\documentclass[11pt]{article}

\usepackage[margin=1.05in]{geometry}
\usepackage{amsmath,amssymb,amsthm,mathtools}
\usepackage[colorlinks=true,linkcolor=blue,citecolor=blue,urlcolor=blue]{hyperref}
\usepackage[capitalise]{cleveref}
\usepackage{enumerate}
\usepackage{comment}
\usepackage{todonotes}
\usepackage{tikz}
\usepackage{graphicx}
\usepackage[utf8]{inputenc}
\usetikzlibrary{arrows.meta,patterns,decorations.pathmorphing,calc}

\theoremstyle{plain}
\newtheorem{theorem}{Theorem}[section]
\newtheorem{proposition}[theorem]{Proposition}
\newtheorem{lemma}[theorem]{Lemma}
\newtheorem{corollary}[theorem]{Corollary}
\theoremstyle{definition}
\newtheorem{definition}[theorem]{Definition}
\theoremstyle{remark}
\newtheorem{remark}[theorem]{Remark}

\theoremstyle{plain}
\newtheorem{mainthm}{Theorem}
\newcommand{\thmletter}[1]{\renewcommand{\themainthm}{#1}}
\crefname{mainthm}{Theorem}{Theorems}
\Crefname{mainthm}{Theorem}{Theorems}

\newcommand{\R}{\mathbb{R}}
\newcommand{\N}{\mathbb{N}}
\newcommand{\Z}{\mathbb{Z}}
\newcommand{\Prob}{\mathbb{P}}
\newcommand{\E}{\mathbb{E}}
\newcommand{\Wien}{\mathbb{W}}
\newcommand{\dist}{\mathrm{dist}}
\newcommand{\rad}{\operatorname{rad}}
\newcommand{\Poi}{\mathrm{Poi}}
\newcommand{\Exp}{\mathrm{Exp}}

\newcommand{\Qin}{Q_{\mathrm{in}}}
\newcommand{\Qext}{Q_{\mathrm{ext}}}
\newcommand{\etaB}{\eta_c^{\mathrm{B}}}
\newcommand{\Vol}{\mathcal V}

\newcommand{\unc}{\mathrm{u}}
\newcommand{\iso}{\mathrm{i}}

\newcommand{\olap}{\mathfrak{o}}
\newcommand{\blk}{\mathfrak{b}}
\newcommand{\Qst}{Q^{*}}
\newcommand{\cell}[2]{(#1,#2)}
\newcommand{\scell}[2]{(#1,#2)^{*}}

\title{Motion helps the contact process: survival below\\ the percolation threshold on Poisson
Brownian motions}
\author{Peter Gracar\thanks{School of Mathematics, University of Leeds,
United Kingdom. ORCID:
\href{https://orcid.org/0000-0001-8340-8340}{0000-0001-8340-8340}.}}
\date{August 25, 2026}

\makeatletter
\let\extrasenglish\@empty
\let\noextrasenglish\@empty
\makeatother

\begin{document}

\maketitle

\begin{abstract}
We consider the SIS contact process on a Poisson system of independent Brownian motions in $\R^d$
of intensity $\eta$. Two particles are in contact when their distance is at most $2r$. An infected
particle transmits the infection to a susceptible particle in contact with it at rate
$\lambda\in(0,\infty]$, and recovers at rate $\mu\in(0,\infty]$, after which it is susceptible
again. When $\lambda=\infty$, recovery acts only while a particle is isolated. Let $\etaB$ be the critical intensity of the
static Boolean model, below which the contact graph has only finite components at every fixed
time. We show that for $d\ge2$, with motion, at every intensity strictly below $\etaB$, the infection survives
and spreads at positive speed once the recovery rate $\mu$ is small enough, depending on the
intensity; that is, started from a single infected particle, at time $t$ there is an infected
particle at distance of order $t$ from the origin. In particular there exists $\mu_\dagger>0$ such
that $\eta_c(\infty,\mu)<\etaB$ for every $\mu<\mu_\dagger$, and as
$\mu\downarrow0$ the set of intensities at which the infection survives extends to all of
$(0,\etaB)$. We also show that the
critical density is positive at every recovery rate, with a lower bound that does not depend on the
infection rate, and that for $d\ge2$, at every intensity and every finite infection rate, the
infection survives and spreads at positive speed once $\mu$ is small enough.
\end{abstract}
 
\tableofcontents
\bigskip

\section{Introduction and main results}
\label{sec:intro}

\subsection{Infections in a moving population}
\label{sec:intro-background}

We study an epidemic carried by a population of particles that move. This differs from a contact
process on a fixed graph in several ways. The graph along which the infection travels is random and
evolves in time and its evolution is correlated over arbitrarily long time spans through the
trajectories of the particles. Moreover, the motion is at the same time the transport mechanism of
the infection and the source of the spatial inhomogeneities to which the contact process is sensitive.
The standard tools for the contact process, i.e.\ a graphical construction on a fixed lattice, a
comparison with oriented percolation and a reproduction estimate, are either not available or have
to be constructed from scratch.

The systematic study of such models goes back to Kesten and Sidoravicius
\cite{KestenSidoravicius05,KestenSidoravicius06,KestenSidoravicius08}. They considered a Poisson
system of independent simple random walks on $\Z^d$ in which an infection is passed whenever an
infected and a healthy particle share a site. Without recovery, they first obtained linear bounds
on the propagation of the front \cite{KestenSidoravicius05} and later a shape theorem
\cite{KestenSidoravicius08}. With recovery, the model is an SIS contact process in a moving medium,
and in \cite{KestenSidoravicius06} they established, at every fixed particle density, a phase
transition in the recovery rate between extinction and survival.

Several variants of the medium and of the transport rule have been studied since. Dauvergne and
Sly \cite{DauvergneSly23} considered infected and susceptible particles diffusing at different
rates. Baldasso and Stauffer \cite{BaldassoStauffer20} considered biased random walks. Gracar and
Stauffer \cite{GracarStauffer19a} considered random walks among random conductances. Drewitz,
Gallo and Gracar \cite{DGG26} considered the Sierpi\'nski gasket, on which the motion is
subdiffusive. Baldasso and Teixeira \cite{BaldassoTeixeira20} replaced the independent walks by a
zero-range process and bounded the position of the front in one dimension. Variants in which
recovered particles are removed rather than returned to the susceptible state, i.e.\ SIR rather
than SIS, were studied by Dauvergne and Sly \cite{DauvergneSly22} on the lattice and by Grimmett
and Li \cite{GrimmettLi22} for Brownian particles in $\R^d$. In the continuum, the static object
underlying the model is the Boolean model. Its dynamic version, Poisson Brownian motions with a
fixed interaction radius, was studied by Peres, Sinclair, Sousi and Stauffer \cite{PSSS13}, whose
detection and percolation estimates for mobile geometric graphs are the starting point of the
local mixing estimates we prove.

All of these works share the premise that the infection spreads because the particles move, so
that a mobile population sustains an epidemic which a static population of the same density would
not. In the lattice models this premise cannot be directly tested, as the underlying graph is not
itself evolving and is assumed to be supercritical. Transmission there occurs only when two
particles share a site, so a frozen population never spreads the infection beyond its starting
site, at any density. Motion is the only mechanism available and there is nothing to compare it
against. In the continuum, the frozen medium has a percolation transition of its own, and in this
paper we study how the motion affects it.

\subsection{The effect of motion on the critical density}
\label{sec:intro-density}

The model has three parameters: the infection rate $\lambda$, the recovery rate $\mu$ and the
intensity $\eta$ of the medium. The transition in $\mu$ at fixed density and fixed $\lambda$ is the
one studied by Kesten and Sidoravicius. The complementary question is the transition in the
density at a fixed recovery rate, that is, extinction at low density and survival at high density.
This question has received attention more recently and is the subject of this paper. We write
$\eta_c(\lambda,\mu)$ for the critical density separating almost sure extinction from survival
with positive probability, and we ask where it lies and how it depends on the two rates.

In the continuum this question has a form which is not available on a lattice. At a fixed time the
contact graph of the medium is a Gilbert graph, and it has a percolation threshold. Below the
critical intensity $\etaB$ of the Boolean model, every component is finite almost surely, so an
infection in a frozen medium would be confined to the component of the particle it started on.
Above $\etaB$ there is an infinite component and no motion is needed at all. The threshold $\etaB$
therefore separates the intensities at which the static geometry can carry an infection to
infinity from those at which it cannot, and the question of whether the motion helps becomes the
question of whether $\eta_c$ is strictly below $\etaB$. On $\Z^d$ there is no such threshold, for
the opposite reason: with transmission only at coincidence, the frozen medium never percolates, at
any density, so the threshold to compare against is infinite.

The main result of this paper is that, for small recovery rates, $\eta_c$ is strictly below
$\etaB$. At every intensity below $\etaB$ the cluster of the seed is almost surely finite, so an
infection which survives there survives because the particles move. Above $\etaB$ this deduction
fails, and we explain in \Cref{sec:soft} why a bound of the form $\eta_c\le\etaB$ on its own
would not show that the motion contributes anything. We therefore prove strict inequality,
together with survival at positive speed from a single infected particle.

The question requires care when the transmission rule is instantaneous, that is, when an encounter
between an infected and a healthy particle infects the latter with certainty rather than at some
finite rate $\lambda$. Under an instantaneous rule, a recovery which occurs while the particle is
in contact with another infected particle is undone at once, and the model is degenerate unless
recovery is restricted to particles which are alone. Kesten and Sidoravicius avoided this by
allowing transmission only at the instant of a jump, and noted that their renormalisation could not
handle the alternative. The rule under which a particle heals only while isolated was introduced
and analysed by Baldasso and Stauffer \cite{BaldassoStauffer23}, who obtained local and global
survival results for independent random walks.

The density transition itself was settled for an interacting lattice medium, while the present
work was in preparation, by Baldasso, Hil\'ario and Ornelas \cite{BHO26}. For the infection with
instantaneous transmission and healing-when-isolated carried by a zero-range process on $\Z^d$ at
density $\rho$, they prove extinction at small $\rho$ and survival at large $\rho$. This gives a
critical density $\rho_c(\delta)\in(0,\infty)$ at every recovery rate $\delta$, and survival at
every positive density once $\delta$ is small. The renormalisation scheme of \Cref{sec:extinction}
is similar to theirs, differing primarily in the continuum objects of interest. The strict inequality above has a lattice counterpart, but a
degenerate one: since the static threshold on the lattice is infinite, it reads
$\rho_c(\delta)<\infty$, which is their theorem. In the continuum the statement to be improved upon
is finite, see \Cref{sec:method}.

\subsection{The particle system}
\label{sec:particles}

Fix a dimension $d\ge1$, an intensity $\eta>0$ and a radius $r>0$. Let $\mathcal P$ be a Poisson
point process on the path space $C([0,\infty);\R^d)$ with intensity measure
\begin{equation}
\label{eq:pathPPP}
  \nu_\eta(\mathrm dw)\;=\;\eta\int_{\R^d}\Wien_x(\mathrm dw)\,\mathrm dx ,
\end{equation}
where $\Wien_x$ denotes Wiener measure started at $x$. In other words, the initial positions form
a Poisson point process of intensity $\eta$ on $\R^d$, and given the initial positions each
particle performs an independent standard Brownian motion. We write $X_u(t)$ for the position at
time $t$ of a particle $u\in\mathcal P$, and
\[
  Q_K=[-K/2,K/2]^d,\qquad B(x,s)=\{y:|y-x|\le s\},
\]
\[
  \omega_d=\frac{\pi^{d/2}}{\Gamma(\tfrac d2+1)},\qquad
  \Vol=|B(0,2r)|=\omega_d(2r)^d ,
\]
so that $\Vol=2^d\omega_dr^d$. Since Wiener measure preserves Lebesgue measure, the system is
stationary in time. That is, for every fixed $t$ the point process $\{X_u(t)\}_{u\in\mathcal P}$
is again Poisson of intensity $\eta$ \cite{vdBMW97}. The system is also ergodic under spatial
translations.

We say that two particles $u\ne v$ are \emph{in contact} at time $t$ if $|X_u(t)-X_v(t)|\le2r$,
that is, if the balls of radius $r$ centred at them intersect. We say that a particle is
\emph{isolated} at time $t$ if it is in contact with no other particle. Note that
$t\mapsto\min_{v\ne u}|X_u(t)-X_v(t)|$ is continuous, so the set of times at which a fixed
particle is isolated is open.

Let $G_t$ be the \emph{Gilbert graph} at time $t$, that is, the graph on
$\{X_u(t)\}_{u\in\mathcal P}$ in which two points are joined when their distance is at most $2r$.
We write $\etaB(R,d)$ for the critical intensity of the Gilbert graph at connection radius $R$ in
$\R^d$, and abbreviate $\etaB=\etaB(2r,d)$. Recall that $\etaB$ is strictly positive in every
dimension, finite for $d\ge2$, and infinite for $d=1$ \cite{MeesterRoy96}.

\subsection{The graphical construction and the two models}
\label{sec:graphical}

The epidemic is driven by two independent families of marks and by a choice of recovery rule.
Fix $\lambda\in(0,\infty]$ and $\mu\in(0,\infty]$. Conditionally on $\mathcal P$, which is almost
surely countable, let
\begin{itemize}
\item $\mathcal R_u$, for $u\in\mathcal P$, be independent Poisson processes of rate $\mu$ on
  $[0,\infty)$, the \emph{recovery marks};
\item $\mathcal A_{u\to v}$, for ordered pairs of distinct particles $u,v\in\mathcal P$, be
  independent Poisson processes of rate $\lambda$ on $[0,\infty)$, the \emph{transmission marks},
  independent of $(\mathcal R_u)_u$.
\end{itemize}
We call $\omega=(\mathcal P,(\mathcal R_u)_u,(\mathcal A_{u\to v})_{u\ne v})$ the \emph{marked
configuration}. Formally, we fix a measurable enumeration of $\mathcal P$, for instance by
increasing distance of $X_u(0)$ from the origin, which is injective almost surely, and we read the
two families off an array $(\mathcal R^{(k)})_{k\in\N}$, $(\mathcal A^{(k,l)})_{k\ne l\in\N}$ of
independent Poisson processes of rates $\mu$ and $\lambda$, independent of $\mathcal P$.

We will use two features of this construction later. First, $\omega$ is not an independently
marked Poisson process, since the family $\mathcal A$ is indexed by ordered pairs of points rather
than by points. Hence there is no Mecke equation for $\omega$ itself, and we cannot use one. What
we use is that conditionally on $\mathcal P$ the marks are independent across particles and
across ordered pairs. Therefore every computation may be performed by conditioning on
$\mathcal P$, integrating out the marks, and applying the Mecke equation to the unmarked process
$\mathcal P$. Second, the pair $(\mathcal P,(\mathcal R_u)_u)$ alone is an independent marking of
$\mathcal P$, hence again a Poisson process on path space $\times$ mark space. This gives
the ergodicity in \Cref{prop:ergodic}.

The infection is of SIS type throughout. That is, a particle which recovers becomes susceptible
again, and may be infected any number of times. We specify the dynamics by declaring which marks
act.

\begin{itemize}
\item \emph{Transmission.} For $\lambda<\infty$, a mark of $\mathcal A_{u\to v}$ at time $t$ is
  \emph{active} if $u$ and $v$ are in contact at $t$. For $\lambda=\infty$ the family
  $(\mathcal A_{u\to v})$ is discarded and we declare instead that a transmission from $u$ to $v$
  occurs at every instant at which $u$ and $v$ are in contact.
\item \emph{Recovery.} Under the \emph{unconditional} rule, written $\mathfrak r=\unc$, every mark
  of $\mathcal R_u$ is \emph{effective}. Under the \emph{isolation} rule, written
  $\mathfrak r=\iso$, a mark of $\mathcal R_u$ at time $t$ is effective if and only if $u$ is
  isolated at $t$. The case $\mu=\infty$ is included with the convention that every instant of
  isolation is an effective recovery.
\end{itemize}

\begin{definition}[Open path]
\label{def:openpath}
Let $\lambda\in(0,\infty]$, $\mu\in(0,\infty]$ and $\mathfrak r\in\{\unc,\iso\}$. An \emph{open
path} from $(u_0,t_0)$ to $(u_n,t_{n+1})$ consists of particles $u_0,\dots,u_n$, such that
consecutive particles are distinct, and times $t_0<t_1<\dots<t_{n+1}$, such that
\begin{enumerate}[(i)]
\item for $1\le j\le n$ there is an active transmission mark from $u_{j-1}$ to $u_j$ at time
  $t_j$; when $\lambda=\infty$ this says that $u_{j-1}$ and $u_j$ are in contact at $t_j$;
\item for $0\le j\le n$ there is no effective recovery mark of $u_j$ in $(t_j,t_{j+1}]$.
\end{enumerate}
Its \emph{trace} is the c\`adl\`ag path $\gamma$ on $[t_0,t_{n+1}]$ given by $\gamma(t)=X_{u_j}(t)$
for $t\in[t_j,t_{j+1})$ and $0\le j\le n-1$, and by $\gamma(t)=X_{u_n}(t)$ for
$t\in[t_n,t_{n+1}]$. Its jumps have size at most $2r$. The path is \emph{contained} in a
space-time region $D$ if $(\gamma(t),t)\in D$ for every $t\in[t_0,t_{n+1}]$.
\end{definition}

An open path and its trace are drawn in \Cref{fig:openpath}.

Note that a path may re-enter a particle it has already used. This makes
\Cref{def:openpath} the SIS notion: a particle recovered at one time may carry the infection at a
later one. It is essential that clause (ii) refers to effective recovery marks and not to the
infected set. Had we required $u_j$ to be infected throughout $[t_j,t_{j+1}]$, the resulting
object would depend on the whole history of the process, and the locality used in
\Cref{sec:horiz,sec:vertical} would fail. As it stands, the notion is a function of the marked
configuration alone. Moreover, under the isolation rule, whether a given mark is effective depends
only on the configuration within distance $2r$ of the particle carrying it.

\begin{lemma}[Truncation]
\label{lem:truncate}
Let $u_0,\dots,u_n$ and $t_0<t_1<\dots<t_{n+1}$ be an open path, let $t\in(t_0,t_{n+1}]$, and let
$j$ be the unique index in $\{0,\dots,n\}$ with $t_j<t\le t_{j+1}$. Then $u_0,\dots,u_j$ with times
$t_0<\dots<t_j<t$ is an open path from $(u_0,t_0)$ to $(u_j,t)$.
\end{lemma}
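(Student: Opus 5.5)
This is a direct check of the two clauses of \Cref{def:openpath} for the truncated sequence. Set $s_i=t_i$ for $0\le i\le j$ and $s_{j+1}=t$. The new object uses particles $u_0,\dots,u_j$ and times $s_0<\dots<s_{j+1}$.

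First, this is an admissible index structure. Since $t_j<t\le t_{j+1}$, we have $s_0<s_1<\dots<s_j<s_{j+1}$. Consecutive particles are still distinct because $u_0,\dots,u_j$ is an initial segment of the original sequence. The index $j$ is uniquely determined: the intervals $(t_i,t_{i+1}]$ for $0\le i\le n$ partition $(t_0,t_{n+1}]$, and $t$ lies in this set by hypothesis.

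Next, clause (i). For $1\le i\le j$ we need an active transmission mark from $u_{i-1}$ to $u_i$ at time $s_i=t_i$. This is exactly clause (i) of the original path, since $i\le j\le n$. The case $\lambda=\infty$ is identical: the contact condition at $t_i$ is inherited.

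Finally, clause (ii). For $0\le i\le j-1$ the interval $(s_i,s_{i+1}]=(t_i,t_{i+1}]$ is unchanged, so the absence of effective recovery marks of $u_i$ there is inherited. For $i=j$ the interval is $(t_j,t]\subseteq(t_j,t_{j+1}]$, because $t\le t_{j+1}$. Any effective recovery mark of $u_j$ in the smaller interval would lie in the larger one, contradicting the original clause (ii).

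The only point needing care is that effectiveness of a mark does not depend on the path. Under the isolation rule, whether a mark of $\mathcal R_u$ at time $s$ is effective depends only on whether $u$ is isolated at $s$, which is a function of the marked configuration alone, as remarked after \Cref{def:openpath}. So the restriction argument applies verbatim under both rules $\mathfrak r\in\{\unc,\iso\}$, including $\mu=\infty$. There is no substantive obstacle. With $j=0$ the statement degenerates to the single-particle path $u_0$ on $(t_0,t]$, which the argument covers.
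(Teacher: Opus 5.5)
Your proof is correct and follows the paper's argument: the times stay strictly increasing because $t>t_j$, clause (i) carries over for $1\le i\le j$, clause (ii) carries over for $i<j$, and for $i=j$ it follows from $(t_j,t]\subseteq(t_j,t_{j+1}]$. Your extra remark that whether a recovery mark is effective depends only on the marked configuration, not on the path, makes explicit a point the paper leaves implicit.
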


\begin{proof}
The times are strictly increasing because $t>t_j$. Clause (i) for $1\le i\le j$ is unchanged.
Clause (ii) for $i<j$ is unchanged, and for $i=j$ it holds because
$(t_j,t]\subseteq(t_j,t_{j+1}]$.
\end{proof}

The lemma expresses the fact that extinction is absorbing. That is, an infection which is alive
at some time is alive at every earlier time.

\begin{figure}[!h]
\centering
\begin{tikzpicture}[x=1cm,y=0.85cm,>=Stealth,scale=1]
  \draw[->,gray] (0.2,0) -- (0.2,6.6) node[left,black]{$t$};
  \draw[->,gray] (0.2,0) -- (7.4,0) node[below,black]{$\R^d$};
  \draw[gray!70] (1.0,0) -- (1.3,0.6) -- (1.6,1.2) -- (1.9,1.8) -- (2.2,2.4) -- (2.0,3.0)
                 -- (1.8,3.6) -- (2.1,4.3) -- (2.4,5.0) -- (2.2,5.6) -- (2.0,6.2);
  \draw[gray!70] (3.4,0) -- (2.8,0.6) -- (2.2,1.2) -- (2.6,1.8) -- (3.0,2.4) -- (3.3,3.0)
                 -- (3.6,3.6) -- (3.3,4.3) -- (3.0,5.0) -- (3.2,5.6) -- (3.4,6.2);
  \draw[gray!70] (5.2,0) -- (4.9,0.6) -- (4.6,1.2) -- (4.1,1.8) -- (3.6,2.4) -- (3.9,3.0)
                 -- (4.2,3.6) -- (4.5,4.3) -- (4.8,5.0) -- (4.6,5.6) -- (4.4,6.2);
  \draw[gray!70] (6.6,0) -- (6.4,0.6) -- (6.2,1.2) -- (5.9,1.8) -- (5.6,2.4) -- (5.2,3.0)
                 -- (4.8,3.6) -- (5.1,4.3) -- (5.4,5.0) -- (5.7,5.6) -- (6.0,6.2);
  \draw[very thick] (1.0,0) -- (1.3,0.6) -- (1.6,1.2);
  \draw[very thick] (2.2,1.2) -- (2.6,1.8) -- (3.0,2.4);
  \draw[very thick] (3.6,2.4) -- (3.9,3.0) -- (4.2,3.6);
  \draw[very thick] (4.8,3.6) -- (5.1,4.3) -- (5.4,5.0) -- (5.7,5.6) -- (6.0,6.2);
  \draw[->,very thick] (1.6,1.2) -- (2.2,1.2);
  \draw[->,very thick] (3.0,2.4) -- (3.6,2.4);
  \draw[->,very thick] (4.2,3.6) -- (4.8,3.6);
  \foreach \p in {(2.0,3.0),(2.8,0.6),(4.5,4.3),(6.2,1.2)}
     {\draw[thick,red!70!black] \p ++(-0.09,-0.09) -- ++(0.18,0.18);
      \draw[thick,red!70!black] \p ++(-0.09,0.09) -- ++(0.18,-0.18);}
  \node[below] at (1.0,-0.05) {$u_0$};
  \node[below] at (3.4,-0.05) {$u_1$};
  \node[below] at (5.2,-0.05) {$u_2$};
  \node[below] at (6.6,-0.05) {$u_3$};
  \draw[dotted,gray] (0.2,1.2) -- (1.6,1.2); \node[left] at (0.2,1.2) {\small$t_1$};
  \draw[dotted,gray] (0.2,2.4) -- (3.0,2.4); \node[left] at (0.2,2.4) {\small$t_2$};
  \draw[dotted,gray] (0.2,3.6) -- (4.2,3.6); \node[left] at (0.2,3.6) {\small$t_3$};
  \node[left] at (0.2,0) {\small$t_0$};
\end{tikzpicture}
\caption{An open path of length $3$ from $(u_0,t_0)$, drawn in bold; time runs upwards. The grey
curves are the trajectories of the particles, the bold horizontal arrows are the transmissions at
$t_1,t_2,t_3$, and the crosses are recovery marks. The path is open because no effective mark of
$u_{j-1}$ lies in $(t_{j-1},t_j]$; marks carried by a particle outside the interval during which
the path uses it, such as the mark of $u_0$ above $t_1$, are irrelevant. The trace $\gamma$ is the
bold curve together with the three jumps, each of size at most $2r$.}
\label{fig:openpath}
\end{figure}

Given a set $I_0$ of initially infected particles we set
\begin{equation}
\label{eq:Idef}
  I^{\lambda,\mu,\mathfrak r}_t=\{u\in\mathcal P:\ \text{there is an open path from }(v,0)
  \text{ to }(u,t)\text{ for some }v\in I_0\} ,
\end{equation}
and we abbreviate this to $I_t$ when the parameters are clear. At $t=0$ the right-hand side is
empty, since the times in \Cref{def:openpath} are strictly increasing, and we read it there as the
given set $I_0$. By \Cref{lem:truncate}, $I_s\ne\emptyset$ implies $I_t\ne\emptyset$ for every
$0\le t\le s$. Hence $t\mapsto\{I_t\ne\emptyset\}$ is decreasing and
$\{I_t\ne\emptyset\ \text{for all}\ t\ge0\}=\bigcap_{n\in\N}\{I_n\ne\emptyset\}$. We take
\eqref{eq:Idef} as the definition of the infected set. Nothing below needs a separate construction
of a Markov process. Moreover, at $\lambda=\infty$ above $\etaB$ the components of $G_t$ are
infinite, so a componentwise construction would not work in any case.

A \emph{translation-covariant factor} of a process $\mathcal X$ on which the translations
$(\vartheta_z)_{z\in\R^d}$ act is a point process of the form $\mathcal S=F(\mathcal X)$ with $F$
measurable and $F(\vartheta_z\mathcal X)=\vartheta_zF(\mathcal X)$ for every $z\in\R^d$. In the
next proposition $\mathcal X$ is the marked process $\widetilde{\mathcal P}$, and in \Cref{prop:soft} it
is $\mathcal P$ itself.

\begin{proposition}[Ergodicity and non-emptiness of covariant factors]
\label{prop:ergodic}
Attaching to each particle its recovery process as a mark makes
$\widetilde{\mathcal P}=\{(X_u,\mathcal R_u):u\in\mathcal P\}$ a Poisson process on
$C([0,\infty);\R^d)\times\mathbb M$, where $\mathbb M$ is the space of locally finite subsets of
$[0,\infty)$ and the intensity is $\nu_\eta\otimes\Pi_\mu$. Here $\Pi_\mu$ is the law of a
rate-$\mu$ Poisson process and $\Pi_\infty:=\delta_{[0,\infty)}$, since no marks are needed at
$\mu=\infty$. This process is ergodic under the spatial translations
$\vartheta_z:(w,\rho)\mapsto(w+z,\rho)$, $z\in\R^d$. Consequently, if $\mathcal S$ is a
translation-covariant factor of $\widetilde{\mathcal P}$ which is a point process of positive
intensity, then $\mathcal S\ne\emptyset$ almost surely.
\end{proposition}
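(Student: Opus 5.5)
The proof has three parts: identify the marked process as Poisson, prove ergodicity by a mixing argument, and deduce non-emptiness from the ergodic theorem for stationary point processes.

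\emph{Step 1: the marked process is Poisson.} Conditionally on $\mathcal P$, the processes $\mathcal R_u$ are i.i.d.\ with law $\Pi_\mu$. They are read off the array $(\mathcal R^{(k)})_k$ through a measurable enumeration of $\mathcal P$, and this array is independent of $\mathcal P$. So $\widetilde{\mathcal P}$ is an independent $\Pi_\mu$-marking of the Poisson process $\mathcal P$. By the marking theorem, $\widetilde{\mathcal P}$ is Poisson on $C([0,\infty);\R^d)\times\mathbb M$ with intensity $\nu_\eta\otimes\Pi_\mu$. For $\mu=\infty$ the mark is deterministic and there is nothing to prove. Translation invariance follows because $\nu_\eta$ is invariant under $w\mapsto w+z$: the law $\Wien_x$ shifted by $z$ is $\Wien_{x+z}$, and Lebesgue measure is invariant. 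Hence $\widetilde{\mathcal P}$ is stationary under $(\vartheta_z)$.

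\emph{Step 2: ergodicity via mixing.} I would show that $\widetilde{\mathcal P}$ is mixing, which implies ergodicity. It suffices to check $\Prob(A\cap\vartheta_zB)\to\Prob(A)\Prob(B)$ as $|z|\to\infty$ for $A,B$ in a generating $\pi$-system. Take $A,B$ depending only on the restriction of $\widetilde{\mathcal P}$ to a set $S\times\mathbb M$. Here $S$ is the set of paths with $\sup_{t\le T}|w(t)|\le K$, for given $T,K$. The union of these events over $T,K$ generates the $\sigma$-field, since every path is continuous and hence bounded on compacts.

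The sets $S$ and $S+z$ have $\nu_\eta$-measure at most $\eta|B(0,K)|<\infty$. They are disjoint as soon as $|z|>2K$, because a path in both would satisfy $|w(0)|\le K$ and $|w(0)-z|\le K$. By the independence property of Poisson processes on disjoint sets, the restrictions to $S\times\mathbb M$ and to $(S+z)\times\mathbb M$ are independent. This gives exact factorisation for large $|z|$. A standard approximation argument, approximating general events in $L^1$ by such local events, then extends mixing to all events, and hence gives ergodicity.

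\emph{Step 3: non-emptiness.} Let $\mathcal S=F(\widetilde{\mathcal P})$ be covariant with positive intensity. The event $E=\{\mathcal S=\emptyset\}$ is translation-invariant: $\vartheta_z\mathcal S=\emptyset$ if and only if $\mathcal S=\emptyset$, and $\vartheta_z\mathcal S=F(\vartheta_z\widetilde{\mathcal P})$. By ergodicity, $\Prob(E)\in\{0,1\}$. If $\Prob(E)=1$, then $\E|\mathcal S\cap[0,1]^d|=0$, which contradicts positive intensity. Hence $\Prob(E)=0$.

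The main obstacle is the measure-theoretic care in Step 2. One must check that the local events generate the $\sigma$-field on the space of configurations on path space $\times\,\mathbb M$, and that $\mathcal S$ is a measurable point process, so that $E$ is measurable. I would handle the first point by viewing configurations as counting measures and using that the sets $S_{T,K}\times\mathbb M$ exhaust the space. The rest is routine.
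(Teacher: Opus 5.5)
Your proposal is correct and follows essentially the same route as the paper. Both arguments apply the marking theorem, then obtain mixing from the fact that events localised on path sets which become disjoint under large translations are independent under the Poisson law, and finally combine the zero--one law for the invariant event $\{\mathcal S=\emptyset\}$ with the positive intensity. Your localising sets $\{\sup_{t\le T}|w(t)|\le K\}$ play the role of the paper's sets $\{w(0)\in R\}$, with which they coincide at $T=0$, and you spell out the approximation step extending mixing to all events, which the paper leaves implicit.
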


\begin{proof}
The first assertion is the marking theorem. The recovery processes are independent of one another
and of $\mathcal P$, so marking the Poisson process $\{X_u:u\in\mathcal P\}$ of intensity
$\nu_\eta$ with independent marks of law $\Pi_\mu$ yields a Poisson process of intensity
$\nu_\eta\otimes\Pi_\mu$.

The intensity $\nu_\eta\otimes\Pi_\mu$ is invariant under $\vartheta_z$ because $\nu_\eta$ is, so
the translations act by measure-preserving transformations. The action is moreover mixing. Indeed,
the $\sigma$-field is generated by the counts of sets $\{w:w(0)\in R\}\cap C'$ with $R$ a bounded
rectangle. Any two such sets become disjoint once the translation is large enough, and counts of
disjoint sets under a Poisson process are independent. Hence the corresponding counts are
independent for all large $z$. A mixing measure-preserving action is ergodic, which gives the
second assertion.

For the third assertion, note that $\{\mathcal S=\emptyset\}$ is translation-invariant, since
$\mathcal S$ is a translation-covariant factor. Hence it has probability $0$ or $1$. Probability
$1$ would force the intensity of $\mathcal S$ to vanish, contrary to the hypothesis. Therefore
$\Prob(\mathcal S=\emptyset)=0$.
\end{proof}

This is the only ergodicity statement we use, and the pair-indexed transmission marks play no role
in it.

\label{sec:corners}
The pair $(\lambda,\mathfrak r)$ admits four combinations. Two of them are the object of this
paper, one is degenerate, and one is intermediate.

\paragraph{The finite-rate model $(\lambda<\infty,\unc)$.} This is the SIS contact process carried
by the mobile particle system. A susceptible particle is infected at rate $\lambda$ times the
number of infected particles in contact with it, and an infected particle recovers at rate $\mu$
irrespective of its surroundings. This is the model of \Cref{sec:smallmu}, and we write
$\Prob^{\lambda,\mu}_\eta$ for its law.

\paragraph{The instantaneous model $(\lambda=\infty,\iso)$.} Transmission occurs at every instant
of contact, and a particle recovers only while it is isolated. This is the model of
\Cref{sec:survupper,sec:extinction}, and we write $\Prob^{\infty,\mu}_\eta$ for its law.

\paragraph{The degenerate case $(\lambda=\infty,\unc)$.} If transmission were instantaneous and
recovery unconditional, an infected particle in contact with a second infected particle would be
reinfected immediately after every recovery, and no recovery would ever have any effect. The
infection would then be a deterministic function of $\mathcal P$ and would never die out. Making
recovery conditional on isolation is the standard way of keeping the model non-degenerate at
infinite infection rate, and it is the rule studied in \cite{BaldassoStauffer23}. For the same
reason we admit $\mu=\infty$ only under the isolation rule. Under $\unc$ it would make every
instant an effective recovery, and no infection could persist at all.

\paragraph{The intermediate case $(\lambda<\infty,\iso)$.} This is a possible model, and by
\Cref{prop:domination} below it lies between the two models we study. We do not treat it
separately. Every extinction statement we prove for the instantaneous model applies to it, and
every survival statement we prove for the finite-rate model applies to it.

The next proposition allows us to read a result proved at $\lambda=\infty$ as a statement about every
$\lambda$. In particular, an extinction result proved for the instantaneous model applies to the
finite-rate model at every infection rate. Note that the proposition changes both $\lambda<\infty$ to
$\lambda=\infty$ and $\unc$ to $\iso$ at once.

\begin{proposition}[Domination]
\label{prop:domination}
Let $\lambda\in(0,\infty)$ and $\mu\in(0,\infty)$, and realise both models on the same marked
configuration, with the same $\mathcal P$, the same recovery marks and the same initial infected
set. Then every open path of the finite-rate model is an open path of the instantaneous model. In
particular
\[
  I^{\lambda,\mu,\unc}_t\ \subseteq\ I^{\infty,\mu,\iso}_t\qquad\text{for every }t\ge0 ,
\]
almost surely, and the same inclusions hold with either intermediate case in between.
\end{proposition}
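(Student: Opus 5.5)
The argument is a direct check of \Cref{def:openpath}, clause by clause, on a fixed marked configuration. Take an open path $u_0,\dots,u_n$, $t_0<\dots<t_{n+1}$ of the finite-rate model $(\lambda,\unc)$. We show that the same particles and the same times form an open path of the instantaneous model $(\infty,\iso)$. Nothing beyond the definitions is needed.

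\emph{Clause (i).} In the finite-rate model there is an active mark of $\mathcal A_{u_{j-1}\to u_j}$ at $t_j$. By definition of activity, $u_{j-1}$ and $u_j$ are in contact at $t_j$. Under $\lambda=\infty$, contact at $t_j$ is exactly what clause (i) requires.

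\emph{Clause (ii).} Under $\unc$ there is no mark of $\mathcal R_{u_j}$ at all in $(t_j,t_{j+1}]$. Under $\iso$ the effective marks are a subset of the marks of $\mathcal R_{u_j}$, namely those at which $u_j$ is isolated. Since the recovery marks are shared and $\mu<\infty$ (so we are not using the $\mu=\infty$ convention), this set is also empty. The conditions that consecutive particles are distinct and that the times increase strictly are unchanged.

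Hence every finite-rate open path starting from some $(v,0)$ with $v\in I_0$ is an instantaneous open path. By \eqref{eq:Idef}, this gives $I^{\lambda,\mu,\unc}_t\subseteq I^{\infty,\mu,\iso}_t$ for $t>0$. At $t=0$ both sets equal $I_0$.

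For the intermediate cases we run the same check in two steps. First, $(\lambda,\unc)\to(\lambda,\iso)$: here clause (i) is identical and the argument for clause (ii) is as above. Second, $(\lambda,\iso)\to(\infty,\iso)$: here clause (ii) is identical and the argument for clause (i) is as above. The same two steps apply to $(\infty,\unc)$ if one wishes to include it. This yields the chain of inclusions.

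The only point needing care is a measurability and coupling subtlety rather than a genuine obstacle. At $\lambda=\infty$ the family $\mathcal A$ is discarded. We must make sure that both models are read off the same $\mathcal P$ and the same $(\mathcal R_u)$, which the array construction of \Cref{sec:graphical} guarantees. The inclusion then holds pointwise on every configuration, so in particular almost surely.
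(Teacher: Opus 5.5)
Your proposal is correct and follows essentially the paper's proof. An active mark forces contact at $t_j$, which is clause (i) at $\lambda=\infty$; the $\iso$-effective recovery marks form a subset of the $\unc$-effective ones, so clause (ii) carries over; and the intermediate cases follow by changing one ingredient at a time.
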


\begin{proof}
Let $(u_j)_{j\le n}$, $(t_j)_{j\le n+1}$ be an open path of the finite-rate model. A mark of
$\mathcal A_{u_{j-1}\to u_j}$ at $t_j$ is active only if $u_{j-1}$ and $u_j$ are in contact at
$t_j$, which is clause (i) of \Cref{def:openpath} at $\lambda=\infty$. For clause (ii), note that
a mark of $\mathcal R_{u_j}$ which is effective under the isolation rule is in particular a mark
of $\mathcal R_{u_j}$, hence effective under the unconditional rule. Therefore the set of
$\iso$-effective marks of a particle is contained in the set of its $\unc$-effective marks, and an
interval free of the latter is free of the former. Both clauses therefore hold in the
instantaneous model, and \eqref{eq:Idef} gives the inclusion. The two intermediate cases are
obtained by applying the same two observations one at a time.
\end{proof}

At $\lambda=\infty$ and $\mu=\infty$ no rates remain and the model is purely geometric. A particle
is infected at time $t$ if and only if it is joined to $I_0$ by a space-time path which uses
contacts as its spatial steps and, along each particle, only time intervals free of isolation. All
results below about the instantaneous model include this case.

\subsection{Survival and the critical density}
\label{sec:soft}

Unless stated otherwise, we infect a single particle at the origin at time $0$ and declare all
other particles susceptible. Formally, we work under the Palm measure of $\mathcal P$ at the
origin. We say that the infection \emph{dies out} if there is a finite time after which no
particle is infected, and that it \emph{survives} otherwise. Since the empty configuration is
absorbing, survival means that $I_t\ne\emptyset$ for every $t\ge0$. The main quantity of interest
in this paper is the \emph{critical density}
\begin{equation}
\label{eq:etac}
  \eta_c(\lambda,\mu)=\inf\big\{\eta>0:\ \Prob^{\lambda,\mu}_\eta(\text{survival})>0\big\} ,
\end{equation}
which is defined for every $\lambda\in(0,\infty]$ and $\mu\in(0,\infty]$. Here $\lambda=\infty$
is read as the instantaneous model, and $\mu=\infty$ is admitted only there. For the finite-rate
model at fixed $\eta$ and $\lambda$ we write in addition
\begin{equation}
\label{eq:muc}
  \mu_c(\lambda,\eta)=\sup\big\{\mu>0:\ \Prob^{\lambda,\mu}_\eta(\text{survival})>0\big\} ,
\end{equation}
with the convention $\sup\emptyset=0$. \Cref{prop:mono}(iii) shows that both quantities are well
defined, that is, that the sets in question are half-lines. The same proposition gives that
$\eta_c(\lambda,\mu)$ is non-decreasing in $\mu$ and non-increasing in $\lambda$ along the
finite-rate models. The comparison of a finite $\lambda$ with $\lambda=\infty$ is not of this
kind, since it changes the recovery rule as well. This comparison is \Cref{prop:domination}, and it
gives
\begin{equation}
\label{eq:etacinf}
  \eta_c(\infty,\mu)\ \le\ \eta_c(\lambda,\mu)
  \qquad\text{for every }\lambda,\mu\in(0,\infty) .
\end{equation}
We do not assert equality in the limit. The statement
$\eta_c(\infty,\mu)=\inf_{\lambda<\infty}\eta_c(\lambda,\mu)$ would be a continuity statement at
$\lambda=\infty$, and we do not prove it here. By \Cref{sec:open}(4) it would settle a question we
leave open, since it would transfer the finiteness of $\eta_c(\infty,\mu)$ to some finite
$\lambda$.
Finally, we write
\begin{equation}
\label{eq:rad}
  \rad(I_t)=\sup\{|X_u(t)|:\ u\in I_t\}\ \in[0,\infty] ,\qquad \sup\emptyset:=0 ,
\end{equation}
for the radius of the infected set. Positive speed will be stated in terms of this quantity.

The notion of survival above is the appropriate one when the initially infected set is finite.
When that set is infinite it is a weak notion, and we now quantify this.

\begin{proposition}[Soft survival]
\label{prop:soft}
Let $\lambda\in(0,\infty]$, $\mu\in(0,\infty]$ and $\mathfrak r\in\{\unc,\iso\}$, with
$\mu<\infty$ if $\mathfrak r=\unc$. Suppose the initially infected set $I_0$ is a
translation-covariant factor of $\mathcal P$ of positive intensity, and, if $\mu=\infty$, suppose
in addition that $I_0$ is \emph{increasing}: whenever $\mathcal P\subseteq\mathcal P'$ are locally
finite path configurations and $u\in\mathcal P$, membership $u\in I_0(\mathcal P)$ implies
$u\in I_0(\mathcal P')$. Then almost surely $I_t\ne\emptyset$ for every $t\ge0$. The extra
hypothesis at $\mu=\infty$ cannot be dropped; see \Cref{rem:softsharp}.
\end{proposition}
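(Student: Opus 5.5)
The plan is to show that for each fixed $n\in\N$ the event $\{I_n\ne\emptyset\}$ has probability one. By \Cref{lem:truncate} this suffices: $\{I_t\ne\emptyset\ \forall t\}=\bigcap_n\{I_n\ne\emptyset\}$, a countable intersection of almost sure events. Fix $n$. We want to exhibit a translation-covariant factor of positive intensity that is contained in the set of initially infected particles whose infection certainly persists up to time $n$. The natural candidate is
\[
\mathcal S_n=\{u\in I_0:\ \mathcal R_u\cap(0,n]=\emptyset\}.
\]
Along the trivial open path consisting of $u$ alone with times $0<n$, clause (ii) requires only that $u$ have no effective recovery mark in $(0,n]$. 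Hence every $u\in\mathcal S_n$ lies in $I_n$. This holds under either recovery rule, because effective marks are always a subset of all marks.

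For $\mu<\infty$, the set $\mathcal S_n$ is a measurable function of $\widetilde{\mathcal P}$ and is covariant, since $I_0$ is a covariant factor of $\mathcal P$ and the marks travel with the particles. Its intensity is computed by conditioning on $\mathcal P$. Because the recovery marks are independent of $\mathcal P$, hence of $I_0$, each $u\in I_0$ survives the thinning independently with probability $e^{-\mu n}$. The intensity of $\mathcal S_n$ is therefore $e^{-\mu n}$ times that of $I_0$, which is positive. \Cref{prop:ergodic} then gives $\mathcal S_n\ne\emptyset$ almost surely, and we are done in this case. This covers $\mathfrak r=\unc$ and also $\mathfrak r=\iso$ with $\mu<\infty$.

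The main obstacle is $\mu=\infty$ under $\iso$. There every instant of isolation is a recovery, so a single particle must remain non-isolated throughout $(0,n]$, and $\mathcal S_n$ must be redefined as
\[
\{u\in I_0:\ u \text{ is in contact with some other particle at every } t\in(0,n]\}.
\]
This set is a covariant factor of $\mathcal P$ alone; one should check measurability using continuity of the nearest-neighbour distance. Its intensity needs to be positive. By the Mecke equation, it equals $\eta$ times the Palm probability that a typical particle $o$ belongs to $I_0(\mathcal P\cup\{o\})$ and is never isolated on $(0,n]$. Both events are increasing in the configuration, the first by the extra hypothesis and the second trivially. We would like to combine them with FKG/Harris for the Poisson process on path space, which applies to increasing events of the underlying Poisson process. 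Conditioning on the trajectory of $o$ keeps both events increasing in the remaining configuration, so Harris gives a lower bound by the product of the two probabilities. The first factor has positive $\Prob$-average over the trajectory of $o$, because $I_0$ has positive intensity. The second is positive for every trajectory of $o$: with positive probability, a second particle starts within distance $r$ of $o$ and its Brownian displacement relative to $o$ stays within $r$ up to time $n$. Integrating the product over the law of $o$'s path gives a positive intensity. This is exactly where increasingness is used, consistent with \Cref{rem:softsharp}. \Cref{prop:ergodic}, applied with trivial marks $\Pi_\infty$, then finishes the argument as before.
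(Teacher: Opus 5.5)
Your proposal is correct and follows the paper's proof essentially step by step. For $\mu<\infty$ you use the independent thinning of $I_0$ by absence of recovery marks together with \Cref{prop:ergodic}; for $\mu=\infty$ you use the Mecke equation and the Harris--FKG product bound $f(w)g(w)$, with $g>0$ for every continuous $w$ by a Brownian tube (support) estimate.

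The only differences are cosmetic. You take integer rather than rational times. At $\mu=\infty$ you define the set as the never-isolated particles of $I_0$, a superset of the paper's set of particles with a single companion throughout $[0,t]$. You then bound its intensity below by that same single-companion event.
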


\begin{proof}
Write $\iota>0$ for the intensity of $I_0$. Since $I_0$ is itself a translation-covariant factor
of positive intensity, $I_0\ne\emptyset$ almost surely by \Cref{prop:ergodic}. This settles the
case $t=0$, so we fix $t>0$.

Assume first that $\mu<\infty$, and let $\mathcal S_t\subseteq I_0$ be the set of particles of
$I_0$ which carry no recovery mark at all in $[0,t]$. Take $n=0$, $t_0=0$ and $t_1=t$ in
\Cref{def:openpath}. Clause (i) is then empty of meaning, and an interval free of marks is free of
effective marks, so the length-zero path from $(u,0)$ to $(u,t)$ is open for every
$u\in\mathcal S_t$ under either recovery rule. This gives that $\mathcal S_t\subseteq I_t$. Since
$I_0$ is a factor of $\mathcal P$ alone and $(\mathcal R_u)_u$ is independent of $\mathcal P$, the
set $\mathcal S_t$ is an independent thinning of $I_0$ with retention probability $e^{-\mu t}$.
Hence it is a translation-covariant factor of $\widetilde{\mathcal P}$ of intensity
$\iota e^{-\mu t}>0$, and $\mathcal S_t\ne\emptyset$ almost surely by \Cref{prop:ergodic}.

Now let $\mu=\infty$, which forces $\mathfrak r=\iso$. In this case we replace $\mathcal S_t$ by
\[
  \mathcal S_t=\big\{u\in I_0:\ \exists\,v\in\mathcal P\setminus\{u\}\ \text{with}\
  |X_u(s)-X_v(s)|\le2r\ \text{for every }s\in[0,t]\big\} .
\]
Such a $u$ is never isolated on $[0,t]$, so it carries no effective recovery mark there, and the
same length-zero path is open. This gives that $\mathcal S_t\subseteq I_t$. The set $\mathcal S_t$
is a translation-covariant factor of $\mathcal P$, and we claim that its intensity is positive. By
the Mecke equation applied to $\mathcal P$, this intensity equals $|Q|^{-1}$ times
\[
  \int_{\{w(0)\in Q\}} \Prob\Big(w\in I_0(\mathcal P\cup\{w\})\ \text{ and }\
  \exists\,v\in\mathcal P:\ \sup_{s\le t}|w(s)-X_v(s)|\le2r\Big)\,\nu_\eta(\mathrm dw)
\]
for an arbitrary bounded $Q$. For fixed continuous $w$ both events are increasing in
$\mathcal P$, the first by hypothesis and the second because enlarging $\mathcal P$ enlarges the
set of candidates for $v$. Hence, by the Harris--FKG inequality for Poisson processes
\cite[Thm.~20.4]{LastPenrose18}, the integrand is at least $f(w)g(w)$, where
$f(w)=\Prob(w\in I_0(\mathcal P\cup\{w\}))$ and
\[
  g(w)=\Prob\big(\exists\,v\in\mathcal P:\ \sup_{s\le t}|w(s)-X_v(s)|\le2r\big)
  =1-\exp\big\{-\nu_\eta\big(\{v:\sup_{s\le t}|w-v|\le2r\}\big)\big\} .
\]
We now show that $g(w)>0$ for every continuous $w$. Bounding the $\nu_\eta$-measure from below by
the contribution of paths started in $B(w(0),r)$, we obtain
\[
  \nu_\eta\big(\{v:\sup_{s\le t}|w-v|\le2r\}\big)\ \ge\
  \eta\int_{B(w(0),r)}\Wien_x\Big(\sup_{s\le t}\big|W_s-x-(w(s)-w(0))\big|<2r-|x-w(0)|\Big)
  \,\mathrm dx\ >\ 0
\]
by the support theorem for Brownian motion, since the integrand is strictly positive for each
$x\in B(w(0),r)$. Finally, $\int_{\{w(0)\in Q\}}f\,\mathrm d\nu_\eta=\iota|Q|>0$ by the Mecke
equation again. Hence $f>0$ on a set of positive $\nu_\eta$-measure inside $\{w(0)\in Q\}$, and
on this set $fg>0$. Therefore the displayed intensity is positive, and $\mathcal S_t\ne\emptyset$
almost surely by \Cref{prop:ergodic}.

In both cases $I_t\ne\emptyset$ almost surely for each fixed $t>0$, hence almost surely
simultaneously for every rational $t>0$. Since $\{I_t\ne\emptyset\}$ decreases in $t$ by
\Cref{lem:truncate}, we obtain $I_t\ne\emptyset$ for every real $t>0$ as well.
\end{proof}

We now show that the extra hypothesis at $\mu=\infty$, that $I_0$ be increasing, cannot be
dropped.

\begin{remark}
\label{rem:softsharp}
Let $I_0=\{u\in\mathcal P:|X_u(0)-X_v(0)|>2r\ \text{for all }v\ne u\}$ be the set of particles
isolated at time $0$. This is a translation-covariant factor of intensity $\eta e^{-\eta\Vol}>0$
which is not increasing. Each of its particles is isolated throughout some interval $[0,\sigma)$,
hence carries an effective recovery mark in $(0,\varepsilon]$ for every $\varepsilon>0$. Then
\Cref{def:openpath}(ii) with $j=0$ forbids every open path issued from $(u,0)$, and
$I_t=\emptyset$ for every $t>0$.
\end{remark}

Note that the subcritical contact process on $\Z^d$ started from full occupancy also satisfies
the conclusion of \Cref{prop:soft}, since $\Prob(x\in I_t)\ge e^{-\mu t}$ for every site $x$. Hence
global survival in the sense of \eqref{eq:etac}, which is the sense used throughout this
literature, says nothing about whether the epidemic sustains itself once the initially infected
set is infinite. Accordingly, \Cref{prop:supercrit} below establishes above $\etaB$ that
the initially infected cluster is infinite, not that the epidemic sustains itself. Both
features of the instantaneous model are needed for this. At finite $\lambda$ the infinite cluster
is not infected at small times, and under unconditional recovery a pair of particles in permanent
contact is a two-site contact process, which dies out. The natural strengthenings are \emph{local
survival}, that is, that the set of times at which some particle within distance $1$ of the
origin is infected is unbounded, and \emph{non-degeneracy}, that is, that
$\liminf_{t\to\infty}\iota(t)>0$ where $\iota(t)$ is the intensity of $I_t$. We address neither
of these; see \Cref{sec:open}(3).

\subsection{Main results}
\label{sec:results}

We can now make the question of whether the motion helps precise. Below $\etaB$ the medium does
not percolate: at every fixed time the Gilbert graph has only finite components, so an infection
which survives there survives because the particles move. Above $\etaB$ the cluster of the seed
is infinite and, by \Cref{prop:soft}, survival in the sense of \eqref{eq:etac} is a property of
the initial condition rather than of the dynamics. A bound of the form
$\eta_c(\infty,\mu)\le\etaB$ is therefore not by itself evidence that anything happens. Taken
alone, it is consistent with the transition detected by \eqref{eq:etac} being only the
percolation transition of the medium, seen through a weak notion of survival.

The first theorem rules this out. It places the transition strictly below $\etaB$, in the range
where the soft mechanism is unavailable. Moreover, it gives the strongest form of survival the
method provides: from a single infected particle, the infection spreads to distance of order $t$
by time $t$.

\thmletter{A}
\begin{mainthm}[Survival below the static percolation threshold]
\label{thm:motion}
Let $d\ge2$ and $r>0$, and let $\eta<\etaB$, so that at every fixed time the Gilbert graph has
almost surely only finite components and the cluster of the seed is almost surely finite. For every
$\lambda\in(0,\infty)$ there is $\mu_*=\mu_*(\lambda;\eta,r,d)>0$ such that for every
$\mu\le\mu_*$ the infection started from a single particle at the origin survives with positive
probability and spreads at positive speed,
\[
  \Prob^{\lambda,\mu}_\eta\Big(\liminf_{t\to\infty}\frac{\rad(I_t)}{t}>0\Big)\ >\ 0 ,
\]
in the finite-rate model and, with $\Prob^{\infty,\mu}_\eta$ in place of
$\Prob^{\lambda,\mu}_\eta$, in the instantaneous model as well. Consequently there is
$\mu_\dagger=\mu_\dagger(r,d)\in(0,\infty]$ such that
\[
  \eta_c(\infty,\mu)\ <\ \etaB\quad\text{for every }\mu<\mu_\dagger ,
  \qquad
  \eta_c(\infty,\mu)\ =\ \etaB\quad\text{for every }\mu\in(\mu_\dagger,\infty) .
\]
\end{mainthm}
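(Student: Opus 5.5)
The plan is to prove the finite-rate statement first and then read everything else off \Cref{prop:domination}. Fix $\eta<\etaB$, $\lambda<\infty$, and use a multiscale block argument compared with supercritical oriented percolation. Tile space-time into boxes $Q_K(x)\times[kT,(k+1)T]$, with $x\in K\Z^d$ and $T$ of order $K^2$ so that the Brownian particles diffuse across a box in one time step. Call a box \emph{good} if the following holds: whenever some particle in the central subcube $\Qin$ of the box carries an open path at time $kT$, then at time $(k+1)T$ there are open paths, contained in a slightly enlarged box, to particles in the central subcubes of all neighbouring boxes.

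Good boxes should occur for the following reason. In $\R^d$ with $d\ge2$, over a time of order $K^2$ a fixed particle meets many others, because its Wiener sausage of radius $2r$ has volume of order $K^2$ (up to a logarithm when $d=2$). Paths of infection can therefore chain from the seed across the box by successive encounters. Concretely, I would take a tagged infected particle and look at its contacts during unit time intervals. Each encounter lasts an amount of time of order $1$, so conditionally on contact the transmission succeeds with probability at least some $p(\lambda)>0$. Meanwhile, the probability that no recovery mark falls on any of the $O(K^{d}T)$ relevant particle-time units is at least $1-C\mu K^dT$. This quantity can be made close to $1$ by choosing $\mu\le\mu_*(\lambda,K)$ after fixing $K$.

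So the recovery rate plays essentially no role inside one box once $\mu$ is small. What remains is to show that the motion alone spreads the infection across the box with high probability at a fixed density. This is exactly where I would invoke the detection and percolation estimates of \cite{PSSS13} for mobile geometric graphs. Those estimates show that, at any positive intensity, a particle is detected by, and connects through, a growing set of particles within time of order $K^2$ with probability tending to $1$. In particular they require no percolation at a fixed time, which is why the bound $\eta<\etaB$ is harmless here. The resulting good-box event is local, depending only on particles within distance $O(K)$ of the box.

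The main obstacle is the dependence between boxes: particle trajectories correlate boxes at arbitrary distance and time lag. To handle it I would use the standard local-mixing/sprinkling decoupling, as in Kesten--Sidoravicius and \cite{PSSS13}. The Poisson point process at time $kT$, restricted to $Q_K$ and conditioned on an arbitrary but reasonable past, dominates a Poisson process of intensity $(1-\epsilon)\eta$ after time of order $K^2$. Since good-box events are monotone increasing in $\mathcal P$, this domination suffices. A multiscale renormalisation then shows that the field of good boxes dominates a supercritical $1$-dependent oriented site percolation in $d+1$ dimensions, with density close to $1$ for large $K$ (via Liggett--Schonmann--Stacey). An infinite open oriented cluster started at the origin box gives survival. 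The linear growth of its rightmost points translates into $\rad(I_t)\ge cKt/T$, which gives the displayed $\liminf$.

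The instantaneous model follows at once from \Cref{prop:domination}. For the consequences, note that $\eta_c(\infty,\mu)\le\etaB$ always, by the supercritical result (\Cref{prop:supercrit}). Moreover, the first part applied at any single $\eta<\etaB$ together with \eqref{eq:etacinf} gives $\eta_c(\infty,\mu)<\etaB$ for all $\mu\le\mu_*$. Monotonicity in $\mu$ from \Cref{prop:mono}(iii) makes $\{\mu:\eta_c(\infty,\mu)<\etaB\}$ an interval $(0,\mu_\dagger)$ or $(0,\mu_\dagger]$, and we set $\mu_\dagger$ to be its supremum. Beyond $\mu_\dagger$ we then have $\eta_c=\etaB$.
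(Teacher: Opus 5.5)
Your final deduction matches the paper's. That is, the instantaneous model follows by \Cref{prop:domination}, $\eta_c(\infty,\mu)\le\etaB$ comes from \Cref{prop:supercrit}, and $\mu_\dagger$ is the supremum of the down-set $\{\mu:\eta_c(\infty,\mu)<\etaB\}$. The paper likewise gets survival from a block argument, namely \Cref{thm:smallmu}.

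\textbf{The gap: the good-box event is not increasing.} The failure is at the sentence ``good-box events are monotone increasing in $\mathcal P$''. For your good-box event to be a function of the local configuration, it must read: for \emph{every} particle $u$ in the central subcube at time $kT$, there are open paths from $(u,kT)$ to all neighbouring central subcubes. This quantifies universally over the particles of the configuration. Adding a particle adds a conjunct that can fail, for instance a particle that drifts off or whose transmission windows all fail. So the event is not increasing, and neither the sprinkling domination you invoke nor any comparison with oriented percolation or a Lipschitz surface (\Cref{thm:GS}) applies to it.

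Replacing ``every'' by ``some'' restores monotonicity but destroys the carrying property. The particle actually infected at time $kT$ is selected by the earlier boxes and need not be the one that spreads, and the event cannot nominate it without reading the past.

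This is exactly the obstruction handled in \Cref{sec:auxpath}. Each cell carries an independent auxiliary Brownian path, recovery process and transmission family. The designated infected particle of the central cube is made to follow them, which leaves the law unchanged by \Cref{lem:rerand}. The cell event then quantifies over reference paths issued from the deterministic grid $\mathcal Z_i$. Its offspring sets $\Upsilon_z$ have membership given by a predicate of a single particle (\Cref{def:qualify,lem:accincreasing}). The $N+1$ witnesses required in $(F_3)$ pay for the offspring whose trajectories were altered by substitutions. Without a device of this kind, your proposal has no event that is at once local, increasing and carrying.

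\textbf{Secondary steps that also need repair.} These are not the main issue.
\begin{enumerate}[(i)]
\item A contact between Brownian particles can be arbitrarily brief, so ``conditionally on contact the transmission succeeds with probability $p(\lambda)$'' is false as stated. You need a proximity window, as in \eqref{eq:p0} and \Cref{lem:slack}.
\item The in-box spreading estimate below $\etaB$ is not supplied by \cite{PSSS13}, whose percolation estimates concern the supercritical regime. The paper proves it directly. The seed infects a Poisson number, of mean of order $\eta L^{1/3}$, of freshly qualifying particles (\Cref{lem:proximity}). These then land in each neighbouring central cube independently with probability bounded below (\Cref{lem:spread}).
\item The good-box field is dependent at every range, so ``multiscale renormalisation, then Liggett--Schonmann--Stacey'' is not an off-the-shelf step. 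What is available is \Cref{thm:GS,thm:GSsurround}. These require an increasing event restricted to the super-cell, with probability controlled at the sprinkled intensity under the conditioned law. They yield a Lipschitz surface, which must still be seeded and followed (\Cref{lem:seed,lem:propagate}).
\end{enumerate}
These three points can be fixed once the event is increasing, but the monotonicity gap cannot be bypassed.
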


At these intensities the medium does not percolate at any fixed time, and yet the infection
spreads to infinity at linear speed. This form of survival differs from the one available above
$\etaB$. The initially infected set is a single particle, its cluster is almost surely finite,
\Cref{prop:soft} does not apply, and the conclusion is not a statement about the initial
condition.

\Cref{thm:motion} is proved in \Cref{sec:mainproof} by combining the next two theorems, which we
establish in the body of the paper and which are of independent interest. The first of them
locates the critical density \eqref{eq:etac}. It is positive for every recovery rate, with a bound
which does not depend on the infection rate. At infinite infection rate it is finite and, in
dimension two and above, at most the percolation threshold of the static medium.

\thmletter{B}
\begin{mainthm}[The critical density]
\label{thm:density}
Let $d\ge1$ and $r>0$.
\begin{enumerate}[(i)]
\item \emph{(Positivity, uniformly in $\lambda$.)} For every $\mu\in(0,\infty]$ there is
  $\bar\eta=\bar\eta(\mu,r,d)>0$ such that
  \[
    \eta_c(\lambda,\mu)\ \ge\ \bar\eta(\mu,r,d)\qquad\text{for every admissible }\lambda\in(0,\infty] ,
  \]
  that is, for every $\lambda\in(0,\infty]$ when $\mu<\infty$ and for $\lambda=\infty$ when
  $\mu=\infty$. Equivalently, the infection started from a single particle dies out almost surely
  at every intensity below $\bar\eta$, whatever the infection rate.
\item \emph{(Finiteness at $\lambda=\infty$.)} For every $\mu\in(0,\infty]$ and every $d\ge1$,
  $\eta_c(\infty,\mu)<\infty$.
\item \emph{(The Boolean bound, $d\ge2$.)} For $d\ge2$ and every $\mu\in(0,\infty]$,
  \[
    \eta_c(\infty,\mu)\ \le\ \eta_c(\infty,\infty)\ \le\ \etaB ,
  \]
  so the critical density at infinite infection rate is bounded above by the critical intensity of
  the static Boolean model, uniformly in the recovery rate.
\end{enumerate}
In particular $\eta_c(\infty,\mu)\in(0,\infty)$ for every $d\ge1$ and every $\mu\in(0,\infty]$.
That is, the instantaneous infection dies out almost surely for $\eta<\eta_c(\infty,\mu)$ and
survives with positive probability for $\eta>\eta_c(\infty,\mu)$.
\end{mainthm}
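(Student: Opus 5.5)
By \Cref{prop:domination} and \eqref{eq:etacinf}, every extinction statement may be proved for the instantaneous model $(\infty,\mu,\iso)$. Under the natural coupling, the $\iso$-effective marks at rate $\mu$ form a subset of the effective instants at $\mu=\infty$. So extinction at rate $\mu$ would imply extinction at every larger rate, and survival at $\mu=\infty$ would imply survival at every $\mu$. This monotonicity already gives the first inequality of (iii), $\eta_c(\infty,\mu)\le\eta_c(\infty,\infty)$. For (i) it suffices to prove, for each fixed $\mu<\infty$, extinction of $I^{\infty,\mu,\iso}$ at all intensities below some $\bar\eta(\mu,r,d)$. The case $\mu=\infty$ then follows by the same monotonicity from any finite $\mu$.

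\textbf{Part (i): multiscale renormalisation.} I would set up a multiscale scheme in space-time, in the spirit of \cite{BHO26}. Fix scales $L_{k+1}=\ell_kL_k$ and time scales of the same order, and call a scale-$k$ box \emph{crossable} if some open path contained in a slight enlargement of the box joins its bottom (or a small inner region) to its lateral or top boundary at distance $\gg L_k$.

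At scale $0$, I would take space-time boxes whose time length $T_0$ satisfies $\mu T_0\gg1$, with $\eta$ so small that, with high probability, every particle visiting the box is isolated for most of $[0,T_0]$. In the instantaneous model any long open path must then use a particle which stays non-isolated through a long stretch. The chance of this is tiny when $\eta\Vol\ll1$, because the contact graph at each time consists of small components and each isolated period of length $\ge1$ carries a recovery with probability $1-e^{-\mu}$. This gives a scale-$0$ crossing probability $p_0(\eta)\to0$ as $\eta\to0$.

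The induction step is the usual one. A crossing at scale $k+1$ forces two crossings at scale $k$ in well-separated sub-boxes, which gives
\[
p_{k+1}\le C\ell_k^{2(d+1)}p_k^2+\varepsilon_k .
\]
Here $\varepsilon_k$ is the decoupling error. I expect the decoupling to be the \textbf{main obstacle}: Brownian trajectories correlate boxes over long times. I would handle it with the local mixing/sprinkling estimates of \cite{PSSS13}, coupling the configuration in a box to a fresh Poisson process of slightly larger intensity $\eta(1+\delta_k)$ with $\sum\delta_k<\infty$, so that the final intensity stays below a fixed small threshold. Summability of $p_k$ then gives that the seed's infection is confined to a finite space-time region almost surely, which means extinction. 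The bound $\bar\eta$ depends only on $\mu,r,d$, since $\lambda$ no longer appears.

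\textbf{Part (ii): comparison with oriented percolation.} Here I would take $\lambda=\infty$ and $\mu=\infty$, which by the monotonicity above is the worst case. Tile space-time into cells of side $\ell$ and duration $\tau$. A cell is \emph{good} if, throughout its time window, every sub-cube of side $r/(2\sqrt d)$ of the (enlarged) cell contains at least one particle. On a good cell no particle there is ever isolated, so there are no effective recovery marks. The Gilbert graph restricted to the cell is also connected at every time, so by continuity infection present anywhere in the cell spreads to all its particles and to the neighbouring good cells at strictly later times. The probability that a cell is good tends to $1$ as $\eta\to\infty$, and this holds also in $d=1$. With bounded-range dependence obtained via the same sprinkling, or directly by Liggett--Schonmann--Stacey domination, good cells dominate supercritical oriented site percolation. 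An infinite open oriented path started at the cell of the origin then yields survival with positive probability.

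\textbf{Part (iii): the Boolean bound for $\eta>\etaB$, $\mu=\infty$, $d\ge2$.} Let $C_0$ be the infinite cluster of $G_0$. This is a translation-covariant factor of $\mathcal P$ of positive intensity, and it is increasing. By \Cref{prop:soft}, starting from $I_0=C_0$ gives $I_t\ne\emptyset$ for all $t$ almost surely. To pass to a single seed $u\in C_0$, I would proceed as follows. Almost surely no pairwise distance at time $0$ equals exactly $2r$, so for any $v\in C_0$ a finite contact chain from $u$ to $v$ persists on a short interval $[0,\varepsilon]$. Along that chain no particle is isolated, so $u$ infects $v$ at some time $s<\varepsilon$. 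Given any open path from $(v,0)$ with first jump time $t_1$, choose $s<t_1$; the path restarted at $(v,s)$ is still open. Hence, almost surely, for every $u\in C_0$ and every $t$ the infection from $u$ is alive at time $t$. The Mecke equation transfers this to the Palm measure on the event that the origin lies in $C_0$, which has positive probability. This gives survival, so $\eta_c(\infty,\infty)\le\etaB$.
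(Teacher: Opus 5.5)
\textbf{Part (ii).} The gap is the step you cover in one sentence. The good-cell indicators are not finite-range dependent. A cell's event is read off the particles present in it at the start of its block, and these are the particles that populated the cells below and around it in earlier blocks. The covariance of particle counts in a fixed region at times $0$ and $t$ decays only like $t^{-d/2}$. The dependence also has the wrong sign for a domination argument. Bad cells are decreasing events, hence positively correlated by Harris--FKG, so a depleted cell makes the cells above it more likely to be bad. The conditional probability of a good cell given the past is therefore not bounded below by its marginal, and Liggett--Schonmann--Stacey has no hypothesis to act on.

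Sprinkling does not manufacture bounded-range dependence either. The lower mixing estimate (\Cref{thm:mixinglower}) places a Poisson process of intensity $(1-\epsilon)\varrho$ inside the future configuration only if every subcube already carries $\varrho\ell^d$ particles, which is exactly what fails on the bad events. Iterating it block by block also loses intensity at every step. This is why the paper does not compare with oriented percolation. In \Cref{prop:crowding} the crowding event (a particle confined to each small cube) is fed into the multi-scale Lipschitz surface of \Cref{thm:GS,thm:GSsurround}. That framework absorbs the infinite-range dependence into a single loss of intensity $\eta\to(1-\varepsilon)\eta$, uses the mixing estimate under the conditioned law (\Cref{cor:mixingcond}), and needs the quantitative surrounding bound to seed the surface from the origin (\Cref{lem:seed}). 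For $d\ge2$ your (iii) already gives (ii), but $\etaB=\infty$ when $d=1$, so without such a framework you have no proof of (ii) in that dimension.

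\textbf{Part (i).} Your architecture is the paper's, but the base-scale step does not work as stated.
\begin{enumerate}[(1)]
\item The claim that a long open path ``must use a particle which stays non-isolated through a long stretch'' is false. The path may change carrier at any contact, so it can last a long time while each carrier is isolated most of the time. A bound on individual particles says nothing about the existence of some path. The paper uses confinement instead (\Cref{lem:slab}). On the good slab events every open path starting in the box lives inside a single small $\mathsf R$-cluster evolving in isolation, and \Cref{lem:localext} kills such a cluster within time $O(\log\ell_0)$ outside an event of polynomially small probability. This must be imposed at every integer restart time, because attractiveness compares a path only with the all-infected process restarted at the path's own starting time.
\item You cannot fix the base box and let $\eta\to0$. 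The mixing bound $\Gamma e^{-c\eta\epsilon^2\Delta^{d/2}}$ then becomes trivial, so the base scale must grow as the intensity falls. The paper takes $\eta_0\asymp\ell_0^{-s}$ with $s<s_\ast$ of \eqref{eq:sstar}, and the base box then holds of order $\ell_0^{2d-s}$ particles. The estimate of \Cref{lem:trigger} is about many particles, not a nearly empty box.
\item $p_k\to0$ concerns the stationary law, while $H(B_k)$ is increasing and the seed is an added particle. You need the Mecke and Cauchy--Schwarz step of \Cref{sec:extproof} to pass to the Palm measure.
\end{enumerate}

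\textbf{Part (iii).} This part is correct. It differs from \Cref{prop:supercrit} only in organisation. You get non-extinction from the whole infinite cluster via \Cref{prop:soft}, reach the surviving member from the seed by a persisting contact chain, and transfer to the Palm measure with the Mecke equation. The paper instead picks a persistent cluster particle directly and uses Slivnyak's theorem together with monotonicity.
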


The remaining theorem is the complementary statement in the recovery rate. At every density and
every finite infection rate, however small, survival holds once recovery is slow enough. Read
through \eqref{eq:etacinf}, it says that the critical density of \Cref{thm:density} degenerates as
$\mu\downarrow0$ at every infection rate. This theorem gives the survival in \Cref{thm:motion},
and \Cref{thm:density}(iii) is used there only through the equality clause.

\thmletter{C}
\begin{mainthm}[Survival at small recovery rate and the degeneration of $\eta_c$]
\label{thm:smallmu}
Let $d\ge2$ and $\eta,r,\lambda>0$ with $\lambda<\infty$. There is
$\mu_*(\lambda)=\mu_*(\lambda;\eta,r,d)>0$ such that for every $\mu\le\mu_*(\lambda)$ the
finite-rate infection started from a single particle at the origin survives with positive
probability, and moreover
\[
  \Prob^{\lambda,\mu}_\eta\Big(\liminf_{t\to\infty}\frac{\rad(I_t)}{t}>0\Big)\ >\ 0 .
\]
In particular $\mu_c(\lambda,\eta)\ge\mu_*(\lambda)>0$ for every $\eta>0$ and every
$\lambda\in(0,\infty)$. Consequently
\[
  \lim_{\mu\downarrow0}\eta_c(\lambda,\mu)\ =\ 0\qquad\text{for every }\lambda\in(0,\infty] .
\]
\end{mainthm}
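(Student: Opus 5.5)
The proof proceeds by a multiscale renormalisation that compares the finite-rate infection with supercritical oriented site percolation on a space-time lattice. Fix $\eta,r,\lambda$ and $d\ge2$. Partition space-time into boxes $Q_K(x)\times[kT,(k+1)T]$ with $x\in K\Z^d$ and $T$ of order $K^2$. We call a box \emph{good} if two things happen. First, the local particle configuration is well mixed: every subcube of side $\ell\ll K$ contains, at all times of the window, a number of particles comparable to $\eta\ell^d$. This is the mixing/detection input in the spirit of \cite{PSSS13}, made local by having the Poisson trajectories re-randomise over a time of order $K^2$. Second, started from any infected particle near the centre of the box at time $kT$, there are open paths to infected particles near the centres of all neighbouring boxes at time $(k+1)T$. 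The conclusion then follows in three steps: (a) show that the probability that a box is good tends to $1$ as $K\to\infty$ and $\mu\to0$ in the right order; (b) handle the dependence between boxes; (c) compare with oriented percolation to obtain survival at linear speed.

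For step (a), first choose $K$ large so that the density event and a transport event hold with probability close to one. The transport event is that the Brownian motions bring some particle from near the centre into contact with a chain of particles reaching distance $K$ within time $T$. Because every particle moves, two given nearby particles meet within time $O(1)$ with positive probability, regardless of $\eta$. So even below $\etaB$ there is a chain of successive contacts, particle $u_0$ meeting $u_1$, then $u_1$ meeting $u_2$, and so on, covering distance $K$ in time $O(K^2)$, or $O(K)$ by using many relays. One route is to use motion directly: pick relay particles whose displacements follow the desired direction, which has positive probability per unit time. Independently of this, each contact lasts a duration bounded below with positive probability. During such a contact an active transmission mark occurs with probability at least $c(\lambda)>0$, so we require a contact of duration at least $1/\lambda$ at each step. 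The number of transmission steps $N$ and the time horizon $T$ are then fixed in terms of $K$ and $\lambda$ only. Finally, choose $\mu_*$ so small that $e^{-\mu_*\cdot(\text{total particle-time used})}$ is close to $1$; on that event no recovery mark falls on the relevant particles. This order of choices is what makes $\mu_*$ depend on $\lambda$ and $\eta$.

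For step (b), goodness of a box is not determined by the marks inside that box alone, because trajectories wander. We truncate by requiring that all particles relevant to a box stay within distance $K$ of it during the window, which fails with probability $e^{-cK^2/T}$ times polynomial factors, so $T$ must be taken slightly smaller than $K^2$ by a logarithmic factor. We also resample the configuration at each time slab using the Poisson thinning/coupling of \cite{PSSS13}: the positions at time $kT$ dominate a fresh Poisson process of intensity $(1-\epsilon)\eta$. The good-box process is then finitely dependent, and Liggett--Schonmann--Stacey yields domination by supercritical oriented site percolation once $\Prob(\text{good})$ is close to $1$. I expect this decorrelation step to be the main obstacle, since the trajectory-driven long-range correlations in time must be broken while monotonicity is kept. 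The first thing to try is to make goodness an increasing event in $\mathcal P$, so that a sprinkling/domination argument applies.

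For step (c), oriented percolation percolates with positive probability from the origin box, and its cluster contains sites at distance of order $k$ at level $k$. The seed particle reaches a central particle of its box with positive probability, which is a local event. Open paths concatenate by \Cref{def:openpath}, so $\rad(I_{kT})\ge cKk$ infinitely often along the cluster, and we obtain $\liminf\rad(I_t)/t>0$ with positive probability, using \Cref{lem:truncate} to interpolate between the times $kT$. The statement about $\mu_c$ is immediate. For $\lim_{\mu\downarrow0}\eta_c(\lambda,\mu)=0$, given $\eta>0$ the survival just proved gives $\eta_c(\lambda,\mu)\le\eta$ for $\mu\le\mu_*(\lambda;\eta)$. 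The case $\lambda=\infty$ then follows from \eqref{eq:etacinf}.
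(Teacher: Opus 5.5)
Your closing deductions (the bound $\mu_c(\lambda,\eta)\ge\mu_*(\lambda)$, and $\lim_{\mu\downarrow0}\eta_c(\lambda,\mu)=0$ with $\lambda=\infty$ from \eqref{eq:etacinf}) match the paper. The survival part, however, has a genuine gap at step (b), which is the heart of the theorem.

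\textbf{The decoupling step.} The claim that after truncation and resampling the good-box process is finitely dependent is false. The positions at time $(k+1)T$ are obtained from those at time $kT$ by independent displacements, so no two time layers are independent, and truncating displacements does not change this. The coupling with a fresh Poisson process of intensity $(1-\epsilon)\eta$ (the lower local mixing, \Cref{thm:mixinglower}) succeeds only with high probability. It also succeeds only on the event that the earlier configuration was dense in a larger region. Conditioning on badness in the past biases that event, and large sparse regions persist for long times. So neither finite-range dependence nor a uniform lower bound on the probability of goodness given distant boxes is available, and Liggett--Schonmann--Stacey does not apply.

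The paper does not compare with oriented percolation at all. It invokes the multi-scale Lipschitz percolation of Gracar and Stauffer (\Cref{thm:GS,thm:GSsurround}). That theorem needs only an increasing event, restricted to a super-cell, of high probability at the sprinkled intensity $(1-\varepsilon)\eta$, under the law in which each particle keeps its displacement in $Q_{wL}$. All correlations are absorbed by the sprinkling inside the theorem, whose Brownian input is local mixing for conditioned particles (\Cref{lem:condkernel,cor:mixingcond}). The output is a two-sided Lipschitz surface rather than an oriented cluster. Propagation at linear speed along it (\Cref{lem:propagate}) is where $d\ge2$ enters.

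\textbf{Monotonicity of the cell event.} Your good event is not increasing, so the sprinkling you propose cannot be applied to it. ``From any infected particle near the centre there are open paths to \dots'' quantifies over particles, and adding a particle adds a conjunct. The event also cannot simply nominate the carrier, since that particle is determined by earlier cells. The missing device is the auxiliary path of \Cref{sec:auxpath}. Each cell carries an independent Brownian path, a recovery process and transmission processes. The designated infected particle is re-randomised to follow them, which preserves the law by \Cref{lem:rerand}. The event then quantifies over grid points $z\in\mathcal Z_i$ of reference paths and is increasing (\Cref{lem:accincreasing}). The price is that up to $N$ witnesses may be compromised, hence the $N+1$ witnesses in $(F_3)$.

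\textbf{High probability in step (a).} Step (a) as written does not give probability close to one: a directed relay chain of length of order $K$ has probability exponentially small in $K$. The paper gets high probability from branching plus diffusion instead.
\begin{enumerate}[(i)]
\item During $[\tau\beta,\tau\beta+T_L]$ the designated particle infects $|\Upsilon_z|\ge n_0\asymp\eta L^{1/3}$ freshly qualifying particles; freshness makes this count exactly Poisson (\Cref{lem:proximity}).
\item Each of these independently reaches any neighbouring central cube with probability at least $c_pe^{-\mu\beta}$, by a killed heat-kernel bound (\Cref{lem:spread}).
\item The requirement $\mu\beta\le\log2$ then fixes $\mu_*=\log2/(\blk L_0^2)$.
\end{enumerate}
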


Together the two theorems say that $\eta_c(\lambda,\cdot)$ is a non-degenerate function of the
recovery rate. It is bounded away from $0$ at every fixed $\mu>0$ by a bound free of $\lambda$,
bounded above by $\etaB$ at $\lambda=\infty$, and tends to $0$ as $\mu\downarrow0$.
\Cref{thm:motion} applies at every intensity below $\etaB$, with a threshold $\mu_*$ depending on
the intensity, and by \Cref{thm:smallmu} the critical density $\eta_c(\infty,\mu)$ tends to $0$
as $\mu\downarrow0$. In other words, as recovery slows, the range of intensities at which the
motion sustains the infection extends to all of $(0,\etaB)$. The picture given by the three
theorems is drawn in \Cref{fig:phase} and described in \Cref{sec:phase} below.

\subsection{The phase diagram}
\label{sec:phase}

\Cref{fig:phase} collects the statements above into the $(\mu,\eta)$ plane at fixed $d\ge2$ and
$r>0$. Four features are marked.

\begin{figure}[!h]
\centering
\begin{tikzpicture}[scale=1.75, >=Stealth]
  \fill[black!8] (0,0) rectangle (5.3, 3.9);
  \fill[green!14] (0, 3.1) rectangle (5.3, 3.9);
  \fill[blue!16]
    (0,0) -- plot[domain=0:5.3, samples=140, variable=\m] (\m, {0.65*\m/(\m+0.9)})
    -- (5.3, 0) -- cycle;
  \fill[teal!28]
    (0,0) -- plot[domain=0:3.9, samples=160, variable=\y]
      ({2.2*\y*\y/(\y*\y+3)}, \y) -- (0, 3.9) -- cycle;
  \draw[teal!70!black, thick]
    plot[domain=0:3.9, samples=160, variable=\y] ({2.2*\y*\y/(\y*\y+3)}, \y);
  \draw[teal!60!black, thick, densely dashed]
    plot[domain=0:3.9, samples=160, variable=\y] ({0.7*\y*\y/(\y*\y+3)}, \y);
  \draw[very thick, black!75]
    plot[domain=0:5.3, samples=140, variable=\m] (\m, {2.35*\m/(\m+1.1)});
  \draw[very thick, blue!60!black]
    plot[domain=0:5.3, samples=140, variable=\m] (\m, {0.65*\m/(\m+0.9)});
  \draw[very thick, green!55!black] (0, 3.1) -- (5.3, 3.1);
  \draw[densely dashed, orange!75!black] (0, 0.7) -- (5.3, 0.7);
  \draw[->, thick] (-0.1, 0) -- (5.6, 0) node[right] {$\mu$};
  \draw[->, thick] (0, -0.1) -- (0, 4.15) node[above] {$\eta$};
  \draw[thick] (3.5pt, 3.1) -- (-3.5pt, 3.1) node[left, font=\small] {$\etaB$};
  \draw[thick] (3.5pt, 0.7) -- (-3.5pt, 0.7) node[left, font=\small] {$\Vol^{-1}$};
  \node[below left, font=\small] at (0,0) {$0$};
  \node[green!45!black, font=\small, right] at (2.15, 3.52)
    {Survival, $\lambda=\infty$ (\Cref{thm:density}(iii))};
  \node[blue!60!black, font=\small, right] at (2.05, 0.25)
    {Extinction, every $\lambda$ (\Cref{thm:density}(i))};
  \node[blue!60!black, font=\footnotesize, right] at (5.32, 0.56) {$\bar\eta(\mu)$};
  \node[black!75, font=\footnotesize, right] at (5.32, 1.95) {$\eta_c(\infty,\mu)$};
  \node[teal!60!black, font=\scriptsize, align=center, inner sep=1.5pt,
        fill=white, fill opacity=0.72, text opacity=1] at (0.92, 2.62)
    {Survival, finite $\lambda$\\ (\Cref{thm:motion,thm:smallmu})};
  \node[teal!70!black, font=\footnotesize, right] at (1.72, 2.35)
    {$\mu=\mu_*(\lambda;\eta)$};
  \node[teal!60!black, font=\footnotesize, above, rotate=79] at (0.34, 1.85)
    {smaller $\lambda$};
  \node[black!45, font=\small] at (3.8, 2.75) {Unresolved};
\end{tikzpicture}
\caption{\emph{Teal}: survival with positive speed for the finite-rate
model to the left of $\mu=\mu_*(\lambda;\eta)$ (\Cref{thm:smallmu}), drawn for one value of
$\lambda$, the dashed curve being the same boundary at a smaller $\lambda$ (\Cref{rem:muStar}). \emph{Blue}: almost sure extinction
below $\bar\eta(\mu)$, simultaneously for every $\lambda\in(0,\infty]$ (\Cref{thm:density}(i)).
\emph{Light green}: survival at $\lambda=\infty$ above $\etaB$ (\Cref{thm:density}(iii)); by
\Cref{sec:soft} the content there is only that the cluster of the seed is infinite, so where the
teal region overlaps it the teal region says more.
\emph{Black}: the critical curve $\eta_c(\infty,\mu)$, non-decreasing and lying between the two
proved regions. \emph{Amber}: the mean degree one
line $\eta=\Vol^{-1}$. \emph{Grey}: neither proved.}
\label{fig:phase}
\end{figure}

\paragraph{Survival at small recovery rate and the strict inequality.} By \Cref{thm:smallmu} the
region $\mu\le\mu_*(\lambda;\eta)$ is a region of survival with positive speed for the finite-rate
model. It lies close to the $\eta$-axis and, by \Cref{rem:muStar}, shrinks towards it as
$\lambda\downarrow0$. The same remark shows that within the binding constraint $\lambda$ and
$\eta$ enter only through their product, which is why the region is drawn opening upwards. Its
part below the line $\eta=\etaB$ is \Cref{thm:motion}. There the medium has only finite
components at every fixed time, so the survival drawn is produced by the motion alone.

\paragraph{Extinction at low density for every $\lambda$.} By \Cref{thm:density}(i) the region
$\eta<\bar\eta(\mu)$ is a region of almost sure extinction, simultaneously for every
$\lambda\in(0,\infty]$. That is, the same curve bounds the extinction region of the instantaneous
model and of the finite-rate model at every infection rate. The function $\bar\eta$ may be taken
non-decreasing, since replacing it by $\mu\mapsto\sup_{\mu'\le\mu}\bar\eta(\mu')$ preserves the
conclusion, as $\eta_c(\lambda,\cdot)$ is non-decreasing by \Cref{prop:mono}(iii). It tends to $0$
as $\mu\downarrow0$. For $d\ge2$ this follows from \Cref{thm:smallmu}, and for every $d\ge1$ it
follows from the construction of \Cref{sec:extinction}. There $\bar\eta$ is a negative power of
the base scale of the renormalisation, the scale at which \Cref{lem:trigger} makes the starting
estimate of the recursion small.
As $\mu\downarrow0$ the local extinction probability from \Cref{lem:localext} tends to $0$, which
forces this base scale to grow. We claim no quantitative form of the degeneration; see
\Cref{sec:open}(2).

\paragraph{Survival above the Boolean threshold.} By \Cref{thm:density}(iii) the half-plane
$\eta>\etaB$ is a region of survival for the instantaneous model, uniformly in $\mu$. Its boundary
is a horizontal line and not a curve, since the argument uses no rates. By \Cref{sec:soft}, what
it asserts is that the cluster of the seed is infinite. Where the teal region overlaps it, the
teal region says more.

\paragraph{The critical curve at $\lambda=\infty$.} Between the two regions lies the curve
$\eta_c(\infty,\mu)$. It is non-decreasing in $\mu$ by \Cref{prop:mono}(iii), takes values in
$(0,\infty)$ by \Cref{thm:density}, and is bounded above by $\etaB$ even at $\mu=\infty$. By
\Cref{thm:motion} it is strictly below $\etaB$ for all small $\mu$, and indeed it tends to $0$
there. How far up in $\mu$ the strict inequality persists, that is, whether the threshold
$\mu_\dagger$ of that theorem is infinite and whether the strict inequality holds at $\mu=\infty$,
is \Cref{sec:open}(1). The curve is drawn below $\etaB$ throughout, which is just a conjecture at this point. For finite $\lambda$ the curve $\eta_c(\lambda,\mu)$ lies above it by
\eqref{eq:etacinf}, but we prove no upper bound on it at fixed $\mu$; see \Cref{sec:open}(4).

The marker on the $\eta$-axis is $\eta=\Vol^{-1}$, the point at which the mean degree of the
medium equals one. As explained in \Cref{sec:noR0}, it is the ceiling of every reproduction
criterion, and in
$d=2$ it lies below $\etaB$ by a factor of about $4.5$. The interval in which
$\eta_c(\infty,\mu)$ is known to lie therefore extends far beyond the range a first-moment
argument could cover.

\subsection{Methods and outline}
\label{sec:method}

We prove \Cref{thm:density}(ii)--(iii) in \Cref{sec:survupper}. \Cref{thm:smallmu}, and with it
\Cref{thm:motion}, is proved by a Lipschitz-surface argument in \Cref{sec:smallmu}.
\Cref{thm:density}(i) follows from \Cref{thm:extinction} together with \Cref{prop:domination}, and
is proved in \Cref{sec:extinction}.

\emph{A Lipschitz surface} for \Cref{thm:motion,thm:smallmu} and for the upper bounds on
$\eta_c$ in \Cref{thm:density}. We use the multi-scale percolation of Gracar and Stauffer
\cite{GracarStauffer19b}, in the form in which it was applied to infections with recovery in
\cite{GracarStauffer19a,BaldassoStauffer23}. It is collected in \Cref{sec:lipschitz} and used
twice, with a crowding event in \Cref{prop:crowding} and with the event of \Cref{sec:smallmu} for
\Cref{thm:smallmu}. Two points require additional work in the continuum. First, the cell event
must be an increasing function of the configuration, whereas the particle carrying the infection
into a cell is determined by the previous cell and, at finite $\lambda$, cannot simply be replaced
by a neighbour as it can on a lattice. Following \cite{BaldassoStauffer23}, we attach an
independent auxiliary path to each cell and let a designated infected particle follow it. This
lets the event quantify over points of space rather than over particles (\Cref{sec:auxpath}).
Second, the percolation is applied to particles conditioned to keep their displacement in a cube,
which is not a high-probability event. \Cref{lem:condkernel} shows that the kernel of a Brownian
particle conditioned in this way obeys the same estimates as the free one, and
\Cref{cor:mixingcond} transfers the local mixing estimates.

\emph{A multi-scale renormalisation} for \Cref{thm:density}(i), which we carry out in
\Cref{sec:extinction}. The reason for this approach is that no reproduction number is available
at $\lambda=\infty$. The lifetime of an infected particle is governed by the geometry rather than
by a rate, and it is positively correlated with the number of its offspring. Moreover, no
first-moment criterion is valid at the intensities in question in any case. Such criteria are
confined to mean degree below one, whereas $\etaB\Vol\approx4.5$ in $d=2$ and exceeds $1$ in
every finite dimension. We explain this in \Cref{sec:noR0}.

The structure of the remainder of this paper is as follows. In \Cref{sec:mono} we collect the
monotonicity statements. Every comparison we need is a pathwise statement about open paths. In
\Cref{sec:mixing} we collect the heat-kernel estimates and establish the two forms of the local
mixing estimate, together with their transport to particles conditioned to keep their
displacement in a cube. In \Cref{sec:lipschitz} we set up the space-time tessellation, record the
multi-scale Lipschitz percolation of Gracar and Stauffer \cite{GracarStauffer19b}, namely the
existence of the surface and the surrounding of the origin, and derive from it the propagation
and seeding lemmas which turn a surface into survival. These are used in \Cref{sec:survupper} and
again in \Cref{sec:smallmu}.

In \Cref{sec:survupper} we prove the upper bounds \Cref{thm:density}(ii)--(iii). For $d\ge2$ and
$\eta>\etaB$ we show that the infinite Gilbert cluster is infected at arbitrarily small times
(\Cref{prop:supercrit}). Finiteness in every dimension, which is needed for $d=1$ where
$\etaB=\infty$, follows from a crowding estimate fed into \Cref{thm:GS} (\Cref{prop:crowding}).
In \Cref{sec:smallmu} we prove \Cref{thm:smallmu} by defining a local increasing event which
carries the infection across a cell and has high probability once the scale is large and the
recovery rate small. In \Cref{sec:extinction} we prove \Cref{thm:extinction} by the
renormalisation mentioned above. Survival forces a half-crossing of a space-time box
(\Cref{lem:surv2H}), half-crossings cascade (\Cref{lem:cascade}), well-separated half-crossings
are nearly independent (\Cref{lem:horiz,prop:vertical}), and the resulting recursion contracts
(\Cref{prop:recursion,lem:contraction}) once the base-scale probability is small, which is the
triggering estimate of \Cref{sec:trigger}.

In \Cref{sec:mainproof} we assemble the three theorems, and in \Cref{sec:remarks} we explain why
no reproduction number is available, discuss the reduction to $\mu=\infty$, and list the open
problems.

\section{Monotonicity}
\label{sec:mono}

Throughout this section $\lambda\in(0,\infty]$, $\mu\in(0,\infty]$ and
$\mathfrak r\in\{\unc,\iso\}$ are arbitrary, subject only to the restriction of
\Cref{sec:corners} that $\lambda=\infty$ and $\mu=\infty$ are admitted only with
$\mathfrak r=\iso$. We collect in the next proposition the monotonicity properties of the process that
we will use.

\begin{proposition}
\label{prop:mono}
\begin{enumerate}[(i)]
\item \emph{(Attractiveness.)} On a fixed marked configuration, if $A\subseteq B$ are initial
  infected sets then $A_t\subseteq B_t$ for every $t\ge0$.
\item \emph{(Monotonicity in the configuration.)} Let $\omega\subseteq\omega'$ be marked
  configurations carrying the same trajectories, the same recovery marks and the same transmission
  marks on the particles, respectively the ordered pairs, that they have in common. Then every
  open path of $\omega$ is an open path of $\omega'$. Consequently $I_t(\omega)\subseteq
  I_t(\omega')$ for every $t$, and every event which is the existence of an open path with
  prescribed properties is increasing.
\item \emph{(Monotonicity of the survival probability.)} The map
  $\eta\mapsto\Prob^{\lambda,\mu}_\eta(\text{survival})$ is non-decreasing,
  $\lambda\mapsto\Prob^{\lambda,\mu}_\eta(\text{survival})$ is non-decreasing, and
  $\mu\mapsto\Prob^{\lambda,\mu}_\eta(\text{survival})$ is non-increasing. The three statements are
  at a fixed recovery rule $\mathfrak r$. Hence $\eta_c$ is well
  defined by \eqref{eq:etac} and is non-decreasing in $\mu$, and $\mu_c$ is well defined by
  \eqref{eq:muc} and is non-decreasing in $\eta$ and in $\lambda$.
\end{enumerate}
\end{proposition}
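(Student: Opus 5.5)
All three parts reduce to pathwise statements about open paths in the sense of \Cref{def:openpath}, combined with couplings of the marked configurations.

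For (i), every open path from $(v,0)$ with $v\in A$ is also a path from an element of $B$, so \eqref{eq:Idef} gives $A_t\subseteq B_t$ directly.

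For (ii), fix an open path of $\omega$. Its particles and trajectories are present in $\omega'$.
\begin{itemize}
\item Clause (i) is preserved. At finite $\lambda$ the transmission mark of the pair $u_{j-1}\to u_j$ is the same in $\omega'$, and contact depends only on the two trajectories. At $\lambda=\infty$ clause (i) is contact alone.
\item Clause (ii) under $\unc$ is preserved because the recovery marks of $u_j$ are unchanged.
\item Clause (ii) under $\iso$ is the only point needing care. A mark of $u_j$ that is effective in $\omega'$ requires $u_j$ to be isolated in $\omega'$. Isolation in $\omega'$ implies isolation in $\omega$, since $\omega$ has fewer particles. So the $\iso$-effective marks in $\omega'$ form a subset of those in $\omega$, and an interval free of the latter is free of the former.
\item At $\mu=\infty$ the same argument applies with ``instants of isolation'' in place of marks.
\end{itemize}
Hence $I_t(\omega)\subseteq I_t(\omega')$, since the seed is common. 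Any event asserting the existence of an open path with prescribed properties is therefore increasing under addition of particles.

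For (iii), I would prove each monotonicity by a coupling on one probability space.
\begin{itemize}
\item \emph{In $\eta$.} For $\eta<\eta'$, realise $\mathcal P_{\eta'}$ as the superposition of $\mathcal P_\eta$ with an independent Poisson process of intensity $(\eta'-\eta)$ on path space. Under the Palm measure, add the origin particle to both. Read the marks off a common array so that shared particles and pairs carry identical marks. The measurable-enumeration convention complicates this, so I would instead attach marks to particles via i.i.d.\ labels, which does not change the law. Then (ii) gives $I^{\eta}_t\subseteq I^{\eta'}_t$ for all $t$, so survival at $\eta$ implies survival at $\eta'$.
\item \emph{In $\mu$.} For $\mu'<\mu$, obtain $\mathcal R^{\mu'}_u$ as an independent $\mu'/\mu$-thinning of $\mathcal R^{\mu}_u$. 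Effective marks shrink under either rule, so open paths at $\mu$ remain open at $\mu'$. The case $\mu=\infty$ under $\iso$ is handled the same way, since every isolation instant is effective there.
\item \emph{In $\lambda$.} For $\lambda<\lambda'<\infty$, thin $\mathcal A^{\lambda'}_{u\to v}$ to get $\mathcal A^{\lambda}_{u\to v}$. Active marks then increase with the rate. The case $\lambda'=\infty$ at a fixed rule $\iso$ is included, because an active mark implies contact.
\end{itemize}

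With these monotonicities, the set $\{\eta:\Prob_\eta(\text{survival})>0\}$ is an up-set, so $\eta_c$ is well defined. The same holds for $\mu_c$. Both monotonicities of $\mu_c$ and the monotonicity of $\eta_c$ in $\mu$ follow by comparing these sets.

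The only delicate point is the isolation rule in (ii): one must observe that adding particles can only destroy isolation, so effective recovery marks can only disappear. The $\eta$-coupling under the Palm measure with pair-indexed marks is a bookkeeping matter once marks are attached to labelled particles.
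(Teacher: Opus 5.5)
Your proposal is correct and follows the paper's proof essentially step for step. Parts (i) and (ii) go through the two clauses of the open-path definition, using that adding particles can only remove isolation instants. Part (iii) uses superposition in $\eta$, thinning of the recovery and transmission marks, and a direct comparison at $\mu=\infty$ and $\lambda'=\infty$ under the isolation rule. Your labelling device for the marks does the same job as the paper's choice to attach marks to the particles and ordered pairs of $\mathcal P^{\eta'}$ and restrict them to $\mathcal P^{\eta}$.
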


\begin{proof}
We start with (i). By \eqref{eq:Idef} the set $I_t$ is the set of endpoints of open paths issued
from $I_0$, and by \Cref{def:openpath} whether a given space-time path is open does not refer to
$I_0$. Enlarging $I_0$ therefore enlarges the family of admissible starting points and nothing
else.

For (ii) we check the two clauses of \Cref{def:openpath} separately. Clause (i) is a statement
about a transmission mark of a pair present in both configurations and about the contact of the
two particles concerned, that is, about their trajectories. Both are unchanged in passing from
$\omega$ to $\omega'$. For clause (ii) under $\mathfrak r=\unc$ there is nothing to prove, since
the effective marks of a particle are all of its marks. Under $\mathfrak r=\iso$, a recovery mark
of a particle $u$ at time $t$ is effective in $\omega'$ only if $u$ is isolated in $\omega'$ at
$t$. Since $\omega\subseteq\omega'$, such a $u$ is also isolated in $\omega$ at $t$, so the
effective marks of $u$ in $\omega'$ form a subset of those in $\omega$. Hence an interval free of
effective marks in $\omega$ is also free of effective marks in $\omega'$, and an open path of
$\omega$ remains open in $\omega'$.

We now prove (iii). Let $\eta\le\eta'$. By the superposition property applied to
\eqref{eq:pathPPP} we may realise $\mathcal P^{\eta}$ and $\mathcal P^{\eta'}$ on one probability
space with $\mathcal P^{\eta}\subseteq\mathcal P^{\eta'}$ and matched trajectories. We attach to
each particle of $\mathcal P^{\eta'}$ its own recovery marks and to each ordered pair of
$\mathcal P^{\eta'}$ its own transmission marks, and take the particle at the origin common to
both. Survival is the event that for every $t$ there is an open path from the origin particle
which extends to time $t$, and this event is increasing by (ii). Monotonicity in $\mu$ for
$\mu\le\mu'<\infty$ follows by thinning the recovery processes. A rate-$\mu'$ family contains a
rate-$\mu$ family, and removing marks can only remove effective ones, so it can only create open
paths. The case $\mu'=\infty$ is not a thinning and we treat it directly. Under the isolation
rule at $\mu'=\infty$ an interval is free of effective recovery marks if and only if it contains
no instant of isolation. At $\mu<\infty$ it suffices that it contain no mark at an instant of
isolation, which is a weaker requirement. Hence every open path at $\mu'=\infty$ is an open path
at every $\mu<\infty$, and
$\Prob^{\infty,\infty}_\eta(\text{survival})\le\Prob^{\infty,\mu}_\eta(\text{survival})$.
Monotonicity in $\lambda$ follows in the same way from the transmission processes. A
rate-$\lambda'$ family with $\lambda\le\lambda'$ contains a rate-$\lambda$ family, and adding
transmission marks can only create open paths. The case $\lambda'=\infty$ follows from the
observation, already made in \Cref{prop:domination}, that a mark is active only at an instant of
contact.

Note that the three statements hold at a fixed $\mathfrak r$. The comparison of the finite-rate
model with the instantaneous one changes the recovery rule as well, and is the content of
\Cref{prop:domination}. At $\mathfrak r=\unc$ the value $\lambda'=\infty$ is inadmissible, since it
is the degenerate combination of \Cref{sec:corners}, so the last clause is a statement about
$\mathfrak r=\iso$.
\end{proof}
\section{Heat kernels and local mixing}
\label{sec:mixing}

In this section we collect the estimates on the Brownian transition kernel that we use throughout
the paper, and we prove from them two local mixing statements, one for free and one for
conditioned particles. Local mixing is the property of the medium that the multi-scale
percolation of \Cref{sec:lipschitz} requires, and the vertical decoupling of \Cref{sec:vertical}
is based on it. We follow the arrangement of \cite{GracarStauffer19a}. We first state the kernel
estimates, then prove mixing for free particles, then study the kernel of a conditioned particle,
and finally adapt the free statement to conditioned particles.

\subsection{Kernel estimates}
\label{sec:HK}

Throughout, $B$ is a standard Brownian motion in $\R^d$, of variance $t$ per coordinate, with
transition density
\begin{equation}
\label{eq:heatkernel}
  q_\Delta(x,y)=(2\pi\Delta)^{-d/2}\exp\Big\{-\frac{|x-y|^2}{2\Delta}\Big\} .
\end{equation}
The \emph{relative motion} $X_u-X_v$ of two particles has variance $2t$ per coordinate, so its
transition density at time $t$ is $q_{2t}$. Every estimate below applies to it after replacing
$\Delta$ by $2\Delta$, which changes only the constants. We will need the following standard
estimates.

\begin{lemma}[Kernel estimates]
\label{lem:kernel}
There are constants $c,C$ depending only on $d$ such that, for all $\Delta>0$ and all
$x,x_1,x_2,y\in\R^d$,
\begin{enumerate}[(H1)]
\item\label{H1} $q_\Delta(x,y)\le(2\pi\Delta)^{-d/2}$;
\item\label{H2} $|q_\Delta(x_1,y)-q_\Delta(x_2,y)|\le c\,\Delta^{-(d+\kappa)/2}|x_1-x_2|^{\kappa}$
  with $\kappa=1$;
\item\label{H3} $\Prob_x\big[\sup_{s\le t}|B_s-x|\ge R\big]\le4d\,e^{-R^2/(2dt)}$ for $R,t>0$;
\item\label{H4} $\displaystyle(2\pi\Delta)^{-d/2}\int_{|z-y|\ge R}
  \exp\Big\{-\frac{(|z-y|-\varsigma)^2}{2\Delta}\Big\}\,\mathrm dz\ \le\ C\,e^{-R^2/(16\Delta)}$
  whenever $0\le2\varsigma\le R$.
\end{enumerate}
\end{lemma}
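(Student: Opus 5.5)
The four estimates are checked directly on the explicit Gaussian density \eqref{eq:heatkernel}, one at a time; only (H4) needs any care.

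\textbf{(H1) and (H2).} For (H1) we simply bound the exponential factor by $1$.

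For (H2) we apply the mean value theorem in the first variable. The gradient is
\[
  \nabla_x q_\Delta(x,y)=-\frac{x-y}{\Delta}\,q_\Delta(x,y),
\]
so its norm is $(2\pi\Delta)^{-d/2}\Delta^{-1/2}\,s\,e^{-s^2/2}$, where $s=|x-y|/\sqrt\Delta$. Since $\sup_{s\ge0}s e^{-s^2/2}=e^{-1/2}$, the gradient is bounded uniformly by $c\,\Delta^{-(d+1)/2}$. Integrating along the segment from $x_1$ to $x_2$ then gives (H2) with $\kappa=1$.

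\textbf{(H3).} We reduce to coordinates. If $\sup_{s\le t}|B_s-x|\ge R$, then some coordinate satisfies $\sup_{s\le t}|B^{(i)}_s-x_i|\ge R/\sqrt d$. For a one-dimensional Brownian motion, the reflection principle gives
\[
  \Prob\Big(\sup_{s\le t}W_s\ge a\Big)=2\Prob(W_t\ge a)\le 2e^{-a^2/(2t)}.
\]
Applying this to $\pm W$ and taking a union bound over the $2d$ one-sided events yields $4d\,e^{-R^2/(2dt)}$ with $a=R/\sqrt d$.

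\textbf{(H4).} We substitute $z-y=w$ and pass to polar coordinates, so the left side equals
\[
  (2\pi\Delta)^{-d/2}\,d\omega_d\int_R^\infty \rho^{d-1}e^{-(\rho-\varsigma)^2/(2\Delta)}\,\mathrm d\rho .
\]
On $\rho\ge R\ge2\varsigma$ we have $\rho-\varsigma\ge\rho/2$, hence the exponent is at most $-\rho^2/(8\Delta)$. We then split this as $e^{-\rho^2/(16\Delta)}\cdot e^{-\rho^2/(16\Delta)}$ and bound the first factor by $e^{-R^2/(16\Delta)}$. The remaining integral is
\[
  (2\pi\Delta)^{-d/2}\int_0^\infty\rho^{d-1}e^{-\rho^2/(16\Delta)}\,\mathrm d\rho .
\]
After the scaling $\rho=\sqrt\Delta\,u$ this becomes a constant depending only on $d$, which gives $C\,e^{-R^2/(16\Delta)}$.

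The only point needing attention is this use of $2\varsigma\le R$ to absorb the shift $\varsigma$, which is exactly why the constant in the exponent degrades from $2$ to $16$. Everything else is routine.
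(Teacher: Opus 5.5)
Your proof is correct and follows the paper's argument step for step. It uses the maximum of the Gaussian for (H1), the explicit gradient bound $c\,\Delta^{-(d+1)/2}$ with the mean value theorem for (H2), the coordinatewise reflection bound for (H3), and polar coordinates with $\rho-\varsigma\ge\rho/2$ followed by the split $e^{-\rho^2/(8\Delta)}\le e^{-R^2/(16\Delta)}e^{-\rho^2/(16\Delta)}$ for (H4). One small correction to your closing comment: absorbing the shift $\varsigma$ only degrades the exponent constant from $2$ to $8$. The further loss to $16$ comes from the split that turns the remaining integral into a $d$-dependent constant.
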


\begin{proof}
(H\ref{H1}) is the maximum of \eqref{eq:heatkernel}, and (H\ref{H3}) is the reflection bound applied
coordinatewise. For (H\ref{H2}), we have
$\nabla_xq_\Delta(x,y)=-q_\Delta(x,y)(x-y)/\Delta$, so
\[
  |\nabla_xq_\Delta(x,y)|\ \le\ (2\pi\Delta)^{-d/2}\Delta^{-1/2}\sup_{u\ge0}ue^{-u^2/2}
  \ \le\ c\Delta^{-(d+1)/2} ,
\]
and the claim follows from the mean value theorem. For (H\ref{H4}), we pass
to polar coordinates and use $|z-y|-\varsigma\ge|z-y|/2$ on the domain of integration, which
holds since $|z-y|\ge R\ge2\varsigma$. This gives the bound
$C_d\Delta^{-d/2}\int_R^\infty r^{d-1}e^{-r^2/(8\Delta)}\mathrm dr$. Splitting
$e^{-r^2/(8\Delta)}\le e^{-R^2/(16\Delta)}e^{-r^2/(16\Delta)}$ and using
$\Delta^{-d/2}\int_0^\infty r^{d-1}e^{-r^2/(16\Delta)}\mathrm dr=C_d'$ finishes the proof.
\end{proof}

\subsection{Soft local times}
\label{sec:softlocal}

We will couple families of independent random points with Poisson point processes by means of
the soft local time method of Popov and Teixeira. The statement we need is
the following.

\begin{proposition}[Soft local times]
\label{prop:softlocal}
Let $J$ be an at most countable index set and let $(Z_j)_{j\in J}$ be independent random points
of $\R^d$ with densities $g_j$. Let $(\xi_j)_{j\in J}$ be i.i.d.\ exponential random variables
of mean $1$ and define the soft local time
\[
  H_J(y)=\sum_{j\in J}\xi_j\,g_j(y),\qquad y\in\R^d .
\]
Let $\rho\colon\R^d\to[0,\infty)$ be measurable and deterministic. Then there is a coupling
$\mathbb Q$ of $(Z_j)_{j\in J}$ with a Poisson point process $\psi$ of intensity $\rho$
such that for every measurable $A\subseteq\R^d$
\begin{enumerate}[(i)]
\item\label{sl:up} $\mathbb Q\big(\{Z_j\}_{j\in J}\cap A\subseteq\psi\big)\ \ge\
  \mathbb Q\big(H_J(y)\le\rho(y)\ \text{ for all }y\in A\big)$;
\item\label{sl:low} $\mathbb Q\big(\psi\cap A\subseteq\{Z_j\}_{j\in J}\big)\ \ge\
  \mathbb Q\big(H_J(y)\ge\rho(y)\ \text{ for all }y\in A\big)$.
\end{enumerate}
\end{proposition}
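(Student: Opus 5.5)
We construct $\psi$ by the Popov--Teixeira scheme: a single Poisson process $\Phi$ on $\R^d\times[0,\infty)$ with intensity $\mathrm dy\otimes\mathrm dv$, from which both the $Z_j$ and $\psi$ are read. For $\psi$ we simply take the points of $\Phi$ lying under the graph of $\rho$,
\[
  \psi=\{y:\ (y,v)\in\Phi,\ v\le\rho(y)\} .
\]
By the mapping theorem this is a Poisson process of intensity $\rho$. For the $Z_j$ we enumerate $J=\{j_1,j_2,\dots\}$ and proceed inductively. Set $G_0\equiv0$. At step $k$, let $\xi_{j_k}$ be the smallest $s\ge0$ such that the region $\{(y,v):\ G_{k-1}(y)<v\le G_{k-1}(y)+s\,g_{j_k}(y)\}$ contains a point of $\Phi$. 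Let $Z_{j_k}$ be the spatial coordinate of that (almost surely unique) point, and put $G_k=G_{k-1}+\xi_{j_k}g_{j_k}$.

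The key step is the standard soft local time lemma. Conditionally on the first $k-1$ steps, the part of $\Phi$ strictly above the graph of $G_{k-1}$ is again a Poisson process of Lebesgue intensity; this is a stopping-set (strong Markov) property of Poisson processes, since the explored region is a stopping set. Hence $\xi_{j_k}$ is $\mathrm{Exp}(1)$, because the explored region has Lebesgue measure $s\int g_{j_k}=s$. Likewise $Z_{j_k}$ has density $g_{j_k}$, and the pair is independent of the past. So the $(Z_j)$ are independent with the right laws, the $\xi_j$ are i.i.d.\ $\mathrm{Exp}(1)$, and $G_\infty=\lim G_k=H_J$. This limit is monotone and might be infinite, which is harmless.

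The structural fact we need is that every $Z_j$ comes from a point $(Z_j,v_j)\in\Phi$ with $v_j\le H_J(Z_j)$. Conversely, every point $(y,v)\in\Phi$ with $v\le H_J(y)$, or with $v<H_J(y)$ in the countable case, is picked as some $Z_j$: the layers $\{G_{k-1}<v\le G_k\}$ tile the region under $H_J$, and each layer contains exactly one point of $\Phi$, namely the one selected.

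Both inclusions now follow. For (i), on the event $\{H_J\le\rho \text{ on } A\}$ each $Z_j\in A$ has height $v_j\le H_J(Z_j)\le\rho(Z_j)$, so $Z_j\in\psi$. For (ii), on the event $\{H_J\ge\rho \text{ on } A\}$ each point of $\psi\cap A$ has height $v\le\rho(y)\le H_J(y)$, so it lies under $H_J$ and is one of the $Z_j$.

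We expect the main obstacle to be the rigorous justification of the inductive exploration. Three points need care: the strong Markov property for the explored stopping set, the almost sure uniqueness of the minimising point, and the countable limit in (ii). In the limit a boundary case $v=H_J(y)$ exactly has probability zero, so it does not affect the argument. We would either cite Popov--Teixeira for these measure-theoretic details or verify them via the finite-$J$ case and monotone limits.
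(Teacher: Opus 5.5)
Your proposal is correct and is essentially the paper's proof: both read $(Z_j)_{j\in J}$ and $\psi$ off a single Poisson process on $\R^d\times[0,\infty)$ and take $\psi$ to be the projection of the points below the graph of $\rho$. Both then derive (i) and (ii) from the fact that the selected points are exactly the points lying under $H_J$; the paper cites Popov--Teixeira (and Hil\'ario et al.) for the exploration you sketch, and in the countable case of (ii) the boundary issue is most easily settled by noting that $\rho$ is deterministic, so almost surely no point of $\Phi$ lies on its graph and hence $v<\rho(y)\le H_J(y)$ strictly.
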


\begin{proof}
The construction of \cite[Section 4]{PopovTeixeira15} produces a Poisson point process $\Sigma$ on
$\R^d\times[0,\infty)$ with intensity the product of Lebesgue measures such that
\[
  \big\{(Z_j,\zeta_j)\big\}_{j\in J}=\Sigma\cap\big\{(z,u):u\le H_J(z)\big\}
\]
for suitable heights $\zeta_j\le H_J(Z_j)$. The construction is introduced in
\cite[Section 4]{PopovTeixeira15} and proved in \cite[Cor.~4.4]{PopovTeixeira15}. A reformulation
for particles on a general space is given in \cite[Appendix~A]{Hilario2015}, and the statement we
use corresponds to \cite[Cor.~A.3]{Hilario2015}. Since $\rho$ is deterministic, the projection
$\psi$ of $\Sigma\cap\{(z,u):u\le\rho(z)\}$ is a Poisson point process of intensity $\rho$. It is
a function of $\Sigma$ alone, hence independent of the densities $g_j$. For (\ref{sl:up}), if
$H_J\le\rho$ on $A$ and $Z_j\in A$, then $\zeta_j\le H_J(Z_j)\le\rho(Z_j)$, so $Z_j\in\psi$. For
(\ref{sl:low}), if $H_J\ge\rho$ on $A$ and $z\in\psi\cap A$, then $z$ is the projection of a point
$(z,u)\in\Sigma$ with $u\le\rho(z)\le H_J(z)$. That point therefore lies in
$\Sigma\cap\{(w,v):v\le H_J(w)\}$, so it is one of the $(Z_j,\zeta_j)$ and $z\in\{Z_j\}_{j\in J}$.
Note that the two conclusions hold under the one coupling $\mathbb Q$, since both $\psi$ and
$(Z_j)_{j\in J}$ are read off the same $\Sigma$.
\end{proof}

\subsection{Local mixing}
\label{sec:vertmix}

We now state the two local mixing estimates, an upper and a lower form. Both forms carry the same
prefactor,
\begin{equation}
\label{eq:Gamma}
  \Gamma\ =\ \Gamma(K,\Delta,\epsilon,d)\ :=\ C\,\epsilon^{-d}
  \Big(\frac K{\Delta^{1/2}}\Big)^{d(d+1)}\ \ge\ 1 ,
\end{equation}
which is polynomial in the scale parameters. It comes from passing from a bound on the soft
local time $H_J(y)$ at a fixed $y$ to a bound holding simultaneously at every $y\in Q_{K'}$, which
\Cref{prop:softlocal} requires in either direction. This reduction is the first step of the proof
below. Only the exponential matters for what follows, since every factor in the recursion of
\Cref{sec:recursion} is polynomial in the scale parameters in any case. The lower form is
stated in \cite{GJLV26} with the sharper prefactor $|Q_{K'}|$. The proof below gives the
prefactor $\Gamma$, which suffices for our purposes. The setting is drawn in \Cref{fig:mixing}.

\begin{figure}[!h]
\centering
\begin{tikzpicture}[x=1cm,y=1cm,>=Stealth]
  \fill[black!5] (0,0) rectangle (7,5.5);
  \fill[white] (1.2,0.9) rectangle (5.8,4.6);
  \draw[step=0.55,gray!45,very thin] (0,0) grid (7,5.5);
  \draw (0,0) rectangle (7,5.5); \node[below left] at (7,0) {\small$Q_K$};
  \draw[thick] (1.2,0.9) rectangle (5.8,4.6); \node[below left] at (5.8,0.9) {\small$Q_{K'}$};
  \fill (3.4,2.6) circle (1.8pt) node[right=2pt]{\small$y$};
  \draw[thick,dashed] (3.4,2.6) circle (1.75);
  \draw[->] (3.4,2.6) -- ++(35:1.75) node[midway,above,sloped]{\small$\mathsf L$};
  \draw[<->] (0,-0.35) -- (1.2,-0.35) node[midway,below]{\small$\ge \mathsf L/2$};
  \draw[<->] (0.02,5.85) -- (0.57,5.85) node[midway,above]{\small$\ell$};
\end{tikzpicture}
\caption{The setting of the local mixing estimates. The box $Q_K$ is tessellated into subcubes
of side $\ell$, each carrying at most $\varrho\ell^d$ particles, and the conclusion is drawn
inside $Q_{K'}$. In the proof the sum $\sum_jq_\Delta(x_j,y)$ is split at the radius
$\mathsf L=c_1\Delta^{1/2}\epsilon^{-1/d}$: inside the dashed ball each particle is compared with the
minimiser of $q_\Delta(\cdot,y)$ over its subcube, and the shaded far field is estimated by a
Gaussian tail.}
\label{fig:mixing}
\end{figure}

\begin{theorem}[Local mixing, upper form]
\label{thm:mixingupper}
There are $c_0,c_1,C>0$ depending only on $d$ such that the following holds. Fix $K>\ell>0$
large enough and $\epsilon\in(0,1)$, and tessellate $Q_K$ into subcubes of side $\ell$. Let
$(x_j)_{j\in J}\subset Q_K$ be a configuration with at most $\varrho\ell^d$ points in each
subcube, where $\lfloor\varrho\ell^d\rfloor>0$. Let $\Delta\ge c_0\ell^2\epsilon^{-4/\kappa}$,
let $Y_j$ be the position at time $\Delta$ of a Brownian motion started at $x_j$, these being
independent, and let $K-K'\ge c_1\Delta^{1/2}\epsilon^{-1/d}$. Then $(Y_j)_{j\in J}$ can be
coupled with a Poisson process $\psi$ of intensity $\varrho(1+\epsilon)$ on $\R^d$, independent
of the initial configuration, so that
\begin{equation}
\label{eq:Mplus}
  \{Y_j\}_{j\in J}\cap Q_{K'}\subseteq\psi\quad\text{with probability at least}\quad
  1-\Gamma\exp\{-C\varrho\epsilon^2\Delta^{d/2}\} ,
\end{equation}
with $\Gamma$ as in \eqref{eq:Gamma}.
\end{theorem}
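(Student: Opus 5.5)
The plan is to apply \Cref{prop:softlocal}(\ref{sl:up}) with $g_j=q_\Delta(x_j,\cdot)$ and the constant intensity $\rho\equiv\varrho(1+\epsilon)$. The Poisson process $\psi$ produced there is a function of $\Sigma$ alone, so it is independent of the initial configuration. It then suffices to show
\[
  \mathbb Q\Big(H_J(y)\le\varrho(1+\epsilon)\ \text{for all }y\in Q_{K'}\Big)\ \ge\ 1-\Gamma\exp\{-C\varrho\epsilon^2\Delta^{d/2}\}.
\]
We will prove this in three steps: a deterministic bound on the mean $\E H_J(y)=\sum_jq_\Delta(x_j,y)$, a concentration bound at a fixed $y$, and a union bound over a net, transferred to all of $Q_{K'}$ by continuity.

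\emph{Mean.} Fix $y\in Q_{K'}$ and split the sum at radius $\mathsf L=c_1\Delta^{1/2}\epsilon^{-1/d}$, as in \Cref{fig:mixing}. Near field: each subcube $S$ meeting $B(y,\mathsf L)$ contains at most $\varrho\ell^d$ points. Each point's contribution is at most $q_\Delta(\cdot,y)$ evaluated at the point of $S$ closest to $y$ (the maximiser of the kernel over $S$). By (H\ref{H2}), replacing this by the average of $q_\Delta(\cdot,y)$ over $S$ costs at most $c\Delta^{-(d+1)/2}\ell$ per point. Summing gives at most $\varrho\int q_\Delta(z,y)\,\mathrm dz$ plus an error $\varrho\,\mathsf L^d\,c\,\Delta^{-(d+1)/2}\ell\lesssim\varrho\,\epsilon^{-1}\ell\Delta^{-1/2}$. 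This is at most $\varrho\epsilon/4$ once $\Delta\ge c_0\ell^2\epsilon^{-4}$, which is where the hypothesis on $\Delta$ (with $\kappa=1$) enters. Far field: points at distance at least $\mathsf L$ are controlled by (H\ref{H4}), with the shift $\varsigma\sim\ell\sqrt d$ accounting for a point's displacement within its subcube. This gives a contribution of at most $C\varrho e^{-\mathsf L^2/(16\Delta)}=C\varrho e^{-c_1^2\epsilon^{-2/d}/16}\le\varrho\epsilon/4$ for $c_1$ large. Finally, since $K-K'\ge\mathsf L$, every subcube near $y$ lies inside $Q_K$, and we obtain $\E H_J(y)\le\varrho(1+\epsilon/2)$.

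\emph{Concentration.} $H_J(y)$ is a weighted sum of independent $\Exp(1)$ variables with weights $a_j=q_\Delta(x_j,y)\le(2\pi\Delta)^{-d/2}$ by (H\ref{H1}). A Chernoff bound, using $\log\E e^{\theta a\xi}\le\theta a+\theta^2a^2$ for $\theta a\le1/2$, gives
\[
  \Prob\big(H_J(y)>\E H_J(y)+\varrho\epsilon/2\big)\le\exp\{-c\,\varrho\epsilon^2/(\max_j a_j)\}\le\exp\{-C\varrho\epsilon^2\Delta^{d/2}\},
\]
since $\sum_ja_j^2\le\max_j a_j\sum_ja_j\lesssim\Delta^{-d/2}\varrho$. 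The hypothesis $\lfloor\varrho\ell^d\rfloor>0$ guarantees that $\varrho$ is not degenerately small relative to the grid, so the constants here are uniform.

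\emph{Uniformity in $y$.} This is the main obstacle. Take a grid $N\subset Q_{K'}$ of mesh $\delta$. For $|y-y'|\le\delta$, (H\ref{H2}) gives $|H_J(y)-H_J(y')|\le c\,\delta\,\Delta^{-(d+1)/2}\sum_j\xi_j$, and this has to be at most $\varrho\epsilon/4$. The difficulty is that $\sum_j\xi_j$ is a total over all $|J|\lesssim\varrho K^d$ points, not a local quantity. I would first try to localise: near $y$, bound the gradient by $|\nabla q|\le q\,|x-y|/\Delta$ so that distant points contribute negligibly, and treat the remaining $\sum_j\xi_j\lesssim\varrho K^d$ with an exponential tail. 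This leads to $\delta\approx\epsilon\Delta^{1/2}(\Delta^{1/2}/K)^{d}$, and hence $|N|\approx(K/\delta)^d\le\epsilon^{-d}(K/\Delta^{1/2})^{d(d+1)}$, which is exactly the form of $\Gamma$ in \eqref{eq:Gamma}. A union bound over $N$, together with the exponential tail for the gradient event (absorbed into the constant $C$), yields \eqref{eq:Mplus}.
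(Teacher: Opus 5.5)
Your proposal is correct and follows the paper's proof step for step. Soft local times with $\rho\equiv\varrho(1+\epsilon)$ reduce everything to $\sup_{Q_{K'}}H_J$; the mean $\sum_j q_\Delta(x_j,y)$ is bounded by a near/far split at $\mathsf L$ using (H2) and (H4); a Chernoff bound at fixed $y$ gets its rate $\varrho\epsilon^2\Delta^{d/2}$ from (H1); and the net of mesh $\asymp\epsilon\Delta^{(d+1)/2}K^{-d}$ is justified by an exponential tail on the global sum $\sum_j\xi_j\lesssim\varrho K^d$, which is exactly where $\Gamma$ comes from.

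The one thing to tidy is the $\epsilon$-bookkeeping. Your three pieces (mean $\le\varrho(1+\epsilon/2)$, deviation $\varrho\epsilon/2$, net error $\varrho\epsilon/4$) add up to $\varrho(1+5\epsilon/4)$, not $\varrho(1+\epsilon)$. This is fixed by enlarging $c_0,c_1$ and shrinking the mesh, as the paper does with its $\epsilon/4$ and $\epsilon/8$ allocations.
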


\begin{theorem}[Local mixing, lower form]
\label{thm:mixinglower}
There are $c_0,c_1,C>0$ depending only on $d$ such that the following holds. Fix $K>\ell>0$
large enough and $\epsilon\in(0,1)$, and tessellate $Q_K$ into subcubes of side $\ell$. Let
$(x_j)_{j\in J}\subset Q_K$ be a configuration with at least $\varrho\ell^d$ points in each
subcube, where $\lfloor\varrho\ell^d\rfloor>0$. Let $\Delta\ge c_0\ell^2\epsilon^{-4/\kappa}$,
let $Y_j$ be the position at time $\Delta$ of a Brownian motion started at $x_j$, these being
independent, and let $K-K'\ge c_1\Delta^{1/2}\epsilon^{-1/d}$. Then $(Y_j)_{j\in J}$ can be
coupled with a Poisson process $\psi$ of intensity $\varrho(1-\epsilon)$ on $\R^d$, independent
of the initial configuration, so that
\[
  \psi\cap Q_{K'}\subseteq\{Y_j\}_{j\in J}\quad\text{with probability at least}\quad
  1-\Gamma\exp\{-C\varrho\epsilon^2\Delta^{d/2}\} ,
\]
with $\Gamma$ as in \eqref{eq:Gamma}.
\end{theorem}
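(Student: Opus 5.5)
The plan is to apply \Cref{prop:softlocal}(\ref{sl:low}) with $g_j=q_\Delta(x_j,\cdot)$ and the constant intensity $\rho\equiv\varrho(1-\epsilon)$. It then suffices to show that
\[
  \mathbb Q\big(H_J(y)\ge\varrho(1-\epsilon)\ \text{for all }y\in Q_{K'}\big)\ \ge\ 1-\Gamma\exp\{-C\varrho\epsilon^2\Delta^{d/2}\},
  \qquad H_J(y)=\sum_j\xi_jq_\Delta(x_j,y).
\]
The argument mirrors that of \Cref{thm:mixingupper}, with the inequalities reversed and with the far field discarded rather than estimated. The independence of $\psi$ from the initial configuration comes for free, since $\rho$ is deterministic.

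\emph{Step 1: the mean at a fixed $y\in Q_{K'}$.} Discard all particles at distance more than $\mathsf L=c_1\Delta^{1/2}\epsilon^{-1/d}$ from $y$; since $H_J\ge0$ termwise, this only decreases $H_J$. Because $K-K'\ge\mathsf L$, the ball $B(y,\mathsf L)$ lies inside $Q_K$, and every subcube of side $\ell$ meeting a slightly smaller ball $B(y,\mathsf L-\sqrt d\ell)$ is contained in $Q_K$ and carries at least $\lfloor\varrho\ell^d\rfloor$ particles. For each such subcube $S$, compare $q_\Delta(x_j,y)$ with $\ell^{-d}\int_S q_\Delta(z,y)\,\mathrm dz$ using (H\ref{H2}). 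The error per particle is at most $c\Delta^{-(d+1)/2}\sqrt d\,\ell$, and after summing this is a relative error of order $\ell\Delta^{-1/2}$ against the mass $\varrho\int q_\Delta$. By the choice $\Delta\ge c_0\ell^2\epsilon^{-4}$ this is at most $\epsilon/4$. The rounding $\lfloor\varrho\ell^d\rfloor$ versus $\varrho\ell^d$ is absorbed in the same way, taking $\varrho\ell^d$ large or $\epsilon$ slack. The Gaussian mass of $q_\Delta(\cdot,y)$ outside $B(y,\mathsf L-\sqrt d\ell)$ is at most $Ce^{-\mathsf L^2/(16\Delta)}\le\epsilon/4$ by (H\ref{H4}) for $c_1$ large. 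This gives $\E[H_J(y)]\ge\varrho(1-\epsilon/2)$ from the retained particles alone.

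\emph{Step 2: concentration at fixed $y$.} $H_J(y)$ is a sum of independent terms $\xi_ja_j$, with $a_j\le(2\pi\Delta)^{-d/2}$ by (H\ref{H1}). A lower-tail Chernoff bound for weighted exponentials, using $\E e^{-s\xi a}\le e^{-sa+s^2a^2}$ for $sa\le1$, gives
\[
  \Prob\big(H_J(y)<\varrho(1-3\epsilon/4)\big)\le\exp\{-c\,\epsilon^2\varrho^2/\textstyle\sum_ja_j^2\}.
\]
Here $\sum_ja_j^2\le\max_j a_j\sum_ja_j\lesssim\Delta^{-d/2}\varrho$; the upper bound on $\sum_j a_j$ needs an upper bound on the particle counts near $y$. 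If none is available, the plan is to truncate first: keep only $\lceil\varrho\ell^d\rceil$ particles per subcube, which is monotone for the lower form. This yields $\exp\{-C\varrho\epsilon^2\Delta^{d/2}\}$.

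\emph{Step 3: uniformity over $y\in Q_{K'}$.} This is the step producing $\Gamma$, and the main obstacle. Place a grid of mesh $\delta=\epsilon\Delta^{1/2}(\Delta^{1/2}/K)^{d}$, or similar, on $Q_{K'}$; the number of grid points is at most $(K/\delta)^d\le\Gamma$. For $|y-y'|\le\delta$, (H\ref{H2}) gives
\[
  |H_J(y)-H_J(y')|\le c\,\delta\,\Delta^{-(d+1)/2}\sum_{j:\,|x_j-y|\le\mathsf L}\xi_j ,
\]
after using the Gaussian factor to localise; the delicate part is making this localisation rigorous for the gradient. The sum of the $\xi_j$ over the $O(\varrho\mathsf L^d)$ retained particles is at most twice its mean except on an event of probability $e^{-c\varrho\mathsf L^d}$, which is smaller than the main term. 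With $\delta$ chosen as above, the oscillation is then at most $\varrho\epsilon/4$. A union bound over the grid points at threshold $1-3\epsilon/4$ then gives $H_J\ge\varrho(1-\epsilon)$ on all of $Q_{K'}$ with the stated probability. The powers of $K/\Delta^{1/2}$ and $\epsilon^{-1}$ in the count are bounded by \eqref{eq:Gamma} after enlarging $C$.
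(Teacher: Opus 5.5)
Your proposal is correct and is essentially the paper's argument: \Cref{prop:softlocal}(\ref{sl:low}) at $\rho\equiv\varrho(1-\epsilon)$, thinning to exactly $\lceil\varrho\ell^d\rceil$ points per subcube so that $|\tilde J|\le 2\varrho K^d$, a near-field lower bound on $\sum_j q_\Delta(x_j,y)$ via (H\ref{H2}) and (H\ref{H4}), a reversed Chernoff step through the Laplace transform $\E e^{-s\xi a}=(1+sa)^{-1}$, and a net of mesh of order $\epsilon\Delta^{(d+1)/2}K^{-d}$, which is exactly where $\Gamma$ comes from. Three small corrections:
\begin{enumerate}[(i)]
\item Since $Q_K=[-K/2,K/2]^d$, the hypothesis $K-K'\ge\mathsf L$ only gives $B(y,\mathsf L/2)\subseteq Q_K$, so run the near field at radius $\mathsf L/2$ and adjust $c_1$.
\item No floor rounding arises. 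By integrality the hypothesis already gives at least $\lceil\varrho\ell^d\rceil\ge\varrho\ell^d$ points per subcube. Working with $\lfloor\varrho\ell^d\rfloor$ instead could nearly halve the density, which neither $\epsilon$-slack nor a large $\varrho\ell^d$ (not a hypothesis) would absorb.
\item Once you thin globally, the crude Lipschitz bound $c\Delta^{-(d+1)/2}\sum_{j\in\tilde J}\xi_j$, with $\sum_{j\in\tilde J}\xi_j\le C_d\varrho K^d$ off an event of probability $e^{-c\varrho\epsilon^2\Delta^{d/2}}$, already suffices at your mesh. The gradient localisation you flag as delicate is therefore not needed.
\end{enumerate}
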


We prove the upper form in full below. The lower form is the local mixing statement of
\cite{GJLV26} at stability index $\alpha=2$, and its lattice counterpart is
\cite[Theorem~5]{GracarStauffer19a}. It is proved by the same argument with the inequalities
reversed. After the proof of the upper form we set out the preliminary thinning that the lower
form requires and the three points at which the two directions differ, and we refer to those
papers for the details.

\begin{proof}[Proof of \Cref{thm:mixingupper}]
Write $\mathsf L=c_1\Delta^{1/2}\epsilon^{-1/d}$, so that $K-K'\ge \mathsf L$ by hypothesis, and let $J$ be the
index set of the particles in $Q_K$. Then for $j\in J$, $Y_j$ has density $q_\Delta(x_j,\cdot)$ and these
are independent. We apply \Cref{prop:softlocal}(\ref{sl:up}) with $g_j=q_\Delta(x_j,\cdot)$,
$\rho\equiv\varrho(1+\epsilon)$ and $A=Q_{K'}$. This gives a coupling $\mathbb Q$ of
$(Y_j)_{j\in J}$ with a Poisson point process $\psi$ of intensity $\varrho(1+\epsilon)$,
independent of the initial configuration, under which $\{Y_j\}_{j\in J}\cap Q_{K'}\subseteq\psi$
holds with probability at least
\[
  \mathbb Q\big(H_J(y)\le\varrho(1+\epsilon)\ \text{ for all }y\in Q_{K'}\big),
  \qquad H_J(y)=\sum_{j\in J}\xi_j\,q_\Delta(x_j,y).
\]
Throughout the proof we fix
\begin{equation}
\label{eq:kappachoice}
  \varkappa:=\tilde C\epsilon\Delta^{d/2},\qquad \tilde C:=\tfrac14(2\pi)^{d/2},\qquad
  M:=\tfrac18\varkappa\varrho\epsilon=\tfrac{\tilde C}8\,\varrho\epsilon^2\Delta^{d/2},
\end{equation}
and note that by (H\ref{H1})
\begin{equation}
\label{eq:supkappa}
  \sup_{x,y\in\R^d}\varkappa\,q_\Delta(x,y)\ \le\ \varkappa(2\pi\Delta)^{-d/2}
  \ =\ \frac\epsilon4\ \le\ \frac14 .
\end{equation}
Note also that $\Delta^{1/2}\le K$, since $K\ge K'+c_1\Delta^{1/2}\epsilon^{-1/d}
>c_1\Delta^{1/2}$ and $c_1\ge1$. This is the only use made below of the hypothesis
$K-K'\ge c_1\Delta^{1/2}\epsilon^{-1/d}$.

We first explain how to move from a pointwise to a uniform bound.
For \Cref{prop:softlocal} we need to control
$\sup_{Q_{K'}}H_J$, and a pointwise bound is not sufficient for this. For an uncountable family
of events the quantity $\int_{Q_{K'}}\mathbb Q(H_J(y)>\varrho(1+\epsilon))\,\mathrm dy$ is the
expected Lebesgue measure of the exceedance set, and it bounds neither the probability that this
set is non-empty nor anything else that we could use. We therefore reduce the supremum to a
maximum over a finite net. This reduction is the source of the factor $\Gamma$.

Write $n=|J|$ and $\Xi=\sum_{j\in J}\xi_j$. The kernel $q_\Delta$ is symmetric, so (H\ref{H2}) applies
in its second variable and
\begin{equation}
\label{eq:HJlip}
  |H_J(y)-H_J(y')|\ \le\ \sum_{j\in J}\xi_j\big|q_\Delta(x_j,y)-q_\Delta(x_j,y')\big|
  \ \le\ c\,\Delta^{-(d+1)/2}\,\Xi\,|y-y'| ,\qquad y,y'\in\R^d .
\end{equation}
There are $(K/\ell)^d$ subcubes, each carrying at most $\varrho\ell^d$ points, so
$n\le\varrho K^d$. Moreover $M\le\tfrac{\tilde C}8\varrho K^d$, because $\epsilon\le1$ and
$\Delta^{1/2}\le K$. Since $\E e^{\Xi/2}=2^n$, Markov's inequality gives
$\mathbb Q(\Xi>4n+2M)\le2^ne^{-2n-M}\le e^{-M}$, and therefore
\begin{equation}
\label{eq:Xibound}
  \mathbb Q\big(\Xi>\Xi_\ast\big)\ \le\ e^{-M},\qquad
  \Xi_\ast:=4\varrho K^d+2M\ \le\ C_d\,\varrho K^d .
\end{equation}
We now tessellate $Q_{K'}$ into cubes of side $\varpi$ and let $\mathcal Y$ consist of one point
of each, where
\[
  \varpi:=\frac{\epsilon\,\Delta^{(d+1)/2}}{C_2\,K^d},\qquad
  C_2=C_2(d)\ \text{ chosen so that }\
  c\,\Delta^{-(d+1)/2}\,\Xi_\ast\,\sqrt d\,\varpi\ \le\ \frac{\varrho\epsilon}4 .
\]
Such a $C_2$ exists and depends on $d$ alone, since $\Xi_\ast\le C_d\varrho K^d$ makes the
factor $\varrho$ cancel. Every $y\in Q_{K'}$ lies within distance $\sqrt d\,\varpi$ of a point
of $\mathcal Y$, so on the event $\{\Xi\le\Xi_\ast\}$ the estimate \eqref{eq:HJlip} gives
$\sup_{Q_{K'}}H_J\le\max_{\mathcal Y}H_J+\varrho\epsilon/4$. Moreover $K'\le K$ and
$K^{d+1}\Delta^{-(d+1)/2}\ge1$, which gives
\begin{equation}
\label{eq:netsize}
  |\mathcal Y|\ \le\ \Big(1+\frac{K'}\varpi\Big)^d\ \le\
  \Big(C_3\,\frac{K^{d+1}}{\epsilon\,\Delta^{(d+1)/2}}\Big)^{d}
  \ =\ C_3^{\,d}\,\epsilon^{-d}\Big(\frac K{\Delta^{1/2}}\Big)^{d(d+1)} .
\end{equation}
Combining \eqref{eq:Xibound} and \eqref{eq:netsize} with Markov's inequality applied at each
point of the net, we obtain
\begin{equation}
\label{eq:markovstep}
  \mathbb Q\Big(\sup_{y\in Q_{K'}}H_J(y)>\varrho(1+\epsilon)\Big)
  \ \le\ e^{-M}+|\mathcal Y|\max_{y\in\mathcal Y}
  e^{-\varkappa\varrho(1+3\epsilon/4)}\,\E_{\mathbb Q}e^{\varkappa H_J(y)} .
\end{equation}

Next we bound the exponential moment. Since $J$ is finite and the $\xi_j$ are independent,
\begin{equation}
\label{eq:taylorApplicationUp}
  \E_{\mathbb Q}e^{\varkappa H_J(y)}=\prod_{j\in J}\E_{\mathbb Q}
  \exp\big(\varkappa\,q_\Delta(x_j,y)\,\xi_j\big)
  =\prod_{j\in J}\big(1-\varkappa\,q_\Delta(x_j,y)\big)^{-1},
\end{equation}
which is finite as soon as $\varkappa q_\Delta(x_j,y)<1$ for every $j$. The moment generating
function of an exponential variable is finite only under such a constraint. The choice of
$\varkappa$ in \eqref{eq:kappachoice} is made so that the constraint holds uniformly in $j$ and
$y$, and \eqref{eq:supkappa}, which follows from (H\ref{H1}), is the form in which we use it.
We apply the inequality $(1-x)^{-1}\le\exp(x+x^2)$, valid for $0\le x\le1/2$, to
\eqref{eq:taylorApplicationUp}, and then \eqref{eq:supkappa}, and obtain
\begin{align*}
  \prod_{j\in J}\big(1-\varkappa q_\Delta(x_j,y)\big)^{-1}
  &\le\prod_{j\in J}\exp\Big(\varkappa q_\Delta(x_j,y)
     \big(1+\varkappa q_\Delta(x_j,y)\big)\Big)\\
  &\le\exp\Big(\varkappa\sum_{j\in J}q_\Delta(x_j,y)
     \Big(1+\sup_{x\in\R^d}\varkappa q_\Delta(x,y)\Big)\Big)\\
  &\le\exp\Big(\varkappa\sum_{j\in J}q_\Delta(x_j,y)\,(1+\epsilon/4)\Big).
\end{align*}
It remains to show that
\begin{equation}
\label{eq:sumupper}
  \sum_{j\in J}q_\Delta(x_j,y)\ \le\ \varrho\,(1+\epsilon/4)
  \qquad\text{for every }y\in Q_{K'}.
\end{equation}
Once this is established the proof is complete, since then the maximum in \eqref{eq:markovstep}
is at most
\[
  \exp\Big(\varkappa\varrho\big[(1+\tfrac\epsilon4)(1+\tfrac\epsilon4)
  -(1+\tfrac{3\epsilon}4)\big]\Big)
  =\exp\Big(\varkappa\varrho\big[\tfrac{\epsilon^2}{16}-\tfrac\epsilon4\big]\Big)
  \le\exp\Big(-\frac{3\varkappa\varrho\epsilon}{16}\Big)\ \le\ e^{-M}
\]
for $\epsilon\le1$. Hence the right-hand side of \eqref{eq:markovstep} is at most
$(1+|\mathcal Y|)e^{-M}$, which by \eqref{eq:netsize} and \eqref{eq:kappachoice} is at most
$\Gamma\exp\{-C\varrho\epsilon^2\Delta^{d/2}\}$ with $C=\tilde C/8$ and $\Gamma$ as in
\eqref{eq:Gamma}. This is \eqref{eq:Mplus}.

Fix $y\in Q_{K'}$ and split $J$ into the near field
$J(y)=\{j\in J:|x_j-y|\le \mathsf L\}$ and its complement. We estimate both parts. The second
part contributes a Gaussian tail.

We start with the near field particles. We compare each particle with the point of its own
subcube at which $q_\Delta(\cdot,y)$ is smallest. For each subcube $T_i$ and each $x_j\in T_i$ set
\[
  x_j'=\operatorname*{argmin}_{w\in T_i}q_\Delta(w,y),
\]
choosing the point with the smallest first coordinate, then the smallest second coordinate, and
so on, if the minimiser is not unique. The triangle inequality gives
\[
  \sum_{j\in J(y)}q_\Delta(x_j,y)\ \le\ \sum_{j\in J(y)}q_\Delta(x_j',y)
  +\sum_{j\in J(y)}\big|q_\Delta(x_j,y)-q_\Delta(x_j',y)\big| .
\]
Let $I(y)$ be the set of subcubes containing at least one $x_j$ with $j\in J(y)$. By hypothesis
each subcube contains at most $\varrho|T_i|$ particles, and the minimum of a function over a set
is at most its average, so
\[
  \sum_{j\in J(y)}q_\Delta(x_j',y)
  =\sum_{i\in I(y)}\#\{j\in J(y):x_j\in T_i\}\min_{w\in T_i}q_\Delta(w,y)
  \le\sum_{i\in I(y)}\varrho\int_{T_i}q_\Delta(z,y)\,\mathrm dz
  \le\varrho ,
\]
where the last step holds because the subcubes are disjoint and
$\int_{\R^d}q_\Delta(z,y)\mathrm dz=1$.

We now turn to the Hölder term. Every subcube of $I(y)$ is contained in
$B(y,\mathsf L+\sqrt d\,\ell)$, so $\sum_{i\in I(y)}|T_i|\le\hat c\,\mathsf L^d$ for a constant
$\hat c=\hat c(d)$, using $\ell\le \mathsf L$. By (H\ref{H2}),
\begin{align*}
  \sum_{j\in J(y)}\big|q_\Delta(x_j,y)-q_\Delta(x_j',y)\big|
  &\le\sum_{i\in I(y)}\varrho|T_i|\,c\,\Delta^{-(d+\kappa)/2}(\sqrt d\,\ell)^{\kappa}\\
  &\le c'\varrho\,\mathsf L^d\ell^{\kappa}\Delta^{-(d+\kappa)/2}
  =c'c_1^d\,\varrho\,\epsilon^{-1}\Big(\frac{\ell^2}{\Delta}\Big)^{\kappa/2} ,
\end{align*}
where we used $\mathsf L^d=c_1^d\Delta^{d/2}\epsilon^{-1}$. Since
$\Delta\ge c_0\ell^2\epsilon^{-4/\kappa}$ we have $\ell^2/\Delta\le c_0^{-1}\epsilon^{4/\kappa}$,
so the right-hand side is at most $c'c_1^dc_0^{-\kappa/2}\varrho\,\epsilon$, which is at most
$\varrho\epsilon/8$ once $c_0$ is chosen large enough with respect to $c_1$ and $d$.

We now consider the far field particles.
Since $\Delta\ge c_0\ell^2$ we have $\ell\le\Delta^{1/2}$ for $c_0\ge1$,
and $\mathsf L\ge c_1\Delta^{1/2}$, so $\mathsf L':=\mathsf L-\sqrt d\,\ell\ge \mathsf L/2\ge2\sqrt d\,\ell$ provided
$c_1\ge4\sqrt d$. Every $j\notin J(y)$ lies in a subcube $T_i$ with
$\mathrm{dist}(y,T_i)\ge \mathsf L'$, and such subcubes are contained in $\{z:|z-y|\ge \mathsf L'\}$. For
$z\in T_i$ we have $|z-y|\le\mathrm{dist}(y,T_i)+\sqrt d\,\ell$, and so
\[
  |T_i|\,\sup_{x\in T_i}q_\Delta(x,y)
  \le\ell^d(2\pi\Delta)^{-d/2}e^{-\mathrm{dist}(y,T_i)^2/(2\Delta)}
  \le(2\pi\Delta)^{-d/2}\int_{T_i}\exp\Big\{-\frac{(|z-y|-\sqrt d\,\ell)^2}{2\Delta}\Big\}
  \mathrm dz .
\]
Summing over the far subcubes, which are disjoint, and using (H\ref{H4}) with $R=\mathsf L'$ and
$\varsigma=\sqrt d\,\ell$, we get
\begin{align*}
  \sum_{j\in J\setminus J(y)}q_\Delta(x_j,y)
  &\le\varrho\,(2\pi\Delta)^{-d/2}\int_{|z-y|\ge \mathsf L'}
  \exp\Big\{-\frac{(|z-y|-\sqrt d\,\ell)^2}{2\Delta}\Big\}\,\mathrm dz\\
  &\le C\varrho\,e^{-\mathsf L'^2/(16\Delta)}\ \le\ C\varrho\,e^{-c_1^2\epsilon^{-2/d}/64},
\end{align*}
using $\mathsf L'\ge \mathsf L/2$ and $\mathsf L^2/\Delta=c_1^2\epsilon^{-2/d}$ in the last step. Since
$\sup_{\epsilon\in(0,1)}\epsilon^{-1}e^{-c_1^2\epsilon^{-2/d}/64}\to0$ as $c_1\to\infty$, this
is at most $\varrho\epsilon/8$ once $c_1=c_1(d)$ is large enough, uniformly in
$\epsilon\in(0,1)$.

Adding the three contributions gives \eqref{eq:sumupper} and completes the proof.
\end{proof}

\begin{proof}[Proof of \Cref{thm:mixinglower}, sketch]
We follow the scheme of the proof just given, with $\rho\equiv\varrho(1-\epsilon)$ in place of
$\varrho(1+\epsilon)$ and \Cref{prop:softlocal}(\ref{sl:low}) in place of (\ref{sl:up}), so that a
lower bound on an infimum replaces an upper bound on a supremum. One preliminary reduction is
needed which has no counterpart above.

The hypothesis here bounds the occupancy of a subcube from below and not from
above, so $|J|$ is not controlled and neither \eqref{eq:HJlip} nor \eqref{eq:Xibound} is available
as it stands. Since counts are integers, a subcube carrying at least $\varrho\ell^d$ points
carries at least $\lceil\varrho\ell^d\rceil$ of them. Let $\tilde J\subseteq J$ consist of exactly
$\lceil\varrho\ell^d\rceil$ points of each subcube, chosen by any fixed rule. Then
\[
  n:=|\tilde J|=\lceil\varrho\ell^d\rceil\Big(\frac K\ell\Big)^d\ \le\ 2\varrho K^d ,
\]
since $\lceil v\rceil\le2v$ for $v\ge1$ and $\varrho\ell^d\ge1$ by the hypothesis
$\lfloor\varrho\ell^d\rfloor>0$. This is the only use made of that hypothesis. It suffices to work
with the thinned family. Indeed, applying \Cref{prop:softlocal}(\ref{sl:low}) to
$(Y_j)_{j\in\tilde J}$ with $\rho\equiv\varrho(1-\epsilon)$ and $A=Q_{K'}$ yields
$\psi\cap Q_{K'}\subseteq\{Y_j\}_{j\in\tilde J}\subseteq\{Y_j\}_{j\in J}$, which is the conclusion.
It remains to show that $\inf_{Q_{K'}}H_{\tilde J}\ge\varrho(1-\epsilon)$ with the stated
probability, where $H_{\tilde J}(y)=\sum_{j\in\tilde J}\xi_j\,q_\Delta(x_j,y)$.

With $\tilde J$ in place of $J$ the reduction of the infimum to a minimum over a
finite net is the same computation. The Lipschitz estimate \eqref{eq:HJlip} does not depend on the
direction, and the bound $n\le2\varrho K^d$ restores \eqref{eq:Xibound} with
$\Xi_\ast:=8\varrho K^d+2M\le C_d\varrho K^d$, so that $\varpi$ may be chosen as before and
\eqref{eq:netsize} holds unchanged. This produces the same factor $\Gamma$, with a larger constant
depending only on $d$. The choices \eqref{eq:kappachoice} of $\varkappa$ and $M$ are unchanged.
Three steps differ somewhat however. 
\begin{enumerate}[(i)]
\item The Chernoff step is applied with the exponent reversed:
  \[
    \Prob\big(H_{\tilde J}(y)<\varrho(1-\tfrac{3\epsilon}4)\big)
    \ \le\ e^{\varkappa\varrho(1-3\epsilon/4)}\,\E e^{-\varkappa H_{\tilde J}(y)} ,
  \]
  and the Laplace
  transform $\E e^{-\varkappa\xi q}=(1+\varkappa q)^{-1}$ replaces the moment generating function
  used in \eqref{eq:taylorApplicationUp}. It is finite for every $\varkappa>0$, so no constraint of
  the form \eqref{eq:supkappa} is needed for finiteness. That estimate is still used, to control the
  second-order term in $1+x\ge e^{x-x^2}$, which replaces $(1-x)^{-1}\le e^{x+x^2}$.
\item Only a lower bound on $\sum_jq_\Delta(x_j,y)$ is required, so the far field may be
  discarded and the sum restricted to a neighbourhood of $y$. In this direction the hypothesis
  $K-K'\ge c_1\Delta^{1/2}\epsilon^{-1/d}$ is used as follows. Since
  $Q_K=[-K/2,K/2]^d$, it places $Q_{K'}$ at distance at least $\mathsf L/2$ from the complement of
  $Q_K$, so that for $y\in Q_{K'}$ every subcube contained in $B(y,\mathsf L/2)$ is one of the
  subcubes of the tessellation and carries at least $\varrho\ell^d$ points. Running the near field
  at radius $\mathsf L/2$ rather than $\mathsf L$ changes only the value of $c_1$.
\item The association step takes the maximiser of $q_\Delta(\cdot,y)$ over each subcube rather
  than the minimiser. Put $\mathsf L':=\mathsf L/2-\sqrt d\,\ell$, so that $\mathsf L'\ge\mathsf L/4$ by the
  inequalities $\ell\le\Delta^{1/2}$ and $\mathsf L\ge c_1\Delta^{1/2}$ with $c_1\ge4\sqrt d$ used
  in the far field above. Let $I(y)$ be the set of subcubes contained in $B(y,\mathsf L/2)$, whose
  union contains $B(y,\mathsf L')$, and let $\tilde J(y)$ be the points of $\tilde J$ lying in them.
  The maximum of a function over a set is at least its average, so
  \begin{align*}
    \sum_{j\in\tilde J(y)}q_\Delta(x_j',y)
    &=\sum_{i\in I(y)}\lceil\varrho\ell^d\rceil\max_{w\in T_i}q_\Delta(w,y)
    \ \ge\ \frac{\lceil\varrho\ell^d\rceil}{\ell^d}\sum_{i\in I(y)}\int_{T_i}q_\Delta(z,y)\,\mathrm dz\\
    &\ \ge\ \varrho\int_{B(y,\mathsf L')}q_\Delta(z,y)\,\mathrm dz ,
  \end{align*}
  which is an integral over a ball rather than over all of $\R^d$. This step is the reason that
  the thinning has to be to $\lceil\varrho\ell^d\rceil$ and not to $\lfloor\varrho\ell^d\rfloor$.
  The step requires that the effective density $\lceil\varrho\ell^d\rceil\ell^{-d}$ be at least
  $\varrho$, whereas $\lfloor\varrho\ell^d\rfloor\ell^{-d}$ can be as small as $\varrho/2$. The
  missing mass is estimated as the far field was, by (H\ref{H4}) with $\varsigma=0$ and
  $R=\mathsf L'$:
  \[
    \varrho\int_{|z-y|\ge\mathsf L'}q_\Delta(z,y)\,\mathrm dz\ \le\
    C\varrho\,e^{-\mathsf L'^2/(16\Delta)}\ \le\ C\varrho\,e^{-c_1^2\epsilon^{-2/d}/256}
    \ \le\ \frac{\varrho\epsilon}8
  \]
  for $c_1=c_1(d)$ large, uniformly in $\epsilon\in(0,1)$. The H\"older error term is as before
  with $\lceil\varrho\ell^d\rceil\le2\varrho|T_i|$ in place of $\varrho|T_i|$, and the factor $2$
  is absorbed by $c_0$. It is again at most $\varrho\epsilon/8$. Together these give
  $\sum_{j\in\tilde J(y)}q_\Delta(x_j,y)\ge\varrho(1-\epsilon/4)$, which is \eqref{eq:sumupper} with
  both inequalities reversed.
\end{enumerate}
The final arithmetic then reads
$\exp\big(\varkappa\varrho[(1-\tfrac{3\epsilon}4)-(1-\tfrac\epsilon4)^2]\big)
=\exp\big(-\varkappa\varrho[\tfrac\epsilon4+\tfrac{\epsilon^2}{16}]\big)\le e^{-2M}\le e^{-M}$, and
on the event $\{\Xi\le\Xi_\ast\}$ the net gives
$\inf_{Q_{K'}}H_{\tilde J}\ge\min_{\mathcal Y}H_{\tilde J}-\varrho\epsilon/4
\ge\varrho(1-\tfrac{3\epsilon}4)-\varrho\epsilon/4=\varrho(1-\epsilon)$, so that the bound
$(1+|\mathcal Y|)e^{-M}\le\Gamma\exp\{-C\varrho\epsilon^2\Delta^{d/2}\}$ follows as before.
The details are in \cite{GJLV26}, where the statement is proved for $\alpha$-stable motion and
specialises to the above at $\alpha=2$, and in \cite[Theorem~5]{GracarStauffer19a} for random walks
on a uniformly elliptic conductance graph over $\Z^d$.
\end{proof}

\subsection{The kernel of a conditioned particle}
\label{sec:condkernel}

The multi-scale percolation of \Cref{sec:lipschitz} evaluates its local events not for the free
system but for one in which every particle is conditioned to keep its displacement inside a cube
throughout a time interval (\Cref{def:restricted}). We show in this section that for Brownian
motion the transition kernel of a particle so conditioned obeys the same estimates as the free
one, with constants depending only on $d$. On a lattice with random
conductances the corresponding statement is \cite[Lemma 1]{GracarStauffer19a}, where no product
structure is available. Here the cube is a product and the coordinates of a Brownian motion are
independent, which makes everything explicit.

Fix $\mathsf a,\Delta>0$, put $Q_\mathsf a=[-\mathsf a/2,\mathsf a/2]^d$, and let
\[
  \mathcal D=\mathcal D(x,\mathsf a,\Delta)
  =\big\{B_s-x\in Q_\mathsf a\ \text{for every }s\in[0,\Delta]\big\},
  \qquad h=h(\mathsf a,\Delta)=\Prob_x[\mathcal D] ,
\]
which does not depend on $x$. Write $g_\Delta(x,\cdot)$ for the density of $B_\Delta$ under
$\Prob_x[\,\cdot\mid\mathcal D]$. We now prove the estimates for the conditioned kernel.

\begin{lemma}[The conditioned kernel]
\label{lem:condkernel}
There is $C_0=C_0(d)$ such that if $\mathsf a\ge C_0\sqrt\Delta$ then, with constants depending only
on $d$:
\begin{enumerate}[(i)]
\item\label{ck:h} $h\ge\tfrac12$;
\item\label{ck:up} $g_\Delta(x,y)\le2\,q_\Delta(x,y)\le2(2\pi\Delta)^{-d/2}$ for all $x,y$;
\item\label{ck:lip} $|g_\Delta(x_1,y)-g_\Delta(x_2,y)|\le C_d\,\Delta^{-(d+1)/2}|x_1-x_2|$;
\item\label{ck:exit} for all $R>0$ and all $t>0$,
  $\Prob_x\big[\sup_{s\le t}|B_s-x|\ge R\ \big|\ \mathcal D\big]\le 8d\,e^{-R^2/(2dt)}$;
\item\label{ck:mass} $\int_{\R^d}g_\Delta(x,y)\,\mathrm dx=1$ for every $y$.
\end{enumerate}
\end{lemma}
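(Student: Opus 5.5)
The whole lemma rests on one structural remark, which I would establish first. Since $\mathcal D(x,\mathsf a,\Delta)$ constrains only the displacement $B_s-x$, Brownian translation invariance gives $g_\Delta(x,y)=f(y-x)$, where $f$ is the density of $B_\Delta$ under $\Prob_0[\,\cdot\mid\mathcal D(0,\mathsf a,\Delta)]$. Because the cube is a product and the coordinates are independent, we can write $h=h_1^d$ and $f(z)=\prod_{i=1}^d p(z_i)/h_1$. Here $p$ is the one-dimensional density of a Brownian motion started at $0$ and killed on leaving $[-\mathsf a/2,\mathsf a/2]$ before time $\Delta$, and $h_1=\int p$. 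With this in hand, (\ref{ck:mass}) is immediate: $\int g_\Delta(x,y)\,\mathrm dx=\int f=1$.

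For (\ref{ck:h}), I would use $1-h\le\Prob_0[\sup_{s\le\Delta}|B_s|\ge\mathsf a/2]$, which (H\ref{H3}) bounds by $4d\,e^{-\mathsf a^2/(8d\Delta)}$. Taking $C_0$ large makes this at most $\tfrac12$. For (\ref{ck:up}), the killed density is pointwise at most the free one, since it is the free density restricted to paths that stay in the cube. Hence $g_\Delta\le q_\Delta/h\le 2q_\Delta$, and (H\ref{H1}) gives the second inequality. For (\ref{ck:exit}), I would simply bound $\Prob_x[A\mid\mathcal D]\le\Prob_x[A]/h\le 2\cdot 4d\,e^{-R^2/(2dt)}$ by (H\ref{H3}) and (\ref{ck:h}).

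The only real work is (\ref{ck:lip}). By the translation structure it is equivalent to $|\nabla f|\le C_d\Delta^{-(d+1)/2}$. By the product rule, and since $p\le\varphi_\Delta$ with $\varphi_\Delta$ the one-dimensional Gaussian density,
\[
  |\partial_i f|\ \le\ h^{-1}\,|p'(z_i)|\prod_{j\ne i}p(z_j)\ \le\ 2\,|p'(z_i)|\,(2\pi\Delta)^{-(d-1)/2} .
\]
So it suffices to show $|p'|\le C\Delta^{-1}$. For this I would use the method-of-images series for the interval of length $\mathsf a$,
\[
  p(z)=\sum_{k\in\Z}\big(\varphi_\Delta(z-2k\mathsf a)-\varphi_\Delta(\mathsf a-z-2k\mathsf a)\big)
\]
for $|z|\le\mathsf a/2$, and $p=0$ outside; I would double-check the precise image positions for the interval $[-\mathsf a/2,\mathsf a/2]$. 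Differentiating termwise, each term is bounded by $c\Delta^{-1}e^{-\mathrm{dist}^2/(4\Delta)}$, where $\mathrm{dist}$ is the distance from $z$ to the corresponding image point. Outside the two nearest images these distances grow like $|k|\mathsf a\ge|k|C_0\sqrt\Delta$, so the tail is summable to $O(\Delta^{-1})$, while the term $k=0$ and the first reflected term each contribute at most $c\Delta^{-1}$ by the computation in (H\ref{H2}). At the boundary $z=\pm\mathsf a/2$, $p$ has a kink where it drops to $0$, but it remains Lipschitz with the same one-sided derivative bound. Hence $f$ is globally Lipschitz with the stated constant.

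I expect this step to be the main obstacle: setting up the image series correctly and justifying termwise differentiation uniformly up to the boundary. If the series bookkeeping becomes awkward, I would fall back on the strong Markov representation $p(z)=\varphi_\Delta(z)-\E_0[\varphi_{\Delta-\tau}(z-B_\tau);\tau<\Delta]$ and bound the derivative of the correction term by reflection at the exit point.
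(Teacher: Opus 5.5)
Your proposal is correct and follows the paper's proof essentially step for step. The paper uses the same translation-invariant product form $g_\Delta(x,y)=k_\Delta(y-x)/h$, the same domination of the killed density by the free one for (ii), doubling under conditioning for (iv), and the method-of-images derivative bound $C\Delta^{-1}$ combined with the product rule for (iii). Your only deviations are minor: you bound $1-h$ via the $d$-dimensional (H\ref{H3}) rather than the one-dimensional reflection bound raised to the power $d$, and you explicitly note the boundary kink of the killed density, which the paper leaves implicit.
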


\begin{proof}
Since the conditioning event is a condition on the increment $B-x$, the conditioned kernel is
translation invariant: $g_\Delta(x,y)=k_\Delta(y-x)/h$, where $k_\Delta$ is the density at time
$\Delta$ of a Brownian motion started at $0$ and killed on leaving $Q_\mathsf a$. The coordinates of
$B$ are independent and $Q_\mathsf a$ is a product of intervals, so the killed motion is a product of
$d$ independent one-dimensional killed motions and
\begin{equation}
\label{eq:kprod}
  k_\Delta(z)=\prod_{i=1}^dk^{(1)}_\Delta(z_i) ,
\end{equation}
where $k^{(1)}_\Delta$ is the density at time $\Delta$ of a one-dimensional Brownian motion
started at $0$ and killed on leaving $[-\mathsf a/2,\mathsf a/2]$.

We start with (\ref{ck:h}). By the reflection principle, a one-dimensional Brownian motion leaves
$[-\mathsf a/2,\mathsf a/2]$ before time $\Delta$ with probability at most
$4e^{-\mathsf a^2/(8\Delta)}$, so $h\ge(1-4e^{-\mathsf a^2/(8\Delta)})^d$, which exceeds $\tfrac12$
once $\mathsf a\ge C_0\sqrt\Delta$ with $C_0=C_0(d)$ large enough.

For (\ref{ck:up}), note that killing only decreases a transition density, so $k_\Delta\le q_\Delta$
pointwise. Dividing by $h\ge\tfrac12$ gives the claim.

For (\ref{ck:lip}) we use the method of images on the interval $[-\mathsf a/2,\mathsf a/2]$, which
gives
\[
  k^{(1)}_\Delta(z)=\sum_{n\in\Z}
  \Big[q^{(1)}_\Delta(z-2n\mathsf a)-q^{(1)}_\Delta(z+\mathsf a-2n\mathsf a)\Big] ,
\]
where $q^{(1)}$ is the one-dimensional Gaussian kernel. Now
\[
  \big|\partial_uq^{(1)}_\Delta(u)\big|=\frac{|u|}{\Delta}\,q^{(1)}_\Delta(u)
  \ \le\ C\Delta^{-1}e^{-u^2/(4\Delta)} ,
\]
and for $|z|\le\mathsf a/2$ the $n$-th pair of arguments is at least $(2|n|-1)\mathsf a/2$ in absolute
value, so differentiating the series term by term gives
\[
  \big|\partial_zk^{(1)}_\Delta(z)\big|\ \le\ C\Delta^{-1}
  \sum_{n\in\Z}e^{-c\,n^2\mathsf a^2/\Delta}\ \le\ C\Delta^{-1} ,
\]
where the sum is bounded by a constant because $\mathsf a\ge C_0\sqrt\Delta$. Differentiating
\eqref{eq:kprod} in the $i$-th coordinate and bounding the remaining $d-1$ factors by
$(2\pi\Delta)^{-1/2}$ gives $\|\nabla k_\Delta\|_\infty\le C_d\Delta^{-(d+1)/2}$, and the claim
follows from the mean value theorem and $h\ge\tfrac12$.

For (\ref{ck:exit}), note that conditioning on an event of probability at least $\tfrac12$ at most
doubles probabilities, and the free bound is
$\Prob_x[\sup_{s\le t}|B_s-x|\ge R]\le4d\,e^{-R^2/(2dt)}$, which is (H\ref{H3}). Note that this holds
for every $t>0$ and not only for $t=\Delta$, since the conditioning event $\mathcal D$ is fixed
and only its probability is used. The intermediate times are needed, since \Cref{sec:smallmu}
applies the estimate at $t=T_L\ll\beta$.

Finally, (\ref{ck:mass}) follows from translation invariance and $\int k_\Delta=h$:
$\int g_\Delta(x,y)\,\mathrm dx=h^{-1}\int k_\Delta(y-x)\,\mathrm dx=1$.
\end{proof}

In other words, (\ref{ck:up}) and (\ref{ck:lip}) are (H\ref{H1}) and (H\ref{H2}) with $\kappa=1$ up to
constants, and (\ref{ck:exit}) is (H\ref{H3}), for every $t>0$ and not only at $t=\Delta$; the
transfer of (H\ref{H4}) is part of \Cref{cor:mixingcond} below. Moreover, (\ref{ck:mass}) says that a Poisson system whose
particles are independently conditioned in this way retains its intensity at later times away from
the boundary.
Every upper estimate of \Cref{sec:survupper,sec:smallmu,sec:vertmix} therefore holds under
the conditioned law with constants depending only on $d$.

\subsection{Local mixing for conditioned particles}
\label{sec:condmix}

We are now ready to transfer the local mixing estimates to conditioned particles.

\begin{corollary}[Local mixing for conditioned particles]
\label{cor:mixingcond}
Let $\mathsf a\ge C_0\sqrt\Delta$ with $C_0=C_0(d)$ as in \Cref{lem:condkernel}, and let each $Y_j$ be
the position at time $\Delta$ of a Brownian motion started at $x_j$ and conditioned to keep its
displacement inside $Q_\mathsf a$ throughout $[0,\Delta]$, where the conditionings are performed
independently over $j$. Then the conclusions of \Cref{thm:mixinglower,thm:mixingupper} hold
verbatim, with the constants $c_0,c_1,C$ replaced by others depending only on $d$. Nothing below
depends on the values of these constants or on which of the two laws they belong to, and we write
$c_0,c_1,C$ for either, as generic constants of the local mixing estimate.
\end{corollary}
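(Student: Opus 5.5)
The plan is to rerun the proofs of \Cref{thm:mixingupper,thm:mixinglower} with $q_\Delta(x_j,\cdot)$ replaced by the conditioned density $g_\Delta(x_j,\cdot)$ of \Cref{lem:condkernel}. We check that each kernel property used there has a conditioned counterpart with constants depending only on $d$. \Cref{prop:softlocal} is stated for arbitrary independent densities $g_j$, so it applies with $g_j=g_\Delta(x_j,\cdot)$. The conditionings are independent over $j$, so the $Y_j$ are still independent. The coupling step therefore goes through unchanged, and the task reduces to the analytic estimates on $H_J(y)=\sum_j\xi_jg_\Delta(x_j,y)$.

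The first step handles the uniform and net estimates. (H\ref{H1}) is replaced by \Cref{lem:condkernel}(\ref{ck:up}), which gives $g_\Delta\le2(2\pi\Delta)^{-d/2}$. I would halve $\tilde C$ in \eqref{eq:kappachoice} so that \eqref{eq:supkappa} still reads $\sup\varkappa g_\Delta\le\epsilon/4$. This costs only a constant in $C$. The Lipschitz bound \eqref{eq:HJlip} needs Lipschitz continuity in $y$, whereas \Cref{lem:condkernel}(\ref{ck:lip}) is stated in $x$. Since $g_\Delta(x,y)=k_\Delta(y-x)/h$ is a function of $y-x$, the same gradient bound on $k_\Delta$ gives it in $y$. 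The bound \eqref{eq:Xibound} involves only the $\xi_j$, so the net construction and \eqref{eq:netsize} are unchanged and produce the same $\Gamma$.

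The second step is the deterministic sum \eqref{eq:sumupper} and its reversed counterpart; this is the main obstacle. In the upper form, the near field uses $\int g_\Delta(z,y)\,\mathrm dz=1$, which is exactly \Cref{lem:condkernel}(\ref{ck:mass}). The Hölder error uses (\ref{ck:lip}) with $\kappa=1$. The far field needs an analogue of (H\ref{H4}). Here I would not transfer (H\ref{H4}) directly but bound $g_\Delta\le2q_\Delta$ pointwise and apply (H\ref{H4}) to $q_\Delta$, which only doubles the constant. In the lower form, after the thinning to $\tilde J$, the maximiser step needs $\varrho\int_{B(y,\mathsf L')}g_\Delta(z,y)\,\mathrm dz\ge\varrho(1-\epsilon/8)$. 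By (\ref{ck:mass}) the missing mass is $\int_{|z-y|\ge\mathsf L'}g_\Delta(z,y)\,\mathrm dz\le2\int_{|z-y|\ge\mathsf L'}q_\Delta\le2Ce^{-\mathsf L'^2/(16\Delta)}$. This is at most $\epsilon/8$ after enlarging $c_1$. The Laplace-transform step in the lower form again uses only $\varkappa g_\Delta\le\epsilon/4$. Alternatively, $g_\Delta(\cdot,y)$ vanishes outside $y-Q_\mathsf a$. If the conditioning cube is small relative to $\mathsf L'$, this actually improves the far-field estimate, but I would not rely on that.

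The last step collects the constants. The factor $2$ from $h\ge\tfrac12$ enters $\varkappa$, the Hölder constant and the tail constant, and is absorbed by enlarging $c_0$ and $c_1$ and shrinking $C$, all depending only on $d$. The arithmetic closing the proofs is unchanged, so both conclusions hold verbatim under the hypothesis $\mathsf a\ge C_0\sqrt\Delta$, which is exactly what \Cref{lem:condkernel} requires.
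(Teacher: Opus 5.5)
Your proposal is correct and follows the paper's proof. The paper likewise observes that both mixing proofs use the law of $(Y_j)$ only through independence, (H\ref{H1}), (H\ref{H2}), (H\ref{H4}), symmetry of the kernel and unit mass in the starting variable, and it transfers each of these via \Cref{lem:condkernel}, obtaining (H\ref{H4}) from the domination $g_\Delta\le 2q_\Delta$ exactly as you do. Your use of $g_\Delta(x,y)=k_\Delta(y-x)/h$ to get Lipschitz continuity in $y$ replaces the paper's appeal to the evenness of $k_\Delta$ and is equivalent; note only that the new Lipschitz constant changes the net mesh, and hence the constant in $\Gamma$, by a factor depending on $d$ alone.
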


\begin{proof}
Both statements concern the law of the family $(Y_j)_{j\in J}$, and both proofs use that law only
through the following four properties. The $Y_j$ are independent. The point $Y_j$ has density
$q_\Delta(x_j,\cdot)$, which enters via the estimates (H\ref{H1}), (H\ref{H2}) and (H\ref{H4}) of
\Cref{lem:kernel}. The kernel $q_\Delta$ is symmetric, which is how (H\ref{H2}), stated in the first
variable, is applied in the second. Finally, $\int q_\Delta(x,y)\,\mathrm dx=1$. No use is made of
the Markov property of the underlying paths, nor of anything about the trajectories between $0$ and
$\Delta$. In particular (H\ref{H3}) is not used by either proof, and we list it below only because the
transported kernel is used elsewhere in the paper. The transported kernel is symmetric as required,
since $g_\Delta(x,y)=k_\Delta(y-x)/h$ with $k_\Delta$ even.

Independence survives the conditioning, since each particle is conditioned on an event depending
only on its own path. The density of $Y_j$ becomes $g_\Delta(x_j,\cdot)$, which by
\Cref{lem:condkernel}(\ref{ck:up}) satisfies (H\ref{H1}) with the constant doubled, by
(\ref{ck:lip}) satisfies (H\ref{H2}) with $\kappa=1$, by (\ref{ck:exit}) satisfies (H\ref{H3}), and by
(\ref{ck:mass}) integrates to $1$ in the starting variable. For (H\ref{H4}), which bounds an integral of
a Gaussian tail, the pointwise domination
$g_\Delta\le2q_\Delta$ of (\ref{ck:up}) suffices. Substituting $g_\Delta$ for $q_\Delta$ throughout
therefore leaves both proofs intact.
\end{proof}
\section{Multi-scale Lipschitz percolation}
\label{sec:lipschitz}

Both survival results, \Cref{prop:crowding} and \Cref{thm:smallmu}, are obtained in the same way.
We define a local increasing event of high probability and apply to it the multi-scale Lipschitz
percolation of Gracar and Stauffer \cite{GracarStauffer19b}. The two applications use the same
framework with different local events.

\subsection{Tessellation and local events}
\label{sec:tessellation}

Fix a spatial scale $L>0$ and a constant $\blk>0$, and set $\beta=\blk L^2$. The ratio
$\beta/L^2=\blk$ will always be a small fixed constant, and $L$ will be taken large. Fix also
an integer \emph{overlap parameter} $\olap\ge\max\{d,2\}$. For $i\in\Z^d$ and $\tau\in\Z$ we define
\[
  C_i=\prod_{j=1}^d\big[i_jL,(i_j+1)L\big],\qquad
  \Qst(i)=\prod_{j=1}^d\big[(i_j-\olap)L,(i_j+\olap+1)L\big] ,
\]
together with the \emph{reduced super-cube}
\[
  \Qst_-(i)=\prod_{j=1}^d\big[(i_j-\olap+1)L,(i_j+\olap)L\big]\ \subset\ \Qst(i) ,
\]
whose boundary is at distance $L$ from that of $\Qst(i)$. We call $C_i$ the \emph{central cube} of
$i$ and $\Qst(i)$ the \emph{super-cube}. The
\emph{cell} $\cell{i}{\tau}$ is $C_i\times[\tau\beta,(\tau+1)\beta]$ and the \emph{super-cell}
$\scell{i}{\tau}$ is $\Qst(i)\times[\tau\beta,(\tau+\olap)\beta]$. We write $Q_z=[-z/2,z/2]^d$.

\begin{definition}[Restricted, increasing, and the associated probability]
\label{def:restricted}
An event $A$ is \emph{restricted} to a region $X\subseteq\R^d$ and a time interval $[t_0,t_1]$ if
it is measurable with respect to the $\sigma$-field generated by the particles lying in $X$ at
time $t_0$, their trajectories on $[t_0,t_1]$ and the marks those particles carry on $[t_0,t_1]$.
When the isolation rule is in force, the $\sigma$-field also includes the trajectories on
$[t_0,t_1]$ of the particles lying within distance $2r$ of $X$ at time $t_0$, since these decide
whether those marks are effective.
A particle has \emph{displacement inside} $X'$ during $[t_0,t_1]$ if $X_u(t)-X_u(t_0)\in X'$ for
every $t\in[t_0,t_1]$. An event is \emph{increasing} if it is preserved by adding particles, in
the sense of \Cref{prop:mono}(ii). For an increasing event $E$ restricted to $X$ and $[0,t]$, we
write $\nu_E(\zeta,X,X',t)$ for the probability that $E$ holds when the particles in $X$ at time
$0$ form a Poisson process of intensity $\zeta$ and are conditioned to have displacement inside
$X'$ during $[0,t]$.
\end{definition}

The clause about particles within distance $2r$ of $X$ is needed only under $\mathfrak r=\iso$.
In that case, by \Cref{def:openpath}, whether a recovery mark is effective depends on the
configuration near the particle carrying it. Both events to which \Cref{thm:GS} is applied below
are in any case insensitive to this clause. The crowding event of \Cref{prop:crowding} involves
no marks at all, and the acceptable-cell event of \Cref{sec:cellevent} is defined under
$\mathfrak r=\unc$.

For each $(i,\tau)\in\Z^{d+1}$ let $E(i,\tau)$ be an increasing event restricted to the super-cell
$\scell{i}{\tau}$, invariant under space-time translations of the tessellation. We say that the
cell $(i,\tau)$ is \emph{good} if $E(i,\tau)$ holds.

\subsection{The base-height index and the two-sided Lipschitz surface}
\label{sec:baseheight}

Following \cite{GracarStauffer19b}, we pick one of the $d$ spatial dimensions and call it the
\emph{height}, indexed by $h\in\Z$. The remaining $d-1$ spatial dimensions together with the time
dimension form the \emph{base}, indexed by $b\in\Z^d$. This is a fixed permutation of the
coordinates of $\Z^{d+1}$, so every space-time cell $(i,\tau)$ has a base-height index $(b,h)$ and
conversely. Note that the base contains the time coordinate, and that for $d\ge2$ it contains at
least one spatial coordinate as well. This is the origin of the restriction $d\ge2$ in
\Cref{thm:smallmu}. We refer to a cell either by $(i,\tau)$ or by its base-height index $(b,h)$.

A function $F:\Z^d\to\Z$ is \emph{Lipschitz} if $|F(x)-F(y)|\le1$ whenever $\|x-y\|_1=1$.

\begin{definition}[Two-sided Lipschitz surface]
\label{def:surface}
A \emph{two-sided Lipschitz surface} $F$ is a set of base-height cells $(b,h)\in\Z^{d+1}$ such
that for every $b\in\Z^d$ there are exactly two, possibly equal, integers $F_+(b)\ge0$ and
$F_-(b)\le0$ with $(b,F_+(b)),(b,F_-(b))\in F$, and such that $F_+$ and $F_-$ are Lipschitz
functions. We say that $F$ \emph{exists} if $F_+(b)<\infty$ and $F_-(b)>-\infty$ for every $b$.
For $D\in\N$ we say that $F$ \emph{surrounds} a cell $(b',h')$ at distance $D$ if every path
$(b',h')=(b_0,h_0),(b_1,h_1),\dots,(b_n,h_n)$ with $\|(b_k,h_k)-(b_{k-1},h_{k-1})\|_1=1$ for all
$k$ and $\|(b_n,h_n)-(b_0,h_0)\|_1>D$ intersects $F$. We say that a space-time cell belongs to
$F$ if its base-height index does.
\end{definition}

A realisation is shown in \Cref{fig:surface}.

\begin{figure}[!h]
\centering
\includegraphics[width=0.85\textwidth]{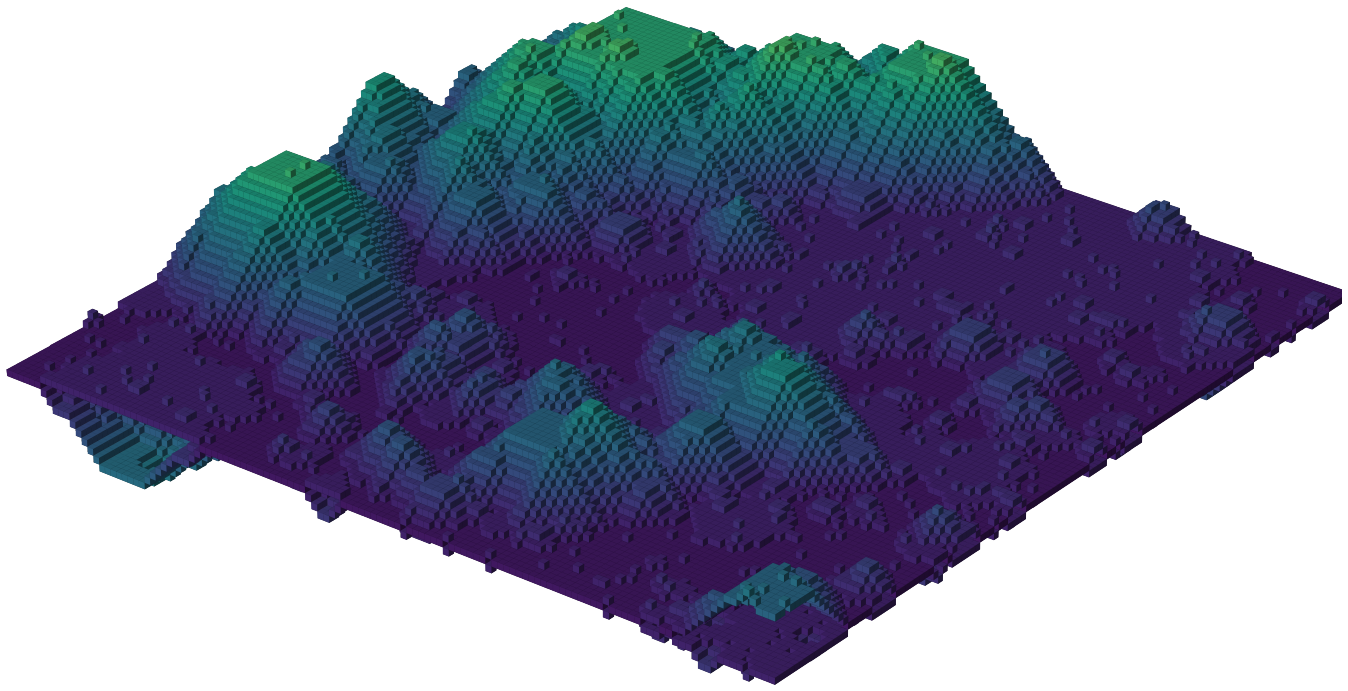}
\caption{A realisation of a two-sided Lipschitz surface over a two-dimensional base. Over every
base index $b$ the surface contains the two cells at heights $F_+(b)\ge0$ and $F_-(b)\le0$,
coloured by their distance from the plane $h=0$. Where all nearby cells are good both functions
vanish; the cells which are not good force the deviations, which enclose them from above and
from below.}
\label{fig:surface}
\end{figure}

\subsection{The existence of the surface}
\label{sec:GSthm}

The two theorems of this subsection are Theorems 2.1 and 2.2 of \cite{GracarStauffer19b},
proved there for Poisson systems
of continuous-time random walks on a uniformly elliptic conductance graph over $\Z^d$, and we
quote them in the corresponding form for Poisson Brownian motions in $\R^d$, where the
ellipticity constant of \cite{GracarStauffer19b} equals $1$. This form
is available because the framework uses the motion as a black box. The proofs in
\cite{GracarStauffer19b} depend on the underlying space only through applications of a local
mixing estimate of the form of \Cref{thm:mixinglower}, applied to the particles of a region
conditioned as in \Cref{def:restricted}. We prove this estimate for Brownian
motions in \Cref{sec:vertmix,sec:condmix}, as \Cref{thm:mixinglower} for free particles and
\Cref{cor:mixingcond} for conditioned ones. With the mixing estimate in place, the proofs apply unchanged apart from the values of
the constants.

\begin{theorem}[Existence of the surface]
\label{thm:GS}
There are positive constants $\mathsf c_0,\mathsf c_1,\mathsf c_2$, depending only on $d$, such that the following holds.
Tessellate as in \Cref{sec:tessellation} with $\beta/L^2<\mathsf c_0$, and let $E(i,\tau)$ be an
increasing event restricted to the super-cell $\scell{i}{\tau}$. Fix $\varepsilon\in(0,1)$ and fix
$w$ such that
\[
  w\ \ge\ \sqrt{\frac{\olap\beta}{\mathsf c_2L^2}\,\log\Big(\frac{8\mathsf c_1}{\varepsilon}\Big)} .
\]
Then there is $\alpha_0>0$, depending on $\varepsilon$, $\olap$, $w$ and the ratio $\beta/L^2$ but
not on $L$ nor on the intensity, such that if
\begin{equation}
\label{eq:GScond}
  \min\Big\{\varepsilon^2\eta L^d,\
  \log\frac1{1-\nu_E\big((1-\varepsilon)\eta,\,Q_{(2\olap+1)L},\,Q_{wL},\,\beta\big)}\Big\}
  \ \ge\ \alpha_0 ,
\end{equation}
then almost surely a two-sided Lipschitz surface $F$ exists on which $E(i,\tau)$ holds for every
$(i,\tau)\in F$.
\end{theorem}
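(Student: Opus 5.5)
The plan is to transplant the proof of Theorems 2.1 and 2.2 of \cite{GracarStauffer19b} to Poisson Brownian motions. That proof is a multi-scale argument that uses the underlying motion only through a small number of inputs. I would first isolate these inputs, then verify each one in the present setting, and then rerun the renormalisation with the new constants.

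\textbf{The inputs.} I would list three.
\begin{enumerate}[(a)]
\item A lower local mixing estimate for particles conditioned as in \Cref{def:restricted}.
\item A displacement bound under the conditioned law.
\item Stationarity of the Poisson system.
\end{enumerate}
Input (a) is \Cref{cor:mixingcond}, the lower form of \Cref{thm:mixinglower} transported by \Cref{lem:condkernel}. Input (b) is \Cref{lem:condkernel}(\ref{ck:exit}). Input (c) is the invariance of $\nu_\eta$ noted in \Cref{sec:particles}.

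\textbf{The single-scale step.} Call a cell \emph{bad} if $E(i,\tau)$ fails. A bad cell must then be enclosed from above and below by the surface, and the surface is constructed as in \cite{GracarStauffer19b} by showing that, for any fixed set of cells, clusters of bad cells are exponentially unlikely. Fix a cell. The particles in $\Qst(i)$ at time $\tau\beta$ are reduced to a Poisson process of intensity $(1-\varepsilon)\eta$ in three stages.
\begin{enumerate}[(1)]
\item Condition on a density event for the previous time layer: every subcube of side $\ell$ contains at least $(1-\varepsilon/2)\eta\ell^d$ particles. This requires $\varepsilon^2\eta L^d$ to be large, which is the first term of \eqref{eq:GScond}.
\item Apply \Cref{cor:mixingcond} over the time gap $\beta$ with $\mathsf a=wL$. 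This yields a Poisson process of intensity $(1-\varepsilon)\eta$ contained in the true configuration on $Q_{(2\olap+1)L}$, except on an event of probability $\Gamma e^{-c\varepsilon^2\eta\beta^{d/2}}$.
\item Retain only the particles whose displacement stays in $Q_{wL}$. The lower bound on $w$ makes the discarded fraction at most $\varepsilon/(8\mathsf c_1)$, by \Cref{lem:condkernel}(\ref{ck:exit}) with $R=wL/2$ and $t=\olap\beta$.
\end{enumerate}
Since $E$ is increasing, it holds with conditional probability at least $\nu_E((1-\varepsilon)\eta,Q_{(2\olap+1)L},Q_{wL},\beta)$ given this coupling. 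This is where the second term of \eqref{eq:GScond} enters.

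\textbf{The multi-scale step and the main obstacle.} Defining the density events themselves at every scale, as in \cite{GracarStauffer19b}, gives a recursion on the probability that a scale-$k$ box is bad. With $\alpha_0$ large, this probability decays superexponentially in the scale. A Peierls count over base-height paths then shows that the surfaces $F_\pm$ exist almost surely, with good cells on them.

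The main obstacle is dependence. Cells whose super-cells overlap, or whose time windows are within $\olap\beta$ of each other, are not independent. The transport of the mixing estimate must hold uniformly after conditioning on arbitrary past information. This is exactly why the conditioned-kernel lemma is needed, and why the constants must depend only on $d$, $\olap$, $w$ and $\beta/L^2$ and not on $L$ or $\eta$. I would check that the factor $\Gamma$ of \eqref{eq:Gamma} is polynomial in $L$ and therefore absorbed by the exponential at every scale. This is the one point where the Brownian constants differ from the lattice ones.
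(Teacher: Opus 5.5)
Your proposal takes the same route as the paper. The paper does not reprove the surface theorem. It quotes Theorems 2.1 and 2.2 of \cite{GracarStauffer19b} and observes that their proofs use the motion only through the lower-form local mixing estimate for particles conditioned as in \Cref{def:restricted}, which it supplies via \Cref{thm:mixinglower}, \Cref{lem:condkernel} and \Cref{cor:mixingcond}; this is exactly your input (a).

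One small correction to your step (3): the discarded fraction is an exit probability under the free law, so it is controlled by (H\ref{H3}), not by \Cref{lem:condkernel}(\ref{ck:exit}), which is a statement under the conditioned law (the two bounds differ only by a factor $2$).
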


\begin{theorem}[Surrounding the origin]
\label{thm:GSsurround}
Assume the hypotheses of \Cref{thm:GS}. There are $c,C>0$ and $D_1$, depending only on $d$, such
that for all $D\ge D_1$,
\[
  \Prob\big[F\text{ does not surround the origin at distance }D\big]\ \le\
  \begin{cases}
    \displaystyle\sum_{s\ge D}s\,\exp\Big\{-\frac{C\eta L\sqrt s}{(\log Ls)^{c}}\Big\}, & d=1,\\[2ex]
    \displaystyle\sum_{s\ge D}s^d\exp\Big\{-\frac{C\eta Ls}{(\log Ls)^{c}}\Big\}, & d=2,\\[2ex]
    \displaystyle\sum_{s\ge D}s^d\exp\{-C\eta Ls\}, & d\ge3 .
  \end{cases}
\]
In particular the right-hand side is summable in $D$ in every dimension, so by Borel--Cantelli $F$
surrounds the origin at an almost surely finite distance.
\end{theorem}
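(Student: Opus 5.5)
The plan is to transfer the proof of \cite[Theorem~2.2]{GracarStauffer19b} in the same way as \Cref{thm:GS}. The motion enters that proof only through the local mixing estimate applied to conditioned particles, and this estimate is available here as \Cref{thm:mixinglower} and \Cref{cor:mixingcond}. The first step is combinatorial. By \Cref{def:surface}, if $F$ does not surround the origin at distance $D$, there is a nearest-neighbour path of cells from the origin that leaves the $\ell^1$-ball of radius $D$ without meeting $F$. In the construction of \cite{GracarStauffer19b}, $F_+$ is the lowest Lipschitz function lying above every cell which is bad in the multi-scale sense, and $F_-$ is defined symmetrically. A path escaping between $F_-$ and $F_+$ therefore forces a base index $b$ with $\|b\|_1\le s$ for which $F_+(b)\ge s$ or $F_-(b)\le -s$, for some $s\ge D/2$. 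Equivalently, some cell within distance $O(s)$ of the origin lies in a Lipschitz ``cone'' of height $s$ generated by bad cells. There are $O(s^d)$ such base positions at distance $s$. A union bound then reduces the statement to a single-cone estimate, namely that a cone of height $s$ above a fixed base point occurs with probability at most $\exp\{-C\eta Ls\}$ for $d\ge3$, with the $(\log Ls)^{-c}$ loss for $d=2$ and the $\sqrt s$ rate for $d=1$.

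The second step proves the single-cone estimate by the multi-scale scheme of \cite{GracarStauffer19b}. Cells are grouped into blocks of geometrically growing scales $L_k$. A block at scale $k$ is bad if it contains two well-separated bad blocks of scale $k-1$, or if its particle density falls below $(1-\varepsilon_k)\eta$, with $\prod_k(1-\varepsilon_k)$ bounded away from $0$. For blocks that are separated in time, the lower form of local mixing for conditioned particles lets us couple the particles present at the start of the later block with a fresh Poisson process of intensity $(1-\varepsilon_k)\eta$. The restricted increasing events of the two blocks then become independent up to an error $\Gamma\exp\{-C\eta\varepsilon_k^2\Delta_k^{d/2}\}$. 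Monotonicity, \Cref{prop:mono}(ii), allows us to pass to the thinned process, and \eqref{eq:GScond} is precisely the base case at scale $0$. The recursion $p_k\le C p_{k-1}^2+\Gamma_k e^{-c\eta\varepsilon_k^2 L_k^d}$ then yields doubly exponential decay for the multi-scale bad event. A cone of height $s$ forces a bad block at a scale of order $Ls$, or a low-density event at that scale, and so the cone probability is controlled by that scale.

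The dimension dependence comes from this last step, and it is where I expect the main obstacle. A cone of height $s$ is only guaranteed to force a density deficit in a region whose particle count, after mixing over a time $\Delta\asymp\beta s$, scales like $\eta L s$ for $d\ge3$. In $d=2$ and $d=1$, Brownian particles revisit regions and the relevant space-time regions are thin. This produces the logarithmic correction, or, in $d=1$, the count $\eta L\sqrt s$ from diffusive scaling. The care needed is to check that the polynomial prefactor $\Gamma$ in \eqref{eq:Gamma}, with $K\asymp Ls$ and $\Delta\asymp L^2s$, is absorbed into the exponent. It is, since $\Gamma$ is polynomial in $s$. We must also verify that the conditioned kernel bounds of \Cref{lem:condkernel} hold at every intermediate scale, which requires $\mathsf a=wL\ge C_0\sqrt{\beta}$ and follows from the condition on $w$. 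Finally, summing over $s\ge D$ gives the displayed bounds. These are summable in $D$ in every dimension, so Borel--Cantelli gives a finite surrounding distance almost surely.
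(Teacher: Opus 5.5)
Your top-level justification matches the paper's. The statement is not re-proved there but quoted from \cite[Theorem~2.2]{GracarStauffer19b}, on the grounds that the motion enters that proof only through the lower local mixing estimate for conditioned particles, that is, \Cref{thm:mixinglower} with \Cref{cor:mixingcond}. The account you then give of that proof fails at its first step, because not surrounding the origin does not force a tall hill. The constant functions $F_+\equiv1$, $F_-\equiv-1$ form a two-sided Lipschitz surface in the sense of \Cref{def:surface}. The path along height $0$ avoids it and leaves every ball, yet $|F_\pm|\le1$ everywhere. With $F_+$ the lowest Lipschitz function above the bad cells, as you describe it, this is exactly what a line of $D$ bad cells at height $0$ produces, and no cone of height larger than $1$ appears. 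So for each $s$ you must bound the probability that the union of the cones generated by bad cells connects the origin to distance $s$, including long, low chains of bad cells. Controlling such chains of dependent cells is the real content of the multi-scale argument. A single-cone estimate at each base point does not give the theorem.

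Your second step does not supply this either. Take a recursion $p_k\le Cp_{k-1}^2+\dots$ over scales growing by a fixed factor $a$, and grant that a structure of size $s$ forces a bad block at the scale of $s$. It gives decay doubly exponential in the number $k\approx\log_a s$ of scales, i.e.\ of order $\exp\{-cs^{\log2/\log a}\}$. That is stretched-exponential in $s$, not the bound in the statement. The exponents $C\eta Ls$, $C\eta Ls/(\log Ls)^c$ and $C\eta L\sqrt s$ come from a specific estimate of \cite{GracarStauffer19b}; for $d=1$, \Cref{rem:GSd1} traces the line to \cite[Lemma~5.6]{GracarStauffer19b}. Your remark that the relevant particle count ``scales like $\eta Ls$'' is a heuristic, not a replacement for that estimate. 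Finally, \cite{GracarStauffer19b} is stated only for $d\ge2$. The $d=1$ line needs the observation of \cite[Remark~2.4]{BaldassoStauffer23} that the argument goes through via \cite[Proposition~6.1]{GracarStauffer19b}, and ``transfer the proof of Theorem~2.2'' does not cover this by itself. If you rely on the citation, as the paper does, drop the reconstruction and add the $d=1$ remark. As a self-contained argument, both of your steps would have to be redone.
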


The right-hand side of \Cref{thm:GSsurround} is an explicit
function of $D,\eta,L$ and $d$ alone: the constant $\alpha_0$ is fixed once
$\varepsilon,\olap,w$ and $\blk$ are, and once $\nu_E$ satisfies \eqref{eq:GScond} the remaining
dependence is absorbed into $c$, $C$ and $D_1$. It is non-increasing in $\eta$ and, once
$L\ge L_1(d)$, non-increasing in $L$. These properties are used in the proof of
\Cref{thm:smallmu}. The $d=1$
line takes its rate from \cite[Lemma 5.6]{GracarStauffer19b}; see \Cref{rem:GSd1}.

\begin{remark}[The dimension $d=1$]
\label{rem:GSd1}
\Cref{thm:GS,thm:GSsurround} are stated in \cite{GracarStauffer19b} for $d\ge2$. Baldasso and
Stauffer \cite[Remark 2.4]{BaldassoStauffer23} observed that the proofs also apply at $d=1$. The
argument uses \cite[Proposition 6.1]{GracarStauffer19b}, which holds in dimension one as well,
and the decay stated there follows from \cite[Lemma 5.6]{GracarStauffer19b}. The $d=1$ bound
$C\sqrt t/(\log t)^{c}$ of that lemma is the source of the $d=1$ line of \Cref{thm:GSsurround}.
That line is a stretched exponential rather than an exponential. This makes no difference here,
since all that is used is summability against $\sum_s s$. This allows \Cref{prop:crowding} to be
applied at $d=1$, the only dimension in which it is needed. \Cref{thm:smallmu} is unaffected; it
requires $d\ge2$ for the independent reason given in \Cref{sec:baseheight}.
\end{remark}

Throughout we take $w$ such that
\begin{equation}
\label{eq:wlarge}
  w\ \ge\ C_0\sqrt\blk\qquad\text{and}\qquad w\ \ge\ 2(2\olap+1) ,
\end{equation}
which \Cref{thm:GS} permits, since its own requirement on $w$ is also a lower bound. In
\eqref{eq:GScond} the probability $\nu_E$ is evaluated under the conditioned law of
\Cref{def:restricted}, in which every particle of the region keeps its displacement inside
$Q_{wL}$ during a block. The framework is designed for events of this kind, and the
conditioning requires no argument beyond the following three observations.

First, the first condition of \eqref{eq:wlarge} is the hypothesis $wL\ge C_0\sqrt\beta$ of
\Cref{lem:condkernel}, so by \Cref{cor:mixingcond} the local mixing, and with it every upper
estimate derived from (H\ref{H1})--(H\ref{H4}), holds under the conditioned law with constants
depending only on $d$. The conditioning is imposed on each particle separately, so it preserves
independence between particles, and by \Cref{lem:condkernel}(\ref{ck:mass}) the conditioned
system retains its intensity at later times.

Second, the second condition of \eqref{eq:wlarge} gives $(2\olap+1)L\le wL/2$, so a trajectory
which remains in a super-cube throughout a block has displacement inside $Q_{wL}$. An event
which confines its particle in this way implies the event on which the law is conditioned, and
conditioning on an event only increases the probability of events which imply it. This holds
conditionally on the past of the block as well, on the event that the trajectory has so far
remained in the super-cube.

Third, if an event is determined by the trajectory on an initial part of the block together
with the marks, and keeps the particle at distance at least $L$ from the boundary of the
super-cube, then its conditioned probability is at least a constant multiple of its free one,
with a constant depending only on $\blk$ and $d$. Indeed, on such an event the particle remains
in the super-cube for the rest of the block with probability bounded below, by the Markov
property and Brownian scaling, and on the intersection the whole block is spent in the
super-cube, so the second observation applies.

Every event evaluated by \eqref{eq:GScond} below is of one of these forms, and we pass between
the free and the conditioned law accordingly.

\subsection{From a surface to survival}
\label{sec:propagation}

We now isolate the two steps which turn \Cref{thm:GS,thm:GSsurround} into a survival statement.
Both applications below produce an event with the following property.

\begin{definition}[Carrying the infection]
\label{def:carries}
An increasing event $E(i,\tau)$ restricted to the super-cell $\scell{i}{\tau}$ \emph{carries the
infection} if, on $E(i,\tau)$, whenever some particle lying in the central cube $C_i$ at time
$\tau\beta$ is infected then
\begin{enumerate}[(P1)]
\item\label{P1} for every $i'$ with $\|i'-i\|_\infty\le2$ there is, at time $(\tau+1)\beta$, an
  infected particle in the central cube $C_{i'}$; and
\item\label{P2} at every time of $[\tau\beta,(\tau+1)\beta]$ there is an infected particle in
  $\Qst(i)$.
\end{enumerate}
\end{definition}

\begin{lemma}[Propagation along the surface]
\label{lem:propagate}
Let $d\ge1$, let $E$ carry the infection in the sense of \Cref{def:carries}, and let $F$ be a
two-sided Lipschitz surface on which $E$ holds. Suppose that for some $(i,\tau)\in F$ there is an
infected particle in $C_i$ at time $\tau\beta$. Then the infection survives. If moreover $d\ge2$,
then
\[
  \liminf_{t\to\infty}\frac{\rad(I_t)}{t}\ \ge\ \frac L\beta\ =\ \frac1{\blk L}\ >\ 0 .
\]
\end{lemma}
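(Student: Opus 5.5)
The plan is to propagate the infection cell by cell along the surface, using (P1) to move from a good cell to its neighbours and (P2) to cover the times in between.

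\emph{Adjacent cells of the surface.} Let $(b,h),(b',h')\in F$ be on the same sheet, say $F_+$, with $\|b-b'\|_1=1$. By the Lipschitz property, $|h-h'|\le1$. In space-time coordinates the two cells $(i,\tau)$ and $(i',\tau')$ then satisfy $|\tau'-\tau|\le1$ and $\|i'-i\|_\infty\le2$. The time coordinate lies in the base, so the time index changes by at most one. The spatial indices change by at most one in a base coordinate and by at most one in the height coordinate.

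\emph{Forward induction.} Call a cell $(i,\tau)\in F$ \emph{activated} if some particle in $C_i$ is infected at time $\tau\beta$. If $(i,\tau)$ is activated and good, then (P1) gives an infected particle at time $(\tau+1)\beta$ in every $C_{i''}$ with $\|i''-i\|_\infty\le2$. So every surface cell at time level $\tau+1$ adjacent to $(i,\tau)$ in the base is activated.

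Lateral moves at the same time level and backward moves need more care. Moving backward in time is not allowed, since infection only propagates forward. I would therefore restrict to a monotone chain. Fix a spatial base direction $e$ and the time direction. Starting from the activated cell $(b_0,h_0)$, consider the base points $b_k=b_0+k\,e_{\mathrm{time}}$. Each $(b_k,F_\pm(b_k))$ has time index $\tau_0+k$ and spatial index within $\infty$-distance $2$ of the previous one, so by induction on $k$ all of them are activated. Here I use that the base time coordinate equals $\tau$, and that the height is spatial. Hence at every time $(\tau_0+k)\beta$ some cell is activated. Property (P2) of that good cell covers the intermediate times $[(\tau_0+k)\beta,(\tau_0+k+1)\beta]$, so $I_t\neq\emptyset$ for all $t\ge\tau_0\beta$. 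Lemma~\ref{lem:truncate} covers earlier times. This gives survival.

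\emph{Positive speed for $d\ge2$.} Now move along base points $b_k=b_0+k(e_{\mathrm{time}}+e_{\mathrm{sp}})$, where $e_{\mathrm{sp}}$ is a spatial base direction, available since $d\ge2$. Going from $b_k$ to $b_{k+1}$ takes two unit base steps. The intermediate point $b_k+e_{\mathrm{time}}$ is activated by the previous step. Then I need to pass from $(b_k+e_{\mathrm{time}},\cdot)$ to $b_k+e_{\mathrm{time}}+e_{\mathrm{sp}}$, which has the same time level, and this is the main obstacle.

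To handle it I use the full strength of (P1): an activated good cell at level $\tau$ infects all central cubes within $\infty$-distance $2$ at level $\tau+1$. The surface cells at level $\tau+1$ over base points $b$ and $b+e_{\mathrm{sp}}$ differ by $1$ in the $e_{\mathrm{sp}}$ coordinate and by at most $1$ in height. So if $(i,\tau)$ is activated, then the cell over $b+e_{\mathrm{time}}$ and the cell over $b+e_{\mathrm{time}}+e_{\mathrm{sp}}$ are both within $\infty$-distance $2$ of $i$. The latter has height within $1$ of the former, which is itself within $1$ of the height of $i$. Both are therefore activated directly from $(i,\tau)$.

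By induction, at time $(\tau_0+k)\beta$ there is an infected particle in a cube whose $e_{\mathrm{sp}}$ coordinate is $(i_0)_{e_{\mathrm{sp}}}+k$. Its distance from the origin is therefore at least $kL-O(1)$. Interpolating between the times $(\tau_0+k)\beta$ using (P2) loses only $O(\olap L)$, which gives
\[
\liminf_{t\to\infty}\frac{\rad(I_t)}{t}\ \ge\ \frac{L}{\beta}.
\]
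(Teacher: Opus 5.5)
Your proposal is correct and is essentially the paper's proof. Survival comes from stepping the base index in the time direction along the sheet containing the entry cell and applying (P1) and (P2). For the speed you step $b\mapsto b+e_{\mathrm{time}}+e_{\mathrm{sp}}$, and your detour through $b+e_{\mathrm{time}}$ is just the Lipschitz bound over $\ell^1$-distance $2$ (giving $\|i'-i\|_\infty\le2$, so (P1) applies directly from $(i,\tau)$), which the paper uses in one step before interpolating with (P2) at a cost of $\olap L$.
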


\begin{proof}
Let $(b,h)$ be the base-height index of $(i,\tau)$. Recall that the base always contains the time
coordinate. Suppose first only that $d\ge1$. Let $b'$ be obtained from $b$ by increasing the time
coordinate by one, so that $\|b'-b\|_1=1$. Since $F_\pm$ are Lipschitz, there is $h'$ with
$(b',h')\in F$ and $|h'-h|\le1$, where we choose whichever of the two sides of $F$ contains
$(b,h)$. In space-time terms $(b',h')$ is a cell $(i',\tau+1)$ with $\|i'-i\|_\infty\le1\le\olap$.
Iterating this step and using (P\ref{P1}) and (P\ref{P2}) as below shows that the infection survives.

For the speed we assume $d\ge2$, so that the base contains at least one spatial coordinate besides
the time coordinate. We fix one such coordinate and call it $b_1$. Let $b'$ now be obtained from
$b$ by increasing both the time coordinate and the coordinate $b_1$ by one, so that
$\|b'-b\|_1=2$. The Lipschitz property gives $h'$ with $(b',h')\in F$ and $|h'-h|\le2$. In
space-time terms $(b',h')$ is a cell $(i',\tau+1)$ with $\|i'-i\|_\infty\le2\le\olap$, whose $b_1$
coordinate is that of $i$ increased by one.

By (P\ref{P1}) applied to $(i,\tau)$ there is an infected particle in $C_{i'}$ at time $(\tau+1)\beta$.
This is the hypothesis of (P\ref{P1}) for the cell $(i',\tau+1)\in F$. Iterating, we obtain cells
$(i^{(n)},\tau+n)\in F$, $n\ge0$, with $i^{(0)}=i$, with an infected particle in $C_{i^{(n)}}$ at
time $t_n:=(\tau+n)\beta$, and with the $b_1$ coordinate of $i^{(n)}$ equal to that of $i$ plus
$n$. In particular $I_{t_n}\ne\emptyset$ for every $n$, and by (P\ref{P2}) the infection is non-empty
at every intermediate time as well. Hence the infection survives.

For the speed, (P\ref{P2}) gives for every $t\in[t_n,t_{n+1}]$ an infected particle in
$\Qst(i^{(n)})$, whose $b_1$ coordinate is therefore at least $(i_{b_1}+n-\olap)L$. Hence
$\rad(I_t)\ge(n-\olap-|i_{b_1}|)L$ for such $t$, while $t\le t_{n+1}=(\tau+n+1)\beta$. Therefore
\[
  \liminf_{t\to\infty}\frac{\rad(I_t)}{t}
  \ \ge\ \lim_{n\to\infty}\frac{(n-\olap-|i_{b_1}|)L}{(\tau+n+1)\beta}\ =\ \frac L\beta . \qedhere
\]
\end{proof}

\Cref{fig:propagation} illustrates \Cref{lem:propagate} on the surface of \Cref{fig:surface}.

\begin{figure}[!h]
\centering
\includegraphics[width=0.85\textwidth]{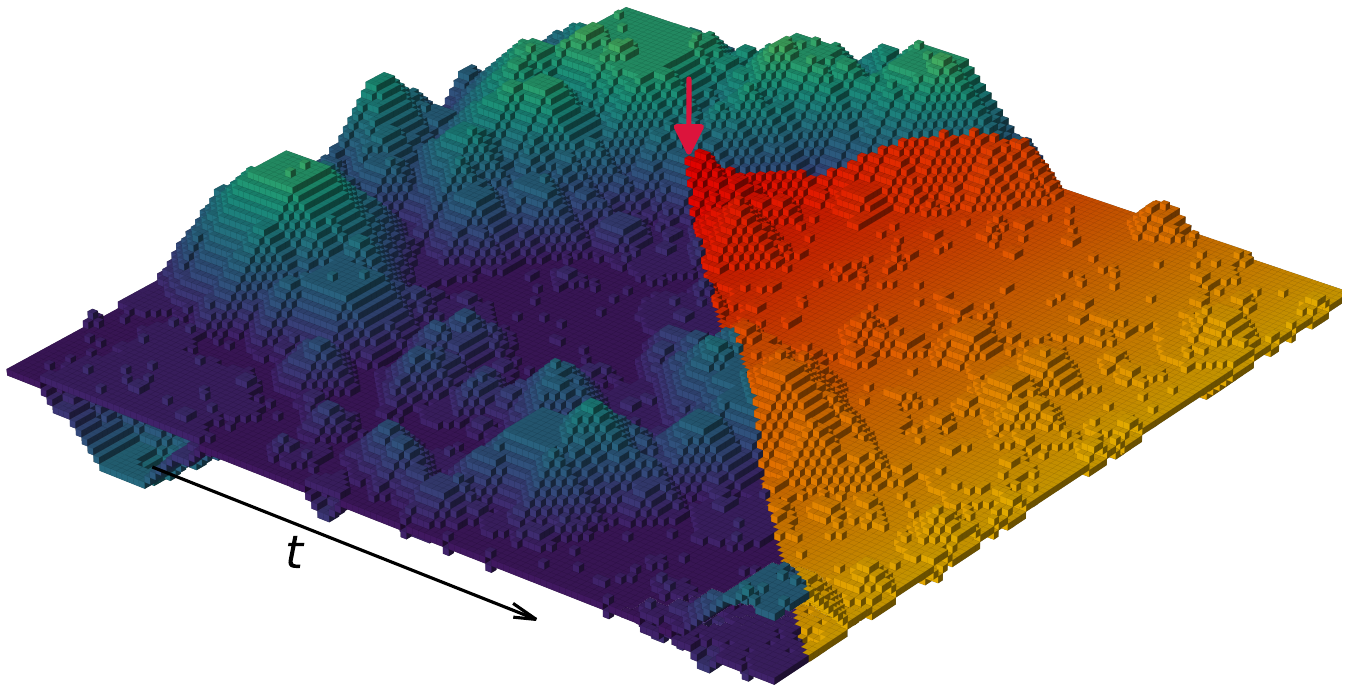}
\caption{The surface of \Cref{fig:surface}, in which one base coordinate is the time
coordinate $t$. The infection enters the surface at the marked cell. Iterating (P\ref{P1})
along the surface infects, $n$ blocks later, surface cells whose base has spatial displacement
at most $n$ from the entry column; these cells are coloured by the time of infection, from red
at the entry cell to orange. The proof of \Cref{lem:propagate} follows a single chain of such
cells, one per block, along which a fixed spatial coordinate increases by one per block, giving
the speed bound.}
\label{fig:propagation}
\end{figure}

\begin{lemma}[Seeding]
\label{lem:seed}
Let $d\ge1$, let $E$ carry the infection in the sense of \Cref{def:carries}, and assume the
hypotheses of \Cref{thm:GS}, so that a two-sided Lipschitz surface $F$ on which $E$ holds exists
almost surely. Let $D$ be the distance at which $F$ surrounds the origin, which is almost surely
finite by \Cref{thm:GSsurround}, and choose $D_0$ with $\Prob[D\le D_0]\ge\tfrac34$. Put
\[
  \tau_0=D_0+1,\qquad
  \mathcal B=\big\{(i,\tau):\ 0\le\tau\le\tau_0,\ \|i\|_\infty\le2\tau_0\big\} .
\]
If $\Prob[E(i,\tau)^c]\le(4|\mathcal B|)^{-1}$ for every cell, then with probability at least
$\tfrac12$ the following holds: $E$ holds on every cell of $\mathcal B$, $F$ surrounds the origin
at distance $D_0$, and consequently, if some particle lying in the central cube $C_0$ is infected
at time $0$, then there is a cell of $F$ whose central cube contains an infected particle at the
initial time of that cell's block.
\end{lemma}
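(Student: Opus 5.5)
The plan is a union bound followed by a deterministic propagation along the time axis. I will not need the full box $\mathcal B$ of good cells, only its time column above the origin.

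\emph{Step 1: the good event.} By a union bound over the cells of $\mathcal B$,
\[
  \Prob\big[E(i,\tau)^c\ \text{for some }(i,\tau)\in\mathcal B\big]\ \le\ |\mathcal B|\,(4|\mathcal B|)^{-1}\ =\ \tfrac14 .
\]
By the choice of $D_0$, $\Prob[D>D_0]\le\tfrac14$. Hence with probability at least $\tfrac12$, every cell of $\mathcal B$ is good and $D\le D_0$.

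\emph{Step 2: monotonicity of surrounding.} I would check that $D\le D_0$ implies that $F$ surrounds the origin at distance $D_0$. This is immediate from \Cref{def:surface}: any nearest-neighbour path from the origin whose endpoint is at $\ell^1$-distance greater than $D_0$ is also at distance greater than $D$, so it meets $F$. The surrounding is meant in base-height coordinates, and I note that the origin cell $(0,0)$ has base-height index $(0,0)$, since the index map is a coordinate permutation.

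\emph{Step 3: propagation to the surface.} On this event, suppose a particle in $C_0$ is infected at time $0$. I show by induction on $\tau\in\{0,\dots,\tau_0\}$ that $C_0$ contains an infected particle at time $\tau\beta$. The case $\tau=0$ is the hypothesis. For the step, the cell $(0,\tau)$ lies in $\mathcal B$, so $E(0,\tau)$ holds. Applying (P\ref{P1}) of \Cref{def:carries} with $i'=i=0$ then gives an infected particle in $C_0$ at time $(\tau+1)\beta$.

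Now consider the path of cells $(0,0),(0,1),\dots,(0,\tau_0)$. In base-height coordinates only the time coordinate changes, and it lies in the base, so consecutive cells are at $\ell^1$-distance $1$. The endpoint is at distance $\tau_0=D_0+1>D_0$. By Step 2 this path meets $F$ at some cell $(0,\tau)$ with $\tau\le\tau_0$, and by Step 3 its central cube $C_0$ contains an infected particle at time $\tau\beta$, the initial time of its block. This proves the lemma.

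\emph{Main obstacle.} The only real subtlety is bookkeeping: I must confirm that the time-column path is a legitimate nearest-neighbour path in the base-height indexing. It is, because time is a base coordinate, as noted in \Cref{sec:baseheight}. I must also confirm that the cells used in the induction indeed lie in $\mathcal B$. They do, because $\|0\|_\infty\le2\tau_0$ and $0\le\tau\le\tau_0$.
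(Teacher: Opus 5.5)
Your proposal is correct, and it takes a genuinely different and simpler route to the final step than the paper does. The union bound in your Step 1 is the same as the paper's; the difference is in how you locate the cell of $F$.

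\textbf{The paper's route.} It walks from the origin in the height direction, $(b_0,0),(b_0,1),\dots,(b_0,D_0+1)$. From the surrounding property it concludes that $F_+(b_0)\le D_0+1$, or else $F_-(b_0)=0$. It then uses the Lipschitz property of $F_+$ to carry this bound $\tau_0$ blocks forward in time, landing on a cell $(i,\tau_0)\in F$ with $\|i\|_\infty\le D_0+1+\tau_0=2\tau_0$. That cell is infected because the range-$2$ clause of (P\ref{P1}), applied throughout the full box $\mathcal B$, spreads the infection laterally by two central cubes per block.

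\textbf{Your route.} You walk along the time coordinate of the base at height zero. The surrounding property of \Cref{def:surface} then hands you a cell $(0,\tau)$ of the central column directly, and (P\ref{P1}) with $i'=i$ suffices.

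\textbf{What each buys.} Your argument uses neither the Lipschitz structure of $F$ nor lateral spreading. It also shows that only the $\tau_0+1$ cells of the column need to be good, so $|\mathcal B|$ could be replaced by $\tau_0+1$ in the hypothesis. This is harmless but immaterial for the paper, where only the $L$-independence of $|\mathcal B|$ matters (\Cref{rem:seedaccounting}).

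The paper's argument, conversely, uses surrounding only along the height direction over the origin, that is, only through a bound on $F_+(b_0)$. It would therefore survive a weaker notion of surrounding that controls heights but not base directions. Your argument leans on the all-directions form of surrounding. That form is exactly what \Cref{def:surface} states, so your proof is complete as written.
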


\begin{proof}
By a union bound, the event $\mathcal E$ that $E$ holds on every cell of $\mathcal B$ and that
$D\le D_0$ has probability at least $1-\tfrac14-\tfrac14=\tfrac12$.

We work on $\mathcal E$ and suppose that a particle of $C_0$ is infected at time $0$. By (P\ref{P1})
applied to the cell $(0,0)$, which lies in $\mathcal B$, the central cube of every $(i',1)$ with
$\|i'\|_\infty\le2$ contains an infected particle at time $\beta$. We iterate this step. Every
cell $(i,\tau)$ with $\tau\le\tau_0$ and $\|i\|_\infty\le2\tau$ lies in $\mathcal B$ and so
satisfies $E$. Hence, for every $\tau\le\tau_0$, at time $\tau\beta$ the central cube of every
$(i,\tau)$ with $\|i\|_\infty\le2\tau$ contains an infected particle. Each of these times is the
initial time of a block.

It remains to find a cell of $F$ of this form. Let $b_0$ be the base index of the cell $(0,0)$,
and let $b_\tau$ be $b_0$ with its time coordinate raised to $\tau$, so that
$\|b_\tau-b_0\|_1=\tau$. The path $(b_0,0),(b_0,1),\dots,(b_0,D_0+1)$ moves only in the height
direction and has $\ell^1$-displacement $D_0+1>D_0$, so on $\mathcal E$ it intersects $F$. The
only cells of $F$ over the base $b_0$ are $(b_0,F_+(b_0))$ and $(b_0,F_-(b_0))$, with
$F_-(b_0)\le0\le F_+(b_0)$. Hence either $F_+(b_0)\le D_0+1$, or $F_-(b_0)=0$. In the second case
$(b_0,0)\in F$, so the cell $(0,0)$ itself belongs to $F$ and there is nothing to prove. We
therefore assume $F_+(b_0)\le D_0+1$. Since $F_+$ is Lipschitz,
$F_+(b_{\tau_0})\le F_+(b_0)+\tau_0\le D_0+1+\tau_0$. The base-height cell
$(b_{\tau_0},F_+(b_{\tau_0}))$ belongs to $F$. In space-time coordinates it is a cell
$(i,\tau_0)$ whose height coordinate is $F_+(b_{\tau_0})$ and whose other spatial coordinates
vanish, so
\[
  \|i\|_\infty\ \le\ D_0+1+\tau_0\ \le\ 2\tau_0 ,
\]
where the last inequality uses $\tau_0=D_0+1$. Hence this cell of $F$ has an infected particle in
its central cube at time $\tau_0\beta$, which is the initial time of its block.
\end{proof}

\begin{remark}
\label{rem:seedaccounting}
The parameters must be fixed in the following order. First we fix $\varepsilon,\olap,w$ and
$\blk$, and hence $\alpha_0$. Then we fix a scale $L_1$. The exponent in \Cref{thm:GSsurround}
contains $\eta Ls$, so the tail of $D$ decreases in $L$ and a single $D_0$ serves for every
$L\ge L_1$. This fixes $\tau_0$ and $|\mathcal B|=(\tau_0+1)(4\tau_0+1)^d$ independently of $L$.
Only then do we take $L$ large, using $\Prob[E^c]\to0$. If $D_0$ were allowed to depend on $L$,
then so would $|\mathcal B|$, and the requirement $\Prob[E^c]\le(4|\mathcal B|)^{-1}$ could not
be met. This is why we quote \Cref{thm:GSsurround} in quantitative form rather than as almost
sure finiteness.

Note that two different quantities are involved. \Cref{thm:GS} requires $\nu_E$, at the reduced
intensity $(1-\varepsilon)\eta$ and under the conditioned law of \Cref{def:restricted}, whereas
\Cref{lem:seed} requires $\Prob[E^c]$ for the process itself. Both inputs of \Cref{lem:seed} are
evaluated under the Palm measure at the origin, and both transfer from the stationary law. By
Slivnyak's theorem the Palm law is the law of $\mathcal P$ with one further particle added, and
both $E$ and $\{D\le D_0\}$ are increasing. Indeed, the surface is built from the field of good
cells, which by \Cref{prop:mono}(ii) only grows when particles are added, and the construction of
\Cref{thm:GS} produces a surface at least as close to the origin when more cells are good.
\end{remark}
\section{Upper bounds on the critical density}
\label{sec:survupper}

In this section we consider the instantaneous model $(\lambda=\infty,\iso)$ and prove parts (ii) and
(iii) of \Cref{thm:density}, that is, the two upper bounds on the critical density.

\subsection{Above the Boolean threshold}
\label{sec:supercrit}

\begin{proposition}
\label{prop:supercrit}
Let $d\ge2$ and $\mu\in(0,\infty]$. For every $\eta>\etaB$ the instantaneous infection started
from a single particle at the origin survives with positive probability. Consequently
$\eta_c(\infty,\mu)\le\etaB$ for every $\mu\in(0,\infty]$.
\end{proposition}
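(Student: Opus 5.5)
The plan is to show that with positive probability the origin particle lies in the infinite component of the Gilbert graph at time $0$. Then, by instantaneous transmission and the fact that particles in an infinite component are never isolated, the infection is carried by the infinite component at every later time. The proof uses no rates, so it suffices to treat $\mu=\infty$ under the isolation rule. The case $\mu<\infty$ then follows from \Cref{prop:mono}(iii), which gives $\Prob^{\infty,\infty}_\eta(\text{survival})\le\Prob^{\infty,\mu}_\eta(\text{survival})$. The bound $\eta_c(\infty,\mu)\le\etaB$ is then immediate from \eqref{eq:etac}.

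\emph{Step 1 (slack in the radius).} Since $\eta>\etaB(2r,d)$ and the critical intensity scales as $\etaB(R,d)=\etaB(1,d)R^{-d}$, I would fix $r'<r$ with $\eta>\etaB(2r',d)$. Write $G'_t$ for the Gilbert graph at connection radius $2r'$. By stationarity, for each fixed $t$ the graph $G'_t$ has a unique infinite component $C'_t$. Under the Palm measure, Slivnyak's theorem gives that the origin particle belongs to $C'_0$ with positive probability $\theta'(\eta)$.

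\emph{Step 2 (propagation over short time windows).} Fix rational times $t_k=k\delta$ and work on the event that every $G'_{t_k}$ has a unique infinite component. This is a countable intersection of almost sure events. The target claim is that, on $\{0\in C'_0\}$, every particle of $C'_{t_k}$ is infected at time $t_k$ for every $k$, and in particular $I_t\ne\emptyset$ for all $t$. The mechanism is as follows. If $u,v$ are at distance at most $2r'$ at time $s$, then by continuity they stay at distance at most $2r$ on $[s,s+\delta_{uv}]$. Along that interval each of them is in contact with the other, so it is never isolated and carries no effective recovery mark, and by \Cref{def:openpath}(i) at $\lambda=\infty$ infection passes between them at once. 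Consequently, on $[t_k,t_{k+1}]$, every finite connected piece of $C'_{t_k}$ whose edges all survive at radius $2r$ carries an infection entering any of its particles, both across the piece and forward in time.

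\emph{Step 3 (linking consecutive infinite clusters; the main obstacle).} I expect the hard step to be showing that, almost surely, $C'_{t_k}$ and $C'_{t_{k+1}}$ are joined by open paths. Two problems arise: the edge-survival times $\delta_{uv}$ are not bounded below uniformly along an infinite cluster, and the infinite cluster at time $t_{k+1}$ need not contain any prescribed particle. I would handle this by a second application of the slack. Take $r'<r''<r$ with $\eta>\etaB(2r'')$. For fixed $\delta$ small, a large box contains, with high probability, a particle that lies in $C'_{t_k}$ via a path inside the box. Every edge of that path survives at radius $2r''$ up to time $t_{k+1}$, since only finitely many particles in the box are involved and each has Brownian modulus of continuity bounded with high probability. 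By uniqueness of the infinite cluster of $G''_{t_{k+1}}$ (in the spirit of the ergodic argument of \Cref{prop:ergodic}), that particle then lies in the infinite $2r''$-component at time $t_{k+1}$, which contains $C'_{t_{k+1}}$. Iterating this with a nested sequence of radii $r'<r_1<r_2<\dots<r$, whose cumulative slack is summable, avoids exhausting the slack. Within each infinite component, Step 2 spreads the infection to all of it. Local finiteness together with the monotonicity of $\{I_t\ne\emptyset\}$ from \Cref{lem:truncate} then gives $I_t\ne\emptyset$ for every real $t$ on the event of Step 1, which has positive probability.

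If the nested-radius scheme proves awkward, the fallback is to quote the result that, above $\etaB$, the dynamic Boolean model has an infinite component simultaneously at all times whose membership varies continuously in time \cite{vdBMW97,PSSS13}, and to replace Step 3 by that result.
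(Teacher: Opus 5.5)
The gap is in Step 3, which you correctly flag as the main obstacle, and the resolution you sketch does not close it. The inference ``by uniqueness of the infinite cluster of $G''_{t_{k+1}}$, that particle then lies in the infinite $2r''$-component at time $t_{k+1}$'' does not follow. Uniqueness says there is at most one infinite component; it does not say that a given particle belongs to it. A particle of $C'_{t_k}$ is joined to infinity only through infinitely many edges. As you observe yourself, infinitely many of these break during $[t_k,t_{k+1}]$ however small $\delta$ is. The finite piece of its connection that survives inside a box therefore certifies nothing about membership at $t_{k+1}$.

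A box argument would need two further inputs, and neither is supplied. First, you would need that with high probability every large cluster in a box at time $t_{k+1}$ is joined to the infinite one; in $d\ge3$ at every $\eta>\etaB$ this is a Grimmett--Marstrand type theorem for the Boolean model. Second, you would need to turn ``high probability'' at each step into probability one at every $k$. The nested radii are unnecessary: once $C''_{t_{k+1}}$ is infected, so is $C'_{t_{k+1}}\subseteq C''_{t_{k+1}}$, and you can restart at $r'$. The fallback does not help either. An infinite component existing at all times simultaneously does not by itself let the infection pass from the infinite cluster at one time to the one at a later time. ``Membership varies continuously in time'' is not a statement you can quote in usable form from those references.

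The missing idea is that no linking across times is needed. Survival only asks that $I_t\ne\emptyset$ for every $t$, and by \Cref{lem:truncate} it suffices to check this at each fixed rational $t$. Your Step 2 at time $0$ shows that on $\{u_0\in C'_0\}$ every particle of $C'_0$ is infected at some time in $(0,t)$, with the time depending on the particle. Now take the set of particles of $C'_0$ that have another particle within distance $2r$ throughout $[0,t]$.
\begin{itemize}
\item Given the particle's trajectory, both defining conditions are increasing in $\mathcal P$. Harris--FKG and the Brownian support theorem then give this set positive intensity, exactly as in the proof of \Cref{prop:soft}.
\item It is therefore almost surely non-empty by \Cref{prop:ergodic}. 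It stays so under the Palm measure by Slivnyak, since the event is increasing.
\item Such a particle is never isolated on $[0,t]$, so it carries its infection up to time $t$.
\end{itemize}
This is the paper's proof. The paper also treats $\mu<\infty$ directly, using mark-free particles of the cluster, though your reduction to $\mu=\infty$ via \Cref{prop:mono}(iii) is legitimate. The same device would also repair your Step 3 if you want the stronger statement that the infinite clusters at all the times $t_k$ are infected. Apply it to particles of $C'_{t_k}\cap C''_{t_{k+1}}$ that have a companion throughout $[t_k,t_{k+1}]$.
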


\begin{proof}
Since $\eta>\etaB$, the Gilbert graph $G_0$ has almost surely a unique infinite cluster
$\mathcal C_\infty$ \cite[Ch.~3]{MeesterRoy96}. Let $A$ be the event that the particle $u_0$ at
the origin belongs to $\mathcal C_\infty$, so that $\Prob(A)=\theta(\eta)>0$. Throughout the proof
we write $I_t$ for the infected set of the process started from $I_0=\{u_0\}$.

We first show that on $A$, every particle of $\mathcal C_\infty$ is infected at arbitrarily small
times. Let $v\in\mathcal C_\infty$ and let $u_0=v_0,v_1,\dots,v_k=v$ be a chain of particles of
$\mathcal C_\infty$ with consecutive ones in contact at time $0$. Almost surely no two particles
are at distance exactly $2r$ at time $0$, so each consecutive pair is at distance strictly less
than $2r$ at time $0$. By continuity of the trajectories there is $\varsigma>0$ such that all $k$
pairs are in contact throughout $[0,\varsigma]$. Note that $\varsigma$ depends on $v$ and on the
chain chosen. Given $t>0$, put $\varsigma'=\min\{t/2,\varsigma\}$ and $s_j=j\varsigma'/(k+1)$ for
$1\le j\le k+1$. Then $v_{j-1}$ and $v_j$ are in contact at $s_j$ for $1\le j\le k$, which is
\Cref{def:openpath}(i) at $\lambda=\infty$. Clause (ii) holds as well. Indeed, a recovery mark is
effective only at an instant of isolation, and $v_j$ is in contact with $v_{j+1}$ throughout
$[0,\varsigma]$, hence is never isolated there. Taking $t_0=0$ and $t_j=s_j$ for $1\le j\le k+1$
gives an open path of length $k$ from $(u_0,0)$ to $(v,s_{k+1})$, so $v\in I_{s_{k+1}}$ with
$s_{k+1}=\varsigma'\le t/2<t$. Since $t>0$ was arbitrary, we obtain that for every
$v\in\mathcal C_\infty$ and every $t>0$ there is $s<t$ with $v\in I_s$.

Next we find a persistent particle. Suppose first $\mu<\infty$ and, for $t>0$, let
\[
  \mathcal S_t=\{v\in\mathcal C_\infty:\ \mathcal R_v\cap[0,t]=\emptyset\} .
\]
This is a translation-covariant factor of $\widetilde{\mathcal P}$ of intensity
$\theta(\eta)\eta e^{-\mu t}>0$, hence almost surely non-empty by \Cref{prop:ergodic}. Note that
that proposition is a statement about the stationary law, whereas we work under the Palm measure
at the origin. However, the event $\{\mathcal S_t\ne\emptyset\}$ is increasing in $\mathcal P$.
Indeed, membership of an infinite Gilbert cluster is preserved by adding particles, and the absence
of recovery marks is a property of the particle itself. By Slivnyak's theorem the Palm law is that
of $\mathcal P$ with one further particle, so the event retains probability one. Let
$v\in\mathcal S_t$. By the first part of the proof there is an open path from $(u_0,0)$ to $(v,s)$
with $s<t$. Since $v$ carries no recovery mark at all in $[0,t]$, clause (ii) allows this path to
be prolonged to $(v,t)$. Hence $v\in I_t$ on $A$, and in particular $I_t\ne\emptyset$.

For $\mu=\infty$ we replace $\mathcal S_t$ by the set of $v\in\mathcal C_\infty$ having a particle
within distance $2r$ throughout $[0,t]$. Such a $v$ is never isolated on $[0,t]$, hence carries no
effective recovery mark there, and the same prolongation applies. The intensity of this set is
positive by the computation in the proof of \Cref{prop:soft}, applied with $\mathcal C_\infty$ in
the role of that proposition's initial set. The hypothesis of that proposition is met because
$\mathcal C_\infty$ is increasing: adding particles to $\mathcal P$ can only enlarge the
Gilbert components, so a particle in an infinite component remains in one. Note that the infected
set of the present proposition is still the one started from $I_0=\{u_0\}$.

The two parts of the proof give $\Prob\big(A\cap\{I_t\ne\emptyset\}\big)=\Prob(A)$ for every
fixed $t>0$. Intersecting over rational $t$ and using \Cref{lem:truncate}, we obtain that
$I_t\ne\emptyset$ for every $t\ge0$ almost surely on $A$. Since $\Prob(A)=\theta(\eta)>0$, the
infection survives with positive probability.
\end{proof}

The proof above requires both features of the instantaneous model. This is why
\Cref{prop:supercrit} has no counterpart at finite $\lambda$ and why the bound it gives is a
statement about $\eta_c(\infty,\cdot)$ alone; see \Cref{sec:soft}. The same two features make the
bound hold uniformly in $\mu$, up to and including $\mu=\infty$, and they make it soft. Above
$\etaB$ the proof produces an infinite initially infected cluster, not a propagating epidemic. The non-soft counterpart is \Cref{thm:motion}, which lies
strictly below $\etaB$ and is proved by a different route.

\subsection{Finiteness of the threshold in every dimension}
\label{sec:crowding}

For $d=1$ we have $\etaB=\infty$, so \Cref{prop:supercrit} gives no information. In order to
obtain finiteness of $\eta_c$ we use instead the multi-scale Lipschitz percolation of
\Cref{sec:lipschitz}. The event to which we apply it is purely geometric, and the following
proposition is correspondingly short. At infinite infection rate the spread within a cell is a
deterministic consequence of the geometry, so the recovery marks never enter the event.

\begin{proposition}
\label{prop:crowding}
Let $d\ge1$ and $\mu\in(0,\infty]$, the model being the instantaneous one of this section. Then
$\eta_c(\infty,\mu)<\infty$.
\end{proposition}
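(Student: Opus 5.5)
We apply \Cref{thm:GS}, \Cref{thm:GSsurround}, \Cref{lem:seed} and \Cref{lem:propagate} to a purely geometric \emph{crowding event}. Fix $\olap=\max\{d,2\}$, a small $\blk<\mathsf c_0$, and $w$ as in \eqref{eq:wlarge}. We then take $L$ to be a fixed moderate multiple of $r$ and let $\eta\to\infty$. The point is that in the instantaneous model with the isolation rule, a particle that is never isolated never recovers. Moreover, two particles that are in contact at some instant exchange the infection at that instant. So if the super-cell is so densely filled that space is covered at all times, the infection spreads through it deterministically. We let $E(i,\tau)$ be the event that for every $t\in[\tau\beta,(\tau+1)\beta]$ and every small subcube of side $r/(2\sqrt d)$ in a fixed mesh of $\Qst_-(i)$, some particle that starts in $\Qst(i)$ at time $\tau\beta$ lies in that subcube at time $t$. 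Adding particles preserves this event, so it is increasing. It is restricted to the super-cell and involves no marks, which agrees with the remark after \Cref{def:restricted}.

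\emph{$E$ carries the infection.} On $E$, at every time $t$ of the block, the points of $\Qst_-(i)$ are covered by particles in the mesh. Particles in adjacent mesh subcubes are within distance $2r$, so the Gilbert graph restricted to these particles is connected at time $t$. Take an infected particle $u$ in $C_i$ at time $\tau\beta$. Every particle that lies in $\Qst_-(i)$ at some time $t$ has a neighbour within distance $2r$, so it is not isolated at $t$ and has no effective recovery mark at $t$. There is a subtlety here: a particle can leave $\Qst_-(i)$ and become isolated outside it. We handle this with the argument of \Cref{prop:supercrit}, which uses chains of contacts with strict slack at arbitrarily small time steps. We take a finite partition of the block into times $s_0<s_1<\dots$. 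On each short interval we pass the infection along a chain in the connected covering graph to particles that sit near the centre of the target subcubes. By continuity these particles stay in contact over the short interval. This produces an open path through the block that always stays on particles in $\Qst_-(i)$. That gives (P\ref{P2}). At time $(\tau+1)\beta$ every central cube $C_{i'}$ with $\|i'\|_\infty\le2$ from $i$ lies in $\Qst_-(i)$, because $\olap\ge2$. Since this cube is covered, we get (P\ref{P1}). To make the chain argument rigorous, a cleaner device is to require in $E$ that each mesh subcube is occupied throughout each of finitely many short sub-intervals of length $\delta$ by a \emph{single} particle. Neighbouring occupants are then in contact throughout, and consecutive sub-intervals overlap in time, so the infection is handed over inside the overlap.

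\emph{Probability estimates.} We bound $1-\nu_E$ at intensity $(1-\varepsilon)\eta$ under the conditioned law, and also $\Prob[E^c]$. Fix a sub-interval and a mesh subcube $T$. By the second and third observations after \eqref{eq:wlarge}, the conditioned probability is at least a constant multiple of the free one. Under the free law, the number of particles of $\Qst(i)$ that sit inside the shrunken central part of $T$ throughout that sub-interval is Poisson, with mean at least $c\,\eta$, where $c=c(r,d,\delta,L,\blk)>0$. Particles started in the subcube stay there with positive probability by the support theorem. So the failure probability is at most $e^{-c\eta}$. A union bound over the $O((L/r)^d\beta/\delta)$ pairs of subcube and sub-interval gives $1-\nu_E\le C e^{-c\eta}$, with $C$ independent of $\eta$. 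Hence \eqref{eq:GScond} holds for $\eta$ large, since $\varepsilon^2\eta L^d\to\infty$ as well. The same bound gives $\Prob[E^c]\le(4|\mathcal B|)^{-1}$. Here we use \Cref{rem:seedaccounting}: the right-hand side of \Cref{thm:GSsurround} is non-increasing in $\eta$, so a single $D_0$ serves for all large $\eta$, and $|\mathcal B|$ is fixed. Under the Palm measure the seed particle lies in $C_0$ after a shift of the tessellation by a fixed amount, and the probability is preserved because $E$ is increasing. \Cref{lem:seed} then produces an infected cell of $F$ with probability at least $\tfrac12$, and \Cref{lem:propagate}, which needs only $d\ge1$ for survival, gives survival. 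Hence $\eta_c(\infty,\mu)<\infty$ for every $\mu\in(0,\infty]$, including $\mu=\infty$, since recovery never acts on the paths we construct.

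The main obstacle is the step from covering to an open path. Continuous motion can isolate a particle momentarily, and at $\mu=\infty$ such an instant kills it. The single-occupant, overlapping-sub-interval formulation of $E$ is what I would try first to ensure that every particle used has a contact with slack throughout the interval on which it carries the infection.
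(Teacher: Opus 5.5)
Your proposal follows the same overall strategy as the paper. You use a purely geometric crowding event that involves no marks and so works for every $\mu\in(0,\infty]$, and you fix $L,\olap,\blk,w,\varepsilon$, let $\eta\to\infty$, and choose $D_0$ uniformly in $\eta$. Then \Cref{thm:GS} and \Cref{lem:seed,lem:propagate} finish the argument. The difference lies in the cell event.

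\textbf{The cell event.} The paper tiles all of $\Qst(i)$ by small cubes of side $\ell\le r/\sqrt d$. It asks that each cube contain a particle confined to it for the \emph{whole} block.
\begin{itemize}
\item At fixed $L$ and $\beta=\blk L^2$ the confinement probability $q_\beta$ is a fixed positive number, so the mean count $(1-\varepsilon)\eta\ell^dq_\beta$ tends to infinity.
\item The confined particles form one static connected network that is never isolated during the block, so no handovers are needed inside the block.
\item The conditioned law is handled by the second observation alone, since confinement to a small cube implies displacement inside $Q_{wL}$.
\end{itemize}
Your overlapping short sub-intervals with handovers in the overlaps also work. Their only advantage is that they avoid the factor $q_\beta$, which is exponentially small in $\beta/r^2$. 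That is irrelevant for finiteness, since $L$ is fixed.

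\textbf{The conditioned law in your version.} Your sub-interval event does not by itself imply the conditioning event. So neither the second nor the third observation applies verbatim; the third needs the particle kept away from $\partial\Qst(i)$ on the whole initial part of the block. The fix is to intersect with confinement to $\Qst(i)$ for the entire block. At fixed parameters this still has positive probability, and positivity is all that the bound ``mean $\ge c\eta$'' requires. Also phrase the occupancy as ``at least one particle staying in the subcube throughout'': ``exactly one'' would not be increasing. Your estimate shows that the former is what you mean.

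\textbf{Points to repair.}
\begin{itemize}
\item \emph{The overlap parameter.} $C_{i'}\subseteq\Qst_-(i)$ holds only for $\|i'-i\|_\infty\le\olap-1$. With $\olap=\max\{d,2\}$ your claim ``because $\olap\ge2$'' fails for $d\le2$. This includes $d=1$, the only dimension in which the proposition is actually needed. In that case (P\ref{P1}) is not established for cubes at $\ell^\infty$-distance $2$. Either take $\olap\ge3$, or mesh all of $\Qst(i)$ as the paper does. Restriction is already guaranteed by requiring the occupants to start in $\Qst(i)$, so you never need distance $L$ from the boundary.
\item \emph{Mesh alignment.} Align the mesh with $L\Z^d$, so that each $C_{i'}$ is a union of mesh subcubes.
\item \emph{The entering handover.} Make explicit the first handover, from an arbitrary infected $u\in C_i$ at time $\tau\beta$ to the first occupant $v$ of the subcube containing $X_u(\tau\beta)$. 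They are within $r/2<2r$ of each other at $\tau\beta$, so they are in contact on some $[\tau\beta,\tau\beta+t_1]$. Hence $u$ is not isolated there, and the length-one path is open. The paper stresses that this step is needed at every cell, not just at the origin. The particles delivered by (P\ref{P1}) at time $(\tau+1)\beta$ need not be occupants in the next block. Your covering argument handles this, but only implicitly.
\end{itemize}
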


\begin{proof}
Put $m=\lceil L\sqrt d/r\rceil$ and $\ell=L/m$, so that $\ell\le r/\sqrt d$. The tiling of
$\R^d$ by the half-open \emph{small cubes} $\prod_j[k_j\ell,(k_j+1)\ell)$, $k\in\Z^d$, refines the
tessellation of \Cref{sec:tessellation}. More precisely, the $m^d$ small cubes contained in a central
cube $C_i$ cover all of it but its upper faces, and likewise the $((2\olap+1)m)^d$ small cubes
contained in a super-cube $\Qst(i)$ cover all of it but its upper faces. The discarded faces are
Lebesgue-null and play no role below. Since a small cube has diameter
$\ell\sqrt d\le r$, any two points lying in the same small cube or in two adjacent ones are within
distance $2r$ of each other, hence in contact. Fix the tessellation of \Cref{sec:tessellation} with
parameters $L$, $\beta=\blk L^2$ and $\olap\ge\max\{d,2\}$, fix $\varepsilon=\tfrac12$ and
$\blk<\mathsf c_0$, take $w$ satisfying both the lower bound of \Cref{thm:GS} and
\eqref{eq:wlarge}, and let $\alpha_0$ be the constant given by \Cref{thm:GS} for these
parameters. Let $\widehat E(i,\tau)$ be the event that every small cube contained in the
super-cube $\Qst(i)$ contains at least one particle which remains inside that cube throughout
$[\tau\beta,(\tau+1)\beta]$.

The event $\widehat E(i,\tau)$ is increasing in $\mathcal P$, since it is a finite conjunction,
over a deterministic family of cubes, of statements asserting the existence of a particle with a
prescribed property. It is also restricted to the super-cell $\scell{i}{\tau}$ in the sense of
\Cref{def:restricted}, since by the choice of the cubes every witness lies in $\Qst(i)$ at the
initial time of the block. Note that both statements need the cubes to be contained in $\Qst(i)$
rather than merely to meet it. A cube meeting $\Qst(i)$ may hold its witness outside the
super-cube, and then the event is not restricted to the super-cell at all.

The condition \eqref{eq:GScond} is imposed on $\nu_{\widehat E}$, and we estimate that quantity.
A particle confined to a
small cube has displacement inside $Q_{wL}$, since $\ell\le r/\sqrt d\le wL/2$, so by the second
observation of \Cref{sec:GSthm} the conditioning only increases the probability that a given
particle is so confined. Hence, by
thinning, the confined starters of a given small cube are
Poisson of mean at least $(1-\varepsilon)\eta\,\ell^dq_\beta$, where $q_\beta>0$ is the probability
that a Brownian motion started uniformly in a cube of side $\ell$ remains in it up to time $\beta$.
Since there are $((2\olap+1)m)^d$ small cubes in $\Qst(i)$, we obtain
\[
  1-\nu_{\widehat E}\big((1-\varepsilon)\eta,\,Q_{(2\olap+1)L},\,Q_{wL},\,\beta\big)
  \ \le\ \big((2\olap+1)m\big)^d\exp\big\{-(1-\varepsilon)\eta\,\ell^dq_\beta\big\}
  \ \xrightarrow[\eta\to\infty]{}\ 0 ,
\]
with $L,\olap,\blk,w,\varepsilon$ held fixed. So $\log\frac1{1-\nu_{\widehat E}}\to\infty$, and we
may choose $\eta$ so large that both conditions of \eqref{eq:GScond} hold. Note that the left-hand
condition $\varepsilon^2\eta L^d\ge\alpha_0$ is also improved by increasing $\eta$.

On $\widehat E(i,\tau)$, each cube carries a confined particle, and any two confined particles of
the same or of adjacent cubes are in contact throughout the time interval. Hence the confined
particles of the super-cube form a connected subgraph of $G_t$ for every $t$ in that interval, and
none of them is ever isolated during it. So none of their recovery marks is effective, and this
holds for every $\mu\in(0,\infty]$. Consequently, if at the beginning of the interval some confined
particle of the super-cube is infected, then by \Cref{def:openpath} every confined particle of the
super-cube is infected at every later time of the interval. Indeed, the chain of contacts gives
clause (i) at arbitrary times, and the absence of effective marks gives clause (ii). This spread is
deterministic on $\widehat E$, so it involves no further event and in particular imposes no
further monotonicity requirement. It remains to pass from an arbitrary infected
particle to a confined one. This must be done at every cell, not only at the origin, since the
particles provided by (P\ref{P1}) at time $(\tau+1)\beta$ were confined during the previous
block and need not be confined during the next. Let then $u$ be any particle of $C_i$ infected at
time $\tau\beta$, and let $\mathfrak c$ be the small cube containing $X_u(\tau\beta)$. On
$\widehat E(i,\tau)$ that cube carries a particle $v$ confined to it throughout the block. If
$v=u$ then $u$ is itself confined and the conclusion below applies to it directly, by the
length-zero path from $(u,\tau\beta)$. We therefore assume $v\ne u$. Then $u$ and
$v$ lie in a common small cube at time $\tau\beta$, hence are in contact then, and by continuity
they remain in contact throughout $[\tau\beta,\tau\beta+t_1]$ for some $t_1>0$. So $u$ is not
isolated on $(\tau\beta,\tau\beta+t_1]$, no mark of $u$ is effective there, and the path of length
one from $(u,\tau\beta)$ to $(v,\tau\beta+t_1)$ is open. Moreover $u$ itself is infected throughout
$[\tau\beta,\tau\beta+t_1]$ and lies in $\Qst(i)$ there. Indeed, being in contact with $v$ it stays
within $2r$ of $\mathfrak c$, which meets $C_i$, hence within $3r$ of $C_i$, and
$\dist(C_i,\partial\Qst(i))=\olap L\ge3r$ for $L$ large. Thus a confined particle of $\Qst(i)$ is
infected, and by the previous paragraph all of them are infected at every later time of the block.
In the language of \Cref{def:carries}, $\widehat E$ therefore carries the infection. Property
(P\ref{P1}) holds because $C_{i'}\subseteq\Qst(i)$ whenever $\|i'-i\|_\infty\le\olap$, in particular
whenever $\|i'-i\|_\infty\le2$, and $C_{i'}$ is a union of small cubes, each of which carries a
confined particle, hence one infected and still in $C_{i'}$ at time $(\tau+1)\beta$. Property
(P\ref{P2}) holds because $u$ is infected and in $\Qst(i)$ on $[\tau\beta,\tau\beta+t_1]$, while the
confined particles, carrying no effective recovery mark during the block, are infected and in
$\Qst(i)$ from $\tau\beta+t_1$ on.

By \Cref{thm:GS} there is almost surely a two-sided Lipschitz surface $F$ on which $\widehat E$
holds. Fix $D_0$ as in \Cref{lem:seed} and enlarge $\eta$ once more so that
$\Prob[\widehat E^c]\le(4|\mathcal B|)^{-1}$. This is possible because $|\mathcal B|$ depends only
on $D_0$ and $d$, and because $D_0$ may be chosen uniformly in $\eta$, the tail in
\Cref{thm:GSsurround} being decreasing in $\eta$. \Cref{lem:seed} then gives, with probability at
least $\tfrac12$, an infected particle in the central cube of a cell of $F$ at the initial time
of its block, and \Cref{lem:propagate} propagates the infection for ever. Hence the infection
survives with positive probability and $\eta_c(\infty,\mu)<\infty$.
\end{proof}

For $d\ge2$ the conclusion of \Cref{prop:crowding} is superseded by \Cref{prop:supercrit}, which
gives the better bound $\etaB$ and uses nothing beyond continuum percolation.
\Cref{prop:crowding} is needed only for $d=1$. This is why the validity of \Cref{thm:GS} in that
dimension, recorded in \Cref{rem:GSd1}, is used here.
\section{Survival and positive speed at small recovery rate}
\label{sec:smallmu}

In this section we prove \Cref{thm:smallmu}. The model is the finite-rate one, that is,
$\lambda\in(0,\infty)$ and $\mathfrak r=\unc$, and $d\ge2$. The construction follows
\cite[Section 5]{GracarStauffer19a} and \cite[Section 3]{BaldassoStauffer23}, with two changes
required by the continuum. First, a contact is an event of positive duration, so transmission at
rate $\lambda$ needs a proximity window. Second, the seed has to be given a trajectory before it
is known which particle the seed is.

\Cref{thm:GS} requires a cell event which is a function of the configuration and increasing in it.
However, the particle carrying the infection into a cell is determined by the previous cell's
randomness, so the event cannot nominate it, and quantifying over all particles of the central
cube would destroy monotonicity. Following \cite{BaldassoStauffer23} we therefore attach to each
cell an independent auxiliary path and let a designated infected particle follow it. In this way
the cell event quantifies over points of space rather than over particles. We present this in
\Cref{sec:auxpath}.

\subsection{Parameters}
\label{sec:smallmuparams}

We use the tessellation of \Cref{sec:tessellation} with parameters $L$, $\beta=\blk L^2$ and
$\olap\ge\max\{d,3\}$. We take the value $3$ rather than $2$ because $(F_{\ref{F3}})$ below provides
witnesses in neighbours at $\ell^\infty$-distance $\olap-1$, and this range must cover the
distance $2$ that \Cref{lem:propagate} uses. The sprinkling parameter of \eqref{eq:GScond} is $\varepsilon=\tfrac12$
throughout this section. Fix a \emph{proximity window} $\mathfrak d=\mathfrak d(r,d)>0$ small
enough that, for a standard Brownian motion $B$,
\begin{equation}
\label{eq:p0}
  q_0:=\Prob\Big[\sup_{s\le\mathfrak d}|B_s|\le r/4\Big]\ \ge\ \tfrac34 ,
\end{equation}
which is possible by (H\ref{H3}). This is the continuum substitute for the requirement, on the
lattice, that two particles share a site. Put $p_{\mathrm{inf}}=1-e^{-\lambda\mathfrak d}>0$.
Set
\begin{equation}
\label{eq:smallmuparams}
  T_L=L^{5/3},\qquad W_L=L^{4/3},\qquad \mathcal J=\{1,\dots,\lfloor L^{1/3}\rfloor-1\},\qquad
  t_j=\tau\beta+jW_L ,
\end{equation}
so that $t_{|\mathcal J|}+\mathfrak d-\tau\beta\le T_L\le\beta/2$ for $L$ large. Note that the
final sampling window must close before $\tau\beta+T_L$, since \Cref{def:qualify} reads the
configuration only up to that time. Dropping the last index secures this when $L^{1/3}$ is an
integer.
Finally fix a grid
\begin{equation}
\label{eq:grid}
  \mathcal Z_i\subseteq C_i\quad\text{of mesh } \tfrac{r}{8\sqrt d},
  \qquad |\mathcal Z_i|\le C_d\big(L/r\big)^d ,
\end{equation}
so that every point of $C_i$ lies within distance $r/8$ of a point of $\mathcal Z_i$.

\subsection{Auxiliary paths and the modified construction}
\label{sec:auxpath}

To each cell $(i,\tau)$ we attach, independently of everything else and of the other cells,
\begin{itemize}
\item a Brownian path $\gamma^{(i,\tau)}=(\gamma^{(i,\tau)}_s)_{s\in[0,\beta]}$ with
  $\gamma^{(i,\tau)}_0=0$;
\item a Poisson process $\mathcal R^{(i,\tau)}$ of rate $\mu$ on $[0,\beta]$; and
\item a family $(\mathcal A^{(i,\tau)}_{\to v})_{v\in\mathcal P}$ of independent Poisson processes
  of rate $\lambda$ on $[0,\beta]$.
\end{itemize}
We modify the construction of \Cref{sec:graphical} as follows. At time $\tau\beta$, if the central
cube $C_i$ contains an infected particle, we designate one of them, say the one whose position at
$\tau\beta$ is smallest in the lexicographic order. This rule is measurable with respect to the
state at time $\tau\beta$. We let the designated particle follow the trajectory
\[
  X(\tau\beta)+\gamma^{(i,\tau)}_{\,\cdot\,-\tau\beta}
  \qquad\text{on }[\tau\beta,(\tau+1)\beta] ,
\]
carrying $\mathcal R^{(i,\tau)}$ as its recovery marks and $(\mathcal A^{(i,\tau)}_{\to v})_v$ as
its outgoing transmission marks during that interval. If $C_i$ contains no infected particle at
$\tau\beta$, the path $\gamma^{(i,\tau)}$ is simply not used. No particle is ever re-assigned in
the middle of a block. Every substitution is decided at a block-initial time, and a particle is
substituted at most once per block, since the central cubes are disjoint. In \Cref{lem:rerand} we
show that this re-randomisation preserves the law.

In \cite[Section 3]{BaldassoStauffer23} the auxiliary path is assigned instead on lateral entry,
to the first infected particle to enter the cell from a neighbour. Here we must not do this. An
offspring $v\in\Upsilon_z$, in the notation of \Cref{def:qualify} below, travelling towards a
neighbouring central cube would be liable to such a
reassignment, and \Cref{lem:spread}, which estimates its displacement by the killed heat kernel of
its own motion, would not apply. The route to the surface used here makes the lateral branch
unnecessary, since every entry into a cell of $F$ occurs at a block-initial time by
\Cref{lem:seed,lem:propagate}.

\begin{lemma}
\label{lem:rerand}
The modified construction has the same law as the original one.
\end{lemma}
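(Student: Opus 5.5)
The approach is a strong Markov / stopping-time argument applied block by block, using that the substitution is decided by information available at block-initial times. We order the blocks $[\tau\beta,(\tau+1)\beta]$, $\tau=0,1,2,\dots$, and prove by induction on $\tau$ that the joint law of (trajectories, recovery marks, transmission marks) restricted to $[0,\tau\beta]$, together with the infected set, is the same in both constructions. The state at time $\tau\beta$ determines, measurably, the set of designated particles, one per cell $(i,\tau)$ whose central cube contains an infected particle. Since the central cubes are disjoint, distinct cells designate distinct particles, and the designation is measurable with respect to $\mathcal F_{\tau\beta}$, the $\sigma$-field generated by the marked configuration up to time $\tau\beta$.

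The key step is the following. Conditionally on $\mathcal F_{\tau\beta}$, in the original construction the future increments $(X_u(\tau\beta+s)-X_u(\tau\beta))_{s\le\beta}$ of all particles are i.i.d.\ Brownian paths, independent of $\mathcal F_{\tau\beta}$ by the Markov property of each Brownian motion. The restrictions of the recovery and transmission Poisson processes to $(\tau\beta,(\tau+1)\beta]$ are likewise independent of $\mathcal F_{\tau\beta}$, by independence of Poisson increments. In the modified construction, for each designated particle we replace its increment by $\gamma^{(i,\tau)}$, its recovery marks by $\mathcal R^{(i,\tau)}$ (shifted), and its outgoing marks by $\mathcal A^{(i,\tau)}_{\to v}$. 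These auxiliary objects have the same laws and are independent of everything, including $\mathcal F_{\tau\beta}$ and the designation. Because the designation is an $\mathcal F_{\tau\beta}$-measurable injection from cells to particles, swapping a fresh independent copy in for an $\mathcal F_{\tau\beta}$-measurably chosen subfamily of an i.i.d.\ family independent of $\mathcal F_{\tau\beta}$ does not change the conditional law. Formally, we condition on the designation map and apply it to a product measure. Hence the conditional law of the block-$\tau$ data given $\mathcal F_{\tau\beta}$ agrees in both constructions, and the induction advances. Incoming transmission marks to a designated particle and the marks of undesignated particles are untouched, and independence across ordered pairs is preserved since only outgoing families of distinct particles are replaced.

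The step I expect to need care is making the Markov property precise on path space for a countable Poisson system with an $\mathcal F_{\tau\beta}$-measurable selection. I would handle it by working conditionally on $\mathcal P$ restricted to $[0,\tau\beta]$. The positions at $\tau\beta$ then have Poisson law, and the increments form i.i.d.\ marks independent of the past. This is the marking theorem, in the spirit of \Cref{prop:ergodic}, and it lets the selection be treated as a deterministic function of the conditioning data. Finally, since the laws agree on every finite horizon $[0,\tau\beta]$, they agree as laws on the full configuration, by Kolmogorov consistency. The infected sets, being functions of the marked configuration through \eqref{eq:Idef}, therefore have the same law too.
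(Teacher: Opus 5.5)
Your proposal is correct and follows essentially the same route as the paper's proof. In both, the designation is measurable with respect to the state at the block-initial time, and the substituted increment and marks are independent copies of what they replace, independent of that state. Distinct cells of a block designate distinct particles because the central cubes are disjoint, and the substitutions are performed block by block; your version only makes explicit the Markov/marking-theorem justification and the passage from finite horizons to the whole configuration, which the paper leaves implicit.
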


\begin{proof}
The designation is measurable with respect to the state at time $\tau\beta$. The objects
substituted for the designated particle on $(\tau\beta,(\tau+1)\beta]$, namely its trajectory
increment, its recovery marks and its outgoing transmission marks, are independent copies of the
objects they replace, independent of the state at time $\tau\beta$. Replacing a conditionally
independent family by an independent copy of itself leaves the joint law unchanged. The
substitutions made in distinct cells of a common time block concern distinct particles, since the
central cubes $C_i$ are disjoint, and those made in distinct blocks concern disjoint time
intervals. So the substitutions may be performed one block after another, each preserving the law
of the whole marked configuration.
\end{proof}

By \Cref{lem:rerand} the seed's trajectory and marks are external randomness attached to the
cell, and the cell event may refer to them without quantifying over particles.

\begin{remark}[The cost of the substitution]
\label{rem:designatedoverlap}
The substitution has an effect beyond the block in which it is made. What is replaced is a
trajectory increment, so a particle designated in some block $\tau'$ continues after
$(\tau'+1)\beta$ from a displaced position, and its trajectory in the process actually run differs
from its trajectory in $\omega$ at every later time. We resolve this in two separate points.

For the probability of the cell event nothing needs counting. \Cref{thm:GS} requires only that
the local event be increasing, be restricted to the super-cell, and have marginal probability
satisfying \eqref{eq:GScond} at the sprinkled intensity. It requires no independence, nor even any
quantitative decorrelation, between the events of different cells, since the correlations are
absorbed entirely by the loss of intensity from $\eta$ to $(1-\varepsilon)\eta$. By
\Cref{def:qualify} below, the set $\Upsilon_z$ is a
functional of $\omega$ and of the auxiliary objects of $(i,\tau)$ alone. This gives
monotonicity (\Cref{lem:accincreasing} below) and locality, and the marginal law is then given by
\Cref{lem:rerand}.

For the pathwise conclusion of \Cref{lem:acccarries} below, which needs a witness that is in
$C_{i'}$ in the process actually run and not merely in $\omega$, the count is finite and explicit.
One particle is designated per central cube at each block-initial time, and $\Qst(i)$ contains
$N=(2\olap+1)^d$ central cubes, so at most $N$ members of $\Upsilon_z$ have a run trajectory
differing from their $\omega$ trajectory during the block. Demanding $N+1$ witnesses in $(F_{\ref{F3}})$
therefore leaves at least one uncompromised, which is all that lemma uses.
\end{remark}

Note that the acceptable-cell event of \Cref{sec:cellevent} below is not a function of the
configuration alone. It also reads
the auxiliary objects $\gamma^{(i,\tau)}$, $\mathcal R^{(i,\tau)}$ and
$(\mathcal A^{(i,\tau)}_{\to v})_v$ of its own cell, which \Cref{def:restricted} does not mention.
Nothing need be added to \Cref{thm:GS} on that account. Every mark in the construction is Poisson,
and the auxiliary paths and mark families are attached to the cells independently of $\mathcal P$
and of each other. So the whole marked configuration is again a Poisson process on an enlarged
mark space, as in \Cref{prop:ergodic}, and the theorem may be read on that process, with distinct
cells carrying disjoint auxiliary objects. In particular the particles added by the sprinkling
arrive carrying their own marks, which the indexing of
$(\mathcal A^{(i,\tau)}_{\to v})_{v\in\mathcal P}$ by the random particle set provides.

\subsection{Qualifying proximity relative to a path}
\label{sec:proximity}

Fix a cell $(i,\tau)$, write $\gamma=\gamma^{(i,\tau)}$, and let
\begin{equation}
\label{eq:Jprime}
  \mathcal J'=\Big\{j\in\mathcal J:\ \sup_{s\in[t_j,t_j+\mathfrak d]}
  \big|\gamma_{s-\tau\beta}-\gamma_{t_j-\tau\beta}\big|\le r/4\Big\} .
\end{equation}
This is a subset of $\mathcal J$ determined by $\gamma$ alone. Since $W_L>\mathfrak d$ for $L$
large, the windows $[t_j,t_j+\mathfrak d]$ are disjoint, so the events $\{j\in\mathcal J'\}$ are
independent, each of probability $q_0\ge\frac34$.

\begin{definition}[Qualifying proximity to a reference path]
\label{def:qualify}
Let $z\in\mathcal Z_i$ and write $\pi_z(s)=z+\gamma_{s-\tau\beta}$ for the reference path issued
from $z$. Throughout this definition $X_v$ and $\mathcal R_v$ denote the trajectory and the
recovery marks of $v$ in the configuration $\omega$ of \Cref{sec:graphical}, before the
substitution of \Cref{sec:auxpath}. Only $\pi_z$ and $\mathcal A^{(i,\tau)}_{\to v}$ are auxiliary
objects of the cell. The set $\Upsilon_z$ defined below is thus a functional of $\omega$ and of
the auxiliary objects of $(i,\tau)$ alone, and the two are independent. A particle $v$
\emph{qualifies at $t_j$}, $j\in\mathcal J'$, if
\begin{enumerate}[(i)]
\item $|\pi_z(t_j)-X_v(t_j)|\le r/2$;
\item $v$ displaces by at most $r/4$ during $[t_j,t_j+\mathfrak d]$; and
\item $\mathcal A^{(i,\tau)}_{\to v}$ has a mark in $[t_j,t_j+\mathfrak d]$.
\end{enumerate}
It \emph{qualifies freshly at $t_j$} if in addition $|\pi_z(t_k)-X_v(t_k)|>r/2$ for every
$k\in\mathcal J'$ with $k>j$. We let $\Upsilon_z$ be the set of particles which qualify freshly at
some $j\in\mathcal J'$, carry no recovery mark during $[\tau\beta,\tau\beta+T_L]$, and remain in
the reduced super-cube $\Qst_-(i)$ throughout $[\tau\beta,\tau\beta+T_L]$.
\end{definition}

Freshness is the device used inside the proof of \cite[Lemma 2]{GracarStauffer19a}. Its effect
here is that the events at distinct sampling times are disjoint, so that $\Upsilon_z$ is a
thinning of $\mathcal P$ and $|\Upsilon_z|$ is exactly Poisson. Counting instead all particles
ever meeting the path would give only a stochastic lower bound, since collisions away from the
sampling grid would not be counted. Note that all three clauses, and both further requirements,
refer only to the window $[\tau\beta,\tau\beta+T_L]$. So the further thinning of
\Cref{lem:spread} below, which concerns $[\tau\beta+T_L,(\tau+1)\beta]$, is conditionally
independent of $\Upsilon_z$.

\begin{lemma}[Transmission to qualifying particles]
\label{lem:slack}
Let $x\in C_i$ and let $z\in\mathcal Z_i$ be within $r/8$ of $x$. If $v$ qualifies at $t_j$ with
$j\in\mathcal J'$ relative to $\pi_z$, then a particle following $x+\gamma$ is in contact with $v$
throughout $[t_j,t_j+\mathfrak d]$, and therefore transmits to $v$ at the mark of clause (iii).
\end{lemma}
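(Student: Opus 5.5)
The statement is a deterministic triangle-inequality check, and I would prove it by bounding the distance between the particle following $x+\gamma$ and $v$ at every time $s\in[t_j,t_j+\mathfrak d]$, then invoking the definition of an active transmission mark. Write $y(s)=x+\gamma_{s-\tau\beta}$ for the position of the particle following $x+\gamma$. Then $y(s)-\pi_z(s)=x-z$ for every $s$, so $|y(s)-\pi_z(s)|\le r/8$ throughout the block.

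Fix $s\in[t_j,t_j+\mathfrak d]$ and split
\[
  |y(s)-X_v(s)|\le |y(s)-\pi_z(s)|+|\pi_z(s)-\pi_z(t_j)|+|\pi_z(t_j)-X_v(t_j)|+|X_v(t_j)-X_v(s)| .
\]
The four terms are bounded as follows:
\begin{itemize}
\item The first term is at most $r/8$, by the choice of $z$ within $r/8$ of $x$.
\item The second equals $|\gamma_{s-\tau\beta}-\gamma_{t_j-\tau\beta}|$, which is at most $r/4$ because $j\in\mathcal J'$, by \eqref{eq:Jprime}.
\item The third is at most $r/2$, by clause (i) of \Cref{def:qualify}.
\item The fourth is at most $r/4$, by clause (ii).
\end{itemize}
The total is $9r/8<2r$. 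So the two particles are in contact, with room to spare, throughout the window.

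For the transmission, clause (iii) provides a mark of $\mathcal A^{(i,\tau)}_{\to v}$ in $[t_j,t_j+\mathfrak d]$. In the modified construction of \Cref{sec:auxpath}, these are exactly the outgoing transmission marks carried by the designated particle following $x+\gamma$. Since that particle is in contact with $v$ at the time of the mark, the mark is active in the sense of \Cref{sec:graphical}, and so it transmits to $v$, i.e.\ clause (i) of \Cref{def:openpath} holds at that time.

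There is one bookkeeping point to be explicit about, though it is not a real obstacle. Clauses (i) and (ii) refer to $X_v$ in $\omega$, before substitution, whereas contact in the process actually run uses $v$'s run trajectory. I would state the lemma for $v$ moving along its $\omega$-trajectory on the window, i.e.\ for $v$ not itself a designated particle in this block. As \Cref{rem:designatedoverlap} notes, the exceptional particles are handled separately by the count of witnesses. No probabilistic estimate is needed.
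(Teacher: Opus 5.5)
Your argument is correct and is exactly the paper's proof: the same four-term triangle inequality, with the bounds $r/8$, $r/4$, $r/2$, $r/4$ coming from the choice of $z$, from $j\in\mathcal J'$, and from clauses (i) and (ii), summing to $\tfrac98 r\le 2r$. One small correction to your bookkeeping aside: ``moving along its $\omega$-trajectory on the window'' is the right hypothesis, but it is not equivalent to ``not designated in this block.'' By \Cref{rem:designatedoverlap}, a particle designated in an earlier block also runs on a displaced trajectory at all later times.
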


\begin{proof}
For $s\in[t_j,t_j+\mathfrak d]$,
\begin{align*}
  \big|X_v(s)-(x+\gamma_{s-\tau\beta})\big|
  &\ \le\ \underbrace{|X_v(s)-X_v(t_j)|}_{\le\,r/4}
  +\underbrace{|X_v(t_j)-\pi_z(t_j)|}_{\le\,r/2}
  +\underbrace{|\pi_z(t_j)-\pi_z(s)|}_{\le\,r/4}
  +\underbrace{|z-x|}_{\le\,r/8}\\
  &\ \le\ \tfrac98 r\ \le\ 2r ,
\end{align*}
by clauses (ii) and (i), by $j\in\mathcal J'$, and by the choice of $z$.
\end{proof}

\begin{lemma}[Offspring count]
\label{lem:proximity}
There is $C_{\mathrm p}=C_{\mathrm p}(r,d,\lambda)>0$ such that the following holds for all large
$L$, under the free and under the conditioned law of \eqref{eq:GScond}. Condition on $\gamma$ and
on $\mathcal R^{(i,\tau)}$, on the event that
$|\mathcal J'|\ge\frac12|\mathcal J|$ and that $\sup_{s\le T_L}|\gamma_s|\le\tfrac12\olap L$, which
is the condition $(F_{\ref{F1}})$ of \Cref{sec:cellevent} below. Then, for each $z\in\mathcal Z_i$,
the set $\Upsilon_z$ is Poisson and
\[
  \E\big|\Upsilon_z\big|\ \ge\ C_{\mathrm p}\,\eta\,e^{-\mu T_L}\,L^{1/3} ,
\]
with $C_{\mathrm p}=\Theta(p_{\mathrm{inf}})=\Theta(\lambda)$ as $\lambda\to0$.
\end{lemma}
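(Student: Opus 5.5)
The plan is to realise $\Upsilon_z$, conditionally on $\gamma$ and $\mathcal R^{(i,\tau)}$, as an independent thinning of the Poisson process $\mathcal P$. Its mean is then a single integral against $\nu_\eta$, which we bound from below by summing over sampling times. Fix $\gamma$ and $\mathcal R^{(i,\tau)}$ on the stated event. Then $\mathcal J'$ and the reference path $\pi_z$ are deterministic. Whether a given particle $v$ belongs to $\Upsilon_z$ depends only on three things, namely its own trajectory $X_v$ on $[\tau\beta,\tau\beta+T_L]$, its own recovery marks $\mathcal R_v$, and its own auxiliary transmission process $\mathcal A^{(i,\tau)}_{\to v}$. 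These are independent across particles and independent of $\mathcal P$ given the trajectories, so the marking theorem applies as in \Cref{prop:ergodic}.

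Hence $\Upsilon_z$ is Poisson with mean $\int p(w)\,\nu_\eta(\mathrm dw)$, where $p(w)$ is the probability, over the marks, that a particle with path $w$ lies in $\Upsilon_z$. Freshness makes the events "qualifies freshly at $t_j$" disjoint over $j\in\mathcal J'$. This gives $p=\sum_{j\in\mathcal J'}p_j$ exactly, with no inclusion--exclusion loss. Under the conditioned law, each particle is conditioned independently on its own path, so the same marking argument applies. The intensity is then controlled by \Cref{lem:condkernel}(\ref{ck:mass}) together with the third observation of \Cref{sec:GSthm}, which costs only a constant factor because all requirements concern $[\tau\beta,\tau\beta+T_L]$ with $T_L\ll\beta$ and keep the particle at distance at least $L$ from $\partial\Qst(i)$.

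Next fix $j\in\mathcal J'$ and estimate $\int p_j\,\mathrm d\nu_\eta$. By stationarity, the particles at time $t_j$ form a Poisson process of intensity $\eta$, and those within $r/2$ of $\pi_z(t_j)$ have mean $\eta\,\omega_d(r/2)^d$, which gives clause (i). For each such particle we then use the Markov property at $t_j$ to bound the remaining probabilities.
\begin{itemize}
\item Clause (ii) holds with probability $q_0\ge\tfrac34$ by \eqref{eq:p0}.
\item Clause (iii) holds with probability $p_{\mathrm{inf}}$, independently of the rest.
\item The absence of recovery marks on $[\tau\beta,\tau\beta+T_L]$ has probability $e^{-\mu T_L}$, again independently.
\item Confinement to $\Qst_-(i)$ follows from $(F_{\ref{F1}})$. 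The particle is within $\tfrac12\olap L+r$ of $C_i$ at time $t_j$. Its displacement over a time $T_L=L^{5/3}$ exceeds $L/2$ only with probability at most $8d\,e^{-cL^{1/3}}$ by (H\ref{H3}) or \Cref{lem:condkernel}(\ref{ck:exit}), and this suffices since $\dist(C_i,\partial\Qst_-(i))=(\olap-1)L$ with $\olap\ge3$.
\end{itemize}

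The step I expect to need the most care is freshness, that is, showing that $v$ does not return within $r/2$ of $\pi_z(t_k)$ for any $k>j$ in $\mathcal J'$. Given $\gamma$, the point $\pi_z(t_k)$ is deterministic, and $X_v(t_k)$ given $X_v(t_j+\mathfrak d)$ has density bounded by $(2\pi(t_k-t_j-\mathfrak d))^{-d/2}$ by (H\ref{H1}), or by twice that under the conditioned law by \Cref{lem:condkernel}(\ref{ck:up}). This bound holds whatever the shift is, so the union bound gives
\[
  \Prob(\text{some later return})\ \le\ C_d r^d\sum_{m\ge1}(mW_L/2)^{-d/2}\ \le\ C r^d W_L^{-d/2}\sum_{m\ge1}m^{-1},
\]
with the sum truncated at $|\mathcal J|\le L^{1/3}$. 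For $d\ge2$ this is at most $C r^dL^{-4/3}\log L\to0$.

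Putting the factors together, each $j\in\mathcal J'$ contributes at least
$\tfrac12\,\eta\,\omega_d(r/2)^d\,q_0\,p_{\mathrm{inf}}\,e^{-\mu T_L}$ for $L$ large. Summing over the at least $\tfrac12|\mathcal J|\ge\tfrac14L^{1/3}$ indices of $\mathcal J'$ gives the bound $C_{\mathrm p}\eta e^{-\mu T_L}L^{1/3}$, with $C_{\mathrm p}=c(r,d)\,p_{\mathrm{inf}}$. Every factor other than $p_{\mathrm{inf}}$ is free of $\lambda$, so $C_{\mathrm p}=\Theta(p_{\mathrm{inf}})=\Theta(\lambda)$ as $\lambda\to0$.
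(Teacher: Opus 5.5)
Your proposal is correct and follows the paper's proof essentially step for step. The shared steps are: Poisson thinning of the marked configuration; disjointness of the fresh-qualification events over $j\in\mathcal J'$; a per-$j$ lower bound from the Poisson count near $\pi_z(t_j)$ times $q_0\,p_{\mathrm{inf}}\,e^{-\mu T_L}$, with the confinement and freshness errors controlled by (H3) and (H1); and the third observation of \Cref{sec:GSthm} for the conditioned law. One small slip: with $\olap=3$ the particle can sit at distance $\tfrac12\olap L+r/2$ from $C_i$ at time $t_j$, so the displacement radius in your confinement step must be $L/2-r$, as in the paper, rather than $L/2$; this leaves the $e^{-cL^{1/3}}$ bound unchanged.
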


\begin{proof}
We condition in addition on the state at time $\tau\beta$. Whether $v\in\Upsilon_z$ is then a
measurable function of $v$'s increment on $[\tau\beta,\tau\beta+T_L]$, of its recovery marks there
and of the process $\mathcal A^{(i,\tau)}_{\to v}$, all read, by \Cref{def:qualify}, in the
configuration $\omega$ before any substitution. These are independent across $v$ and independent of
the point process of positions at time $\tau\beta$, which is Poisson of intensity $\eta$. So
$\Upsilon_z$ is a thinning of that process and hence Poisson.

Fix $j\in\mathcal J'$. By stationarity the particles at time $t_j$ form a Poisson process of
intensity $\eta$. The conditioning has made the point $\pi_z(t_j)$ deterministic, so the number of
particles within distance $r/2$ of it is Poisson of mean $\eta\omega_d(r/2)^d$. For such a particle
$v$, let $Q_j$ be the event that clauses (ii) and (iii) hold, that $v$ carries no recovery mark on
$[\tau\beta,\tau\beta+T_L]$ and that $v$ stays in $\Qst_-(i)$ there. The recovery marks and the
transmission process are independent of the trajectory, so those two factors split off. Clause
(ii) and the confinement are both properties of the trajectory, and we combine them by
$\Prob[A\cap B]\ge\Prob[A]-\Prob[B^c]$. On $(F_{\ref{F1}})$ the point $\pi_z(t_j)$ lies at distance at
least $(\tfrac{\olap}2-1)L\ge L/2$ from $\partial\Qst_-(i)$, since $z\in C_i$ is at distance
$(\olap-1)L$ from it and $|\gamma|\le\tfrac12\olap L$. As $v$ is within $r/2$ of $\pi_z(t_j)$,
(H\ref{H3}) with radius $L/2-r$ and time $T_L=L^{5/3}$ bounds the probability that $v$ leaves
$\Qst_-(i)$ by $C\exp\{-cL^{1/3}\}$. Hence, by \eqref{eq:p0} and the definition of
$p_{\mathrm{inf}}$,
\[
  \Prob[Q_j]\ \ge\ p_{\mathrm{inf}}\,e^{-\mu T_L}\big(q_0-Ce^{-cL^{1/3}}\big)
  \ \ge\ \tfrac12\,p_{\mathrm{inf}}\,e^{-\mu T_L}
\]
for $L$ large, using $q_0\ge\frac34$. Conditionally on $Q_j$, the probability that $v$ fails to
qualify freshly is bounded, by the Markov property at $t_j+\mathfrak d$ and (H\ref{H1}), by
\[
  \sum_{k>j}C_d\big(W_L/2\big)^{-d/2}(k-j)^{-d/2}\omega_d(r/2)^d\ \le\ C\,L^{1/3-2d/3}
  \ \le\ \tfrac12
\]
for $L$ large, in every dimension $d\ge1$. Note that conditioning avoids subtracting one bound
from another. Hence the expected number of particles qualifying freshly at $t_j$ and lying in
$\Upsilon_z$ is at least $\frac14\eta\omega_d(r/2)^dp_{\mathrm{inf}}e^{-\mu T_L}$. The events at
distinct $j$ are disjoint, and $|\mathcal J'|\ge\frac12|\mathcal J|\ge\frac13L^{1/3}$ for $L$
large, so summing gives the claim with $C_{\mathrm p}=\frac1{12}\omega_d(r/2)^dp_{\mathrm{inf}}$.

Under the conditioned law, the event that the particle started at $y$ lies in $\Upsilon_z$ is
determined by its trajectory and marks on $[\tau\beta,\tau\beta+T_L]$ and keeps it in
$\Qst_-(i)$, at distance at least $L$ from $\partial\Qst(i)$, so by the third observation of
\Cref{sec:GSthm} its probability decreases by at most a constant factor. The claim follows
after decreasing $C_{\mathrm p}$ by that factor.
\end{proof}

\begin{lemma}[Survival and spread over the rest of the block]
\label{lem:spread}
Fix the ratio $\blk=\beta/L^2$. There is $c_p=c_p(\olap,\blk,d)>0$ such that the following holds
for all large $L$. Let $\mathcal F$ be generated by the auxiliary objects of the cell together
with the marked configuration on $[\tau\beta,\tau\beta+T_L]$, so that each $\Upsilon_z$ is
$\mathcal F$-measurable and its particles lie in the reduced super-cube $\Qst_-(i)$ at time
$\tau\beta+T_L$. Then,
conditionally on $\mathcal F$, the events
\begin{align*}
  \mathcal S_v&=\big\{v\text{ carries no recovery mark in }(\tau\beta+T_L,(\tau+1)\beta]\big\},\\
  \mathcal T_v^{i'}&=\big\{v\text{ stays in }\Qst(i)\text{ on }[\tau\beta+T_L,(\tau+1)\beta]
  \text{ and }v\in C_{i'}\text{ at time }(\tau+1)\beta\big\}
\end{align*}
are independent over $v\in\Upsilon_z$, and $\Prob[\mathcal S_v\cap\mathcal T_v^{i'}\mid\mathcal F]
\ge c_p\,e^{-\mu\beta}$ for every $v\in\Upsilon_z$ and every $i'$ with
$\|i'-i\|_\infty\le\olap-1$. Consequently, if $|\Upsilon_z|\ge n$ then for every $\nu$ with $1\le\nu\le n+1$
\[
  \Prob\big[\text{fewer than }\nu\text{ particles }v\in\Upsilon_z\text{ satisfy }
  \mathcal S_v\cap\mathcal T_v^{i'}\ \big|\ \mathcal F\big]
  \ \le\ \binom{n}{\nu-1}\big(1-c_pe^{-\mu\beta}\big)^{n-\nu+1} .
\]
\end{lemma}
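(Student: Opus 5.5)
The plan is to condition on $\mathcal F$ and then use the Markov property of the Brownian trajectories at time $\tau\beta+T_L$, together with the independence of recovery marks across particles. Since $\mathcal F$ contains the marked configuration only up to $\tau\beta+T_L$, the future increments $(X_v(\tau\beta+T_L+s)-X_v(\tau\beta+T_L))_{s\ge0}$ are independent standard Brownian motions for $v\in\Upsilon_z$. This follows from the strong Markov property applied to the Poisson system, with each particle's future depending only on its own path. Likewise, the recovery marks $\mathcal R_v$ restricted to $(\tau\beta+T_L,(\tau+1)\beta]$ are independent of $\mathcal F$ and across $v$, because Poisson processes have independent increments. We use that $\Upsilon_z$ is defined in terms of $\omega$ before the substitution, so no designated-path issue arises here. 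Each event $\mathcal S_v\cap\mathcal T_v^{i'}$ is a function of $v$'s own future increment and marks, given $X_v(\tau\beta+T_L)$, which is $\mathcal F$-measurable. Conditional independence over $v$ follows.

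For the lower bound we factor the probability. First, $\Prob[\mathcal S_v\mid\mathcal F]=e^{-\mu(\beta-T_L)}\ge e^{-\mu\beta}$. Second, we need $\Prob[\mathcal T_v^{i'}\mid\mathcal F]\ge c_p$ uniformly over starting points $x\in\Qst_-(i)$ and targets $i'$ with $\|i'-i\|_\infty\le\olap-1$. We remain in Brownian scaling: the remaining time is $\beta-T_L\in[\beta/2,\beta]$ with $\beta=\blk L^2$. Since $x$ lies at distance at least $L$ from $\partial\Qst(i)$ and $C_{i'}\subseteq\Qst_-(i)$, rescaling space by $L$ reduces this to the following claim. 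A Brownian motion run for time $s\in[\blk/2,\blk]$, started at a point of the compact set $[-\olap+1,\olap]^d$ (shifted), stays in the open box $(-\olap,\olap+1)^d$ and ends in a given unit cube inside the reduced box with probability bounded below. This probability is positive for each starting point, target cube and time by the support theorem. It is continuous in the starting point and time, and there are finitely many targets. Taking the minimum over the compact parameter set gives $c_p(\olap,\blk,d)>0$ independent of $L$. The sub-interval $[T_L/\beta]$ shrinks as $L\to\infty$, so the uniformity in $s\in[\blk/2,\blk]$ covers all large $L$. This compactness/continuity step is the main obstacle. If it proves awkward, we can instead give an explicit tube argument: follow a straight segment from $x$ to the centre of $C_{i'}$ inside $\Qst_-(i)$, and bound the probability of staying within $L/4$ of it by the killed kernel, or by Brownian scaling with a fixed number of steps.

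The binomial tail is then elementary. Given $\mathcal F$, the indicators $\mathbf 1_{\mathcal S_v\cap\mathcal T_v^{i'}}$, for $v$ among any fixed $n$ elements of $\Upsilon_z$, are independent Bernoulli variables with success probabilities at least $p=c_pe^{-\mu\beta}$. If fewer than $\nu$ succeed, then some set of $n-\nu+1$ of them all fail. A union bound over the $\binom{n}{n-\nu+1}=\binom n{\nu-1}$ such sets gives the stated bound $\binom n{\nu-1}(1-p)^{n-\nu+1}$. Restricting to $n$ particles when $|\Upsilon_z|\ge n$ only makes success harder, so the bound holds as stated.
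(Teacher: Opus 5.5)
Your proposal is correct and follows essentially the same route as the paper. You use the Markov property of the trajectories and the independent increments of the recovery marks at time $\tau\beta+T_L$, read in $\omega$ before the substitution, to get conditional independence and the factor $e^{-\mu(\beta-T_L)}$, and then Brownian scaling to a fixed cube with rescaled time in $[\blk/2,\blk]$ to get a uniform lower bound $c_p$. Your support-theorem-plus-compactness argument, for which lower semicontinuity in $(x,s)$ already suffices, is a hands-on version of the paper's appeal to positivity of the killed heat kernel on compact subsets of the interior, and your union bound over the $\binom{n}{\nu-1}$ failing sets is exactly the binomial lower-tail estimate the paper invokes.
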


\begin{proof}
Conditionally on $\mathcal F$ the trajectories of distinct particles after $\tau\beta+T_L$ are
independent Brownian motions started at their current positions, and their recovery marks after
that time are independent fresh Poisson processes. Both are independent of $\mathcal F$. This is
where we use that \Cref{def:qualify} only reads the window $[\tau\beta,\tau\beta+T_L]$.
This gives the conditional independence, and
$\Prob[\mathcal S_v\mid\mathcal F]=e^{-\mu(\beta-T_L)}\ge e^{-\mu\beta}$. By
\Cref{def:qualify} the trajectories and marks meant here are those of $\omega$, before any
substitution. The discrepancy between $\omega$ and the process actually run is confined to the at
most $N$ compromised members identified in \Cref{rem:designatedoverlap}, and is paid for by the
surplus of witnesses that $(F_{\ref{F3}})$ requires, not by an exception here.

For $\mathcal T_v^{i'}$ the particle is at some $x\in\Qst_-(i)$ at time $\tau\beta+T_L$ and the
time remaining is $\theta:=\beta-T_L\in[\beta/2,\beta]$ for $L$ large. Writing
$p^{\Qst(i)}_\theta$ for the transition density of Brownian motion killed on leaving $\Qst(i)$,
\[
  \Prob\big[\mathcal T_v^{i'}\ \big|\ \mathcal F\big]
  \ =\ \int_{C_{i'}}p^{\Qst(i)}_{\theta}(x,y)\,\mathrm dy\ \ge\ c_p .
\]
Indeed, after the scaling $x\mapsto x/L$ the domain becomes the fixed cube $[-\olap,\olap+1]^d$,
the starting point ranges over the fixed cube $[-\olap+1,\olap]^d$, the target over a fixed unit
sub-cube of $[-\olap+1,\olap]^d$, and the elapsed time over $\theta/L^2\in[\blk/2,\blk]$. Both the
starting point and the target are thus confined to a compact subset of the interior of the domain,
at distance at least $1$ from its boundary in the rescaled picture. The killed heat kernel of a
bounded Lipschitz domain is bounded below on such compacts by a positive constant depending only on
the domain, the target and the time range. In particular $c_p$ does not depend on $L$. This is
where we use the reduction from $\Qst(i)$ to $\Qst_-(i)$ and the restriction to
$\|i'-i\|_\infty\le\olap-1$. The two events concern
disjoint sources of randomness, so their conditional probabilities multiply. Conditionally on
$\mathcal F$ the number of $v\in\Upsilon_z$ satisfying $\mathcal S_v\cap\mathcal T^{i'}_v$
therefore dominates a binomial variable with $n$ trials and success probability
$c_pe^{-\mu\beta}$, and the display is the corresponding lower tail bound.
\end{proof}

\subsection{The cell event}
\label{sec:cellevent}

\begin{definition}[Acceptable cell]
\label{def:acceptable}
Put $n_0:=\lceil\tfrac18C_{\mathrm p}\eta e^{-\mu T_L}L^{1/3}\rceil$. This is an eighth of the
mean given by \Cref{lem:proximity} at intensity $\eta$, and hence a quarter of it at the sprinkled
intensity $(1-\varepsilon)\eta=\eta/2$ at which \eqref{eq:GScond} evaluates the event. The cell
$(i,\tau)$ is \emph{acceptable} if the following three conditions hold.
\begin{enumerate}[$(F_1)$]
\item\label{F1} $\mathcal R^{(i,\tau)}$ has no mark in $[0,T_L]$; the path $\gamma^{(i,\tau)}$
  satisfies $\sup_{s\le T_L}|\gamma^{(i,\tau)}_s|\le\tfrac12\olap L$, so that the reference paths
  $\pi_z$, $z\in\mathcal Z_i$, stay at distance at least $(\tfrac{\olap}2-1)L$ from
  $\partial\Qst_-(i)$; and $|\mathcal J'|\ge\frac12|\mathcal J|$.
\item\label{F2} for every $z\in\mathcal Z_i$, the set $\Upsilon_z$ satisfies
  $|\Upsilon_z|\ge n_0$.
\item\label{F3} for every $z\in\mathcal Z_i$ and every $i'$ with $\|i'-i\|_\infty\le\olap-1$,
  there are at least $N+1=(2\olap+1)^d+1$ distinct particles $v\in\Upsilon_z$ which carry no
  recovery mark in $(\tau\beta+T_L,(\tau+1)\beta]$, remain in $\Qst(i)$ throughout
  $[\tau\beta+T_L,(\tau+1)\beta]$, and lie in $C_{i'}$ at time $(\tau+1)\beta$.
\end{enumerate}
\end{definition}

\begin{lemma}
\label{lem:accincreasing}
The event that $(i,\tau)$ is acceptable is increasing in the particle configuration, and is
restricted to the super-cell $\scell{i}{\tau}$ in the sense of \Cref{def:restricted}, together with
the auxiliary randomness of the cell.
\end{lemma}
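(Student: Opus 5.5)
We verify the two properties separately, taking $(F_{\ref{F1}})$, $(F_{\ref{F2}})$ and $(F_{\ref{F3}})$ one at a time. For each condition we identify which objects it reads: the particle configuration $\omega$ (trajectories and recovery marks, before substitution), or the auxiliary objects $\gamma^{(i,\tau)}$, $\mathcal R^{(i,\tau)}$ and $(\mathcal A^{(i,\tau)}_{\to v})_v$ of the cell. Condition $(F_{\ref{F1}})$ involves only auxiliary objects. The set $\mathcal J'$ of \eqref{eq:Jprime} is a function of $\gamma$ alone, so $(F_{\ref{F1}})$ is trivially increasing and involves no particle at all.

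\emph{Monotonicity.} Let $\omega\subseteq\omega'$ be two configurations that agree on common particles, as in \Cref{prop:mono}(ii), with the same auxiliary objects. The central step is to show that $\Upsilon_z(\omega)\subseteq\Upsilon_z(\omega')$. By \Cref{def:qualify}, membership of $v$ in $\Upsilon_z$ is decided by four ingredients: clauses (i)--(iii), freshness, the absence of recovery marks of $v$ on $[\tau\beta,\tau\beta+T_L]$, and confinement of $v$ to $\Qst_-(i)$. Each of these refers only to $\pi_z$, to $\mathcal A^{(i,\tau)}_{\to v}$, and to the trajectory and marks of $v$ itself. This includes freshness, which compares $X_v$ with $\pi_z$ at later sampling times and does not involve other particles. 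Moreover, the recovery rule is $\unc$, so no mark's effectiveness depends on neighbours. Hence membership is a property of the pair $(v,\text{auxiliary objects})$ alone, and adding particles can only add members. This uses that $\mathcal A^{(i,\tau)}_{\to v}$ is attached to $v$ and is unchanged when $v$ is common to both configurations, as noted after \Cref{rem:designatedoverlap}. Given the inclusion, $(F_{\ref{F2}})$ is a lower bound on $|\Upsilon_z|$ and is therefore increasing. Condition $(F_{\ref{F3}})$ asks for at least $N+1$ members of $\Upsilon_z$ with further properties, and these properties are again intrinsic to each $v$: no recovery mark of $v$ in $(\tau\beta+T_L,(\tau+1)\beta]$, and the trajectory of $v$ staying in $\Qst(i)$ and ending in $C_{i'}$. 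So the set of such witnesses also grows with the configuration. A finite intersection of increasing events over $z\in\mathcal Z_i$ and over $i'$ is increasing.

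\emph{Restriction.} Each particle $v$ counted in $(F_{\ref{F2}})$ or $(F_{\ref{F3}})$ lies in $\Upsilon_z$, so it remains in $\Qst_-(i)\subset\Qst(i)$ throughout $[\tau\beta,\tau\beta+T_L]$. In particular it lies in $\Qst(i)$ at time $\tau\beta$. The events therefore read only particles located in $\Qst(i)$ at time $\tau\beta$, together with their trajectories and marks on $[\tau\beta,(\tau+1)\beta]\subseteq[\tau\beta,(\tau+\olap)\beta]$ and the auxiliary objects of the cell. More precisely, the event can be written as ``there exist particles starting in $\Qst(i)$ with property $\mathcal X$'', so it is measurable with respect to the $\sigma$-field generated by the restriction of the marked configuration to those particles. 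The clause of \Cref{def:restricted} about particles within $2r$ of the region is not needed, because under $\unc$ no effectiveness condition refers to neighbours.

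The step I expect to need the most care is justifying that membership in $\Upsilon_z$ really does not depend on other particles. One must check in particular that the lexicographic designation and the substitution of \Cref{sec:auxpath} do not enter: \Cref{def:qualify} explicitly reads $\omega$ before substitution, and the designated seed appears only through the auxiliary path, not as a particle of the configuration. Once this is confirmed, everything else is a routine verification.
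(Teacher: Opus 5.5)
Your proposal is correct and follows essentially the paper's own proof. For $(F_2)$ and $(F_3)$ you show that membership in $\Upsilon_z$, and in the witness set of $(F_3)$, is a predicate of the single particle $v$ together with the cell's auxiliary objects, with freshness negative only in $v$'s own position; these sets therefore grow under $\omega\subseteq\omega'$ while the index sets $\mathcal Z_i$, $\{i':\|i'-i\|_\infty\le\olap-1\}$ and $\mathcal J'$ do not, and locality follows because every member of $\Upsilon_z$ lies in $\Qst_-(i)\subseteq\Qst(i)$ at time $\tau\beta$. Your extra remarks (the $2r$-clause of \Cref{def:restricted} is idle under $\unc$, and the designation and substitution never enter because \Cref{def:qualify} reads $\omega$ before substitution) agree with the paper's surrounding text.
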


\begin{proof}
Condition $(F_{\ref{F1}})$ does not involve the particles at all. For $(F_{\ref{F2}})$ and $(F_{\ref{F3}})$ it suffices to
show that $v\in\Upsilon_z$ is a predicate of the single particle $v$. By \eqref{eq:Jprime} the set
$\mathcal J'$ and the reference path $\pi_z(s)=z+\gamma_{s-\tau\beta}$ are functions of
$\gamma^{(i,\tau)}$ alone, and the grid $\mathcal Z_i$ and the threshold $n_0$ are deterministic.
Each of clauses (i)--(iii) of \Cref{def:qualify}, the freshness clause, the absence of recovery
marks and the confinement to $\Qst_-(i)$ constrains only $v$'s own trajectory, its own recovery
marks and the process $\mathcal A^{(i,\tau)}_{\to v}$. Note that clause (iii) requires only that
this process have a mark; that the mark is active is shown separately in \Cref{lem:slack}. In
particular the freshness clause, though negative, is negative in $v$'s own position only, so
adding particles cannot expel a particle from $\Upsilon_z$. Hence
$\Upsilon_z(\omega)\subseteq\Upsilon_z(\omega')$ whenever $\omega\subseteq\omega'$ in the sense of
\Cref{prop:mono}(ii), and the same holds for the set of witnesses of $(F_{\ref{F3}})$. Both conditions are
lower bounds on the cardinality of such a set, and the index sets $\mathcal Z_i$,
$\{i':\|i'-i\|_\infty\le\olap-1\}$ and $\mathcal J'$ do not grow with the configuration, so no new
conjunct is created. The event is therefore increasing.

For locality, \Cref{def:qualify} requires every member of $\Upsilon_z$ to remain in $\Qst_-(i)$
throughout $[\tau\beta,\tau\beta+T_L]$, hence to lie in $\Qst_-(i)\subseteq\Qst(i)$ already at time
$\tau\beta$. This is the time at which \Cref{def:restricted} evaluates membership of the region.
The witnesses of $(F_{\ref{F3}})$ are members of $\Upsilon_z$, and their further requirements concern
only their own trajectories and marks on $[\tau\beta+T_L,(\tau+1)\beta]$. No clause refers to a
particle entering $\Qst(i)$ after $\tau\beta$, so the event is restricted to
$\Qst(i)\times[\tau\beta,(\tau+1)\beta]$, and therefore to the super-cell, together with
$\gamma^{(i,\tau)}$, $\mathcal R^{(i,\tau)}$ and $(\mathcal A^{(i,\tau)}_{\to v})_v$.
\end{proof}

\begin{lemma}[The carrying property of acceptable cells]
\label{lem:acccarries}
The event $E(i,\tau)=\{(i,\tau)\text{ is acceptable}\}$ carries the infection in the sense of
\Cref{def:carries}.
\end{lemma}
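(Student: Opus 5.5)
We work on the acceptable-cell event and suppose some particle of $C_i$ is infected at time $\tau\beta$. By the modified construction of \Cref{sec:auxpath}, the lexicographically smallest such particle is designated. Write $x=X(\tau\beta)\in C_i$ for its position. On $[\tau\beta,(\tau+1)\beta]$ it follows $x+\gamma^{(i,\tau)}_{\cdot-\tau\beta}$, it carries $\mathcal R^{(i,\tau)}$ as recovery marks, and its outgoing transmission marks are $(\mathcal A^{(i,\tau)}_{\to v})_v$. Choose $z\in\mathcal Z_i$ within $r/8$ of $x$, using \eqref{eq:grid}.

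The plan proceeds in three steps.

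\emph{First step: the designated particle stays infected and near the cell.} By $(F_{\ref{F1}})$, $\mathcal R^{(i,\tau)}$ has no mark in $[0,T_L]$, so the designated particle stays infected on $[\tau\beta,\tau\beta+T_L]$ via the length-zero path. Its position is within $\tfrac12\olap L$ of $C_i$, hence inside $\Qst(i)$.

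\emph{Second step: the infection passes to the witnesses.} Take any $v\in\Upsilon_z$ that qualifies freshly at some $t_j$ with $j\in\mathcal J'$. \Cref{lem:slack} shows that the designated particle is in contact with $v$ throughout $[t_j,t_j+\mathfrak d]$ and transmits at the mark of clause (iii). Since $t_j+\mathfrak d\le\tau\beta+T_L$, the designated particle is still infected at that mark. This gives an open path of length one to $(v,s)$ with $s\in[t_j,t_j+\mathfrak d]$. Now consider the witnesses of $(F_{\ref{F3}})$ for a given $i'$ with $\|i'-i\|_\infty\le2\le\olap-1$, where the inequality uses $\olap\ge3$. Each witness has no recovery mark on $[\tau\beta,\tau\beta+T_L]$ by \Cref{def:qualify}, and none on $(\tau\beta+T_L,(\tau+1)\beta]$ by $(F_{\ref{F3}})$. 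So the path extends to $(v,(\tau+1)\beta)$ under the unconditional rule. The witness stays in $\Qst_-(i)\subseteq\Qst(i)$ on the first window and in $\Qst(i)$ afterwards, and it lies in $C_{i'}$ at time $(\tau+1)\beta$. This gives (P\ref{P1}). For (P\ref{P2}), use the designated particle on $[\tau\beta,s]$ and then the witness on $[s,(\tau+1)\beta]$.

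\emph{Third step (the main obstacle): passing from $\omega$ to the process actually run.} \Cref{def:qualify} reads trajectories and marks in $\omega$. The run process differs from $\omega$ only for particles designated at block-initial times, whose increments are replaced. Within the current block, at most one particle per central cube of $\Qst(i)$ is designated at $\tau\beta$, so at most $N$ particles are affected. There is a subtlety here: earlier designations shift a particle's position at later times. I would handle it by reading $X_v$ in \Cref{def:qualify} as $\omega$'s increments issued from run positions, or else by noting that the remark's count of $N$ refers to the particles whose run trajectory on this block differs from the one used to qualify them. Among the $N+1$ witnesses, at least one is uncompromised. For that witness, clauses (i)--(ii), the confinement and the absence of marks hold in the run process, and the incoming transmission marks $\mathcal A^{(i,\tau)}_{\to v}$ are indeed the designated particle's outgoing marks there. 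The argument of the second step therefore applies verbatim to this witness.

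Finally, the designated particle is not itself a witness counted against this argument. If it happens to coincide with some $v$, the path is trivial and only helps. This concludes (P\ref{P1}) and (P\ref{P2}).
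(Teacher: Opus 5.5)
Your first two steps follow the paper's proof. The paper uses the same designated particle and grid point $z$, uses $(F_1)$ to keep the designated particle infectious and inside $\Qst(i)$ until $\tau\beta+T_L$, invokes \Cref{lem:slack} for the transmissions, and extends the $(F_3)$ witnesses to $(\tau+1)\beta$, with $\olap\ge3$ covering $\|i'-i\|_\infty\le2$. It then concludes as your option (b) does, by quoting from \Cref{rem:designatedoverlap} that at most $N$ witnesses are compromised.

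The caveat you raise is legitimate, and it is the one weak point of both arguments. That remark itself says a particle designated in an earlier block is displaced at every later time. Its count $N$, however, covers only the designations made at $\tau\beta$. So with \Cref{def:qualify} read literally in $\omega$ before any substitution, nothing bounds the number of compromised members of $\Upsilon_z$ by $N$.

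Your option (a) would restore the count, but it is not a harmless rereading. Taking positions at $\tau\beta$ from the process actually run makes $\Upsilon_z$ depend on which particles were designated in earlier blocks. That in turn depends on the infection history and on other cells' auxiliary paths. This breaks the monotonicity and locality of \Cref{lem:accincreasing}, which the application of \Cref{thm:GS} relies on.

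So your proof is exactly as complete as the paper's, and no more. To close the step you would need to commit to one reading and prove what it requires, rather than offering the two as alternatives.
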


\begin{proof}
Suppose $(i,\tau)$ is acceptable and some particle of $C_i$ is infected at time $\tau\beta$. The
designated particle of \Cref{sec:auxpath} is then one of them. Write $x\in C_i$ for its position
at $\tau\beta$ and let $z\in\mathcal Z_i$ be within $r/8$ of $x$. By $(F_{\ref{F1}})$ it carries no
recovery mark on $[\tau\beta,\tau\beta+T_L]$ and so is infectious throughout, and it stays in
$\Qst(i)$ there. By $(F_{\ref{F2}})$ and \Cref{lem:slack} it transmits to every $v\in\Upsilon_z$, of which
there are at least $n_0$, and each of these is then infected and free of recovery marks up to
$\tau\beta+T_L$.

Fix $i'$ with $\|i'-i\|_\infty\le\olap-1$. By $(F_{\ref{F3}})$ at least $N+1$ members $v$ of
$\Upsilon_z$ carry no recovery mark on
$(\tau\beta+T_L,(\tau+1)\beta]$, remain in $\Qst(i)$ there, and lie in $C_{i'}$ at time
$(\tau+1)\beta$. All of this is read in $\omega$, and by \Cref{rem:designatedoverlap} at most $N$
of them are compromised by the substitution. We fix one of the others, for which the trajectory
just described is also its trajectory in the process actually run. Since $\olap\ge3$ the range
$\|i'-i\|_\infty\le\olap-1$ covers $\|i'-i\|_\infty\le2$, which is (P\ref{P1}) of \Cref{def:carries}.
For (P\ref{P2}), the designated particle is infected and in $\Qst(i)$ on $[\tau\beta,\tau\beta+T_L]$,
and the witness $v$ just selected is infected and in $\Qst(i)$ on
$[\tau\beta+T_L,(\tau+1)\beta]$.
\end{proof}

\begin{lemma}[Probability of an acceptable cell]
\label{lem:mainSmallMu}
Fix $\blk$ and $\olap$. There are $C,c>0$, depending only on $\olap,\blk,r,d,\lambda$, such that
for all large $L$ and all $\mu>0$ with $\mu\beta\le\log2$,
\begin{equation}
\label{eq:cellbound}
  \Prob\big[(i,\tau)\text{ is acceptable}\big]\ \ge\
  1-\mu T_L-Ce^{-cL^{1/3}}-C L^{d}\exp\big\{-c\,\eta L^{1/3}\big\} .
\end{equation}
Consequently, for every $\alpha_0>0$ there is $L_0=L_0(\alpha_0,\lambda,\eta,r,d,\blk,\olap)$ such
that the right-hand side of \eqref{eq:cellbound} exceeds $1-e^{-\alpha_0}$ whenever $L\ge L_0$ and
\begin{equation}
\label{eq:mustar}
  \mu\ \le\ \mu_*(L)\ :=\ \frac{\log 2}{\blk L^{2}} .
\end{equation}
\end{lemma}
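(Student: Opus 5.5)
A union bound over the failure of each of $(F_{\ref{F1}})$, $(F_{\ref{F2}})$ and $(F_{\ref{F3}})$ will give \eqref{eq:cellbound}; the three terms after the leading $1$ correspond roughly to these three conditions. I will work conditionally on the auxiliary objects $\gamma^{(i,\tau)}$, $\mathcal R^{(i,\tau)}$ and treat $(F_{\ref{F1}})$ first.

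\emph{Step 1, $(F_{\ref{F1}})$.} The recovery clause fails with probability $1-e^{-\mu T_L}\le\mu T_L$. The confinement clause fails with probability at most $4d\exp\{-\olap^2L^2/(8dT_L)\}=4d\exp\{-cL^{1/3}\}$ by (H\ref{H3}). The events $\{j\in\mathcal J'\}$ are independent with probability $q_0\ge\frac34$, so $|\mathcal J'|$ dominates a $\mathrm{Bin}(|\mathcal J|,\frac34)$ variable. A Chernoff bound therefore gives $\Prob[|\mathcal J'|<\frac12|\mathcal J|]\le e^{-c|\mathcal J|}\le Ce^{-cL^{1/3}}$. Together these contribute $\mu T_L+Ce^{-cL^{1/3}}$.

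\emph{Step 2, $(F_{\ref{F2}})$ on $(F_{\ref{F1}})$.} By \Cref{lem:proximity}, each $\Upsilon_z$ is Poisson with mean $m\ge C_{\mathrm p}\eta e^{-\mu T_L}L^{1/3}$, and $n_0\le m/8+1$. The Poisson lower tail gives $\Prob[\mathrm{Poi}(m)<m/4]\le e^{-cm}$. Since $\mu T_L\le\mu\beta\le\log2$, we have $e^{-\mu T_L}\ge\frac12$, so this is at most $\exp\{-c\eta L^{1/3}\}$. A union bound over $z\in\mathcal Z_i$, using $|\mathcal Z_i|\le C_d(L/r)^d$ from \eqref{eq:grid}, gives $CL^d\exp\{-c\eta L^{1/3}\}$. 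I deliberately work on the stronger event $|\Upsilon_z|\ge 2n_0$, say, so that Step 3 has room to spare.

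\emph{Step 3, $(F_{\ref{F3}})$ on $(F_{\ref{F2}})$.} Condition on $\mathcal F$ as in \Cref{lem:spread}, with $n:=|\Upsilon_z|\ge n_0$ and $\nu=N+1$. Since $\mu\beta\le\log2$, the success probability is $p:=c_pe^{-\mu\beta}\ge c_p/2$. The displayed bound of \Cref{lem:spread} is then at most $n^{N}(1-c_p/2)^{n-N}\le\exp\{-c\,n\}$ once $n$ is large. Here $N$ is fixed and $n\ge n_0\asymp\eta L^{1/3}$, so this is again of order $\exp\{-c\eta L^{1/3}\}$ for $L$ large. When $\eta$ is small, I absorb the threshold "$n$ large" into $L\ge L_0(\eta)$. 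A union bound over $z\in\mathcal Z_i$ and over the at most $(2\olap-1)^d$ indices $i'$ gives another $CL^d\exp\{-c\eta L^{1/3}\}$. This completes \eqref{eq:cellbound}.

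\emph{Consequence and main obstacle.} Take $\mu\le\mu_*(L)$ as in \eqref{eq:mustar}. Then $\mu\beta\le\log2$, and $\mu T_L\le\log2\,L^{5/3}/(\blk L^2)=CL^{-1/3}\to0$. The remaining terms also tend to $0$ as $L\to\infty$ at fixed $\eta$, so all of them fall below $1-e^{-\alpha_0}$ for $L\ge L_0$. The step I expect to need the most care is Step 2, in applying \Cref{lem:proximity} jointly with the conditioning. The bound in that lemma holds only on $(F_{\ref{F1}})$, and the constants must be checked to be uniform under both the free and the conditioned laws. The exponent $c$ must also be allowed to depend on $\lambda$ through $C_{\mathrm p}$, so that the final constants depend only on $\olap,\blk,r,d,\lambda$, as the statement requires.
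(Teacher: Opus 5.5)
Your proposal is correct and follows the paper's proof essentially step for step. It uses a union bound over the failures of $(F_{\ref{F1}})$, $(F_{\ref{F2}})$ and $(F_{\ref{F3}})$, handled respectively by $1-e^{-\mu T_L}\le\mu T_L$, (H\ref{H3}) and a Hoeffding bound for $|\mathcal J'|$; by the Poisson law of $\Upsilon_z$ from \Cref{lem:proximity} with a Chernoff bound and a union over $\mathcal Z_i$; and by the binomial tail of \Cref{lem:spread} with a union over the pairs $(z,i')$, using $e^{-\mu\beta}\ge\frac12$ throughout.

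Two small points. The paper evaluates the $(F_{\ref{F2}})$ bound at the sprinkled intensity $\eta/2$ required by \eqref{eq:GScond}, where $n_0$ is a quarter rather than an eighth of the mean; your Chernoff step covers this once you adjust the fraction (e.g.\ to $m/2$). In the final step, what must fall below $e^{-\alpha_0}$ is the sum of the error terms, not each term below $1-e^{-\alpha_0}$.
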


\begin{proof}
We first bound the probability that $(F_{\ref{F1}})$ fails. The process $\mathcal R^{(i,\tau)}$ has a
mark in $[0,T_L]$ with probability $1-e^{-\mu T_L}\le\mu T_L$. By (H\ref{H3}) with radius
$\tfrac12\olap L$ and time $T_L=L^{5/3}$, the path $\gamma^{(i,\tau)}$ leaves the ball of radius
$\tfrac12\olap L$ with probability at most $C\exp\{-c\olap^2L^{1/3}\}$. The indicators
$\{j\in\mathcal J'\}$, $j\in\mathcal J$, are independent Bernoulli variables of parameter
$q_0\ge\frac34$, so by Hoeffding's inequality
$\Prob[|\mathcal J'|<\frac12|\mathcal J|]\le e^{-|\mathcal J|/8}\le e^{-cL^{1/3}}$. Note that this
is the only place where the seed is paid for, and it costs a single $\mu T_L$. There is no union
bound, because $(F_{\ref{F1}})$ concerns the auxiliary objects of the cell and not any particle.

Next we bound the probability that $(F_{\ref{F2}})$ fails. Fix $z\in\mathcal Z_i$. Recall that the bound
must hold at the sprinkled intensity $(1-\varepsilon)\eta=\eta/2$, which is where
\eqref{eq:GScond} evaluates it. On $(F_{\ref{F1}})$, \Cref{lem:proximity} applied at that intensity makes
$|\Upsilon_z|$ Poisson of mean at least $\tfrac12C_{\mathrm p}\eta e^{-\mu T_L}L^{1/3}\ge2n_0$, so
the Chernoff bound $\Prob[\Poi(m)\le m/2]\le e^{-m/8}$ gives
$\Prob[|\Upsilon_z|<n_0]\le\exp\{-\frac1{16}C_{\mathrm p}\eta e^{-\mu T_L}L^{1/3}\}
\le\exp\{-\frac1{32}C_{\mathrm p}\eta L^{1/3}\}$, using $\mu T_L\le\mu\beta\le\log2$. A union
bound over the $|\mathcal Z_i|\le C_d(L/r)^d$ grid points gives the third term of
\eqref{eq:cellbound}.

Finally we bound the probability that $(F_{\ref{F3}})$ fails given $(F_{\ref{F2}})$. \Cref{lem:spread} applied
with $\nu=N+1$ and $n=n_0$, together with a union bound over the $|\mathcal Z_i|(2\olap-1)^d$
pairs $(z,i')$, gives a failure probability at most
\[
  CL^d\binom{n_0}{N}\big(1-c_pe^{-\mu\beta}\big)^{n_0-N}
  \ \le\ CL^d\,n_0^{N}\exp\big\{-(n_0-N)c_pe^{-\mu\beta}\big\}
  \ \le\ CL^{d}\exp\big\{-\tfrac1{64}C_{\mathrm p}c_p\eta L^{1/3}\big\}
\]
for $L$ large, again using $e^{-\mu\beta}\ge\frac12$ and $e^{-\mu T_L}\ge\frac12$. Here the number
$N$ of witnesses lost to \Cref{rem:designatedoverlap} is a constant, so $n_0-N\ge n_0/2$, and the
polynomial factor $n_0^N\le CL^{N/3}$ is absorbed by the exponential.

Summing the three bounds gives \eqref{eq:cellbound}. For the last assertion, impose
\eqref{eq:mustar}. Then $\mu\beta\le\log2$, so the first part applies, and
\[
  \mu T_L\ \le\ \frac{\log2}{\blk L^2}\,L^{5/3}\ =\ \frac{\log2}{\blk}\,L^{-1/3}
  \ \longrightarrow\ 0 ,
\]
while the remaining two terms of \eqref{eq:cellbound} carry a factor $e^{-cL^{1/3}}$ against a
polynomial and also tend to $0$. Hence for $L$ large the right-hand side of \eqref{eq:cellbound}
exceeds $1-e^{-\alpha_0}$.
\end{proof}

\begin{remark}[The threshold]
\label{rem:muStar}
The binding constraint in \eqref{eq:mustar} is $\mu\beta\le\log2$, not $\mu T_L\lesssim1$. Tracing
it through the proof in \Cref{sec:goodcell} below, the third term of \eqref{eq:cellbound} requires
$C_{\mathrm p}(\lambda)\eta L_0^{1/3}\gtrsim d\log L_0$, and $C_{\mathrm p}(\lambda)=\Theta(\lambda)$,
so the smallest admissible scale and the resulting threshold are
\[
  L_0(\lambda)\ \asymp\ \Big(\frac{\log(1/\lambda)}{\lambda\eta}\Big)^{3} ,
  \qquad
  \mu_*(\lambda)\ =\ \frac{\log2}{\blk L_0(\lambda)^2}
  \ \asymp\ \frac{(\lambda\eta)^{6}}{\big(\log(1/\lambda)\big)^{6}}
  \qquad(\lambda\to0) .
\]
A lower bound on the admissible $L_0$ gives an upper bound on $\mu_*$, so this is a threshold of
that order and not merely at least that order. In particular $\mu_*(\lambda)\to0$ as
$\lambda\to0$, which determines the shape of the finite-$\lambda$ survival region drawn in
\Cref{fig:phase}. The constraint involves $\lambda$ and $\eta$ only through their product, so the
display describes equally the regime of small $\eta$ at fixed $\lambda$, once the logarithm is
read as $\log\big(1/(\lambda\eta)\big)$. It does not describe the regime of large $\eta$. The
scale $L_0$ is subject to further requirements which do not involve the intensity at all, namely
the dimensional threshold $L_1(d)$ of \Cref{thm:GSsurround} and the several ``for $L$ large''
conditions of \Cref{sec:smallmuparams}, and these bound $\mu_*=\log2/(\blk L_0^2)$ from above
uniformly in $\eta$. We make no claim that the exponent is sharp; see \Cref{sec:open}(2).
\end{remark}

\subsection{Proof of \texorpdfstring{\Cref{thm:smallmu}}{Theorem C}}
\label{sec:goodcell}

\Cref{def:acceptable} propagates the infection provided it enters a cell through the base, that is,
provided some particle of the central cube is infected at the initial time of the block. Every
entry is of that form here. Indeed, \Cref{lem:seed} gives an infected particle in the central
cube of a cell of $F$ at the initial time of that cell's block, and by \Cref{lem:propagate}
every subsequent entry again occurs through a central cube $C_{i'}$ at a time $(\tau+1)\beta$. We
may therefore take
\[
  E(i,\tau)\ =\ \big\{(i,\tau)\text{ is acceptable}\big\} .
\]
This event is increasing by \Cref{lem:accincreasing} and restricted to the super-cell
$\scell{i}{\tau}$, and \Cref{lem:mainSmallMu} bounds its probability directly.

\begin{proof}[Proof of \Cref{thm:smallmu}]
Let $d\ge2$ and $\eta,r,\lambda>0$. Fix $\varepsilon=\frac12$ and $\olap=\max\{d,3\}$, consistent
with \Cref{sec:smallmuparams}, then $\blk<\mathsf c_0$ so that the ratio condition of \Cref{thm:GS}
holds, and then $w$ large enough to satisfy both the lower bound of \Cref{thm:GS} and the two
requirements \eqref{eq:wlarge}. Note that all four choices are free of
$L$. Let $\alpha_0$ be the constant given by \Cref{thm:GS} for these parameters. It too is
free of $L$ and of the intensity.

We begin by fixing a first scale. By \Cref{lem:mainSmallMu}, applied at the sprinkled intensity
$(1-\varepsilon)\eta=\eta/2$ and under the conditioned law, there is
$L_1=L_1(\alpha_0,\lambda,\eta,r,d,\blk,\olap)$ such that the second condition of
\eqref{eq:GScond} holds for every $L\ge L_1$ and every $\mu\le\mu_*(L)$. Enlarging $L_1$ we may
also assume $\varepsilon^2\eta L_1^d\ge\alpha_0$, which then holds for every $L\ge L_1$. So
\eqref{eq:GScond} holds at every scale $L\ge L_1$ with $\mu\le\mu_*(L)$.

Next we fix the seeding constants. The right-hand side of the bound of \Cref{thm:GSsurround} is an
explicit function of $D$, $\eta$, $L$ and $d$ alone. It is non-increasing in $L$ once $L$ exceeds
the dimensional threshold $L_1(d)$ of \Cref{thm:GSsurround}, and it is summable in $D$. We enlarge
$L_1$ so that $L_1\ge L_1(d)$, and choose an integer $D_0\ge D_1(d)$, with $D_1$ the threshold of
\Cref{thm:GSsurround}, for which that function, evaluated at $L=L_1$,
is at most $\frac14$. This fixes $\tau_0$ and $\mathcal B$, hence $|\mathcal B|$, once and for all
and independently of $L$. Note that no probabilistic statement has been made at the scale $L_1$,
so the choice is not circular. Only the deterministic bound has been read off, and by
\Cref{thm:GSsurround} that bound carries no dependence on $\nu_E$ or on $\alpha_0$ beyond the
threshold condition \eqref{eq:GScond}. This is why the bound may be reused at $L_0\ge L_1$ even
though $\nu_E$ varies with $L$.

We now fix the working scale. By \Cref{lem:mainSmallMu} again, we choose $L_0\ge L_1$ so large
that in addition $\Prob[E^c]\le(4|\mathcal B|)^{-1}$ at intensity $\eta$, and we put
$\mu_*(\lambda):=\mu_*(L_0)=\log2/(\blk L_0^2)$. Fix $\mu\le\mu_*(\lambda)$ and work at $L=L_0$.
Then \eqref{eq:GScond} holds, so \Cref{thm:GS} and \Cref{thm:GSsurround} both apply, and
$\Prob[D>D_0]$ is at most the bound at $(D_0,\eta,L_0)$, which is itself at most the bound at
$(D_0,\eta,L_1)\le\frac14$. The event $E(i,\tau)=\{(i,\tau)\text{ acceptable}\}$ is increasing and
restricted to the super-cell by \Cref{lem:accincreasing}, as \Cref{thm:GS} requires, and carries
the infection by \Cref{lem:acccarries}, as \Cref{lem:seed,lem:propagate} require.

\Cref{thm:GS} now provides, almost surely, a two-sided Lipschitz surface $F$ on which $E$ holds.
The particle $u_0$ at the origin lies in the central cube $C_0$ at time $0$ and is infected, so
\Cref{lem:seed} applies. That is, with probability at least $\tfrac12$ some cell of $F$ has an infected particle in its
central cube at the initial time of its block. On that event
\Cref{lem:propagate} propagates the infection for ever, since $E$ carries the infection in the sense of
\Cref{def:carries}, and gives
\[
  \liminf_{t\to\infty}\frac{\rad(I_t)}{t}\ \ge\ \frac{L_0}{\blk L_0^2}\ =\ \frac1{\blk L_0}\ >\ 0 .
\]
Hence the infection survives, and spreads at positive speed, with probability at least
$\tfrac12>0$. By \Cref{lem:rerand} the same holds for the original construction.
\end{proof}

\section{Extinction at small intensity}
\label{sec:extinction}

In this section we consider the instantaneous model, that is $\lambda=\infty$ and
$\mathfrak r=\iso$, with $\mu\in(0,\infty]$. Our goal is to prove part (i) of \Cref{thm:density}.
We do not use the tessellation of \Cref{sec:tessellation} here. The scales $\ell_k$, $w_k$ and the
boxes $B_k$ introduced below belong to the renormalisation and are unrelated to the cells
$\cell{i}{\tau}$. Since $\lambda$, $\mathfrak r$ and $\mu$ are fixed throughout, we abbreviate the
law $\Prob^{\infty,\mu}_\eta$ of \Cref{sec:corners} to $\Prob_\eta$, and we write $\Prob_\varrho$
for the same law at intensity $\varrho$.

We follow the renormalisation scheme introduced for the zero-range process by Baldasso, Hil\'ario
and Ornelas \cite{BHO26}. The scheme has three steps. First, survival forces a half-crossing of a
space-time box. Second, a half-crossing at one scale forces two well-separated half-crossings at
the scale below. Third, the resulting recursion contracts once the base-scale probability is small.
Baldasso, Hil\'ario and Ornelas in turn adapt the constructions of \cite{FMUV23,HUVV22} for
renewal contact processes. The three analytic inputs of the scheme are different objects in the
continuum, and we prove them in \Cref{sec:horiz,sec:vertical,sec:trigger}. 

\begin{theorem}
\label{thm:extinction}
Let $d\ge1$, $r>0$ and $\mu\in(0,\infty]$. There is $\bar\eta=\bar\eta(\mu,r,d)>0$ such that for
every $\eta<\bar\eta$ the instantaneous infection started from a single particle dies out almost
surely.
\end{theorem}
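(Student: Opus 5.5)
We follow the three-step renormalisation announced at the start of this section, modelled on \cite{BHO26}. Fix a rapidly growing sequence of spatial scales $\ell_k$ (say $\ell_{k+1}=\ell_k^{\,a}$ for some $a>1$, or $\ell_{k+1}=\lfloor\ell_k^{\gamma}\rfloor\ell_k$), temporal scales comparable to $\ell_k^2$ or a fixed power of $\ell_k$, and space-time boxes $B_k=Q_{\ell_k}\times[0,\ell_k^{2}]$ with enlargements $w_k B_k$. Let $p_k(\varrho)$ be the supremum over translates of the probability that there is an open path, in the sense of \Cref{def:openpath}, \emph{half-crossing} $B_k$: it starts inside the box and reaches either spatial distance $\ell_k/2$ from its start or time-height $\ell_k^2/2$ while staying in the enlarged box. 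Half-crossing events are existence-of-open-path events, so by \Cref{prop:mono}(ii) they are increasing, which is what makes sprinkling comparisons with intensities $\varrho_k\uparrow\varrho_\infty$ possible. The first step, \Cref{lem:surv2H}, should be soft: survival from a single particle forces, for every $k$, a half-crossing of a box of scale $k$ at a translate within a polynomially bounded set near the origin. Indeed, either the trace $\gamma$ leaves $Q_{\ell_k}$ or it stays alive for time $\ell_k^2$. A union bound then gives $\Prob(\text{survival})\le C\ell_k^{c}p_k$. It therefore suffices to show that $\ell_k^{c}p_k\to0$.

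Second, in the cascade step, \Cref{lem:cascade}, a half-crossing of scale $k+1$ is decomposed into many half-crossings of scale $k$. Among these we select two that are well separated, either horizontally at distance of order $\ell_{k+1}$ in space or vertically at time separation of order $\ell_{k+1}^2$. The number of choices of the pair is polynomial in $\ell_{k+1}/\ell_k$. For horizontal separation (\Cref{lem:horiz}), the two events depend on particles starting in disjoint regions, up to particles travelling a distance of order $\ell_{k+1}$. We truncate with (H\ref{H3}), and with the locality of the isolation rule, namely a $2r$-neighbourhood. Poisson independence then gives $p\le p_k^2+e^{-c\ell_{k+1}}$. For vertical separation (\Cref{prop:vertical}), we condition on the configuration at the intermediate time. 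Using \Cref{thm:mixingupper}, the particles in the relevant region at the later time are dominated by a Poisson process of intensity $\varrho(1+\epsilon_k)$, except on an event of probability $\Gamma e^{-C\varrho\epsilon_k^2\Delta^{d/2}}$. Monotonicity then bounds the later event by $p_k(\varrho(1+\epsilon_k))$. The needed density bound on subcubes at the intermediate time is obtained from a Poisson large-deviation estimate. Together these give the recursion of \Cref{prop:recursion}:
\[
  p_{k+1}(\varrho_{k+1})\le C(\ell_{k+1}/\ell_k)^{c}\,p_k(\varrho_k)^2+e^{-\ell_k^{c'}},
\]
with $\varrho_{k+1}=\varrho_k/(1+\epsilon_k)$ and $\prod(1+\epsilon_k)<2$. \Cref{lem:contraction} then shows inductively that $p_k\le\ell_k^{-\theta}$ for a large $\theta$, once $p_0\le\ell_0^{-\theta}$.

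Third, the trigger (\Cref{lem:trigger}) is the main obstacle. At intensity $\eta\le\bar\eta$ and base scale $\ell_0$, we must show that $p_0\le\ell_0^{-\theta}$, uniformly in $\lambda$ and including $\mu=\infty$. At $\lambda=\infty$ no rate controls how far infection travels. We would argue geometrically. Consider the particles meeting a given particle's $2r$-tube over time $\ell_0^2$; their expected number is of order $\eta\,r^{d-1}\ell_0^{2}$ or so, and is small when $\eta$ is tiny. A half-crossing requires either a long chain of contacts or one infected particle surviving a long time. The local extinction input \Cref{lem:localext} covers the latter: an isolated infected particle, or a finite cluster, is hit by an effective recovery within time $O(1)$ with probability bounded below, since isolation occupies a positive fraction of time at small $\eta$. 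We would bound the number of infections along a path by a subcritical branching-type comparison in which each particle's offspring count has small mean relative to its $\mu$-dependent lifetime. We would then choose $\ell_0=\ell_0(\mu)$ large, and afterwards $\bar\eta=\ell_0^{-K}$, so that $p_0$ is tiny. The difficulty is the positive correlation between lifetime and offspring noted in \Cref{sec:method}. We would handle it by cutting time into unit windows: in each window the chance of any contact is $O(\eta)$, and with no contact recovery occurs with probability $1-e^{-\mu}$ (or $1$ at $\mu=\infty$). This yields geometric decay in the number of windows, without a reproduction number.
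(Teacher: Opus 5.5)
Your architecture is the paper's: survival forces a half-crossing, which cascades into two separated half-crossings, decoupled in space by Poisson restriction and in time by local mixing with sprinkling, followed by contraction and a trigger via \Cref{lem:localext}. But the horizontal decoupling fails in the diffusive geometry you chose. With $B_k=Q_{\ell_k}\times[0,\ell_k^2]$, two scale-$k$ boxes separated in space by order $\ell_{k+1}$, but not vertically separated, may still lie up to order $\ell_{k+1}^2$ apart in time. Over such a time a Brownian particle covers distance $\ell_{k+1}$ with probability bounded below, since (H\ref{H3}) gives $4d\,e^{-R^2/(2dt)}$ with $R^2/t\asymp1$. The expected number of paths feeding both events is then of order $\varrho\,\ell_{k+1}^d$. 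So Poisson restriction gives no decoupling at all, let alone an error $e^{-c\ell_{k+1}}$; that exponent belongs to boxes whose duration is comparable to their width, not to its square. The paper's boxes $[-w_k,w_k]^d\times[0,\ell_k]$ with $w_k=\ell_0\ell_k$ are far wider than the diffusive displacement over their duration. This gives $D^2/\Lambda\asymp\ell_0^2\ell_{k+1}$ both in \Cref{lem:horiz} and in the entry events $\mathcal M,\mathcal N$ of \Cref{prop:vertical}. With the fixed ratio $a=8$ the cascade also yields at most $C_{\mathrm{ent}}$ pairs, so \Cref{lem:contraction} needs only $p_0\le1/(4C_{\mathrm{ent}})$, not a polynomial rate. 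Alternatively you could call a pair vertical once its time gap exceeds $\ell_{k+1}^2/(\log\ell_{k+1})^2$, but that case split has to be built into the cascade.

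Two further steps do not go through as stated. First, $\Prob(\text{survival})\le C\ell_k^{c}p_k$ mixes two laws. Survival is under the Palm measure, which adds a particle and so raises the probability of the increasing event $H$; it is therefore not bounded by the stationary $p_k$. The paper instead bounds $\E[N_Q\mathbf 1_{H(B_k)}]\le\E[N_Q^2]^{1/2}p_k^{1/2}$ and identifies the left-hand side, via the Mecke equation, with $\eta\int_Q\Prob^x_\eta(\text{survival})\,\mathrm dx$. Second, the trigger cannot rest on a branching comparison over unit windows. The event $H(B_0)$ quantifies over open paths started at every particle and every time in the box. Conditionally on the history, infected particles are not typical: two particles that have just exchanged the infection are within distance $2r$ and meet again in the next window with probability bounded below uniformly in $\eta$. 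So ``the chance of any contact is $O(\eta)$'' restates the correlation problem rather than avoiding it. What replaces it is geometric confinement. In \Cref{lem:trigger} time is cut into slabs of length $\mathsf T=\ell_0^{b}$. At intensity $\eta_0\asymp\ell_0^{-s}$, with high probability every $\mathsf R$-cluster near the box has fewer than $m$ particles, particles move at most $\mathsf d_0$, and no outsider enters, so each cluster evolves in isolation (\Cref{lem:slab}). \Cref{lem:localext}, applied to each cluster restarted all-infected at every integer time (attractiveness), then ends every open path starting in the box within time $O(\log\ell_0)$ and inside a ball of radius $o(w_0)$. Finally, you cannot take $\bar\eta=\ell_0^{-K}$ with $K$ as large as the trigger would like. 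The occupancy and mixing errors of the vertical decoupling need $\varrho\epsilon^2\ell^d$ large, with $\ell$ squeezed by $\Delta\ge c_0\ell^2\epsilon^{-4/\kappa}$. This is why the paper works at $\bar\eta=\ell_0^{-s}$ with $s<s_\ast$, and why its trigger must succeed at that comparatively large intensity.
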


\subsection{Half-crossings and cascading}
\label{sec:cascading}

We fix once and for all the scale multiplier and the offset parameter
\begin{equation}
\label{eq:atheta}
  a=8,\qquad \vartheta=\tfrac18 ,
\end{equation}
and a base scale $\ell_0\in\N$. We define
\begin{equation}
\label{eq:scales}
  \ell_k=a^k\ell_0,\qquad w_k=\ell_0\ell_k,\qquad
  B_k=[-w_k,w_k]^d\times[0,\ell_k],
\end{equation}
so that $w_0=\ell_0^2$, $w_{k+1}=aw_k$, and every scale-$k$ box has aspect ratio $2\ell_0$. A
\emph{scale-$k$ box} is any translate $B=Q\times[s,s+\ell_k]$ of $B_k$, with
$Q=y+[-w_k,w_k]^d$. For such a box we write
\[
  Q^+=y+[-w_k-2r,\ w_k+2r]^d,\qquad B^+=Q^+\times[s,s+\ell_k] .
\]
We need the collar of width $2r$ because the trace of an open path is càdlàg with jumps of
size at most $2r$, so it can leave a region by jumping over its boundary.

\begin{definition}[Half-crossings]
\label{def:halfcross}
Let $B=Q\times[s,s+h]$ with $Q=y+[-w,w]^d$.
\begin{itemize}
\item $T(B)$ is the event that there is an open path contained in $B$ whose trace has duration
  at least $h/2$.
\item For $1\le i\le d$, $S^+_i(B)$ is the event that there is an open path contained in $B^+$
  whose trace starts at a point with $x_i\le y_i+\vartheta w$ and later reaches a point with
  $x_i\ge y_i+w$; and $S^-_i(B)$ is its mirror image, obtained by replacing $x_i-y_i$ by
  $y_i-x_i$.
\end{itemize}
The \emph{half-crossing event} of $B$ is
$H(B)=T(B)\cup\bigcup_{i=1}^d\big(S^+_i(B)\cup S^-_i(B)\big)$.
\end{definition}

The two kinds of half-crossing are drawn in \Cref{fig:halfcross}.

\begin{figure}[!h]
\centering
\begin{tikzpicture}[x=1cm,y=0.8cm,>=Stealth,baseline={(0,0)}]
  \draw[dashed,gray] (-0.25,0) rectangle (5.25,4.5);
  \draw (0,0) rectangle (5,4.5);
  \draw[dotted] (-0.25,2.25) -- (5.25,2.25) node[right]{\small$s+h/2$};
  \node[below] at (2.5,0) {\small$y$}; \draw[dotted] (2.5,0) -- (2.5,4.5);
  \draw[very thick] plot[smooth] coordinates {(1.6,0) (2.9,0.9) (1.9,1.8) (3.1,2.6)};
  \node at (2.5,5.1) {$T(B)$};
  \node[below left] at (5.25,0) {\small$Q^+$};
\end{tikzpicture}
\hfill
\begin{tikzpicture}[x=1cm,y=0.8cm,>=Stealth,baseline={(0,0)}]
  \draw[dashed,gray] (-0.25,0) rectangle (5.25,4.5);
  \draw (0,0) rectangle (5,4.5);
  \draw[dotted] (2.5,0) -- (2.5,4.5); \node[below] at (2.5,-0.02) {\small$y_i$};
  \draw[dotted] (3.125,-0.72) -- (3.125,4.5);
  \node[below] at (3.125,-0.72) {\small$y_i+\vartheta w$};
  \node[below] at (5,-0.02) {\small$y_i+w$};
  \draw[very thick] plot[smooth] coordinates {(2.9,0.6) (2.2,1.4) (3.6,2.2) (4.2,3.1) (5.05,3.9)};
  \fill (2.9,0.6) circle (1.6pt); \fill (5.05,3.9) circle (1.6pt);
  \node at (2.5,5.1) {$S^+_i(B)$};
\end{tikzpicture}
\caption{The two kinds of half-crossing of a box $B=Q\times[s,s+h]$, $Q=y+[-w,w]^d$, with time
running upwards. The dashed rectangle is the collar $Q^+$, which absorbs the jumps of the trace.
Left: $T(B)$, a path contained in $B$ of duration at least $h/2$. Right: $S^+_i(B)$, a path
contained in $B^+$ whose trace starts to the left of $y_i+\vartheta w$ and reaches
$y_i+w$. We use the offset $\vartheta w$, rather than the centre $y_i$ itself, so that a
half-crossing can be caught by the finite grid of windows of \Cref{lem:localisation}.}
\label{fig:halfcross}
\end{figure}

\begin{lemma}
\label{lem:Hprops}
For every space-time box $B=Q\times[s,s+h]$ the event $H(B)$ is increasing in the marked
configuration, and it is determined by the restriction of the marked configuration to the
particles which visit the $6r$-neighbourhood of $Q$ during $[s,s+h]$.
\end{lemma}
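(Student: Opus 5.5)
The proof splits into two independent parts: monotonicity, which comes from \Cref{prop:mono}(ii), and locality, which comes from tracking which particles can influence the clauses of \Cref{def:openpath} for a path confined near $Q$.

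\emph{Monotonicity.} Each of $T(B)$, $S^\pm_i(B)$ is the event that there exists an open path with prescribed properties of its trace. These properties are containment in $B$ or $B^+$, duration at least $h/2$, and starting and reaching certain coordinate levels. All of them are functions of the trajectories of the particles used, and these trajectories are unchanged when particles are added. By \Cref{prop:mono}(ii), every open path of $\omega$ is an open path of $\omega'\supseteq\omega$, with the same trace. Each of the events is therefore increasing, and so is their finite union $H(B)$.

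\emph{Locality.} Let $\mathcal V$ be the set of particles $u$ with $X_u(t)$ in the $6r$-neighbourhood of $Q$ for some $t\in[s,s+h]$. Fix an open path witnessing one of the events. Its trace lies in $Q^+$, which is within $2r$ of $Q$, on the whole time span $[t_0,t_{n+1}]\subseteq[s,s+h]$.

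The particles used by the path are in $\mathcal V$. During $[t_j,t_{j+1})$ the particle $u_j$ sits at the trace, hence within $2r$ of $Q$. At $t_{n+1}$ the endpoint $u_n$ also lies on the trace.

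Clause (i) of \Cref{def:openpath} only involves $u_{j-1},u_j$ at $t_j$, together with their pair mark, since $\lambda=\infty$ in this section. For the general statement it involves the pair mark of two particles of $\mathcal V$.

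Clause (ii) under $\iso$ asks whether $u_j$ is isolated at each time of $(t_j,t_{j+1}]$. At such a time $u_j$ is within $2r$ of $Q$, by continuity at the right endpoint. Any particle in contact with it is then within $4r\le6r$ of $Q$, so it belongs to $\mathcal V$. Isolation of $u_j$ at those times is therefore decided by $\mathcal V$ alone, and so are the effective marks.

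Consequently, whether a given path is open and witnesses the event is the same in $\omega$ and in the restriction $\omega|_{\mathcal V}$. Every witness path uses only particles of $\mathcal V$. This gives both directions: a witness in $\omega$ is a witness in $\omega|_{\mathcal V}$, and conversely the same path works in $\omega$ since adding particles outside $\mathcal V$ changes nothing it reads. (The converse direction also follows from monotonicity.) Hence $H(B)$ is a function of $\omega|_{\mathcal V}$.

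The only delicate point is the endpoint convention for the trace and for clause (ii). Recovery marks are checked on the half-open interval $(t_j,t_{j+1}]$, so one must confirm that $u_j$ at time $t_{j+1}$ is still within $2r$ of $Q$. This holds because $u_j$ is within $2r$ of $Q$ on $[t_j,t_{j+1})$ and its trajectory is continuous, so the limit point is in the closed set. The constant $6r$ leaves ample slack: $2r$ for the collar plus $2r$ for the contact radius gives $4r$.
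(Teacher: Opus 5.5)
Your proof is correct and follows the same route as the paper: monotonicity comes from \Cref{prop:mono}(ii) applied to a witnessing open path, and locality comes from the trace lying in $Q^+$ together with the fact that whether a mark of $u_j$ is effective depends only on the particles within $2r$ of $u_j$. The one difference is at the right endpoint $t_{j+1}$: the paper places $u_j$ within $2r$ of $\gamma(t_{j+1})$ through its contact with $u_{j+1}$, which is where the margin $6r$ comes from, whereas your continuity-and-closedness argument puts $u_j$ in $Q^+$ itself and so shows that a $4r$-neighbourhood would already suffice.
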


\begin{proof}
We first show monotonicity. Each of the events in \Cref{def:halfcross} is the existence of an
open path with a property of its trace alone, so monotonicity follows from \Cref{prop:mono}(ii)
applied to the witnessing open path.

We now show locality. A witnessing path has its trace in $Q^+$, the $2r$-neighbourhood of $Q$.
Clause (ii) of \Cref{def:openpath} is imposed on the particle $u_j$ over the interval
$(t_j,t_{j+1}]$, which is closed at its right endpoint, and at the right endpoint $u_j$ need no
longer carry the trace. More precisely, for $t<t_{j+1}$ we have $X_{u_j}(t)=\gamma(t)\in Q^+$,
while at $t=t_{j+1}$ the particle $u_j$ is in contact with $u_{j+1}$ and so lies within $2r$ of
$\gamma(t_{j+1})\in Q^+$. Hence $X_{u_j}(t)$ lies within $2r$ of $Q^+$ throughout
$[t_j,t_{j+1}]$. By \Cref{def:openpath}, whether a mark of $u_j$ carried there is effective
depends only on the particles within distance $2r$ of $u_j$ at that instant. It therefore depends
only on the particles visiting the $4r$-neighbourhood of $Q^+$, that is the $6r$-neighbourhood
of $Q$.
\end{proof}

\begin{lemma}[Survival and half-crossings]
\label{lem:surv2H}
Let $u$ be a particle with $X_u(0)\in[-\vartheta w_k,\vartheta w_k]^d$. If the infection started
from $u$ alone survives past time $\ell_k/2$, then $H(B_k)$ occurs. In particular this applies to
a single particle at the origin.
\end{lemma}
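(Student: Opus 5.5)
Survival past $\ell_k/2$ hands us an open path $\gamma$ from $(u,0)$ to some $(v,t)$ with $t>\ell_k/2$. By \Cref{lem:truncate} we cut it at a time slightly larger than $\ell_k/2$ but at most $\ell_k$, so that its duration is at least $\ell_k/2$ and it lives in $[0,\ell_k]$. The trace starts at $X_u(0)\in[-\vartheta w_k,\vartheta w_k]^d$. We then split into two cases according to whether the trace stays in $Q_k=[-w_k,w_k]^d$ throughout.

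\emph{Case 1: the trace stays in $Q_k$ on the whole truncated interval.} Then the truncated path is contained in $B_k$, it has duration at least $\ell_k/2$, and $T(B_k)$ holds.

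\emph{Case 2: the trace leaves $Q_k$.} Let $\sigma$ be the first time at which some coordinate satisfies $|\gamma_i(\sigma)|\ge w_k$. This time is well defined because the trace is càdlàg, so the set of times where it lies outside the open cube is closed from the right; we take the infimum and use right-continuity to see that the infimum is attained. Choose the index $i$ and sign $\pm$ realised at $\sigma$; say $\gamma_i(\sigma)\ge w_k$. Before $\sigma$ the trace is in $Q_k$, and at $\sigma$ it has jumped by at most $2r$, so $\gamma(\sigma)\in Q_k^+$. Truncate the path at $\sigma$ by \Cref{lem:truncate}. The careful point is the endpoint convention: the truncated path is open from $(u,0)$ to $(u_j,\sigma)$, where $u_j$ is the particle with $t_j<\sigma\le t_{j+1}$. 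The trace of the truncation on $[t_j,\sigma]$ is $X_{u_j}$. If $\sigma$ is a jump time $t_{j+1}$ of the original path, then the point $\gamma(\sigma)$ belongs to $u_{j+1}$, not $u_j$. In that case we do not truncate but keep the path up to $t_{j+1}$ and then truncate at $t_{j+1}$ together with an arbitrarily small extension on $u_{j+1}$. Alternatively we simply note that the truncated path of \Cref{lem:truncate} ending at $t_{j+1}$ on $u_{j+1}$ is obtained by truncating at any $t\in(t_{j+1},t_{j+2}]$ sufficiently close to $t_{j+1}$; by continuity of $X_{u_{j+1}}$ this keeps the trace within $Q_k^+$ while reaching $x_i\ge w_k$ at time $t_{j+1}$. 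Hence the trace starts at $x_i=X_{u,i}(0)\le\vartheta w_k$, later reaches a point with $x_i\ge w_k$, and stays in $Q_k^+$ over $[0,\ell_k]$, which gives $S_i^+(B_k)$. The mirror case gives $S_i^-(B_k)$. The time range is fine since everything happens before $\ell_k$.

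I expect the only real obstacle to be this bookkeeping at the exit time: ensuring that the truncated path is genuinely contained in $B_k^+$, so that the small extension after a jump does not itself exit $Q_k^+$ (it does not, by continuity, for a short enough extension). We must also check that the exit time does not coincide with the start, which is excluded because $\vartheta w_k<w_k$. The "in particular" follows with $X_u(0)=0$.
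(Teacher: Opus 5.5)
Your proof is correct and follows the paper's argument. You truncate the open path and obtain $T(B_k)$ if the trace stays in $[-w_k,w_k]^d$; otherwise you stop at the first time some $|\gamma_i|\ge w_k$, where the $2r$ bound on jumps keeps the path inside $B_k^+$ and yields $S^\pm_i(B_k)$.

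Your extra care when the exit happens through a jump at some $t_{j+1}$ fills in a point that the paper's ``restriction to $[0,t_1]$'' passes over. You extend briefly along $u_{j+1}$ instead of truncating at $t_{j+1}$, which by \Cref{lem:truncate} would end on $u_j$. Note only that this continuity step needs $X_{u_{j+1}}(t_{j+1})$ to lie off the boundary of $[-w_k-2r,w_k+2r]^d$, which holds almost surely.
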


\begin{proof}
By hypothesis there is an open path from $(u,0)$ whose trace $\gamma$ is defined on
$[0,\ell_k/2]$ and satisfies $\gamma(0)=X_u(0)$, so that $|\gamma_i(0)|\le\vartheta w_k$ for every
$i$. If $\gamma$ remains in $[-w_k,w_k]^d$ on $[0,\ell_k/2]$, then the path is contained in
$B_k$ and has duration $\ell_k/2$, so $T(B_k)$ occurs.

Otherwise let $t_1\le\ell_k/2$ be the first time at which $|\gamma_i(t)|\ge w_k$ for some
$i$. Before $t_1$ all coordinates of $\gamma$ are smaller than $w_k$ in absolute value. At
$t_1$ they are at most $w_k+2r$ in absolute value, because the jump has size at most $2r$.
Hence the restriction of the path to $[0,t_1]$ is contained in $B_k^+$. If
$\gamma_i(t_1)\ge w_k$, then its trace starts at $\gamma_i(0)\le\vartheta w_k$ and reaches
$\gamma_i(t_1)\ge w_k$, so $S^+_i(B_k)$ occurs. If instead $\gamma_i(t_1)\le-w_k$, then
$\gamma_i(0)\ge-\vartheta w_k$ and $S^-_i(B_k)$ occurs. Note that the restriction of an open
path to a subinterval of its time interval is again an open path, since
\Cref{def:openpath}(ii) is inherited by subintervals.
\end{proof}

We now turn to the geometric step of the renormalisation. We show that a half-crossing at scale
$k+1$ forces two half-crossings at scale $k$, in boxes taken from a finite list. The two boxes
are separated either in time by at least $\ell_{k+1}/4$ or in space by at least $w_{k+1}/4$;
see \Cref{fig:cascade}. We first isolate the localisation step, which we will use four times.

Let
\begin{equation}
\label{eq:grids}
  \mathfrak Z_k=\tfrac{w_k}{4}\Z^d\cap\big[-2w_{k+1},2w_{k+1}\big]^d,
  \qquad
  \mathfrak J_k=\Big\{\big[\tfrac j2\ell_k,\ \tfrac j2\ell_k+\ell_k\big]:\
  0\le j\le 2a\Big\},
\end{equation}
and let $\mathfrak B_k=\{(z+[-w_k,w_k]^d)\times I:\ z\in\mathfrak Z_k,\ I\in\mathfrak J_k\}$ be the
corresponding finite family of scale-$k$ boxes, of cardinality
\begin{equation}
\label{eq:boxcount}
  |\mathfrak B_k|\ \le\ (16a+1)^d(2a+1)\ =:\ C_{\mathrm{box}}(d) .
\end{equation}
Note that $\mathfrak Z_k$ has mesh $w_k/4=2\vartheta w_k$, so every point of
$[-2w_{k+1},2w_{k+1}]^d$ lies within $\vartheta w_k$ of a point of $\mathfrak Z_k$ in every
coordinate. Note also that every $\alpha\in[0,\ell_{k+1}]$ lies in some $I\in\mathfrak J_k$ with
$\alpha+\ell_k/2\le\sup I$.

\begin{lemma}[Localisation]
\label{lem:localisation}
Let $\gamma$ be the trace of an open path defined on $[\alpha,\beta]\subseteq[0,\ell_{k+1}]$ with
$\gamma(\alpha)\in[-2w_{k+1},2w_{k+1}]^d$. Let $z\in\mathfrak Z_k$ satisfy
$|\gamma_i(\alpha)-z_i|\le\vartheta w_k$ for all $i$, let $I\in\mathfrak J_k$ satisfy $\alpha\in I$
and $\alpha+\ell_k/2\le\sup I$, and put $W=z+[-w_k,w_k]^d$ and $B=W\times I$. Suppose that
\begin{enumerate}[(a)]
\item\label{loc:dur} either $\beta-\alpha\ge\ell_k/2$,
\item\label{loc:esc} or $\gamma(\beta)\notin W$.
\end{enumerate}
Then $H(B)$ occurs.
\end{lemma}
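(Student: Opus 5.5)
The plan is to mimic the proof of \Cref{lem:surv2H}, applied to the box $B=W\times I$ and to the restriction of the given open path to a subinterval starting at $\alpha$. Throughout we use two facts. First, the restriction of an open path to a subinterval of its time interval is again an open path, since \Cref{def:openpath}(ii) is inherited by subintervals; this is the remark closing the proof of \Cref{lem:surv2H}, together with \Cref{lem:truncate}. Second, the trace of such a path starts at $\gamma(\alpha)$, which by the choice of $z$ satisfies $|\gamma_i(\alpha)-z_i|\le\vartheta w_k$ for every $i$. Note also that $\alpha\in I$ and $\alpha+\ell_k/2\le\sup I$, so every time in $[\alpha,\alpha+\ell_k/2]$ lies in $I$.

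\emph{Case 1.} Suppose $\gamma(t)\in W$ for all $t\in[\alpha,\min\{\beta,\alpha+\ell_k/2\}]$. Under (\ref{loc:esc}) we have $\gamma(\beta)\notin W$, so $\beta>\alpha+\ell_k/2$. Hence in either alternative $\beta-\alpha\ge\ell_k/2$. The restriction of the path to $[\alpha,\alpha+\ell_k/2]$ is then an open path. Its trace stays in $W$ and its times stay in $I$, so it is contained in $B$. Its duration is $\ell_k/2$, which is half the height of $B$, so $T(B)$ occurs.

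\emph{Case 2.} Otherwise let $t_1$ be the first time in $[\alpha,\min\{\beta,\alpha+\ell_k/2\}]$ at which $|\gamma_i(t_1)-z_i|\ge w_k$ for some $i$.
\begin{itemize}
\item The first time exists and is attained because $\gamma$ is càdlàg, so the exit set is closed from the right in the required sense.
\item $t_1>\alpha$, since the starting offset satisfies $\vartheta w_k<w_k$.
\item Before $t_1$ all coordinates satisfy $|\gamma_j-z_j|<w_k$.
\item At $t_1$ all coordinates satisfy $|\gamma_j-z_j|\le w_k+2r$, because jumps of the trace have size at most $2r$.
\end{itemize}
So the restriction of the path to $[\alpha,t_1]$ is contained in $W^+\times I$, which is $B^+$ with $Q=W$. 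If $\gamma_i(t_1)-z_i\ge w_k$, the trace starts with $x_i\le z_i+\vartheta w_k$ and reaches $x_i\ge z_i+w_k$, which is exactly $S^+_i(B)$. The opposite sign gives $S^-_i(B)$. In all cases $H(B)$ occurs.

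The only delicate step is the bookkeeping of the first exit time for a càdlàg trace. We need to check that the infimum of the exit times is attained, using right-continuity, and that the coordinates before $t_1$ are strictly inside $W$. This mirrors \Cref{lem:surv2H} exactly. The remaining checks, namely that time containment in $I$ uses $\alpha+\ell_k/2\le\sup I$ and $t_1\le\alpha+\ell_k/2$, are routine.
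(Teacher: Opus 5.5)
Your proposal is correct and follows essentially the same argument as the paper: if the trace stays in $W$ you get $T(B)$, and otherwise the first exit time, attained by right-continuity, gives $S^\pm_i(B)$ inside the $2r$-collar. The only difference is that you cap the time window at $\alpha+\ell_k/2$, whereas the paper caps it at $\beta'=\min\{\beta,\sup I\}$; this is harmless and slightly more economical, since it makes containment in $I$ immediate.
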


\begin{proof}
Put $\beta'=\min\{\beta,\sup I\}$. We first suppose that $\gamma(t)\in W$ for every
$t\in[\alpha,\beta']$. Under (\ref{loc:dur}) we have $\beta\ge\alpha+\ell_k/2$ and
$\sup I\ge\alpha+\ell_k/2$, so $\beta'\ge\alpha+\ell_k/2$. Under (\ref{loc:esc}) we have
$\beta\notin[\alpha,\beta']$, since $\gamma(\beta)\notin W$, so $\beta'=\sup I\ge\alpha+\ell_k/2$.
In either case the restriction of the path to $[\alpha,\beta']\subseteq I$ is contained in $B$ and
has duration at least $\ell_k/2$, so $T(B)$ occurs.

Otherwise let $t_1\le\beta'$ be the first time at which $|\gamma_i(t)-z_i|\ge w_k$ for some $i$,
say with $\gamma_i(t_1)-z_i\ge w_k$. The infimum is attained because $\gamma$ is right-continuous
and $\{x:\max_i|x_i-z_i|\ge w_k\}$ is closed. Before $t_1$ the trace lies in $W$, and at $t_1$ it
lies in $W^+$, so the restriction of the path to $[\alpha,t_1]$ is contained in $B^+$. Its trace
starts at $\gamma_i(\alpha)\le z_i+\vartheta w_k$ and reaches $\gamma_i(t_1)\ge z_i+w_k$, which is
$S^+_i(B)$.
\end{proof}

Note that under (\ref{loc:esc}) the duration $\ell_k/2$ needed for $T(B)$ comes from the time
window $I$ and not from the duration of the trace. For this reason the spatial case of
\Cref{lem:cascade} below does not require a lower bound on $\beta-\alpha$.

\begin{figure}[!h]
\centering
\begin{tikzpicture}[x=0.75cm,y=0.62cm,>=Stealth,baseline={(0,0)}]
  \draw (0,0) rectangle (8,8); \node[above] at (4,8) {\small$B_{k+1}$};
  \draw[fill=black!8] (2.4,0) rectangle (4.4,1);
  \draw[fill=black!8] (3.2,6.6) rectangle (5.2,7.6);
  \draw[very thick] plot[smooth] coordinates {(3.2,0.2) (4.6,1.6) (3.0,3.2) (4.8,4.8) (3.4,6.2) (4.4,7.6)};
  \node[left] at (2.4,0.5) {\small$B$};
  \node[right] at (5.2,7.1) {\small$B'$};
  \draw[<->] (7.0,1) -- (7.0,6.6) node[midway,fill=white]{\small$\ge\tfrac{\ell_{k+1}}4$};
\end{tikzpicture}
\hfill
\begin{tikzpicture}[x=0.75cm,y=0.62cm,>=Stealth,baseline={(0,0)}]
  \draw (0,0) rectangle (8,8); \node[above] at (4,8) {\small$B_{k+1}$};
  \draw[fill=black!8] (0.4,0.1) rectangle (2.4,1.1);
  \draw[fill=black!8] (6.6,6.1) rectangle (8.0,7.1);
  \draw[very thick] plot[smooth] coordinates {(1.0,0.6) (2.2,2.0) (1.4,3.4) (4.0,4.4) (6.2,5.4) (7.6,6.6)};
  \node[below] at (1.4,0.1) {\small$B$};
  \node[above] at (7.3,7.1) {\small$B'$};
  \draw[<->] (2.4,3.2) -- (6.6,3.2) node[midway,fill=white]{\small$\ge\tfrac{w_{k+1}}4$};
\end{tikzpicture}
\caption{Cascading. Left: on $T(B_{k+1})$ the path has duration at least $\ell_{k+1}/2$, so it
is present during two time intervals of length $\ell_k$ separated by at least $\ell_{k+1}/4$,
and \Cref{lem:localisation} produces a half-crossing of a scale-$k$ box in each. Right: on
$S^+_i(B_{k+1})$ the path travels a distance at least $(1-\vartheta)w_{k+1}$ in the $i$-th
coordinate, so it produces half-crossings of two scale-$k$ boxes separated in space by at least
$w_{k+1}/4$. In both cases the two boxes are drawn from the finite family $\mathfrak B_k$.}
\label{fig:cascade}
\end{figure}

\begin{lemma}[Cascading]
\label{lem:cascade}
Let $C_{\mathrm{ent}}=(2d+1)\,C_{\mathrm{box}}(d)^2$ with $C_{\mathrm{box}}$ as in
\eqref{eq:boxcount}. For every $k\ge0$ there is a family $\mathcal L_k$ of at most
$C_{\mathrm{ent}}$ pairs $(B,B')$ of boxes of $\mathfrak B_k$ such that
\begin{equation}
\label{eq:cascade}
  H(B_{k+1})\ \subseteq\ \bigcup_{(B,B')\in\mathcal L_k}\big(H(B)\cap H(B')\big),
\end{equation}
and such that every pair $(B,B')\in\mathcal L_k$ is separated either in time by at least
$\ell_{k+1}/4$ or in space by at least $w_{k+1}/4$, provided $\ell_0$ is large enough that
$2r\le w_k/8$.
\end{lemma}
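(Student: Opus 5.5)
We prove \eqref{eq:cascade} by splitting $H(B_{k+1})$ into its $2d+1$ constituents $T(B_{k+1})$, $S^\pm_i(B_{k+1})$. For each constituent we apply \Cref{lem:localisation} twice to the witnessing path, and we let $\mathcal L_k$ consist of all pairs $(B,B')$ of boxes of $\mathfrak B_k$ satisfying the relevant separation. This family has at most $(2d+1)C_{\mathrm{box}}^2=C_{\mathrm{ent}}$ elements, and the only thing to verify is that the two boxes produced for each witness actually form such a pair. Throughout we use that a witnessing trace lies in $B_{k+1}^+\subseteq[-2w_{k+1},2w_{k+1}]^d\times[0,\ell_{k+1}]$, since $2r\le w_{k+1}$. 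Hence every point of the trace has a grid point $z\in\mathfrak Z_k$ within $\vartheta w_k$ in each coordinate, and every starting time $\alpha$ has a window $I\in\mathfrak J_k$ with $\alpha\in I$ and $\alpha+\ell_k/2\le\sup I$, as noted after \eqref{eq:grids}. We also use freely that restricting an open path to a subinterval gives an open path (\Cref{lem:truncate} together with its left-hand analogue, which follows directly from \Cref{def:openpath}).

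\emph{Temporal case.} On $T(B_{k+1})$ there is a path contained in $B_{k+1}$ whose trace lives on $[\alpha_0,\beta_0]$ with $\beta_0-\alpha_0\ge\ell_{k+1}/2=4\ell_k$. We apply \Cref{lem:localisation}(\ref{loc:dur}) to the restriction to $[\alpha_0,\alpha_0+\ell_k/2]$ and to the restriction to $[\beta_0-\ell_k/2,\beta_0]$. This gives boxes $B$ and $B'$ with windows $I\ni\alpha_0$ and $I'\ni\beta_0-\ell_k/2$. We pick $I$ with the smallest admissible left endpoint and $I'$ with the largest. Then $\sup I\le\alpha_0+\ell_k+\ell_k/2$ and $\inf I'\ge\beta_0-\ell_k/2-\ell_k/2$, so the gap between the two windows is at least $4\ell_k-5\ell_k/2=\tfrac32\ell_k$. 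This is less than $\ell_{k+1}/4=2\ell_k$, so the naive choice fails. To fix it we choose the windows more carefully. Since $\alpha$ in \Cref{lem:localisation} need only satisfy $\alpha\in I$ and $\alpha+\ell_k/2\le\sup I$, we take $I=[\tfrac j2\ell_k,\tfrac j2\ell_k+\ell_k]$ with $\tfrac j2\ell_k\le\alpha_0<\tfrac j2\ell_k+\tfrac12\ell_k$. This gives $\sup I<\alpha_0+\tfrac32\ell_k$. Symmetrically, we apply the lemma at $\alpha'=\beta_0-\ell_k/2$ with the window whose left endpoint lies in $(\alpha'-\tfrac12\ell_k,\alpha']$. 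The resulting gap exceeds $4\ell_k-\tfrac32\ell_k-\ell_k=\tfrac32\ell_k$. If this is still short of $2\ell_k$, we instead take a window inside the path's lifetime near each end. Concretely, we find $I$ contained in $[\alpha_0,\alpha_0+\ell_k+\ell_k/2]$ and start the restricted path at $\max(\alpha_0,\inf I)$. Using the duration of the path, this yields $T$ directly, and the windows can be pushed to within $\ell_k/2$ of each end, giving a gap of at least $4\ell_k-2\cdot\tfrac32\ell_k+\dots$. Tuning these choices to reach $\ell_{k+1}/4$ is the delicate arithmetic. It is the step I expect to be the main obstacle, and I would resolve it by using the full freedom of the half-integer grid $\mathfrak J_k$.

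\emph{Spatial case.} On $S^+_i(B_{k+1})$, say, the trace $\gamma$ on $[\alpha_0,\beta_0]$ starts with $\gamma_i(\alpha_0)\le\vartheta w_{k+1}$ and ends with $\gamma_i(\beta_0)\ge w_{k+1}$. Let $\sigma$ be the first time with $\gamma_i\ge\tfrac12 w_{k+1}$. For the first box we localise at $\alpha_0$ with a grid point $z$. If the path escapes $W=z+[-w_k,w_k]^d$ before time $\sigma$, \Cref{lem:localisation}(\ref{loc:esc}) applies to the restriction up to the exit time. Otherwise the restriction to $[\alpha_0,\sigma]$ has $\gamma(\sigma)\notin W$, because $\tfrac12w_{k+1}-\vartheta w_{k+1}=3w_k>w_k+\vartheta w_k$. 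Either way $H(B)$ holds with $B$ centred at $i$-coordinate at most $\vartheta w_{k+1}+\vartheta w_k$. For the second box we localise at $\sigma$, where $\gamma_i(\sigma)\in[\tfrac12w_{k+1},\tfrac12w_{k+1}+2r]$ because the jumps are at most $2r$. The path ends at $i$-coordinate at least $w_{k+1}$, more than $w_k$ away, so (\ref{loc:esc}) applies again. The centres then differ in coordinate $i$ by at least $\tfrac12w_{k+1}-\vartheta w_{k+1}-2\vartheta w_k=3w_k-w_k/4$. Subtracting the two half-widths $2w_k$ leaves a spatial gap of $\tfrac34w_k$. This is again short of $w_{k+1}/4=2w_k$, so in the write-up I would place the split points at $i$-levels $\vartheta w_{k+1}$ and $w_{k+1}-2w_k$ instead. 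For this, take the second box at the last time before $\beta_0$ with $\gamma_i\le w_{k+1}-2w_k$ and use escape to $\gamma_i\ge w_{k+1}$. This gives a gap of $(1-2\vartheta)w_{k+1}-4w_k-O(\vartheta w_k)\ge 6w_k-4w_k-w_k/2$, which is at least $w_{k+1}/4$ once the margins are checked, using $2r\le w_k/8$. The case $S^-_i$ is the mirror image.
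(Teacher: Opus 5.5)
Your plan is the paper's: split $H(B_{k+1})$ into its $2d+1$ constituents and apply \Cref{lem:localisation} twice to the witnessing trace. In the temporal case you use hypothesis (\ref{loc:dur}) at $\alpha_0$ and at $\beta_0-\ell_k/2$. In the spatial case you use hypothesis (\ref{loc:esc}) at $\alpha_0$ and at the last time before $\beta_0$ at which $\gamma_i\le w_{k+1}-2w_k$. As written, however, the temporal case is not proved. You conclude that this choice gives a gap of only $\tfrac32\ell_k$, and you leave the ``tuning'' open.

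The culprit is your bound $\sup I\le\alpha_0+\tfrac32\ell_k$. Since $\alpha_0\in I$ and $I$ has length $\ell_k$, in fact $\sup I\le\alpha_0+\ell_k$. On the other side, $\alpha'+\ell_k/2\le\sup I'$ with $\alpha'=\beta_0-\ell_k/2$ gives $\inf I'\ge\beta_0-\ell_k$. Hence $\inf I'-\sup I\ge(\beta_0-\alpha_0)-2\ell_k\ge2\ell_k=\ell_{k+1}/4$ for \emph{every} admissible choice of windows, and nothing needs tuning. Your suggested repair would actually go the wrong way. A window with $\inf I\ge\alpha_0$ has $\sup I\ge\alpha_0+\ell_k$, so pushing the windows inside the lifetime of the path can only shrink the gap.

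In the spatial case your corrected construction is the right one, but your displayed estimate evaluates to $6w_k-4w_k-w_k/2=\tfrac32w_k<2w_k=w_{k+1}/4$. So your closing ``once the margins are checked'' is contradicted by your own arithmetic. The correct count is as follows.

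From $z_i\le\vartheta w_{k+1}+\vartheta w_k$ you get $W\subseteq\{x_i\le\tfrac{17}{64}w_{k+1}\}$. Put $\rho=\sup\{t\le\beta_0:\gamma_i(t)\le w_{k+1}-2w_k\}$; the last time need not be attained. Right-continuity and the bound $2r$ on jumps give $|\gamma_i(\rho)-(w_{k+1}-2w_k)|\le2r$. The same jump bound gives $\rho<\beta_0$, since $\gamma_i(\beta_0)\ge w_{k+1}$. Hence $z'_i-w_k\ge w_{k+1}-(3+\vartheta)w_k-2r\ge\tfrac{38}{64}w_{k+1}$, and the gap is at least $\tfrac{21}{64}w_{k+1}\ge w_{k+1}/4$.

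You should also verify (\ref{loc:esc}) explicitly for both windows. This amounts to $z_i+w_k\le\tfrac{17}{64}w_{k+1}<w_{k+1}$ and $z'_i+w_k\le w_{k+1}-(1-\vartheta)w_k+2r<w_{k+1}$, both using $2r\le w_k/8$. With these corrections your argument coincides with the paper's proof.
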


\begin{proof}
We treat the $2d+1$ events constituting $H(B_{k+1})$ separately. Each of them will produce a
pair of boxes of $\mathfrak B_k$. Taking for $\mathcal L_k$ the set of all pairs of boxes of
$\mathfrak B_k$ which can arise then gives \eqref{eq:cascade} with the stated cardinality.

We start with the temporal half-crossing. On $T(B_{k+1})$ there is an open path
contained in $B_{k+1}$, with trace $\gamma$ defined on some
$[\alpha,\beta]\subseteq[0,\ell_{k+1}]$ of length at least $\ell_{k+1}/2=4\ell_k$. We apply
\Cref{lem:localisation} twice, first at the time $\alpha_1=\alpha$ and then at the time
$\alpha_2=\beta-\ell_k/2$. In both cases the remaining duration is at least $\ell_k/2$, so
hypothesis (\ref{loc:dur}) holds. Moreover $\gamma(\alpha_j)\in[-w_{k+1},w_{k+1}]^d$, so the
remark following \eqref{eq:grids} gives $z\in\mathfrak Z_k$ and $I\in\mathfrak J_k$ as required.
We obtain boxes $B=W\times I$ and $B'=W'\times I'$ in $\mathfrak B_k$ with $H(B)$ and $H(B')$.
Their time intervals satisfy $\sup I\le\alpha_1+\ell_k$ and $\inf I'\ge\alpha_2-\ell_k/2$, so
they are separated in time by at least
\[
  \alpha_2-\alpha_1-\tfrac32\ell_k\ \ge\ 4\ell_k-\tfrac12\ell_k-\tfrac32\ell_k
  \ =\ 2\ell_k\ =\ \tfrac14\ell_{k+1} .
\]

We now turn to the spatial half-crossings. We consider $S^+_i(B_{k+1})$; the other $2d-1$
cases are identical after a reflection or a permutation of the coordinates. There is an open
path contained in $B_{k+1}^+$ whose trace $\gamma$, defined on
$[\alpha,\beta]\subseteq[0,\ell_{k+1}]$, satisfies $\gamma_i(\alpha)\le\vartheta w_{k+1}$ and
$\gamma_i(\beta)\ge w_{k+1}$. Shrinking $\beta$, we may assume that $\beta$ is the first such
time. Since $2w_k=w_{k+1}/4$ and $\vartheta=1/8$ we have $\gamma_i(\alpha)<w_{k+1}-2w_k$, so
\[
  \rho:=\sup\{t\le\beta:\ \gamma_i(t)\le w_{k+1}-2w_k\}
\]
is well defined and lies in $[\alpha,\beta)$. Right-continuity of $\gamma$ and the bound $2r$
on its jumps give
\begin{equation}
\label{eq:rhoposition}
  w_{k+1}-2w_k\ \le\ \gamma_i(\rho)+2r,\qquad \gamma_i(\rho)\le w_{k+1}-2w_k+2r .
\end{equation}

We apply \Cref{lem:localisation} at the time $\alpha_1=\alpha$, with $z\in\mathfrak Z_k$ within
$\vartheta w_k$ of $\gamma(\alpha)$ coordinatewise, and at the time $\alpha_2=\rho$, with
$z'\in\mathfrak Z_k$ within $\vartheta w_k$ of $\gamma(\rho)$. The remark following
\eqref{eq:grids} gives the corresponding $I,I'\in\mathfrak J_k$. In both cases we have no bound
on the remaining duration, and we use hypothesis (\ref{loc:esc}) instead. That is, the trace ends
outside the window, because
\[
  z_i+w_k\ \le\ \vartheta w_{k+1}+(1+\vartheta)w_k
  \ =\ \Big(\tfrac18+\tfrac{9}{64}\Big)w_{k+1}\ <\ w_{k+1}
\]
and, by \eqref{eq:rhoposition},
\[
  z'_i+w_k\ \le\ w_{k+1}-2w_k+2r+(1+\vartheta)w_k\ =\ w_{k+1}-(1-\vartheta)w_k+2r\ <\ w_{k+1},
\]
where we used $2r\le w_k/8$. Hence in each case $\gamma_i(\beta)\ge w_{k+1}$ exceeds the largest
$i$-th coordinate attained in the window. In other words $\gamma(\beta)\notin W$ and
$\gamma(\beta)\notin W'$, which is (\ref{loc:esc}). \Cref{lem:localisation} therefore gives $H(B)$
and $H(B')$ for $B=W\times I$ and $B'=W'\times I'$ in $\mathfrak B_k$.

It remains to bound the spatial separation of $W$ and $W'$ from below. On the one hand
$W\subseteq\{x_i\le z_i+w_k\}$ and $z_i+w_k\le\vartheta w_{k+1}+(1+\vartheta)w_k$, that is
$W\subseteq\{x_i\le\frac{17}{64}w_{k+1}\}$. On the other hand, by \eqref{eq:rhoposition},
$z'_i\ge\gamma_i(\rho)-\vartheta w_k\ge w_{k+1}-2w_k-2r-\vartheta w_k$, so
$W'\subseteq\{x_i\ge z'_i-w_k\}$ with
\[
  z'_i-w_k\ \ge\ w_{k+1}-(3+\vartheta)w_k-2r\ \ge\ w_{k+1}-\tfrac{25}{64}w_{k+1}-\tfrac18 w_k
  \ \ge\ \tfrac{38}{64}\,w_{k+1} .
\]
The two windows are therefore separated in space by at least
$(\frac{38}{64}-\frac{17}{64})w_{k+1}=\frac{21}{64}w_{k+1}\ge w_{k+1}/4$.
\end{proof}

\subsection{Horizontal decoupling}
\label{sec:horiz}

In this section we show that two half-crossings in boxes which are far apart in space are
almost independent. For a Poisson process on path space this is close to an exact statement.
The paths which enter the two regions during the relevant time window form two independent
Poisson processes, apart from those which enter both regions, and the expected number of the
latter is small.

\begin{lemma}[Horizontal decoupling]
\label{lem:horiz}
There is $C=C(d)<\infty$ such that the following holds. Let
$B,B'\subseteq\R^d\times[0,\Lambda]$ and let $A,A'$ be increasing events determined, in the sense
of \Cref{lem:Hprops}, by $B$ and $B'$ respectively. By \Cref{lem:Hprops} the regions on which
$A$ and $A'$ depend are the $6r$-neighbourhoods of the spatial projections of $B$ and $B'$, and
not those projections themselves. Suppose that these two regions are contained in cubes of side
$W$ and lie at distance $D$ from one another. Then
\begin{equation}
\label{eq:horiz}
  \Prob_\eta(A\cap A')\ \le\ \Prob_\eta(A)\,\Prob_\eta(A')
  +C\eta\big(W^d+\Lambda^{d/2}\big)e^{-D^2/(2d\Lambda)} .
\end{equation}
\end{lemma}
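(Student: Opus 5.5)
Let $R,R'$ denote the two regions of the statement, cubes of side $W$ at distance $D$, and let $[0,\Lambda]$ be the common time window. The plan is to split the Poisson process on path space according to which of the two regions a path visits during $[0,\Lambda]$, and to use the independence of a Poisson process on disjoint sets. Partition $\mathcal P$ into $\mathcal P_R$ (paths visiting $R$ but not $R'$ during $[0,\Lambda]$), $\mathcal P_{R'}$ (visiting $R'$ but not $R$), $\mathcal P_{RR'}$ (visiting both), and the rest. These are independent Poisson processes, since they are restrictions of $\mathcal P$ to disjoint measurable subsets of path space. The marks are independent conditionally on $\mathcal P$, across particles and ordered pairs. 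The pairs relevant to $A$ lie within $\mathcal P_R\cup\mathcal P_{RR'}$ and those relevant to $A'$ within $\mathcal P_{R'}\cup\mathcal P_{RR'}$. These two families share only pairs inside $\mathcal P_{RR'}$. By \Cref{lem:Hprops}, $A$ is a function of $\mathcal P_R\cup\mathcal P_{RR'}$ and its marks, and $A'$ is a function of $\mathcal P_{R'}\cup\mathcal P_{RR'}$ and its marks.

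On the event $\{\mathcal P_{RR'}=\emptyset\}$, $A$ coincides with $A(\mathcal P_R)$ and $A'$ with $A'(\mathcal P_{R'})$. These are independent, and by monotonicity (\Cref{prop:mono}(ii)) each is dominated by the event evaluated on the full configuration. Hence
\[
\Prob_\eta(A\cap A')\le\Prob\big(A(\mathcal P_R)\big)\,\Prob\big(A'(\mathcal P_{R'})\big)+\Prob(\mathcal P_{RR'}\ne\emptyset)\le\Prob_\eta(A)\Prob_\eta(A')+\E|\mathcal P_{RR'}|.
\]
The marks are read off conditionally on the sub-configuration, so the monotone coupling is the one of \Cref{prop:mono}(ii), applied to $\mathcal P_R\subseteq\mathcal P$ with inherited marks.

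The remaining step is to bound $\E|\mathcal P_{RR'}|=\nu_\eta(\{w:w\text{ visits }R\text{ and }R'\text{ in }[0,\Lambda]\})$. Such a path must, at some time, be in one region and later reach the other, so its displacement over $[0,\Lambda]$ is at least $D$. Bound this measure by integrating over the starting point $x=w(0)$. Starting points within distance $\Lambda^{1/2}$ of $R$ contribute at most $\eta(W+2\Lambda^{1/2})^d$ times the probability from (H\ref{H3}) of a displacement of at least $D$ in time $\Lambda$. Starting points at distance $\rho$ from $R$ must travel at least $\max(\rho,D)$, and hence at least $\rho/2+D/2$; integrating the Gaussian tail over shells gives an extra factor $\Lambda^{d/2}$ times a polynomial in $W$. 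One must be careful to keep the exponent $D^2/(2d\Lambda)$ rather than losing a constant. For starting points far from $R$, use the cruder requirement of first reaching $R$ at distance $\rho$ and then traveling $D$, with the two displacements measured from the same starting point. Decompose via the hitting time of $R$ and the strong Markov property: the first factor integrates to $C(W^d+\Lambda^{d/2})$, and the second is at most $4d\,e^{-D^2/(2d\Lambda)}$. This gives $C\eta(W^d+\Lambda^{d/2})e^{-D^2/(2d\Lambda)}$.

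I expect the main obstacle to be this last integration, which must show that the expected number of paths hitting $R$ at all during $[0,\Lambda]$ is $O(\eta(W^d+\Lambda^{d/2}))$, a Wiener-sausage volume estimate. Cross terms of the form $W^{d-1}\Lambda^{1/2}$ can be absorbed into $W^d+\Lambda^{d/2}$ by Young's inequality.
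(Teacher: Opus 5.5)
Your proposal is correct and follows essentially the paper's proof. It uses the same splitting of path space according to which of the two regions a path visits during $[0,\Lambda]$, and the same bound $\Prob_\eta(A)\Prob_\eta(A')+\E|\mathcal P_{RR'}|$ via monotonicity. It also estimates $\E|\mathcal P_{RR'}|$ in the same way, by the strong Markov property at the first visit, the reflection bound (H3), and a Wiener-sausage integral of order $W^d+W^{d-1}\Lambda^{1/2}+\Lambda^{d/2}$ absorbed by Young's inequality.

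Two small points need fixing. First, apply the strong Markov property at the first hitting time of $R\cup R'$, or sum over which region is visited first, because a path may meet $R'$ before $R$. Second, drop your near-field case: there the displacement from the starting point is only guaranteed to be $D/2$, not $D$. The hitting-time decomposition already covers every starting point with the sharp exponent $D^2/(2d\Lambda)$.
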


\begin{proof}
Let $S$ be the set of paths which visit the region relevant to $A$ during $[0,\Lambda]$, and
let $S'$ be the set of paths which visit the region relevant to $A'$ during $[0,\Lambda]$. We
split path space into the three disjoint measurable sets $S\setminus S'$, $S'\setminus S$ and
$S\cap S'$. By the restriction theorem the point processes
$\mathcal P_1,\mathcal P_2,\mathcal P_{12}$ carried by these sets are independent Poisson
processes. The recovery marks are indexed by the particles themselves, so the recovery marks
attached to disjoint sets of particles are independent as well. This is all we need here. Note
that at $\lambda=\infty$ the transmission marks are discarded (\Cref{sec:graphical}). At finite
$\lambda$ the transmission marks are indexed by ordered pairs of particles and would not
decompose in this way.

The event $A$ is determined by $\mathcal P_1\cup\mathcal P_{12}$ and $A'$ by
$\mathcal P_2\cup\mathcal P_{12}$. Hence on $\{\mathcal P_{12}=\emptyset\}$ they are determined by
$\mathcal P_1$ and by $\mathcal P_2$ respectively, and are therefore independent. We write
$A(\mathcal P_1)$ for the event evaluated on $\mathcal P_1$ alone. Then
\[
  \Prob(A\cap A')\ \le\ \Prob\big(A(\mathcal P_1)\big)\Prob\big(A'(\mathcal P_2)\big)
  +\Prob(\mathcal P_{12}\ne\emptyset)
  \ \le\ \Prob_\eta(A)\Prob_\eta(A')+\E|\mathcal P_{12}| .
\]
The middle inequality holds because $\mathcal P_1\subseteq\mathcal P$ and $A$ is increasing, and
the last inequality holds because $\Prob(N\ne0)\le\E N$.

It remains to bound $\E|\mathcal P_{12}|$. A path of $S\cap S'$ visits the two regions
relevant to $A$ and to $A'$ during a time interval of length $\Lambda$. By hypothesis these
regions are at distance at least $D$, so the path displaces by at least $D$ during that
interval. By the strong Markov property at the first visit and the reflection bound
$\Prob_y\big(\sup_{s\le\Lambda}|B_s-y|\ge D\big)\le4d\,e^{-D^2/(2d\Lambda)}$, we get
\[
  \E|\mathcal P_{12}|\ \le\ 4d\,e^{-D^2/(2d\Lambda)}\;
  \eta\int_{\R^d}\Prob_x\big(\text{the path visits the cube of side }W\text{ before }\Lambda\big)
  \mathrm dx .
\]
A Brownian motion started at distance $R$ from the cube reaches it before time $\Lambda$
with probability at most $4de^{-R^2/(2d\Lambda)}$, so the last integral is at most
$C_d(W^d+W^{d-1}\Lambda^{1/2}+\Lambda^{d/2})$. Moreover
$W^{d-1}\Lambda^{1/2}\le W^d+\Lambda^{d/2}$ by Young's inequality with exponents $d/(d-1)$ and
$d$. This gives \eqref{eq:horiz}.
\end{proof}

\subsection{Local mixing and vertical decoupling}
\label{sec:vertical}

In this section we consider two half-crossings in boxes which are far apart in time. These
require a different treatment. We first explain why the direct approach fails. Conditionally
on $\mathcal F_{t_0}$ and on the configuration $\Psi$ at time $t_0$, the configuration at time
$t_0+\Delta$ is obtained by displacing the points of $\Psi$ by independent Gaussian vectors of
variance $\Delta$. This conditional law is not stochastically dominated by any Poisson process.
To see this, let $R$ be a region containing $m$ points of $\Psi$, each at distance at least
$\zeta$ from the boundary of $R$. Each of them still lies in $R$ after the displacement with
probability at least $1-e^{-\zeta^2/(2d\Delta)}$. Hence $R$ contains at least $m$ displaced
points with probability at least $1-me^{-\zeta^2/(2d\Delta)}$, which is arbitrarily close to $1$
once $\zeta/\Delta^{1/2}$ is large. On the other hand, a Poisson variable of mean
$(1+\epsilon)m$ is smaller than $m$ with probability at least $e^{-c\epsilon^2m}$. Domination
would require the first of these two failure probabilities to be the larger, and this fails as
soon as $\zeta^2\ge C\Delta(\epsilon^2m+\log m)$. The obstruction therefore disappears at rate
$\exp\{-c\epsilon^2\eta\Delta^{d/2}\}$ per cell of side $\sqrt\Delta$. The correct statement is
a coupling which succeeds with high probability, with that error, and local mixing provides
it. The geometry of the coupling is drawn in \Cref{fig:mixing} and that of the decoupling in
\Cref{fig:vertical}. Below, $\kappa$ denotes the H\"older exponent of the heat kernel of
(H\ref{H2}), which for the Gaussian kernel is $\kappa=1$. We carry $\kappa$ as a parameter only
because the hypothesis $\Delta\ge c_0\ell^2\epsilon^{-4/\kappa}$ of the mixing estimates is
stated for a general $\kappa$. The parameter
$\varrho$ is a density.

The estimate we use is
\Cref{thm:mixingupper}, which we proved in \Cref{sec:mixing}. Recall that it states that the mixed
configuration is dominated by a Poisson process of slightly larger intensity, with error
$\Gamma\exp\{-C\varrho\epsilon^2\Delta^{d/2}\}$ per box. We use it here in the upper form, since
the inclusion needed for extinction runs in the opposite direction to the one required in
\Cref{sec:lipschitz}.

\begin{figure}[!h]
\centering
\begin{tikzpicture}[x=1cm,y=0.7cm,>=Stealth]
  \draw[->,gray] (-5.4,-0.4) -- (-5.4,7.8) node[above,black]{$t$};
  \draw[|-|] (-4,-1.0) -- (4,-1.0) node[right]{\small$Q_K$};
  \draw[|-|] (-3,-1.9) -- (3,-1.9) node[right]{\small$Q_{K'}$};
  \draw[|-|] (-2,-2.8) -- (2,-2.8) node[right]{\small$Q_W$};
  \draw[<->] (-3,-2.4) -- (-2,-2.4) node[midway,below]{\small$\mathsf G$};
  \draw[<->] (-4,-1.5) -- (-3,-1.5) node[midway,below]{\small$\mathsf G$};
  \draw[dotted] (-5.4,3) -- (4,3); \node[left] at (-5.4,3) {\small$t_0$};
  \draw[dotted] (-5.4,4.2) -- (4,4.2); \node[left] at (-5.4,4.2) {\small$t_0+\Delta$};
  \draw[dotted] (-5.4,7) -- (4,7); \node[left] at (-5.4,7) {\small$\Lambda$};
  \draw[fill=black!8] (-2,0.2) rectangle (2,3); \node at (0,1.6) {$B$, event $A$};
  \draw[fill=black!8] (-2,4.2) rectangle (2,7);  \node at (0,5.6) {$B'$, event $A'$};
  \draw[<->] (3.4,3) -- (3.4,4.2) node[midway,right]{\small$\Delta$};
  \draw[gray,->] (-4.6,3) .. controls (-4.3,3.6) .. (-3.6,4.2);
  \node[gray,above left] at (-4.0,3.9) {\scriptsize$\mathcal M$};
\end{tikzpicture}
\caption{The geometry of the vertical decoupling. The event $A$ is carried by $B$, which lies
below time $t_0$, and $A'$ by $B'$, which lies above $t_0+\Delta$. Conditionally on the
configuration at time $t_0$, the local mixing estimate replaces the particles of $Q_K$ at time
$t_0$ by a Poisson process of intensity $\eta(1+\epsilon)$ containing all of them inside
$Q_{K'}$ at time $t_0+\Delta$. The two spatial margins of width $\mathsf G$ absorb the particles which
enter from outside, on the events $\mathcal M$ and $\mathcal N$.}
\label{fig:vertical}
\end{figure}

\begin{proposition}[Vertical decoupling]
\label{prop:vertical}
There is $c=c(d)>0$ such that the following holds. Let $\eta>0$, $\epsilon\in(0,1)$,
$0<\Delta\le\Lambda$ and $W>0$, and let $\ell$ satisfy
\begin{equation}
\label{eq:ellcond}
  \Delta\ge c_0\ell^2(\epsilon/3)^{-4/\kappa}\qquad\text{and}\qquad\eta\ell^d\ge1 .
\end{equation}
Let $\mathsf G\ge\max\{c_1\Delta^{1/2}(\epsilon/3)^{-1/d},\,12r\}$, and put $K'=W+2\mathsf G$ and
$K=K'+2\mathsf G$. Let
$B\subseteq Q_W\times[0,t_0]$ and $B'\subseteq Q_W\times[t_0+\Delta,\Lambda]$, and let $A,A'$ be
increasing events determined by $B$ and $B'$ in the sense of \Cref{lem:Hprops}. Then
\begin{equation}
\label{eq:vertical}
  \Prob_\eta(A\cap A')\le\Prob_\eta(A)\Big[\Prob_{\eta(1+\epsilon)}(A')
  +\Gamma\,e^{-c\eta\epsilon^2\Delta^{d/2}}\Big]
  +\Big(\tfrac K\ell\Big)^de^{-c\eta\epsilon^2\ell^d}
  +C_d\,\eta\,(K+\Lambda^{1/2})^de^{-\mathsf G^2/(8d\Lambda)} ,
\end{equation}
where $\Gamma=C_d\,\epsilon^{-d}\big(K\Delta^{-1/2}\big)^{d(d+1)}$ is the factor
\eqref{eq:Gamma} of \Cref{thm:mixingupper} at mixing parameter $\epsilon/3$.
\end{proposition}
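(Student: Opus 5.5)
We plan to condition on $\mathcal F_{t_0}$, the marked configuration up to time $t_0$, and to use that $A$ is $\mathcal F_{t_0}$-measurable up to the contribution of particles which visit the $6r$-neighbourhood of $Q_W$ before $t_0$ but are only relevant through $B$. By \Cref{lem:Hprops} $A$ depends only on particles visiting the $6r$-neighbourhood of $Q_W$ during $[0,t_0]$, hence is $\mathcal F_{t_0}$-measurable. It then suffices to bound $\Prob_\eta(A'\mid\mathcal F_{t_0})$ on a good event $\mathcal G\in\mathcal F_{t_0}$ by $\Prob_{\eta(1+\epsilon)}(A')+\Gamma e^{-c\eta\epsilon^2\Delta^{d/2}}$, plus the probability of $\mathcal G^c$ and of a boundary event. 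Concretely we write
\[
  \Prob_\eta(A\cap A')\le\E\big[\mathbf 1_A\mathbf 1_{\mathcal G}\,\Prob(A'\cap\mathcal M^c\cap\mathcal N^c\mid\mathcal F_{t_0})\big]+\Prob(\mathcal G^c)+\Prob(\mathcal M\cup\mathcal N),
\]
where $\mathcal G$ is the event that every subcube of side $\ell$ in a tessellation of $Q_K$ carries at most $\eta(1+\epsilon/3)\ell^d$ particles at time $t_0$, $\mathcal M$ is the event that some particle outside $Q_K$ at time $t_0$ enters the $6r$-neighbourhood of $Q_W$ during $[t_0,\Lambda]$, and $\mathcal N$ the event that some particle in $Q_K$ at $t_0$ but not in $Q_{K'}$ at time $t_0+\Delta$ later reaches that neighbourhood (both margins have width $\mathsf G\ge12r$, so the $6r$ collar is absorbed).

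The three error terms are then estimated separately. By the Poisson Chernoff bound and a union bound over the $(K/\ell)^d$ subcubes, $\Prob(\mathcal G^c)\le(K/\ell)^de^{-c\eta\epsilon^2\ell^d}$, using $\eta\ell^d\ge1$. For $\mathcal M\cup\mathcal N$ we use the Mecke equation and the reflection bound (H\ref{H3}): a particle must travel distance at least $\mathsf G/2$ in time $\Lambda$, and integrating the hitting probability over starting positions, exactly as in the proof of \Cref{lem:horiz}, gives $C_d\eta(K+\Lambda^{1/2})^de^{-\mathsf G^2/(8d\Lambda)}$. These events are not $\mathcal F_{t_0}$-measurable, but we bound them unconditionally, which is why they appear without the factor $\Prob_\eta(A)$.

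The main step is the conditional bound on $\mathcal G$. Off $\mathcal M\cup\mathcal N$, the event $A'$ is determined by the particles which are in $Q_{K'}$ at time $t_0+\Delta$ (with their subsequent trajectories and fresh recovery marks, independent of $\mathcal F_{t_0}$ by the Markov property and the Poisson marks) together with particles outside $Q_K$ at $t_0$ — but the latter are excluded by $\mathcal M^c$. Apply \Cref{thm:mixingupper} with $\varrho=\eta(1+\epsilon/3)$ and mixing parameter $\epsilon/3$, whose hypotheses are \eqref{eq:ellcond} and $K-K'=2\mathsf G\ge c_1\Delta^{1/2}(\epsilon/3)^{-1/d}$: with probability at least $1-\Gamma e^{-c\eta\epsilon^2\Delta^{d/2}}$ the positions in $Q_{K'}$ at $t_0+\Delta$ are contained in a Poisson process $\psi$ of intensity $\eta(1+\epsilon/3)^2\le\eta(1+\epsilon)$, independent of $\mathcal F_{t_0}$. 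Attaching independent Brownian continuations and recovery marks to the points of $\psi$ (those coinciding with true particles keep the true ones, which have the same law), monotonicity of $A'$ (\Cref{prop:mono}(ii)) and stationarity give $\Prob(A'\cap\mathcal M^c\cap\mathcal N^c\mid\mathcal F_{t_0})\le\Prob_{\eta(1+\epsilon)}(A')+\Gamma e^{-c\eta\epsilon^2\Delta^{d/2}}$. The delicate point, which I expect to be the main obstacle, is this coupling: checking that $\psi$ extended by fresh paths is a stationary Poisson path process of intensity $\eta(1+\epsilon)$ restricted appropriately, so that $A'$ evaluated on it has probability at most $\Prob_{\eta(1+\epsilon)}(A')$, while the isolation rule's dependence on neighbours within $2r$ is covered by the collar. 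Multiplying by $\mathbf 1_A$ and taking expectations yields \eqref{eq:vertical}.
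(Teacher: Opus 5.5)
Your proposal is correct and follows the paper's argument. It uses the same occupancy event on $Q_K$ at time $t_0$, two boundary events forcing every particle relevant to $A'$ to lie in $Q_K$ at $t_0$ and in $Q_{K'}$ at $t_0+\Delta$, and \Cref{thm:mixingupper} applied conditionally on $\mathcal F_{t_0}$ with $\varrho=(1+\epsilon/3)\eta$ and parameter $\epsilon/3$, followed by the matched extension of $\psi$ to a stationary system. Your split of the boundary events (stated as bad events) differs only cosmetically from the paper's $\mathcal M$ (no outsider lands in $Q_{K'}$ at $t_0+\Delta$) and $\mathcal N$ (no particle outside $Q_{K'}$ at $t_0+\Delta$ reaches the $6r$-collar), and it produces the same last term of \eqref{eq:vertical}.
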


\begin{proof}
Let $\Psi$ be the configuration at time $t_0$, which is a Poisson process of intensity $\eta$.
We introduce three auxiliary events.
\begin{itemize}
\item $\mathcal O$ is the event that every subcube of side $\ell$ of $Q_K$ carries at most
  $(1+\epsilon/3)\eta\ell^d$ points of $\Psi$. This event is $\mathcal F_{t_0}$-measurable, and
  by the Poisson Chernoff bound $\Prob(\mathcal O^c)\le(K/\ell)^de^{-\eta\ell^d\epsilon^2/36}$.
\item $\mathcal M$ is the event that no particle lying outside $Q_K$ at time $t_0$ lies in
  $Q_{K'}$ at time $t_0+\Delta$. By stationarity and the reflection bound,
  $\Prob(\mathcal M^c)\le C_d\eta(K+\Delta^{1/2})^de^{-\mathsf G^2/(2d\Delta)}$.
\item $\mathcal N$ is the event that no particle lying outside $Q_{K'}$ at time $t_0+\Delta$
  enters the $6r$-neighbourhood of $Q_W$ during $[t_0+\Delta,\Lambda]$. Similarly
  $\Prob(\mathcal N^c)\le C_d\eta(W+\Lambda^{1/2})^de^{-\mathsf G^2/(8d\Lambda)}$. Here the margin
  is $\mathsf G-6r$ rather than $\mathsf G$, and this is at least $\mathsf G/2$ as soon as
  $\mathsf G\ge12r$.
\end{itemize}

On $\mathcal M\cap\mathcal N$, every particle relevant to $A'$ lies in $Q_{K'}$ at time
$t_0+\Delta$ and lay in $Q_K$ at time $t_0$. We apply \Cref{thm:mixingupper} conditionally on
$\mathcal F_{t_0}$, on the event $\mathcal O$, with $\varrho=(1+\epsilon/3)\eta$, mixing
parameter $\epsilon/3$ and $J$ the set of particles of $Q_K$. Its hypotheses are the two
conditions \eqref{eq:ellcond} and $\mathsf G\ge c_1\Delta^{1/2}(\epsilon/3)^{-1/d}$, and we have
$\varrho\ell^d\ge\eta\ell^d\ge1$. This produces a Poisson process $\psi$ of intensity
$(1+\epsilon/3)^2\eta\le(1+\epsilon)\eta$, independent of $\mathcal F_{t_0}$, which contains all
the relevant positions outside an event of conditional probability at most
$\Gamma e^{-c\eta\epsilon^2\Delta^{d/2}}$. The inclusion given by \Cref{thm:mixingupper} holds
inside $Q_{K'}$ only. Before comparing with a stationary system we therefore superpose on
$\psi$, outside $Q_{K'}$, an independent Poisson process of intensity $\eta(1+\epsilon)$. The
union is Poisson of intensity at most $\eta(1+\epsilon)$ everywhere, and it still contains every
position relevant to $A'$, since by the choice $K'=W+2\mathsf G$ those positions lie in $Q_{K'}$.
We attach to its points independent Brownian motions and independent marks, matched to the
original particles under the inclusion. This yields a stationary marked system of intensity
$\eta(1+\epsilon)$, independent of $\mathcal F_{t_0}$, whose particles contain all those relevant
to $A'$. Since $A'$ is increasing, we obtain
\[
  \Prob\big(A'\cap\mathcal M\cap\mathcal N\mid\mathcal F_{t_0}\big)
  \le\Prob_{\eta(1+\epsilon)}(A')+\Gamma\,e^{-c\eta\epsilon^2\Delta^{d/2}}
  \qquad\text{on }\mathcal O .
\]
We multiply by $\mathbf1_{A\cap\mathcal O}$, which is $\mathcal F_{t_0}$-measurable, take
expectations, and add $\Prob(\mathcal O^c)+\Prob(\mathcal M^c)+\Prob(\mathcal N^c)$. This gives
\eqref{eq:vertical}.
\end{proof}

\subsection{The recursion}
\label{sec:recursion}

We now set up the recursion. The two conditions in \eqref{eq:ellcond} bound $\ell$ from above
and from below. Propagating them through the scales determines the admissible sprinkling
exponent. We set
\begin{equation}
\label{eq:sstar}
  s_\ast:=\frac{d\kappa}{4(\kappa+d)}\qquad\Big(=\tfrac d{4(1+d)}\ \text{ for }\kappa=1\Big),
\end{equation}
fix $s\in(0,s_\ast)$, and put
\begin{equation}
\label{eq:sprink}
  \delta_k=\ell_k^{-s},\qquad \eta_\infty=\ell_0^{-s},\qquad
  \eta_k=\eta_\infty+\sum_{j\ge k}\delta_j,\qquad p_k=\Prob_{\eta_k}\big(H(B_k)\big),
\end{equation}
so that $(\eta_k)$ decreases to $\eta_\infty$ and
$\eta_0=\big(1+(1-a^{-s})^{-1}\big)\ell_0^{-s}$. Note that $s_\ast<\min\{d,\kappa\}/4$, and
that $s_\ast=1/6$ when $\kappa=1$ and $d=2$. Hence every $s\le1/10$ is admissible in every
dimension when $\kappa=1$.

\begin{proposition}[Recursion]
\label{prop:recursion}
Let $s\in(0,s_\ast)$, $\theta_\ast:=\tfrac d2-2s-\tfrac{2sd}\kappa>0$ and $\mathsf N:=(d+2)^2$. There
are $C,c>0$, depending only on $d,s,\kappa$, such that for $\ell_0$ large and every $k\ge0$,
\begin{equation}
\label{eq:recursion}
  p_{k+1}\ \le\ C_{\mathrm{ent}}\,p_k^2+C_{\mathrm{ent}}\,\epsilon_k,
  \qquad
  \epsilon_k=C\,\ell_0^{\mathsf N}\ell_k^{\mathsf N}\Big(
    \exp\big\{-c\,\ell_0^{d/2-s}a^{k\theta_\ast}\big\}
    +\exp\big\{-c\,\ell_0^{2}\ell_{k+1}\big\}\Big) .
\end{equation}
\end{proposition}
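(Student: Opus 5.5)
Start from the cascading inclusion of \Cref{lem:cascade}. By a union bound over the at most $C_{\mathrm{ent}}$ pairs,
\[
  p_{k+1}=\Prob_{\eta_{k+1}}(H(B_{k+1}))\le\sum_{(B,B')\in\mathcal L_k}\Prob_{\eta_{k+1}}\big(H(B)\cap H(B')\big).
\]
It therefore suffices to show that for each pair
\[
  \Prob_{\eta_{k+1}}\big(H(B)\cap H(B')\big)\le p_k^2+\epsilon_k .
\]
The intensity $\eta_{k+1}=\eta_k-\delta_k$ is smaller than $\eta_k$, and this gap is the room for sprinkling. Every $H(B)$ is increasing by \Cref{lem:Hprops}, and its probability depends on $B$ only through its scale, by space-time stationarity. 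Hence it is enough to bound each pair by $\Prob_{\eta_{k+1}}(H(B))\,\Prob_{\eta_k}(H(B'))+\epsilon_k$, and then use $\Prob_{\eta_{k+1}}(H(B))\le p_k$ by monotonicity in the intensity. The pairs split into two kinds, treated separately.

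\emph{Spatially separated pairs.} Use \Cref{lem:horiz} with $\Lambda=\ell_{k+1}$ and $W\le C w_{k+1}$. The spatial projections are at distance at least $w_{k+1}/4$, so the $6r$-neighbourhoods are at distance $D\ge w_{k+1}/8$ once $\ell_0$ is large. The error term is
\[
  C\eta\,(w_{k+1}^d+\ell_{k+1}^{d/2})\,e^{-D^2/(2d\ell_{k+1})} .
\]
Since $w_{k+1}^2/\ell_{k+1}=\ell_0^2\ell_{k+1}$, this is at most $C\ell_0^{\mathsf N}\ell_k^{\mathsf N}e^{-c\ell_0^2\ell_{k+1}}$, which is the second term of $\epsilon_k$. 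No sprinkling is needed in this case.

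\emph{Temporally separated pairs.} Use \Cref{prop:vertical} at intensity $\eta=\eta_{k+1}$, with $1+\epsilon=\eta_k/\eta_{k+1}$, so that $\epsilon\asymp\delta_k/\eta_\infty\ge c\,a^{-ks}$. Take $\Delta=\ell_{k+1}/8$, which fits inside the gap $\ell_{k+1}/4$, and $\Lambda=\ell_{k+1}$. Take $W$ of order $w_{k+1}$, covering both boxes after translation; then $\mathsf G=w_{k+1}$ is far larger than $c_1\Delta^{1/2}\epsilon^{-1/d}$. Choose $\ell$ to satisfy both conditions of \eqref{eq:ellcond}: it must be at most of order $\Delta^{1/2}\epsilon^{2/\kappa}$ and at least of order $\eta^{-1/d}\ge\ell_0^{s/d}$ up to constants. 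Take $\ell$ at the upper limit. The three exponents of \eqref{eq:vertical} are then as follows.
\begin{itemize}
\item $\eta\epsilon^2\Delta^{d/2}\gtrsim\ell_0^{-s}a^{-2ks}\ell_{k+1}^{d/2}$.
\item $\eta\epsilon^2\ell^d\gtrsim\ell_0^{-s}\epsilon^{2+2d/\kappa}\ell_{k+1}^{d/2}$.
\item $\mathsf G^2/\Lambda=\ell_0^2\ell_{k+1}$, which again gives the second term of $\epsilon_k$.
\end{itemize}
Using $\epsilon\gtrsim\ell_0^{s}\ell_k^{-s}$ up to $\eta_\infty$ factors, the second exponent is the binding one. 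It is of order $\ell_0^{d/2-s}\,a^{k(d/2-2s-2sd/\kappa)}$, which is exactly $\ell_0^{d/2-s}a^{k\theta_\ast}$. The condition $s<s_\ast$ is what makes $\theta_\ast>0$, and it also makes $\ell\ge\eta^{-1/d}$ hold for large $\ell_0$. The prefactors $\Gamma$ and $(K/\ell)^d$ are polynomial in $\ell_0\ell_k$ and are absorbed into $\ell_0^{\mathsf N}\ell_k^{\mathsf N}$.

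Finally, the leading term of \eqref{eq:vertical} is $\Prob_\eta(A)\Prob_{\eta(1+\epsilon)}(A')\le p_k\cdot p_k$. This uses again that $H$ depends on its box only through the scale, so the translated box $B'$ has the same probability as $B_k$.

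The main obstacle is the bookkeeping in the temporal case. The sprinkling $\epsilon$ must be matched with $\eta_k/\eta_{k+1}-1$. The mixing hypothesis $\Delta\ge c_0\ell^2(\epsilon/3)^{-4/\kappa}$ and the density hypothesis $\eta\ell^d\ge1$ must both be met simultaneously at every $k$. One must also check that the exponent obtained really is $\ell_0^{d/2-s}a^{k\theta_\ast}$ and is uniform in $k$. I would first fix $\ell=(\Delta/c_0)^{1/2}(\epsilon/3)^{2/\kappa}$ and verify the lower bound on $\eta\ell^d$ using $\eta_{k+1}\ge\ell_0^{-s}$. A smaller point is that the statement has $C_{\mathrm{ent}}\epsilon_k$ rather than a sum: each of the at most $C_{\mathrm{ent}}$ pairs contributes at most $\epsilon_k$, and the union bound produces exactly the factor $C_{\mathrm{ent}}$.
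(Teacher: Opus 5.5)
Your proposal is correct and follows essentially the same route as the paper: a union bound over the cascade pairs, \Cref{lem:horiz} for the spatially separated pairs, and \Cref{prop:vertical} at intensity $\eta_{k+1}$ with sprinkling parameter $\delta_k/\eta_{k+1}$ and $\ell$ taken at the upper limit of \eqref{eq:ellcond}, which produces exactly the exponent $\ell_0^{d/2-s}a^{k\theta_\ast}$. There are two cosmetic slips, neither of which affects the conclusion beyond constants. First, the boxes of $\mathfrak B_k$ reach time $\ell_{k+1}+\ell_k$, so you need $\Lambda=2\ell_{k+1}$, as the paper takes, rather than $\ell_{k+1}$. Second, the inequality you actually need is $\eta_{k+1}^{-1/d}\le\ell_0^{s/d}$, not $\ge$.
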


The first exponential in $\epsilon_k$ comes from the local mixing. The second comes from the
Gaussian tails of the horizontal decoupling and of the last term of \eqref{eq:vertical}. Both
exponents grow exponentially in $k$, at the respective rates $a^{\theta_\ast}$ and $a$, so
$\epsilon_k$ decays superexponentially. This is all that \Cref{lem:contraction} requires. Note
that neither term dominates the other in general, since $\theta_\ast>1$ once $d$ is large.

\begin{proof}
By \Cref{lem:cascade} and a union bound,
$p_{k+1}\le C_{\mathrm{ent}}\max_{(B,B')}\Prob_{\eta_{k+1}}\big(H(B)\cap H(B')\big)$, where the
maximum is over the pairs produced by the cascading. Throughout the proof we use that
$\Prob_{\eta_{k+1}}(H(B))\le\Prob_{\eta_k}(H(B))=p_k$ for every scale-$k$ box $B$. This follows
from \Cref{lem:Hprops} together with the superposition coupling of \Cref{prop:mono}(ii)--(iii).
All the boxes involved lie in $[-2w_{k+1},2w_{k+1}]^d\times[0,2\ell_{k+1}]$, so we may take
$\Lambda=2\ell_{k+1}$ throughout.

We start with the horizontally separated pairs. We apply \Cref{lem:horiz} with $W=4w_k$ and
$\Lambda=2\ell_{k+1}$. By \Cref{lem:cascade} the two boxes are separated by at least
$\frac{21}{64}w_{k+1}$. The regions on which the two events depend are the $6r$-neighbourhoods
of their projections, so these regions are separated by at least $\frac{21}{64}w_{k+1}-12r$.
Since $2r\le w_0/8$ and $w_{k+1}\ge8w_0$ we have $2r\le w_{k+1}/64$, hence
$12r\le\frac{6}{64}w_{k+1}$, and we may take
\[
  D\ =\ \tfrac{15}{64}w_{k+1} .
\]
The same estimate shows that the regions are contained in cubes of side
$2w_k+4r+12r\le3w_k\le W$. Then
$D^2/\Lambda\ge\frac{225}{4096}w_{k+1}^2/(2\ell_{k+1})\ge\ell_0^2\ell_{k+1}/37$, and the prefactor
is at most $C\eta_0(w_k^d+\ell_{k+1}^{d/2})\le C\ell_0^{d}\ell_k^{d}$. The resulting error is at
most $C\ell_0^d\ell_k^d\exp\{-\ell_0^2\ell_{k+1}/(128d)\}$, which is accounted for by the second
term of $\epsilon_k$. Note that the box separation alone would give the constant $64$ in place
of $128$; the difference is the cost of the $6r$ collar.

We now consider the vertically separated pairs. We apply \Cref{prop:vertical} with
$\Delta=\ell_{k+1}/4=2\ell_k$, $\Lambda=2\ell_{k+1}$, $W=6w_{k+1}$, $\mathsf G=w_{k+1}$ and mixing
parameter $\epsilon^{(k)}=\delta_k/\eta_{k+1}$, so that $\eta_{k+1}(1+\epsilon^{(k)})=\eta_k$
and $\epsilon^{(k)}\asymp a^{-ks}\le1$. We take
\[
  \ell^{(k)}=\big(c_0^{-1}\Delta(\epsilon^{(k)}/3)^{4/\kappa}\big)^{1/2}
  \ \asymp\ \ell_0^{1/2}a^{k(1/2-2s/\kappa)},
\]
which is increasing in $k$ because $s<\kappa/4$. Then
$\eta_{k+1}(\ell^{(k)})^d\ge c\,\ell_0^{d/2-s}a^{k(d/2-2sd/\kappa)}\ge1$ for $\ell_0$ large,
which is the second condition in \eqref{eq:ellcond}. Moreover $\mathsf G=w_{k+1}=a\ell_0\ell_k$
exceeds $c_1\Delta^{1/2}(\epsilon^{(k)}/3)^{-1/d}\asymp\ell_k^{1/2}a^{ks/d}$ because $s<d/2$,
and it exceeds $12r$ because $2r\le w_0/8\le w_{k+1}/8$. The choice $W=6w_{k+1}$ contains every
scale-$k$ box involved. Indeed, their centres lie in $[-2w_{k+1},2w_{k+1}]^d$ and each box,
with its collar, extends a further $w_k+2r$, so all of them lie in
$Q_W=[-3w_{k+1},3w_{k+1}]^d$. Using
$\eta_{k+1}(\epsilon^{(k)})^2=\delta_k^2/\eta_{k+1}\ge c\,\ell_0^{s}\ell_k^{-2s}$ we get
\[
  \eta_{k+1}(\epsilon^{(k)})^2\Delta^{d/2}\ \ge\ c\,\ell_0^{d/2-s}a^{k(d/2-2s)},
  \qquad
  \eta_{k+1}(\epsilon^{(k)})^2(\ell^{(k)})^d\ \ge\ c\,\ell_0^{d/2-s}a^{k\theta_\ast},
\]
and $\theta_\ast>0$ holds if and only if $s<s_\ast$. Since $\theta_\ast\le d/2-2s$, the second
of these exponents is the smaller, and it therefore dominates the two mixing errors. This is
the first term of $\epsilon_k$. The remaining term of \eqref{eq:vertical} has the Gaussian
exponent $\mathsf G^2/(8d\Lambda)=w_{k+1}^2/(16d\ell_{k+1})=\ell_0^2\ell_{k+1}/(16d)$, and so it
is again accounted for by the second term of $\epsilon_k$.

It remains to bound the prefactors. We show that all the prefactors appearing are at most
$C\ell_0^{\mathsf N}\ell_k^{\mathsf N}$ with $\mathsf N=(d+2)^2$. First, $K\asymp w_{k+1}=a\ell_0\ell_k$ and
$\ell^{(k)}\ge1$ bound $(K/\ell^{(k)})^d$, $\eta_{k+1}(K+\Lambda^{1/2})^d$ and the prefactor of
\Cref{lem:horiz} by $C\ell_0^{2d}\ell_k^{2d}$. Second, since
$K\Delta^{-1/2}\asymp\ell_0\ell_k^{1/2}$ and $(\epsilon^{(k)})^{-1}\asymp a^{ks}\le\ell_k^s$,
the factor $\Gamma$ of \Cref{prop:vertical} is at most
$C\ell_0^{d(d+1)}\ell_k^{d(d+1)/2+sd}$. Both pairs of exponents are at most $\mathsf N$, because
$s<1$. This gives \eqref{eq:recursion}.
\end{proof}

\begin{lemma}[Contraction]
\label{lem:contraction}
If $p_0\le1/(4C_{\mathrm{ent}})$ and $C_{\mathrm{ent}}^2\epsilon_k\le2^{-(k+4)}$ for all
$k\ge0$, then $p_k\le2^{-(k+2)}/C_{\mathrm{ent}}$ for all $k\ge0$.
\end{lemma}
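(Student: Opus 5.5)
Write $C:=C_{\mathrm{ent}}$ and prove the claim $p_k\le2^{-(k+2)}/C$ by induction on $k$. The base case $k=0$ is exactly the hypothesis $p_0\le1/(4C)=2^{-2}/C$.

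For the inductive step, assume $p_k\le2^{-(k+2)}/C$. The recursion \eqref{eq:recursion} of \Cref{prop:recursion} gives
\[
  p_{k+1}\ \le\ C\,p_k^2+C\,\epsilon_k .
\]
We bound the two terms separately against half of the target $2^{-(k+3)}/C$, that is, each by $2^{-(k+4)}/C$.

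For the quadratic term, the inductive hypothesis gives
\[
  C\,p_k^2\ \le\ C\cdot\frac{2^{-2(k+2)}}{C^2}\ =\ \frac{2^{-(2k+4)}}{C}\ \le\ \frac{2^{-(k+4)}}{C},
\]
since $2k+4\ge k+4$ for $k\ge0$.

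For the error term, the hypothesis $C^2\epsilon_k\le2^{-(k+4)}$ rewrites as
\[
  C\,\epsilon_k\ \le\ \frac{2^{-(k+4)}}{C}.
\]

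Adding the two bounds gives
\[
  p_{k+1}\ \le\ 2\cdot\frac{2^{-(k+4)}}{C}\ =\ \frac{2^{-(k+3)}}{C},
\]
which closes the induction.

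No step here is a real obstacle. The exponents in the hypotheses were chosen so that the arithmetic closes exactly, with the quadratic term having room to spare. The substantive work lies elsewhere: checking that the hypotheses hold for large $\ell_0$ uses the superexponential decay of $\epsilon_k$ and the triggering estimate for $p_0$.
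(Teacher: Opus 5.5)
Your proposal is correct and is essentially the paper's proof: the paper rescales to $q_k=C_{\mathrm{ent}}p_k$, so that the recursion reads $q_{k+1}\le q_k^2+C_{\mathrm{ent}}^2\epsilon_k$, and then runs the same induction with the same split of $2^{-(k+3)}$ into two halves of $2^{-(k+4)}$. You do the identical arithmetic without the rescaling, which changes nothing of substance.
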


\begin{proof}
Put $q_k=C_{\mathrm{ent}}p_k$. Multiplying \eqref{eq:recursion} by $C_{\mathrm{ent}}$ gives
$q_{k+1}\le q_k^2+C_{\mathrm{ent}}^2\epsilon_k$, and $q_0\le2^{-2}$ by hypothesis. If
$q_k\le2^{-(k+2)}$, then $q_k^2\le2^{-(2k+4)}\le2^{-(k+4)}$, so
$q_{k+1}\le2^{-(k+4)}+2^{-(k+4)}=2^{-(k+3)}$. The claim follows by induction.
\end{proof}

\subsection{The triggering estimate}
\label{sec:trigger}

It remains to show that $p_0\to0$ as $\ell_0\to\infty$, with $\eta_0\asymp\ell_0^{-s}$. On the
lattice the temporal half-crossing is excluded by a cluster-and-healing event. The spatial
half-crossings are excluded separately, by counting self-avoiding lattice paths and using that
the infection advances only at jump times. Neither argument has a continuum counterpart. We
therefore run the cluster argument on short space-time slabs. This excludes both kinds of
half-crossing at once and requires no bound on the speed of the infection front.

\begin{lemma}[Local extinction in an isolated cluster]
\label{lem:localext}
Fix $m\in\N$, $r>0$, $\mu\in(0,\infty]$ and $d\ge1$. There is
$p=p(m,r,\mu,d)>0$ such that, for any $k\le m$ particles performing independent Brownian
motions with no other particle present and all infected at time $0$,
\[
  \inf_{x\in(\R^d)^k}\Prob_x\big(\text{every particle is susceptible at time }1\big)\ \ge\ p ,
\]
and consequently
$\Prob_x(\text{some particle is infected at time }t)\le(1-p)^{\lfloor t\rfloor}$ for $t\ge1$.
\end{lemma}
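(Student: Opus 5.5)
The plan is to exhibit, uniformly in the starting positions, an event of probability bounded below on which every particle recovers by time $1$, and then to iterate it using the Markov property.

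The model is instantaneous with the isolation rule. A particle becomes susceptible once it carries an effective recovery mark, that is, a mark at an instant of isolation, and it cannot be reinfected while it stays isolated. At $\mu=\infty$ any instant of isolation already cures it.

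\emph{The separation event.} Given positions $x\in(\R^d)^k$ with $k\le m$, consider the event $\mathcal G$ on which, during $[0,\tfrac12]$, the particles spread out, and on $[\tfrac12,1]$ they stay pairwise at distance greater than $2r$. I would realise this with deterministic targets $y_j=x_1+j\,(4r+1)e_1$, or, more robustly, targets chosen far from the convex hull of $x$ and from each other. Each particle is required to follow the straight segment from $x_j$ to $y_j$ within a tube of width $r/4$ on $[0,\tfrac12]$, and then to stay within $r/4$ of $y_j$ on $[\tfrac12,1]$.

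The obstacle is uniformity in $x$. The Brownian support probability of following a segment of length $R$ in time $\tfrac12$ decays like $e^{-cR^2}$. So the targets must be placed at distance $O(mr)$ from the particles' own starting points, not at a fixed location.

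To achieve this, I would use a greedy construction. Order the particles and choose, for particle $j$, a target $y_j$ with $|y_j-x_j|\le C(m)r$ and at distance at least $4r$ from all previously chosen targets. This is possible since at most $m$ balls of radius $4r$ must be avoided, and a ball of radius $C(m)r$ around $x_j$ cannot be covered by them once $C(m)$ is large.

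It is not necessary for the tubes to avoid each other during $[0,\tfrac12]$: contacts then are harmless, because only the final interval is used. The support theorem, together with independence across particles, then gives $\Prob_x(\mathcal G)\ge p_1(m,r,d)>0$ uniformly in $x$.

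\emph{Recovery on $[\tfrac12,1]$.} On $\mathcal G$, since $|y_i-y_j|\ge4r$ and each particle stays within $r/4$ of its target, every pair is at distance greater than $2r$ throughout $[\tfrac12,1]$. Every particle is therefore isolated on this whole interval.

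No transmission can occur on $[\tfrac12,1]$, since there are no contacts. An infected particle is cured by its first recovery mark in $(\tfrac12,1]$, and it is then susceptible at time $1$ because it remains isolated. Susceptible particles stay susceptible.

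For $\mu<\infty$, the probability that every one of the $k$ particles has a mark in $(\tfrac12,1]$ is $(1-e^{-\mu/2})^k$, which is independent of the paths. For $\mu=\infty$, isolation alone cures instantly. Taking $p=p_1(1-e^{-\mu/2})^m$, with the second factor read as $1$ at $\mu=\infty$, gives the first claim.

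In terms of \Cref{def:openpath}: every open path reaching some particle at time $1$ must use a particle over a subinterval of $(\tfrac12,1]$ without passing through a contact. The last segment of such a path sits on a single particle that is isolated throughout $(\tfrac12,1]$ and carries a mark there, which contradicts clause (ii).

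\emph{Iteration.} The system of particles together with the recovery marks is Markov in the positions, and infection at integer times is monotone. On the event that some particle is infected at time $n$, restart at time $n$ from the current positions. By \Cref{prop:mono}(i) one may assume, as a worst case, that all particles are infected at time $n$. The uniform bound applied on $[n,n+1]$ gives conditional probability at least $p$ of extinction by time $n+1$. Once no particle is infected, the infection stays extinct (\Cref{lem:truncate}). This yields the bound $(1-p)^{\lfloor t\rfloor}$.

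The main obstacle is making the greedy target selection and the support-theorem lower bound uniform in configurations, including configurations in which particles start on top of each other. The construction above handles this because all displacements are bounded by $C(m)r$.
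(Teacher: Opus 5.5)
Your proposal is correct and follows the paper's proof essentially step for step. The shared steps are a greedy choice of targets $y_j$ within distance $O(mr)$ of $x_j$ and pairwise at least $4r$ apart, a tube event of probability bounded below uniformly in $x$, isolation on $[\tfrac12,1]$ giving an effective mark with probability $(1-e^{-\mu/2})^k$, and a restart at integer times via the Markov property and attractiveness; the paper's choices of tube radius $r/8$ and $|y_j-x_j|\le 8rm$ differ only cosmetically, and your open-path check that the last particle of any path reaching time $1$ sits alone on $(\tfrac12,1]$ makes explicit the paper's remark that a recovered particle cannot be reinfected while isolated.
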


\begin{proof}
Put $M=32rm$. We choose $y_i\in B(x_i,M/4)$ with $|y_i-y_j|>4r$ for $i\ne j$. This is possible
greedily, since the volume forbidden to $y_i$ by the previously chosen points is at most
$m\omega_d(4r)^d$, which is smaller than $|B(x_i,M/4)|=\omega_d(8rm)^d$ because
$8^dm^d>m4^d$.

Let $\mathcal A$ be the event that each particle follows a tube of radius $r/8$ from $x_i$ to
$y_i$ during $[0,1/2]$ and remains in $B(y_i,r/8)$ during $[1/2,1]$. The tubes have length at
most $M/4$, which depends only on $m$ and $r$, so $\Prob_x(\mathcal A)\ge p_1(m,r,d)>0$
uniformly in $x$. On $\mathcal A$, any two particles are at distance more than $4r-r/4>2r$
throughout $[1/2,1]$, so all $k$ particles are isolated in that interval. Each of them
therefore carries an effective recovery mark in $[1/2,1]$ with probability
$1-e^{-\mu/2}$, independently of the others; this probability is $1$ when $\mu=\infty$. Once a
particle has recovered it cannot be reinfected while it is isolated. Taking
$p=p_1(1-e^{-\mu/2})^m$ gives the first assertion. The second assertion follows from the first
by the Markov property together with \Cref{prop:mono}(i), which allows us to restart at each
integer time from the all-infected configuration.
\end{proof}

We fix $b\in(0,1)$ with $b\le s/d$, and we set
\begin{equation}
\label{eq:slabparams}
  \mathsf T=\ell_0^b,\qquad h_0=C_\star\log\ell_0,\qquad
  \mathsf d_0=\ell_0^{b/2}\log\ell_0,\qquad \mathsf R=2r+2\mathsf d_0 ,
\end{equation}
where $C_\star=C_\star(p,d)$ is chosen in \Cref{lem:trigger} below and $\ell_0$ is large enough
that $h_0+1\le \mathsf T$. We put
\[
  Q_0=[-w_0-2r,\ w_0+2r]^d,\qquad \Qin=[-2w_0,2w_0]^d,\qquad \Qext=[-4w_0,4w_0]^d .
\]
Note that throughout \Cref{sec:extinction} a subscripted $Q$ denotes the cube of the indicated
half-width, and not, as in \Cref{sec:particles,sec:mixing,sec:lipschitz}, the cube of the
indicated side. This convention is local to \Cref{sec:extinction}. We adopt it because every
region here is described by its distance from the origin. The cubes $Q_K,Q_{K'},Q_W$ of
\Cref{prop:vertical} are the exception; they follow the side convention of \Cref{sec:mixing},
from which they are quoted.
We call \emph{$\mathsf R$-clusters} the connected components of the graph on the particles which
joins two of them when their distance is at most $\mathsf R$. For $0\le j<\ell_0^{1-b}$ let
$\mathcal G(j)$ be the intersection of the following four events, all referring to the slab
$[j\mathsf T,(j+1)\mathsf T+h_0+1]$.
\begin{enumerate}[(G1)]
\item Every $\mathsf R$-cluster of the time-$j\mathsf T$ configuration which meets $\Qext$ has fewer than $m$
  particles.
\item Every particle lying in $\Qext$ at time $j\mathsf T$ displaces by at most $\mathsf d_0$ during
  $[j\mathsf T,(j+1)\mathsf T+h_0+1]$.
\item No particle lying outside $\Qext$ at time $j\mathsf T$ enters $\Qin$ during
  $[j\mathsf T,(j+1)\mathsf T+h_0+1]$.
\item For every $\mathsf R$-cluster $\mathcal K$ of the time-$j\mathsf T$ configuration which meets
  $\Qin$, and every integer $i\in[j\mathsf T,(j+1)\mathsf T+1]$, the isolated system consisting of the
  particles of $\mathcal K$ alone, all infected at time $i$, has no infected particle at time
  $i+h_0$.
\end{enumerate}

The geometry of one slab is drawn in \Cref{fig:slab}.

\begin{figure}[!h]
\centering
\begin{tikzpicture}[x=0.85cm,y=0.8cm,>=Stealth]
  \draw[dotted] (-5,0) -- (5,0); \draw[dotted] (-5,4.4) -- (5,4.4);
  \node[left] at (-5,0) {\small$j\mathsf T$};
  \node[left] at (-5,3.3) {\small$(j{+}1)\mathsf T$};
  \node[left] at (-5,4.4) {\small$(j{+}1)\mathsf T{+}h_0{+}1$};
  \draw[dotted] (-5,3.3) -- (5,3.3);
  \draw[|-|] (-4.6,-0.7) -- (4.6,-0.7) node[right]{\small$\Qext$};
  \draw[|-|] (-2.3,-1.4) -- (2.3,-1.4) node[right]{\small$\Qin$};
  \draw[|-|] (-1.1,-2.1) -- (1.1,-2.1) node[right]{\small$Q_0$};
  \fill[black!8] (-0.95,0) rectangle (0.95,4.4);
  \draw[gray] plot[smooth] coordinates {(-0.6,0) (-0.3,1.4) (-0.7,2.8) (-0.4,4.4)};
  \draw[gray] plot[smooth] coordinates {(0.2,0) (0.6,1.4) (0.1,2.8) (0.5,4.4)};
  \draw[gray] plot[smooth] coordinates {(0.8,0) (0.4,1.4) (0.85,2.8) (0.6,4.4)};
  \draw[very thick] plot[smooth] coordinates {(-0.6,0.8) (-0.3,1.4)};
  \draw[->,very thick] (-0.3,1.4) -- (0.6,1.4);
  \draw[very thick] plot[smooth] coordinates {(0.6,1.4) (0.35,2.1)};
  \fill (0.35,2.1) circle (1.6pt);
  \node[right] at (1.15,2.1) {\scriptsize dies by (G4)};
  \draw[<->] (-0.95,5.0) -- (0.95,5.0) node[midway,above]{\small$2m\mathsf R$};
  \draw[gray,->] (-4.9,0.4) .. controls (-4.0,1.6) .. (-3.2,3.0);
  \node[gray,right] at (-3.2,3.0) {\scriptsize (G3)};
\end{tikzpicture}
\caption{One slab of the triggering estimate, with time running upwards. On $\mathcal G(j)$ every
$\mathsf R$-cluster meeting $\Qext$ has fewer than $m$ particles by (G1) and stays inside a ball of
radius $m\mathsf R+\mathsf d_0$ by (G2), so distinct clusters never come into contact. By (G3) no
particle from outside $\Qext$ enters $\Qin$. The cluster therefore evolves in isolation, and
(G4) ends every open path started in it within time $h_0$. Since $2m\mathsf R=o(w_0)$ and
$h_0+1\ll\ell_0/2$, neither a spatial nor a temporal half-crossing of $B_0$ can occur.}
\label{fig:slab}
\end{figure}

Note that we impose clause (G4) at every integer time of the slab and not only at its
beginning. The reason is the following. By attractiveness, an open path started at a time
$\sigma$ is dominated by the all-infected process started at $\sigma$, and not by the one
started at $j\mathsf T$, since the starting particle may already have recovered and been reinfected
in between. The cost is a union bound over $\mathsf T+2$ times. The gain is that the bound on
the duration of an open path improves from order $\mathsf T$ to order $\log\ell_0$.

\begin{lemma}[Confinement and local death]
\label{lem:slab}
Let $\ell_0$ be large enough that $m\mathsf R+2\mathsf d_0+4r<w_0$. On $\mathcal G(j)$, every open path whose trace
starts inside $Q_0$ at a time $\sigma\in[j\mathsf T,(j+1)\mathsf T]$ uses only particles of a single
$\mathsf R$-cluster of the time-$j\mathsf T$ configuration, has trace contained in a ball of radius $2m\mathsf R$,
and has duration less than $h_0+1$.
\end{lemma}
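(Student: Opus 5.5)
The plan is to prove the three conclusions in order: single cluster, spatial confinement, bounded duration. Each step feeds the next, and the argument runs along the open path, inductively over its transmissions.

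Fix an open path $u_0,\dots,u_n$ with times $\sigma=t_0<\dots<t_{n+1}$ and trace $\gamma(\sigma)\in Q_0\subseteq\Qin$. We may assume its duration is at most $\mathsf T+h_0+1$, so that it lies in the slab; otherwise we truncate it by \Cref{lem:truncate} and argue by contradiction on the duration.

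\emph{Step 1: geometric isolation.} The particle $u_0$ lies within $2r$ of $Q_0$ at time $\sigma$, hence in $\Qin$. Let $\mathcal K$ be the $\mathsf R$-cluster at time $j\mathsf T$ of any particle that is in $\Qin$ at time $\sigma$. By (G3) such a particle was in $\Qext$ at time $j\mathsf T$. By (G1) its cluster has fewer than $m$ particles, so at time $j\mathsf T$ the cluster has diameter at most $m\mathsf R$. By (G2) all of its particles stay within $\mathsf d_0$ of their time-$j\mathsf T$ positions. So throughout the slab the cluster lies in a ball of radius $m\mathsf R+\mathsf d_0$ around $X_{u_0}(j\mathsf T)$.

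Next take a particle $v\notin\mathcal K$ that is in $\Qext$ at time $j\mathsf T$. At that time its distance to every particle of $\mathcal K$ exceeds $\mathsf R=2r+2\mathsf d_0$, and each of the two moves by at most $\mathsf d_0$, so they are never in contact during the slab. A particle that is outside $\Qext$ at time $j\mathsf T$ never enters $\Qin$ by (G3). The ball of radius $m\mathsf R+\mathsf d_0+2r$ around $\mathcal K$ stays inside $\Qin$, using $m\mathsf R+2\mathsf d_0+4r<w_0$ together with $X_{u_0}(j\mathsf T)$ lying within $\mathsf d_0$ of $Q_0$. Hence such a particle cannot touch $\mathcal K$ either.

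It follows by induction on $j$ that each transmission from $u_{j-1}\in\mathcal K$ goes to a particle in contact with it, so $u_j\in\mathcal K$. This gives the single-cluster claim. The trace then lies in the ball above of radius $m\mathsf R+\mathsf d_0+2r\le 2m\mathsf R$, since $m\ge1$ and $\mathsf R\ge 2r+2\mathsf d_0$.

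\emph{Step 2: duration.} Let $i=\lceil\sigma\rceil\le(j+1)\mathsf T+1$. By Step 1 the particles of $\mathcal K$ interact only with each other during the slab. So, restricted to $\mathcal K$, both contacts and isolation status (and therefore which recovery marks are effective) coincide with those of the isolated system of $\mathcal K$.

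The truncated path at time $i$ (\Cref{lem:truncate}) ends at some particle of $\mathcal K$, and its continuation after $i$ is an open path of the isolated system. By attractiveness, \Cref{prop:mono}(i), every such continuation is dominated by the isolated system started all-infected at time $i$. By (G4), that system has no infected particle at time $i+h_0$. Hence $t_{n+1}<i+h_0\le\sigma+h_0+1$, provided the path stays within the slab. This is consistent with the a priori truncation, because $i+h_0\le(j+1)\mathsf T+h_0+1$.

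The main obstacle is the isolation argument in Step 1. Distances must be checked at every time of the slab, not just at $j\mathsf T$, and particles of $\mathcal K$ must be kept separated both from other clusters and from outsiders. The identification of effective recovery marks under the isolation rule also needs care, since isolation depends on particles outside $\mathcal K$ too. Step 1 shows that no outsider ever comes within $2r$ of $\mathcal K$, and that is exactly what makes the marks agree.
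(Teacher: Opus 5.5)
Your proposal is correct and follows essentially the paper's proof: (G3) puts $u_0$ in $\Qext$ at time $j\mathsf T$, (G1)--(G2) confine its $\mathsf R$-cluster $\mathcal K$ and separate it from other clusters, (G3) keeps outsiders out of the $2r$-neighbourhood of $\mathcal K$, and (G4) at $i=\lceil\sigma\rceil$, with restriction of the path and attractiveness, bounds the duration; your explicit handling of paths that might outlive the slab, and of why effective recovery marks agree with those of the isolated system, spells out what the paper leaves implicit. Two small corrections: the a priori reduction should be to paths ending by $(j+1)\mathsf T+h_0+1$, since duration at most $\mathsf T+h_0+1$ does not keep a path started at $\sigma\le(j+1)\mathsf T$ inside the slab; and $u_0$ lies in $Q_0$ itself at time $\sigma$ because $\gamma(\sigma)=X_{u_0}(\sigma)$, and it is this (not ``within $2r$ of $Q_0$'') that your bound on $X_{u_0}(j\mathsf T)$ needs in order for the hypothesis $m\mathsf R+2\mathsf d_0+4r<w_0$ to suffice.
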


\begin{proof}
We first show that the starting particle lies in $\Qext$ at time $j\mathsf T$, so that (G2)
applies to it below. The starting particle lies in $Q_0\subseteq\Qin$ at the time $\sigma$ of
the slab, and $\Qin$ and $\Qext$ are separated by $2w_0$. A particle outside $\Qext$ at time
$j\mathsf T$ would have to displace by at least $2w_0$ to enter $\Qin$ during the slab, which (G3)
forbids.

By (G2), two particles of $\Qext$ lying in distinct $\mathsf R$-clusters at time $j\mathsf T$ start at
distance more than $\mathsf R=2r+2\mathsf d_0$ from each other and each moves by at most $\mathsf d_0$
during the slab, so they are never in contact during the slab. By (G1) the cluster $\mathcal K$
containing the starting particle has fewer than $m$ particles, hence diameter at most
$(m-1)\mathsf R$ at time $j\mathsf T$. By (G2) the starting particle lies within $\mathsf d_0$ of $Q_0$ at
time $j\mathsf T$, so every particle of $\mathcal K$ lies within
$w_0+2r+\mathsf d_0+(m-1)\mathsf R<2w_0$ of the origin in every coordinate. Hence
$\mathcal K\subseteq\Qin$ at time $j\mathsf T$, and (G2) applies to every particle of $\mathcal K$. It
follows that $\mathcal K$ remains throughout the slab inside the ball of radius
$m\mathsf R+\mathsf d_0$ centred at the position of the starting particle at time $j\mathsf T$.
Since $m\mathsf R+2\mathsf d_0+4r<w_0$ and $Q_0$ has half-width $w_0+2r$,
that ball together with its $2r$-neighbourhood has half-width at most
$w_0+2r+\mathsf d_0+m\mathsf R+\mathsf d_0+2r<2w_0$, and so it is contained in $\Qin$. Hence by (G3) no
particle from outside $\Qext$ ever comes into contact with $\mathcal K$. This gives that
$\mathcal K$ evolves in isolation during the slab, which proves the first two assertions.

It remains to bound the duration. Suppose that the path has duration at least $h_0+1$, and put
$i=\lceil\sigma\rceil$, an integer with $j\mathsf T\le i\le\sigma+1\le(j+1)\mathsf T+1$. Note that
$\mathsf T$ need not be an integer, which is why we imposed (G4) up to $(j+1)\mathsf T+1$. We now check
that clause (G4) applies to $\mathcal K$, that is, that $\mathcal K$ meets $\Qin$ at time
$j\mathsf T$. The starting particle lies in $Q_0$ at time $\sigma$ and displaces by at most
$\mathsf d_0$ during the slab by (G2), so at time $j\mathsf T$ it lies within $\mathsf d_0$ of $Q_0$.
Moreover $Q_0$ has half-width $w_0+2r$ while $\Qin$ has half-width $2w_0$. This boundary layer
is the reason why we impose (G4) on the clusters meeting $\Qin$ rather than on those meeting
$Q_0$. The path passes through a particle at time $i$, and its restriction to the time interval
starting at $i$ is again an open path by \Cref{def:openpath}(ii). Hence, in the isolated system
on $\mathcal K$ started all infected at time $i$, the endpoint of the path is infected at a time
at least $i+h_0$. This contradicts (G4).
\end{proof}

\begin{lemma}[Base-scale estimate]
\label{lem:trigger}
Let $s\in(0,s_\ast)$, let $b\le s/d$ and $m>(6d+6)/s$, and let $C_\star$ be large enough that
$(1-p)^{\lfloor C_\star\log\ell_0\rfloor}\le\ell_0^{-(2d+3)}$, with $p$ as in \Cref{lem:localext}.
Then $\Prob_{\eta_0}\big(H(B_0)\big)\to0$ as $\ell_0\to\infty$.
\end{lemma}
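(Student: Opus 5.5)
The plan is to show that $H(B_0)\subseteq\bigcup_j\mathcal G(j)^c$ for $j$ ranging over the slabs covering $[0,\ell_0]$, and then to bound $\Prob_{\eta_0}(\mathcal G(j)^c)$ by a union bound over (G1)--(G4). There are about $\ell_0^{1-b}$ slabs, so it suffices to make each failure probability $o(\ell_0^{-1})$.

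\textbf{Deterministic step.} On $\bigcap_j\mathcal G(j)$, take any open path witnessing $H(B_0)$. Its trace starts in $Q_0$ at some time $\sigma\in[0,\ell_0]$, which lies in the range $[j\mathsf T,(j+1)\mathsf T]$ of some slab.
\begin{itemize}
\item By \Cref{lem:slab} the path has duration less than $h_0+1$. For $\ell_0$ large this is below $\ell_0/2$, so $T(B_0)$ fails. The restriction of a witnessing path to the slab is still open, and that restriction is all \Cref{lem:slab} needs.
\item For $S^\pm_i(B_0)$, the trace must travel from $x_i\le\vartheta w_0$ to $x_i\ge w_0$, a distance at least $(1-\vartheta)w_0$. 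By \Cref{lem:slab} the trace stays in a ball of radius $2m\mathsf R=O(\ell_0^{b/2}\log\ell_0)=o(w_0)$, so no spatial half-crossing occurs.
\end{itemize}
One bookkeeping point: a path starting near the end of a slab is still covered, because the slab extends to $(j+1)\mathsf T+h_0+1$.

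\textbf{Probabilistic step.} All bounds are at intensity $\eta_0\asymp\ell_0^{-s}$.
\begin{itemize}
\item (G2) and (G3) follow from the reflection bound (H\ref{H3}). For (G2), the displacement is $\mathsf d_0=\ell_0^{b/2}\log\ell_0$ over a time of order $\ell_0^b$, giving $e^{-c\log^2\ell_0}$ per particle. The expected number of particles in $\Qext$ is polynomial in $\ell_0$, so the total is superpolynomially small. For (G3) the displacement needed is of order $w_0$, which is far smaller still.
\item (G4): each cluster meeting $\Qin$ has fewer than $m$ particles on (G1). By \Cref{lem:localext} and $h_0=C_\star\log\ell_0$, each cluster and each start time fails with probability at most $\ell_0^{-(2d+3)}$. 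The union is over at most $\eta_0|\Qext|$ clusters (with a Poisson tail for the count) and about $\mathsf T+2\le\ell_0$ start times. Since $\eta_0 w_0^d\le\ell_0^{2d}$, the total is $O(\ell_0^{-2})$ per slab.
\end{itemize}
The conditioning in (G4) needs care. The isolated-system dynamics of a cluster depend only on that cluster's own trajectories and marks after time $j\mathsf T$, which are independent of the configuration at $j\mathsf T$. So I would condition on the time-$j\mathsf T$ configuration and apply \Cref{lem:localext} uniformly in the starting positions.

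\textbf{Main obstacle: (G1).} I expect the bound on $\mathsf R$-clusters to be the hardest step. An $\mathsf R$-cluster of at least $m$ points contains a spanning tree on $m$ points, and the time-$j\mathsf T$ configuration is Poisson. Using the Mecke equation and counting trees, the expected number of such clusters meeting $\Qext$ is at most
\[
  C\,|\Qext|\,\eta_0\,(C\eta_0\mathsf R^d)^{m-1} .
\]
Here $\eta_0\mathsf R^d\asymp\ell_0^{-s}\ell_0^{bd/2}(\log\ell_0)^d\le\ell_0^{-s/2+o(1)}$, using $b\le s/d$, and $|\Qext|\eta_0\le C\ell_0^{2d}$. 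The bound therefore becomes $\ell_0^{2d-(m-1)s/2+o(1)}$. The condition $m>(6d+6)/s$ makes this at most $\ell_0^{-d-2}$, which is $o(\ell_0^{-1})$ per slab. After summing over the $\ell_0^{1-b}$ slabs, all error terms tend to $0$, so $p_0\to0$.
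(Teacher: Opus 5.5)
Your proposal is correct and follows the paper's proof. It uses the same deterministic exclusion of $T(B_0)$ and $S^\pm_i(B_0)$ on $\bigcap_j\mathcal G(j)$ via \Cref{lem:slab}, the same first-moment and reflection bounds for (G2)--(G4), and the same union bound over the at most $\ell_0^{1-b}$ slabs.

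The only difference is in (G1). You count $m$-point spanning trees of the $\mathsf R$-graph with the multivariate Mecke equation. The paper instead covers $\Qext$ by $C_d\ell_0^{2d}$ balls of radius $2\sqrt d\,m\mathsf R$, so that a cluster of $m$ particles forces some ball to hold $m$ Poisson points. Both give a bound of the form $\ell_0^{2d-cms}$. Yours has the slightly better exponent, since you keep $\eta_0\mathsf R^d\le\ell_0^{-s/2+o(1)}$ where the paper uses $\ell_0^{-s/3}$.
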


\begin{proof}
We first show that on $\bigcap_j\mathcal G(j)$ the event $H(B_0)$ fails. A witness for $T(B_0)$
is an open path contained in $B_0$ of duration at least $\ell_0/2$. Its trace starts inside
$[-w_0,w_0]^d\subseteq Q_0$ at some time $\sigma\in[0,\ell_0]$, so \Cref{lem:slab}, applied in
the slab containing $\sigma$, bounds its duration by $h_0+1=O(\log\ell_0)$, which is smaller
than $\ell_0/2$. A witness for $S^\pm_i(B_0)$ is an open path contained in $B_0^+$ whose trace
starts inside $Q_0$ and travels a distance at least $(1-\vartheta)w_0=\tfrac78\ell_0^2$ in the
$i$-th coordinate. By \Cref{lem:slab}, however, its trace stays in a ball of radius
$2m\mathsf R=O(\ell_0^{b/2}\log\ell_0)=o(w_0)$.

We now bound the four failure probabilities. The configuration at time $j\mathsf T$ is again Poisson
of intensity $\eta_0$, so none of the four probabilities depends on $j$.

We start with (G1). We cover $\Qext$ by balls of radius $2\sqrt d\,m\mathsf R$ centred on the points
of the lattice $m\mathsf R\Z^d$ which lie within distance $m\mathsf R$ of $\Qext$. Their number is at most
$C_d(w_0/(m\mathsf R))^d\le C_d\ell_0^{2d}$. Any set $S$ of diameter at most $(m-1)\mathsf R$ meeting
$\Qext$ lies inside one of them. Indeed, choosing $y\in S\cap\Qext$ and a lattice point $z$ with
$|y-z|_\infty\le m\mathsf R/2$, every point of $S$ is within
$\sqrt d\,m\mathsf R/2+(m-1)\mathsf R\le2\sqrt d\,m\mathsf R$ of $z$. The number of particles in such a ball is
Poisson with mean at most $C_dm^d\eta_0\mathsf R^d$, and since $bd/2\le s/2$,
\[
  \eta_0\mathsf R^d\ \le\ C\ell_0^{-s}\big(3\ell_0^{b/2}\log\ell_0\big)^d
  \ =\ C'\ell_0^{bd/2-s}(\log\ell_0)^d\ \le\ \ell_0^{-s/3}
\]
for $\ell_0$ large. Hence
$\Prob\big((\mathrm{G1})^c\big)\le C(m,d)\ell_0^{2d-ms/3}\le C(m,d)\ell_0^{-2}$ by the choice of
$m$.

Next we consider (G2). The particles lying in $\Qext$ at time $j\mathsf T$ form a Poisson process of
intensity $\eta_0$ on $\Qext$, and conditionally on their positions their subsequent
displacements are independent Brownian motions. We write $\tau=\mathsf T+h_0+1$ for the length of
the slab. By Markov's inequality and the reflection bound,
\[
  \Prob\big((\mathrm{G2})^c\big)\ \le\ \eta_0|\Qext|\;
  \Prob\Big(\sup_{s\le\tau}|B_s|>\mathsf d_0\Big)
  \ \le\ 8^d\ell_0^{2d-s}\cdot4d\,\exp\Big\{-\frac{\mathsf d_0^2}{2d\tau}\Big\} .
\]
The exponent has $\tau$ in its denominator, so the right-hand side is increasing in $\tau$.
Since $h_0+1\le \mathsf T$ we have $\tau\le2\mathsf T$, and replacing $\tau$ by $2\mathsf T$ therefore only
increases the right-hand side. As $\mathsf d_0^2=\ell_0^b(\log\ell_0)^2$ and $\mathsf T=\ell_0^b$,
\[
  \frac{\mathsf d_0^2}{2d\cdot2\mathsf T}\ =\ \frac{\ell_0^b(\log\ell_0)^2}{4d\,\ell_0^b}
  \ =\ \frac{(\log\ell_0)^2}{4d} ,
\]
and using $\ell_0^{-s}\le1$ we obtain
$\Prob((\mathrm{G2})^c)\le C_d\,\ell_0^{2d}e^{-(\log\ell_0)^2/(4d)}
=C_d\,\ell_0^{2d-\log\ell_0/(4d)}$, which decays faster than any power of $\ell_0$. Note that
both replacements, that of $\tau$ by $2\mathsf T$ and that of $\ell_0^{-s}$ by $1$, weaken the
bound.

We now consider (G3). A particle outside $\Qext$ is at distance at least $2w_0$ from $\Qin$, so
\[
  \Prob\big((\mathrm{G3})^c\big)\ \le\ C\,\eta_0\,w_0^{d-1}\sqrt{\mathsf T}\,e^{-w_0^2/(d\mathsf T)},
  \qquad\text{and}\qquad w_0^2/\mathsf T=\ell_0^{4-b} .
\]

Finally we consider (G4). The number of $\mathsf R$-clusters meeting $\Qin$ is at most the number
of particles in $\Qin$, whose expectation is at most $C\ell_0^{2d}$, and there are at most
$\mathsf T+2\le2\ell_0^b$ integer times $i\in[j\mathsf T,(j+1)\mathsf T+1]$. On (G1) every such cluster has
fewer than $m$ particles, so by \Cref{lem:localext} and the choice of $C_\star$,
\[
  \Prob\big((\mathrm{G4})^c\cap(\mathrm{G1})\big)
  \ \le\ C\ell_0^{2d}\cdot2\ell_0^b\cdot\ell_0^{-(2d+3)}\ \le\ C\ell_0^{-2} .
\]

We sum the four bounds and multiply by the number $\ell_0^{1-b}\le\ell_0$ of slabs. This gives
\[
  \Prob_{\eta_0}(H(B_0))\ \le\ C(m,d)\ell_0^{-1}+o(\ell_0^{-1})\ \longrightarrow\ 0 . \qedhere
\]
\end{proof}

\subsection{Proof of \texorpdfstring{\Cref{thm:extinction}}{Theorem 7.1}}
\label{sec:extproof}

\begin{proof}[Proof of \Cref{thm:extinction}]
We fix $s\in(0,s_\ast)$, and then $b$, $m$ and $C_\star$ as in \Cref{lem:trigger}. By that
lemma, we can choose $\ell_0$ so large that $p_0\le1/(4C_{\mathrm{ent}})$ and that
$2r\le w_0/8$, as required in \Cref{lem:cascade}. By \Cref{prop:recursion} the errors
$\epsilon_k$ decay superexponentially in $k$. Hence after enlarging $\ell_0$ further we have
$C_{\mathrm{ent}}^2\epsilon_k\le2^{-(k+4)}$ for every $k$, and \Cref{lem:contraction} gives
$p_k\to0$. We put $\bar\eta=\eta_\infty=\ell_0^{-s}$ and let $\eta<\bar\eta$.

Recall that $\eta\le\eta_k$ for every $k$ and that $H(B_k)$ is increasing by \Cref{lem:Hprops}.
Realising $\mathcal P^{\eta}\subseteq\mathcal P^{\eta_k}$ on one space by superposition, as in
the proof of \Cref{prop:mono}(iii), we obtain
\begin{equation}
\label{eq:statHk}
  \Prob_\eta\big(H(B_k)\big)\ \le\ \Prob_{\eta_k}\big(H(B_k)\big)\ =\ p_k\ \longrightarrow\ 0 .
\end{equation}
This is a statement about the stationary system, in which no particle is distinguished. The
theorem concerns the infection started from a single particle at the origin, that is the system
under the Palm measure. The two are related by the Mecke equation and not by an inequality.
Adding the Palm particle can only increase the probability of the increasing event $H(B_k)$, so
we cannot apply \eqref{eq:statHk} to it directly. We argue as follows.

We fix a bounded $Q\subseteq\R^d$ with $|Q|>0$, and we let $N_Q=\#\{u\in\mathcal P:X_u(0)\in Q\}$.
This is a Poisson variable of mean $\eta|Q|$, so that $\E[N_Q^2]<\infty$. We take $k$ large
enough that $Q\subseteq[-\vartheta w_k,\vartheta w_k]^d$, which is possible since
$w_k\to\infty$. If some $u\in\mathcal P$ with $X_u(0)\in Q$ has an infection which survives,
then in particular it survives past time $\ell_k/2$, so $H(B_k)$ occurs by \Cref{lem:surv2H}.
Hence, by Cauchy--Schwarz and \eqref{eq:statHk},
\begin{align*}
  \E\Big[\#\big\{u\in\mathcal P:\ X_u(0)\in Q,\ \text{the infection started from }u
  \text{ survives}\big\}\Big]
  &\ \le\ \E\big[N_Q\mathbf 1_{H(B_k)}\big]\\
  &\ \le\ \E\big[N_Q^2\big]^{1/2}p_k^{1/2} .
\end{align*}
The left-hand side does not depend on $k$ and the right-hand side tends to $0$, so the left-hand
side vanishes.

It remains to identify the left-hand side. We apply the Mecke equation to $\mathcal P$ with
$f(w,\mathcal P)=\mathbf 1\{w(0)\in Q\}\mathbf 1\{\text{the infection started from }w\text{ in }
\mathcal P\cup\{w\}\text{ survives}\}$, and we integrate out the marks. This gives that the
left-hand side equals
\[
  \int_{\{w(0)\in Q\}}\Prob\big(\text{the infection started from }w\text{ in }
  \mathcal P\cup\{w\}\text{ survives}\big)\,\nu_\eta(\mathrm dw)
  \ =\ \eta\int_Q\Prob^{x}_\eta(\text{survival})\,\mathrm dx ,
\]
where $\Prob^x_\eta$ is the law of the system with one extra particle started at $x$, that is, by
\eqref{eq:pathPPP}, the Palm measure of $\mathcal P$ at $x$. By translation invariance
$\Prob^x_\eta(\text{survival})$ does not depend on $x$ and equals the survival probability of the
theorem. Since $\eta|Q|>0$, that probability is $0$.
\end{proof}
\section{Proofs of the main theorems}
\label{sec:mainproof}

We prove \Cref{thm:motion} first, since it is assembled from the other two theorems. We then prove
the two inputs.

\begin{proof}[Proof of \Cref{thm:motion}]
Let $d\ge2$, $r>0$ and $\eta<\etaB$.

We begin with the medium. Since $\eta<\etaB$, the Gilbert graph $G_t$ has almost surely no infinite
component for each fixed $t$ \cite[Ch.~3]{MeesterRoy96}. We work under the Palm measure at the
origin. By Slivnyak's theorem this is the law of $\mathcal P$ with one further particle placed at
the origin. The probability that the cluster of that particle is infinite is $\theta(\eta)$, which
vanishes for $\eta<\etaB$. Hence the cluster of the seed is almost surely finite, and
\Cref{prop:soft} is not available at these intensities. In other words, the survival we prove below
is not of the soft kind.

We next prove survival and positive speed. Fix $\lambda\in(0,\infty)$ and let
$\mu_*=\mu_*(\lambda;\eta,r,d)>0$ be the threshold of \Cref{thm:smallmu} at this intensity. Let
$\mu\le\mu_*$. That theorem gives, for the finite-rate model,
\[
  \Prob^{\lambda,\mu}_\eta(\text{survival})>0
  \qquad\text{and}\qquad
  \Prob^{\lambda,\mu}_\eta\Big(\liminf_{t\to\infty}\frac{\rad(I_t)}{t}>0\Big)>0 .
\]
For the instantaneous model, we realise the two processes on one marked configuration, with the same
$\mathcal P$, the same recovery marks and the same initial infected particle. This is possible
because $\mu\le\mu_*<\infty$, which is the hypothesis of \Cref{prop:domination}. That proposition gives
$I^{\lambda,\mu,\unc}_t\subseteq I^{\infty,\mu,\iso}_t$ for every $t\ge0$ almost surely. Hence
$\rad(I^{\lambda,\mu,\unc}_t)\le\rad(I^{\infty,\mu,\iso}_t)$ for every $t$ by \eqref{eq:rad}. The
event that the finite-rate radius grows linearly is therefore contained in the event that the
instantaneous one does, and the same holds for survival. This gives that both displays hold with
$\Prob^{\infty,\mu}_\eta$ in place of $\Prob^{\lambda,\mu}_\eta$.

Finally, we consider the critical density. Put $J=\{\mu\in(0,\infty):\eta_c(\infty,\mu)<\etaB\}$.
Since $\eta_c(\infty,\cdot)$ is non-decreasing by \Cref{prop:mono}(iii), $J$ is a down-set. Since
$\eta_c(\infty,\mu)\le\etaB$ for every $\mu$ by \Cref{thm:density}(iii), the complement of $J$ in
$(0,\infty)$ is the set on which equality holds. Hence $\mu_\dagger:=\sup J$ has the two stated
properties, and it remains to show that $J\ne\emptyset$. We apply the previous paragraph at
$\eta=\etaB/2$ with any $\lambda\in(0,\infty)$. Note that $\etaB/2$ is positive in every dimension
and finite because $d\ge2$ (\Cref{sec:particles}). For every $\mu\le\mu_*(\lambda;\etaB/2)$ the
finite-rate infection at that intensity survives with positive probability, so
$\eta_c(\lambda,\mu)\le\etaB/2$ by \eqref{eq:etac}. Hence
$\eta_c(\infty,\mu)\le\eta_c(\lambda,\mu)\le\etaB/2<\etaB$ by \eqref{eq:etacinf}. Thus
$(0,\mu_*(\lambda;\etaB/2)]\subseteq J$ and $\mu_\dagger>0$.
\end{proof}

\begin{proof}[Proof of \Cref{thm:density}]
By \Cref{prop:mono}(iii) the survival probability is non-decreasing in $\eta$ at every fixed
$(\lambda,\mu)$. Hence $\{\eta>0:\Prob^{\lambda,\mu}_\eta(\text{survival})>0\}$ is an up-set, and
the number $\eta_c(\lambda,\mu)$ of \eqref{eq:etac} separates almost sure extinction from survival
with positive probability.

We first prove (i). Let $\bar\eta=\bar\eta(\mu,r,d)>0$ be the threshold of \Cref{thm:extinction},
and let $\eta<\bar\eta$. At $\lambda=\infty$, \Cref{thm:extinction} states directly that the
infection started from a single particle dies out almost surely, so $\eta_c(\infty,\mu)\ge\bar\eta$.
Let now $\lambda\in(0,\infty)$. Then $\mu<\infty$, since the combination $(\lambda<\infty,\unc)$
admits no $\mu=\infty$. As in the proof of \Cref{thm:motion} we realise the finite-rate and the
instantaneous model on one marked configuration, which \Cref{prop:domination} permits since
$\mu<\infty$. That proposition gives
\[
  I^{\lambda,\mu,\unc}_t\ \subseteq\ I^{\infty,\mu,\iso}_t\qquad\text{for every }t\ge0 ,
\]
almost surely. By \Cref{thm:extinction} the right-hand side is empty from some finite time
onwards, almost surely. Hence so is the left-hand side, and
$\Prob^{\lambda,\mu}_\eta(\text{survival})=0$. As $\eta<\bar\eta$ was arbitrary, we obtain
$\eta_c(\lambda,\mu)\ge\bar\eta$ for every $\lambda\in(0,\infty]$, with $\bar\eta$ depending only
on $\mu,r,d$.

Part (ii) is \Cref{prop:crowding}, which is valid for every $d\ge1$ and every $\mu\in(0,\infty]$.

We now prove (iii). \Cref{prop:supercrit} gives $\eta_c(\infty,\mu)\le\etaB$ for $d\ge2$ and every
$\mu\in(0,\infty]$, in particular at $\mu=\infty$. The inequality
$\eta_c(\infty,\mu)\le\eta_c(\infty,\infty)$ is the monotonicity of $\eta_c$ in $\mu$ from
\Cref{prop:mono}(iii).

The final assertion combines (i) with (ii).
\end{proof}

\begin{proof}[Proof of \Cref{thm:smallmu}]
The survival and positive-speed statement is proved in \Cref{sec:smallmu}, and the argument is
assembled in \Cref{sec:goodcell}. It gives $\Prob^{\lambda,\mu}_\eta(\text{survival})>0$ for every
$\mu\le\mu_*(\lambda;\eta,r,d)$. By \eqref{eq:muc} this is $\mu_c(\lambda,\eta)\ge\mu_*(\lambda)>0$.

For the last assertion, fix $\lambda\in(0,\infty)$ and let $\eta'>0$. We apply the above at
intensity $\eta'$. This produces $\mu_*(\lambda;\eta',r,d)>0$ such that the infection at
intensity $\eta'$ survives with positive probability whenever
$\mu\le\mu_*(\lambda;\eta',r,d)$. For such $\mu$ we therefore have
$\eta_c(\lambda,\mu)\le\eta'$ by \eqref{eq:etac}. Hence
$\limsup_{\mu\downarrow0}\eta_c(\lambda,\mu)\le\eta'$, and $\eta'>0$ was arbitrary. The
case $\lambda=\infty$ follows from $\eta_c(\infty,\mu)\le\eta_c(\lambda,\mu)$, which is
\eqref{eq:etacinf}.
\end{proof}
\section{Remarks and open problems}
\label{sec:remarks}

\subsection{The absence of a reproduction number}
\label{sec:noR0}

Consider a contact process in which recovery is unconditional and the medium has bounded degree.
There the infectious period of a particle is an $\Exp(\mu)$ variable independent of everything
else. One bounds the expected number of transmissions it makes by a reproduction number $R_0$, and
$R_0<1$ gives almost sure extinction. No such argument is available in the instantaneous model, for
two independent reasons.

First, an infected particle lives until its accumulated isolated time reaches an
$\Exp(\mu)$ threshold. Recall that $\Vol=|B(0,2r)|$. By ergodicity the isolated fraction of time
is $e^{-N}$ with $N=\eta\Vol$, so the mean lifetime is of order $e^{N}/\mu$. This is not a rate but a quantity already governed
by the geometry.

Second, the offspring number and the lifetime are positively correlated. By
the Mecke equation the expected offspring number of a tagged infected particle at $x$ is
\[
  \eta\int_{\R^d}\Prob^{x,y}\big(\text{the }y\text{-particle meets the }x\text{-particle before
  its death in }\omega\cup\delta_y\big)\,\mathrm dy ,
\]
in which the death time is computed in the environment that includes the candidate offspring.
Since adding a particle can only lengthen a lifetime, the product bound goes the wrong way.
\Cref{sec:extinction} avoids this difficulty altogether, since \Cref{lem:localext} bounds the
extinction time of a bounded isolated cluster with no reproduction estimate at all.

There is also a ceiling which no criterion of this type can pass. A first-moment criterion counts
the particles that a single infected particle can reach, and that count is at least the mean
degree $\eta\Vol$. Hence any criterion of that shape is confined to $\eta\Vol<1$. Subcriticality of
the static Boolean model is strictly weaker. The critical mean degree $\etaB\Vol$ equals
$4\log\big(1/(1-\phi_c)\big)\approx4.512$ in $d=2$, from the critical area fraction
$\phi_c=0.676339$ for fully penetrable discs \cite{QTZ00}. It is about $2.7$ in $d=3$, and it
exceeds $1$ in every finite dimension, tending to $1$ only as $d\to\infty$ \cite{Penrose96}. Since
\Cref{thm:density} locates $\eta_c(\infty,\mu)$ in $[\bar\eta,\etaB]$, the interval in which the
transition is known to lie extends well above mean degree one. In other words, an argument of this
type can bound $\eta_c$ from below at very small intensity, but it cannot determine $\eta_c$. Both
this ceiling and the correlation above come from the same unboundedness of the contact degree.

\subsection{Reduction to \texorpdfstring{$\mu=\infty$}{mu = infinity}}
\label{sec:muinfty}

By \Cref{prop:mono}(iii) we have $\eta_c(\infty,\mu)\le\eta_c(\infty,\infty)$ for every $\mu$. Hence
a proof that $\eta_c(\infty,\infty)<\etaB$ would give, in one step, survival for every
$\mu\in(0,\infty]$ at every intensity in $(\eta_c(\infty,\infty),\etaB)$, that is, strictly below
the static percolation threshold. By \Cref{sec:corners} the instantaneous model at $\mu=\infty$
carries no rates at all. The local events driving a Lipschitz surface would therefore be functions
of $\mathcal P$ alone and hence automatically increasing. \Cref{thm:GS} would then apply with no
treatment of marks, and with no need for the auxiliary paths of \Cref{sec:auxpath}.
\Cref{prop:crowding} is a worked instance. Its event $\widehat E$ is purely geometric, and that
section uses none of the devices of \Cref{sec:smallmu}. The missing input is a cell event that
carries the infection below $\etaB$, not a treatment of marks. This is the open question. The
conditioned kernel of \Cref{sec:condkernel} would still be needed, with \Cref{thm:mixinglower}
rather than \Cref{thm:mixingupper} as the relevant input. The reason is that one would need the
mixed configuration to contain a Poisson process rather than to be contained in one.

\subsection{Open problems}
\label{sec:open}

\begin{enumerate}[(1)]
\item How far does the strict inequality $\eta_c(\infty,\mu)<\etaB$ extend? By
  \Cref{thm:motion} it holds for every $\mu$ below a threshold $\mu_\dagger\in(0,\infty]$, and
  $\mu_\dagger>0$. Is $\mu_\dagger=\infty$, that is, does motion strictly help at every finite
  recovery rate? And is $\eta_c(\infty,\infty)<\etaB$, or is it equal to $\etaB$? The second does
  not follow from the first. A supremum of quantities each strictly below $\etaB$ may equal
  $\etaB$, and even $\sup_{\mu<\infty}\eta_c(\infty,\mu)<\etaB$ would still have to be transferred
  to $\mu=\infty$, for which monotonicity gives only the inequality $\le$. This is the question of
  whether motion still helps when healing is instantaneous. As explained above,
  a positive
  answer would immediately give survival strictly below $\etaB$ for every $\mu\in(0,\infty]$ in
  one step.
\item Quantify the two degenerations. How does $\bar\eta(\mu)$, equivalently
  $\eta_c(\lambda,\mu)$, behave as $\mu\downarrow0$? And how does $\mu_*(\lambda)$ behave as
  $\lambda\downarrow0$? \Cref{rem:muStar} gives
  $\mu_*(\lambda)\asymp(\lambda\eta)^{6}(\log(1/\lambda))^{-6}$, and we make no claim that this is
  of the right order. The natural guess is $\mu_c(\lambda,\eta)\asymp\lambda$ as $\lambda\to0$.
  Similarly, is $\sup_{\mu<\infty}\eta_c(\infty,\mu)=\eta_c(\infty,\infty)$, and is $\eta_c$
  continuous in its arguments?
\item Above $\etaB$, does local survival hold, and is $\liminf_{t\to\infty}\iota(t)>0$, where
  $\iota(t)$ is the intensity of $I_t$? \Cref{prop:soft} shows that global survival as defined here
  carries no such information, and \Cref{prop:supercrit} produces no structure along which the
  infection is transported. \Cref{thm:motion} shows that this degeneracy is confined to the region
  above $\etaB$ and that the transition detected by $\eta_c$ is not an artefact of it. The natural
  version of the question is therefore whether \Cref{thm:motion} itself can be strengthened to
  local survival and non-degeneracy at the intensities below $\etaB$ where it applies.
\item Is $\eta_c(\lambda,\mu)<\infty$ at finite $\lambda$ and fixed $\mu$? Both routes to
  \Cref{thm:density}(ii)--(iii) are $\lambda=\infty$ arguments. \Cref{prop:supercrit} needs the whole
  Gilbert cluster to be infected at arbitrarily small times, and \Cref{prop:crowding} needs the
  spread within a crowded cell to be a deterministic consequence of the geometry. Neither
  argument works at finite $\lambda$, and \Cref{prop:domination} runs the wrong way to transport
  them. A natural route to a bound at fixed $\mu$ is a metastability argument. Once the number of
  infected particles in a cell exceeds a positive fraction of the local population, the SIS
  dynamics should keep it there for a time exponential in that population, which is far longer
  than a block. Making this rigorous requires a comparison for the infection restricted to a cell
  which we have not found.
\item Extinction at small $\lambda$ and fixed intensity. \Cref{thm:density}(i) is a
  small-intensity statement uniform in $\lambda$, and says nothing at a fixed intensity above
  $\bar\eta$. One expects extinction there for $\lambda$ small, that is, $\eta_c(\lambda,\mu)>\eta$
  for $\lambda$ small enough. As explained in \Cref{sec:noR0}, a first-moment argument cannot
  give this
  above mean degree one. The question is the finiteness of an annealed Lyapunov exponent for
  branching in a medium of moving catalysts. This is expected to hold for $d\ge3$ and small
  $\lambda$ and to fail for $d\le2$. A proof would presumably have to be a renormalisation, as in
  \cite{KestenSidoravicius06}.
\item Is the exponent $s_\ast$ of \eqref{eq:sstar} an artefact of the method? It arises from the
  two conditions \eqref{eq:ellcond}, which squeeze the mixing scale $\ell$ from both sides. Any
  improvement of \Cref{thm:mixingupper} in the regime of small $\varrho\Delta^{d/2}$ would relax
  it.
\item Extend the results to non-Gaussian mobility, for instance $\alpha$-stable flights, for which
  \Cref{thm:mixinglower} is available in \cite{GJLV26} at general $\alpha$. Gaussianity enters
  quantitatively in three places, and all three would have to be replaced. These are the kernel
  estimates (H\ref{H1})--(H\ref{H4}) of \Cref{sec:HK}; \Cref{lem:condkernel}, whose proof uses the
  product structure of the coordinates and the method of images, neither of which is available for
  a jump process; and \Cref{lem:localext}, which uses Brownian tube estimates.
\end{enumerate}


\begin{thebibliography}{99}

\bibitem{BHO26} R.\ Baldasso, M.\ Hil\'ario, I.\ Ornelas.
\emph{Epidemic phase transitions in the zero-range process.}
Preprint, 2026. \href{https://arxiv.org/abs/2607.22908}{arXiv:2607.22908}.

\bibitem{BaldassoStauffer23} R.\ Baldasso, A.\ Stauffer.
\emph{Local and global survival for infections with recovery.}
Stochastic Process.\ Appl.\ \textbf{160} (2023), 161--173.
\href{https://doi.org/10.1016/j.spa.2023.03.008}{doi:10.1016/j.spa.2023.03.008}.

\bibitem{BaldassoStauffer20} R.\ Baldasso, A.\ Stauffer.
\emph{Local survival of spread of infection among biased random walks.}
Electron.\ J.\ Probab.\ \textbf{27} (2022), paper no.~135, 28 pp.
\href{https://doi.org/10.1214/22-EJP861}{doi:10.1214/22-EJP861}.

\bibitem{BaldassoTeixeira20} R.\ Baldasso, A.\ Teixeira.
\emph{Spread of an infection on the zero range process.}
Ann.\ Inst.\ Henri Poincar\'e Probab.\ Stat.\ \textbf{56} (2020), 1898--1928.
\href{https://doi.org/10.1214/19-AIHP1021}{doi:10.1214/19-AIHP1021}.

\bibitem{DauvergneSly22} D.\ Dauvergne, A.\ Sly.
\emph{The SIR model in a moving population: propagation of infection and herd immunity.}
Comm.\ Pure Appl.\ Math., published online 8 July 2026.
\href{https://doi.org/10.1002/cpa.70054}{doi:10.1002/cpa.70054}.

\bibitem{DauvergneSly23} D.\ Dauvergne, A.\ Sly.
\emph{Spread of infections in a heterogeneous moving population.}
Probab.\ Theory Related Fields \textbf{187} (2023), 73--131.
\href{https://doi.org/10.1007/s00440-023-01216-6}{doi:10.1007/s00440-023-01216-6}.

\bibitem{DGG26} A.\ Drewitz, G.\ Gallo, P.\ Gracar.
\emph{Lipschitz cutset for fractal graphs and applications to the spread of infections.}
Ann.\ Inst.\ Henri Poincar\'e Probab.\ Stat.\ \textbf{62} (2026), 830--878.
\href{https://doi.org/10.1214/24-AIHP1539}{doi:10.1214/24-AIHP1539}.

\bibitem{GrimmettLi22} G.\ R.\ Grimmett, Z.\ Li.
\emph{Brownian snails with removal: epidemics in diffusing populations.}
Electron.\ J.\ Probab.\ \textbf{27} (2022), paper no.~78, 31 pp.
\href{https://doi.org/10.1214/22-EJP804}{doi:10.1214/22-EJP804}.

\bibitem{Hilario2015} M.\ Hil\'ario, F.\ den Hollander, R.\ dos Santos, V.\ Sidoravicius,
A.\ Teixeira.
\emph{Random walk on random walks.}
Electron.\ J.\ Probab.\ \textbf{20} (2015), no.~95, 35 pp.
\href{https://doi.org/10.1214/EJP.v20-4437}{doi:10.1214/EJP.v20-4437}.

\bibitem{FMUV23} L.\ R.\ Fontes, T.\ S.\ Mountford, D.\ Ungaretti, M.\ E.\ Vares.
\emph{Renewal contact processes: phase transition and survival.}
Stochastic Process.\ Appl.\ \textbf{161} (2023), 102--136.
\href{https://doi.org/10.1016/j.spa.2023.03.005}{doi:10.1016/j.spa.2023.03.005}.

\bibitem{GJLV26} P.\ Gracar, B.\ Jahnel, L.\ L\"uchtrath, A.\ D.\ Vu.
\emph{Detection, coverage and percolation in dynamic Boolean models with random radii based on
$\alpha$-stable processes.}
Preprint, 2026. \href{https://arxiv.org/abs/2602.22109}{arXiv:2602.22109}.

\bibitem{GracarStauffer19a} P.\ Gracar, A.\ Stauffer.
\emph{Random walks in random conductances: decoupling and spread of infection.}
Stochastic Process.\ Appl.\ \textbf{129} (2019), 3547--3569.
\href{https://doi.org/10.1016/j.spa.2018.09.016}{doi:10.1016/j.spa.2018.09.016}.

\bibitem{GracarStauffer19b} P.\ Gracar, A.\ Stauffer.
\emph{Multi-scale Lipschitz percolation of increasing events for Poisson random walks.}
Ann.\ Appl.\ Probab.\ \textbf{29} (2019), 376--433.
\href{https://doi.org/10.1214/18-AAP1420}{doi:10.1214/18-AAP1420}.

\bibitem{HUVV22} M.\ Hil\'ario, D.\ Ungaretti, D.\ Valesin, M.\ E.\ Vares.
\emph{Results on the contact process with dynamic edges or under renewals.}
Electron.\ J.\ Probab.\ \textbf{27} (2022), paper no.~91, 31 pp.
\href{https://doi.org/10.1214/22-EJP811}{doi:10.1214/22-EJP811}.

\bibitem{KestenSidoravicius05} H.\ Kesten, V.\ Sidoravicius.
\emph{The spread of a rumor or infection in a moving population.}
Ann.\ Probab.\ \textbf{33} (2005), 2402--2462.
\href{https://doi.org/10.1214/009117905000000413}{doi:10.1214/009117905000000413}.

\bibitem{KestenSidoravicius06} H.\ Kesten, V.\ Sidoravicius.
\emph{A phase transition in a model for the spread of an infection.}
Illinois J.\ Math.\ \textbf{50} (2006), 547--634.
\href{https://doi.org/10.1215/ijm/1258059486}{doi:10.1215/ijm/1258059486}.

\bibitem{KestenSidoravicius08} H.\ Kesten, V.\ Sidoravicius.
\emph{A shape theorem for the spread of an infection.}
Ann.\ of Math.\ (2) \textbf{167} (2008), 701--766.
\href{https://doi.org/10.4007/annals.2008.167.701}{doi:10.4007/annals.2008.167.701}.

\bibitem{LastPenrose18} G.\ Last, M.\ Penrose.
\emph{Lectures on the Poisson Process.}
Institute of Mathematical Statistics Textbooks \textbf{7}, Cambridge University Press, 2018.

\bibitem{MeesterRoy96} R.\ Meester, R.\ Roy.
\emph{Continuum Percolation.}
Cambridge Tracts in Mathematics \textbf{119}, Cambridge University Press, 1996.

\bibitem{PSSS13} Y.\ Peres, A.\ Sinclair, P.\ Sousi, A.\ Stauffer.
\emph{Mobile geometric graphs: detection, coverage and percolation.}
Probab.\ Theory Related Fields \textbf{156} (2013), 273--305.
\href{https://doi.org/10.1007/s00440-012-0428-1}{doi:10.1007/s00440-012-0428-1}.

\bibitem{Penrose96} M.\ D.\ Penrose.
\emph{Continuum percolation and Euclidean minimal spanning trees in high dimensions.}
Ann.\ Appl.\ Probab.\ \textbf{6} (1996), 528--544.
\href{https://doi.org/10.1214/aoap/1034968142}{doi:10.1214/aoap/1034968142}.

\bibitem{PopovTeixeira15} S.\ Popov, A.\ Teixeira.
\emph{Soft local times and decoupling of random interlacements.}
J.\ Eur.\ Math.\ Soc.\ \textbf{17} (2015), 2545--2593.
\href{https://doi.org/10.4171/JEMS/565}{doi:10.4171/JEMS/565}.

\bibitem{QTZ00} J.\ Quintanilla, S.\ Torquato, R.\ M.\ Ziff.
\emph{Efficient measurement of the percolation threshold for fully penetrable discs.}
J.\ Phys.\ A: Math.\ Gen.\ \textbf{33} (2000), L399--L407.
\href{https://doi.org/10.1088/0305-4470/33/42/104}{doi:10.1088/0305-4470/33/42/104}.

\bibitem{vdBMW97} J.\ van den Berg, R.\ Meester, D.\ G.\ White.
\emph{Dynamic Boolean models.}
Stochastic Process.\ Appl.\ \textbf{69} (1997), 247--257.
\href{https://doi.org/10.1016/S0304-4149(97)00044-6}{doi:10.1016/S0304-4149(97)00044-6}.

\end{thebibliography}
\end{document}